\newtheorem{theorem}{Theorem}[section]
\newtheorem{lemma}[theorem]{Lemma}
\newtheorem{corollary}[theorem]{Corollary}
\newtheorem{proposition}[theorem]{Proposition}
\newtheorem{prop}[theorem]{Proposition}
\theoremstyle{definition}
\theoremstyle{remark}
\newtheorem{remark}[theorem]{Remark}
\numberwithin{equation}{section}
\newcommand{\calO}{\mathcal{O}}
\newcommand{\calP}{\mathcal{P}}
\newcommand{\calX}{\mathcal{X}}
\newcommand{\la}{\langle}
\newcommand{\ra}{\rangle}
\def\Ker{{\text{Ker}}}
\def\Im{{\text{Im}}}
\def\rank{{\text{rank}}}
\begin{document}
\title [Kummer surfaces]{Kummer surfaces and quadratic line complexes in characteristic two}

\author{Toshiyuki Katsura}
\address{Graduate School of Mathematical Sciences, The University of Tokyo,
Meguro-ku, Tokyo 153-8914, Japan}
\email{tkatsura@ms.u-tokyo.ac.jp}
\thanks{}

\author{Shigeyuki Kond\=o}
\address{Graduate School of Mathematics, Nagoya University, Nagoya,
464-8602, Japan}
\email{kondo@math.nagoya-u.ac.jp}
\thanks{Research of the first author is partially supported by JSPS Grant-in-Aid 
for Scientific Research (C) No.23K03066 and the second author by JSPS
Grant-in-Aid for Scientific Research (A) No.20H00112.}

\begin{abstract}
In this paper, we study the classical theory of quadratic line complexes and Kummer surfaces.
A quadratic line complex is the intersection of the Grassmannian $G(2,4)$ and a quadric hypersurface 
in ${\bf P}^5$, and a Kummer surface is the quotient of the Jacobian of a curve of genus 2 by the inversion.
F. Klein discovered a relationship between a quadratic line complex and a curve of genus 2, its Jacobian and the associated Kummer surface.  This theory holds in any characteristic not equal to two.  However the situation in
characteristic two is entirely different.  The purpose of this paper is to give 
an analogue in characteristic 2 of this classical theory.
\end{abstract}

\maketitle

\section{Introduction}\label{sec1}

Let $C$ be a compact Riemann surface of genus 2 and $J(C)$ its Jacobian.
Let $\iota$ be the inversion automorphism of $J(C)$ and
$\Theta$ the theta divisor on $J(C)$.  
The complete linear system $|2\Theta|$ gives a double covering from $J(C)$ to
a quartic surface $S$ in ${\bf P}^3$ with 16 nodes
defined by
$$x^4+y^4+z^4+t^4 + A\left(x^2y^2+z^2t^2\right)+B\left(x^2z^2+y^2t^2\right)+C\left(x^2t^2+y^2z^2\right)+Dxyzt=0,$$
where $A, B, C, D$ are constants satisfying a certain explicit equation (e.g. Hudson \cite[p.81]{H}).
The quartic surface $S$ is isomorphic to the quotient surface $J(C)/\la \iota\ra$ and is called a {\it Kummer quartic surface}.  It contains sixteen {\it tropes} (= double conics) which are the images of $\Theta$ 
and its translations by 
2-torsions.  The incidence relation between sixteen nodes and tropes is called 
a $(16_6)$-{\it configuration}, that is, each node is contained in six tropes and each trope 
contains six nodes.  Let $\Sigma$ be the minimal resolution of $S$ which is a $K3$ surface.  
The surface $\Sigma$ is realized as a complete intersection of three quadrics in ${\bf P}^5$
which is the image of the rational map from $J(C)$ defined by the linear system 
$|4\Theta - \sum p_i|$ where $\{p_i\}$ are sixteen 2-torsion points of $A$ 
(cf. Griffiths-Harris \cite[p.786]{GH}). 
Then $\Sigma$ contains 32 lines forming a $(16_6)$-configuration, that is, there are two sets of disjoint 16 lines on $\Sigma$ such that each member in one set meets exactly six members in another set.  The 32 lines consist of the proper 
transforms of sixteen tropes and sixteen exceptional curves over sixteen nodes.  We remark that
by contracting the proper transforms of sixteen tropes we obtain another quartic surface $S^\vee$
with 16 nodes which is the projective dual of $S$.

In the paper \cite{Klein}, Klein discovered a relationship between Kummer quartic surfaces 
and quadratic line complexes.  A {\it quadratic line complex} is a 3-dimensional family of 
lines in ${\bf P}^3$ which is defined as the intersection of the Grassmannian 
$G=G(2,4) \subset {\bf P}^5$ with a quadric $Q$ in ${\bf P}^5$.
In this paper we assume that $G\cap Q$ is non-singular.  Then we can diagonalize simultaneously
$G$ and $Q$ as
\begin{equation}\label{diagonal}
G= \left\{\sum_{i=1}^6 X_i^2 =0\right\}, \quad Q=\left\{\sum_{i=1}^6 a_iX_i^2 =0\right\},
\end{equation}
where $(X_1,\cdots, X_6)$ are homogeneous coordinates of ${\bf P}^5$ and 
$a_1,\ldots, a_6 \in {\bf C}$, $a_i\not=a_j (i\not=j)$.
Recall that a non-singular quadric in 
${\bf P}^5$ contains two irreducible families of planes and a singular quadric of rank 5
contains an irreducible family of planes.  
Since the pencil of quadrics in ${\bf P}^5$ defined by $G$ and $Q$ contains exactly six
singular quadrics of rank 5, we thus obtain a {\it non-singular curve $C$ of genus} 2 defined by
\begin{equation}\label{genus2}
y^2 = \prod_{i=1}^6 (x - a_i)
\end{equation}
parameterizing the irreducible families of planes contained in members of the pencil. 
The classical theory claims that the surface $\Sigma$ is isomorphic to the locus of special lines, 
called {\it singular lines}, and $S$ is isomorphic to the set of {\it foci} of these singular lines
(see (\ref{KummerQuartic1}), (\ref{Kummer}) for the definitions).  
The surface $\Sigma$ is given by the intersection of three quadrics: 
\begin{equation}\label{threeQuadrics}
\Sigma=\left\{\sum_{i=1}^6 X_i^2 = \sum_{i=1}^6 a_iX_i^2 =\sum_{i=1}^6 a_i^2X_i^2 =0\right\}.
\end{equation}
Moreover the {\it variety of lines} in $G\cap Q$ is an abelian surface which is 
isomorphic to the Jacobian $J(C)$ of $C$.  This is an outline of the classical theory.  
For quadratic line complexes we refer the reader to Jessop \cite{J}. 
In the last century, Narasimhan and Ramanan \cite{NR} and independently Newstead \cite{N} reproved this 
in connection with the theory of vector bundles over $C$.
For modern treatment of this theory, we refer the reader to Griffiths and Harris \cite{GH}, Cassels and Flynn
\cite{CF}, Dolgachev \cite{DoC} and for a history of Kummer surfaces to Dolgachev \cite{Do}.
The above theory holds in any characteristic different from 2.

Now assume that the ground field is an algebraically closed field of characteristic 2. 
Then the situation is entirely different.
There are mainly two differences between the case of characteristic $p=2$ and the case of $p\not= 2$.
First of all, a quadratic form in characteristic 2 is not determined by 
the associated alternating bilinear form.
Secondly the moduli space of curves of genus two and that of their Jacobians are stratified in terms
of $2$-{\it rank}. There are three types, that is, {\it ordinary} ($2$-rank 2), $2$-{\it rank} $1$ 
and {\it supersingular} ($2$-rank $0$).  
Shioda \cite{Sh} and the first author \cite{Ka} found that  
$J(C)/\la \iota\ra$ 
has, instead of sixteen nodes, four rational double points of type $D_4$ for $J(C)$ being ordinary, two rational double points of type $D_8$ for $J(C)$ with $2$-rank 1, 
an elliptic singularity of type 
$\raise0.2ex\hbox{\textcircled{\scriptsize{4}}}_{0,1}^1$ 
in the sense of Wagreich \cite{W} 
for $J(C)$ being supersingular.  See Table \ref{Table1} (see Figure \ref{ellipticSing} in Subsection \ref{Char2} for elliptic singularities).

\begin{table}[!htb]
\centering
\begin{tabular}{|>{\centering\arraybackslash}m{5cm}|>{\centering\arraybackslash}m{1cm}|>{\centering\arraybackslash}m{2.5cm}|>{\centering\arraybackslash}m{1.5cm}|>{\centering\arraybackslash}m{3cm}|}
\hline
{\rm char $p$} & $p\ne 2$ & $p=2$ &  $p=2$ & $p=2$\\  \hline \hline
{\rm $2$-rank of $J(C)$} & -- & $2$\ {\rm (ordinary)} & $1$ & $0$ \ {(supersingular)}\\ \hline
\rm{$\#$ of branches of $C\to {\bf P}^1$} & $6$ & $3$ & $2$  & $1$ \\  \hline
\rm{$\#$ of $2$-torsion points of $J(C)$} & $16$ & $4$ & $2$ & $1$  \\  \hline
\rm{Singularities of $J(C)/\la \iota \ra$} & $16$ $A_1$  & $4$ $D_4$  &  $2$ $D_8$   & $\raise0.2ex\hbox{\textcircled{\scriptsize{4}}}_{0,1}^1$   \\  \hline
\end{tabular}
\caption{Difference between $p\ne 2$ and $p=2$}
\label{Table1}
\end{table}

In characteristic 2, Bhosle \cite{B} studied pencils of quadrics in ${\bf P}^{2g+1}$ 
(For the recent progress on pencils of quadrics in ${\bf P}^{2g}$ we refer the reader to Dolgachev and Duncan \cite{DD}). 
She presented a canonical form of a pencil of quadrics in ${\bf P}^{2g+1}$, associated
a hyperelliptic curve $C$ of genus $g$ to the pencil in general case (that is, ordinary case for $g=2$) and showed that 
the Jacobian of $C$ is isomorphic to the variety of $(g-1)$-dimensional subspaces on the
base locus of the pencil.  Also Laszlo and Pauly \cite{LP}, \cite{LP2} studied the linear system $|2\Theta|$ and gave the equation of Kummer quartic surface
in ordinary case, Ducrohet \cite{Duc} in supersingular case and Duquesne \cite{Du} in arbitrary case. 

The main purpose of this paper is to present an analogue of the theory of Kummer surfaces 
and quadratic line complexes in characteristic 2.  We show that the stratification of the moduli space of
curves of genus 2 by the $2$-rank bijectively corresponds to the one by the canonical forms of pencils 
$\{\lambda G + \mu Q\}_{(\lambda ,\mu)\in {\bf P}^1}$
of quadrics.  Let $A, B$ be the associated alternating forms of $G, Q$, respectively.
Then there are three possibilities of the associated alternating forms
$\left(
    \begin{array}{cc}
       0 & \lambda A+\mu B \\
      ^t(\lambda A+ \mu B)& 0 \\
    \end{array}
  \right)$, 
where 
$A=
\left(
    \begin{array}{ccc}
       0 & 0 & 1 \\
      0 & 1 & 0 \\
      1 & 0 & 0 \\
    \end{array}
  \right)$
and $B$ is 
as in Table \ref{Table2}.  

\begin{table}[!htb]
\centering
\begin{tabular}{|>{\centering\arraybackslash}m{3cm}|>{\centering\arraybackslash}m{3cm}|>{\centering\arraybackslash}m{3cm}|>{\centering\arraybackslash}m{3cm}|}
\hline
{\rm $2$-rank of $J(C)$} &  $2$ & $1$ & $0$ \\ \hline
$B$ & 
$\left(
    \begin{array}{ccc}
       0 & 0 & a_1 \\
      0 & a_2 & 0 \\
      a_3 & 0 & 0 \\
    \end{array}
  \right)$
 & 
$\left(
    \begin{array}{ccc}
       0 & 0 & a_1 \\
      0 & a_2 & 0 \\
      a_2 & 1 & 0 \\
    \end{array}
  \right)$
  & 
$\left(
    \begin{array}{ccc}
       0 & 0 & a_1 \\
      0 & a_1 & 1 \\
      a_1 & 1 & 0 \\
    \end{array}
  \right)$  \\  \hline
\end{tabular}
\caption{Alternating forms}
\label{Table2}
\end{table}
\noindent
Here $a_i\ne a_j$ for $i\ne j$ (see Proposition \ref{CanAlt} and the equations (\ref{(a)}), (\ref{(b)}),
(\ref{(c)}) in Proposition \ref{CanFormPencilQuadrics}).

For each canonical form of a pencil of quadrics, we associate to quartic surfaces $S$ and $S^\vee$, 
an intersection of three quadrics $\Sigma$, a non-singular curve $C$ of genus 2 and its Jacobian $J(C)$ 
in terms of line geometry. 

We remark that in characteristic $p\ne 2$ a quadratic form is determined by the associated bilinear form.
Under the condition $G\cap Q$ being non-singular, the case of non-diagonalizable pairs $G, Q$ is excluded.
On the other hand, in the case $p=2$ a quadratic form is not determined by the associated alternating form.
This difference allows the possibility of the above three cases of alternating forms, 
and hence the exsistence of three types of curves of genus 2.

We now state the main theorems of this paper.  
We can take the following canonical forms of two quadrics $\{G, Q\}$ for ordinary case (a), $\{G, Q_1\}$ for
$p$-rank 1 case (b) and $\{G, Q_0\}$ for supersingular case (c) (see Proposition \ref{CanFormPencilQuadrics}):
\[
\left\{ \begin{array}{l}
G =G(2,4) : \sum_{i=1}^3 X_iY_i=0, \\
\\
{\rm (a)} \ \ Q : \sum_{i=1}^3 a_iX_iY_i + c_iX_i^2 + d_iY_i^2=0,\\
\\
{\rm (b)} \ \ Q_1 : \sum_{i=1}^3 \left(a_iX_iY_i + c_iX_i^2 + d_iY_i^2\right) + B =0 ,\\
\\
{\rm (c)} \ \ Q_0 :  \sum_{i=1}^3 \left(aX_iY_i + c_iX_i^2 + d_iY_i^2\right) + C = 0,\\
\end{array} \right.
\]
where $(X_1,X_2,X_3,Y_1,Y_2,Y_3)$ are homogeneous coordinates of ${\bf P}^5$,
\[
B = b_1X_2Y_3 + b_2X_3Y_2 + b_3X_2X_3 + b_4Y_2Y_3,
\]
\[
C= b_1X_2Y_3 + b_2X_3Y_2 + b_3X_2X_3 +b_4Y_2Y_3 + b_5X_1Y_2+b_6X_2Y_1+b_7X_1X_2+b_8Y_1Y_2,
\]
and $a_i, b_i, c_i, d_i, a$ are constants with $a_i\ne a_j (i\ne j)$ in case (a) and 
$a_1\ne a_2=a_3$ in case (b).
We denote by $\calX, \calX_1, \calX_0$ the intersection of two quadrics $G\cap Q, G\cap Q_1, G\cap Q_0$, 
respectively.
In this paper we assume that $\calX, \calX_1, \calX_0$ are non-singular.  We give the following necessary and sufficient condition
for being non-singular in Lemma \ref{3-fold}:

${\rm (a)}$ $\calX$ is non-singular iff $\prod_i c_id_i\ne 0$.

${\rm (b)}$ $\calX_1$ is non-singular iff 
$c_1d_1\ne 0$ and $b_2b_4c_2 + b_1b_4c_3 + b_1b_3d_2 + b_2b_3d_3 \ne 0$.

${\rm (c)}$ $\calX_0$ is non-singular iff 
$c_1(b_1b_8+b_4b_6)(b_2b_6+b_3b_8)+c_3(b_1b_5+b_4b_7)(b_1b_8+b_4b_6)
+d_1(b_1b_5+b_4b_7)(b_2b_7+b_3b_5)+d_3(b_2b_6+b_3b_8)(b_2b_7+b_3b_5) \ne 0.$

Now let $S, S_1$ or $S_0 \subset {\bf P}^3$ be the set of foci of singular lines of 
$\calX, \calX_1$ or $\calX_0$, respectively (see (\ref{KummerQuartic1}) for the definition), and
let $(x,y,z,t)$ be homogeneous coordinates of ${\bf P}^3$.

\begin{theorem}\label{KummerQuartic} \ 
{\rm (a)}\ The surface $S$ is given by the equation
$$(a_1+a_2)^2(c_3x^2y^2+d_3z^2t^2) + (a_1+a_3)^2(c_2x^2z^2+d_2y^2t^2)$$
$$+ (a_2+a_3)^2(c_1x^2t^2 + d_1y^2z^2)
+ (a_1+a_2)(a_2+a_3)(a_3+a_1)xyzt=0.
$$

\smallskip
{\rm (b)}\
The surface $S_1$ is given by the equation
$$b_3^2c_1x^4 + b_2^2d_1y^4 + b_1^2d_1z^4 + b_4^2c_1t^4$$ 
$$+(b_3^2d_2 + b_2^2c_2 + (a_1+a_2)^2c_3 + (a_1+a_2)b_2b_3)x^2y^2$$
$$+(b_3^2d_3 + b_1^2c_3 + (a_1+a_2)^2c_2 + (a_1+a_2)b_1b_3)x^2z^2$$
$$+ (b_2^2d_3 + b_4^2c_3 + (a_1+a_2)^2d_2 + (a_1+a_2)b_2b_4)y^2t^2$$
$$+ (b_1^2d_2 + b_4^2c_2 + (a_1+a_2)^2d_3 + (a_1+a_2)b_1b_4)z^2t^2$$
$$+ (a_1+a_2)^2(b_3x^2yz + b_2xy^2t + b_1xz^2t + b_4yzt^2) = 0.$$

\smallskip
{\rm (c)}\
The surface $S_0$ is given by the equation
$$(b_3^2c_1+b_7^2c_3)x^4 + (b_2^2d_1 +b_8^2c_3)y^4 + (b_1^2d_1+b_6^2d_3)z^4 + 
(b_4^2c_1+b_5^2d_3)t^4$$
$$+b_5(b_1b_5+b_4b_7)xt^3 + b_7(b_2b_7+b_3b_5)x^3t + b_2(b_2b_6 + b_3b_8)xy^3 
+ b_8(b_2b_6+b_3b_8)y^3z$$
$$+b_3(b_2b_7+b_3b_5)x^3y + b_4(b_1b_5+b_4b_7)zt^3 + b_6(b_1b_8 + b_4b_6)yz^3 
+ b_1(b_1b_8+b_4b_6)z^3t$$ 
$$+ (b_2^2c_2 + b_3^2d_2)x^2y^2
+(b_1^2c_3+b_3^2d_3 + b_6^2c_1+b_7^2d_1)x^2z^2+(b_5^2c_2+b_7^2d_2)x^2t^2$$
$$+(b_6^2d_2+b_8^2c_2)y^2z^2 + (b_2^2d_3 + b_4^2c_3 + b_5^2d_1 + b_8^2c_1)y^2t^2
+ (b_1^2d_2 + b_4^2c_2)z^2t^2$$
$$+ b_7(b_2b_6+b_3b_8)x^2yz + b_3(b_1b_5+b_4b_7)x^2zt + b_8(b_2b_7+b_3b_5)xy^2t + b_2(b_1b_8+b_4b_6)y^2zt$$
$$+b_1(b_2b_6+b_3b_8)xyz^2 + b_6(b_1b_5+b_4b_7)xz^2t + b_4(b_2b_7+b_3b_5)xyt^2
+ b_5(b_1b_8+b_4b_6)yzt^2=0.$$
\end{theorem}

We also give singularities of $S, S_1, S_0$ and tropes on them explicitly in Theorems \ref{KummerQuarticOrdinary}, \ref{KumQuarticRank1}, \ref{ssKummerQ}.

Next let $\Sigma, \Sigma_1$ or $\Sigma_0$ be the set of singular lines of 
$\calX, \calX_1$ or $\calX_0$, respectively (see (\ref{Kummer}) for the definition).

\begin{theorem}\label{Int3quadrics} \ 
{\rm (a)}\quad 
$\Sigma = \calX \cap \left\{\sum_{i=1}^3 a_i^2X_iY_i=0\right\}$.

\smallskip

{\rm (b)}\quad
$\Sigma_1= \calX_1 \cap \left\{\sum_{i=1}^3 a_i^2X_iY_i + b_1b_3X_2^2 + b_2b_3X_3^2+ b_2b_4Y_2^2+b_1b_4Y_3^2=0\right\}$.

\smallskip

{\rm (c)}\quad  
$\Sigma_0= \calX_0 \cap Q_0'$,

\smallskip
\noindent
where $Q_0'$ is the qudratic defined by the equation 
\[
a^2\sum_{i=1}^3X_iY_i + b_5b_7X_1^2 + (b_1b_3+b_6b_7)X_2^2 + b_2b_3X_3^2+ b_6b_8Y_1^2+(b_2b_4+b_5b_8)Y_2^2+b_1b_4Y_3^2\]
\[
+(b_1b_5+b_4b_7)X_1Y_3 + (b_2b_6+b_3b_8)X_3Y_1+(b_2b_7+b_3b_5)X_1X_3+(b_1b_8+b_4b_6)Y_1Y_3 =0.
\]
\end{theorem}

We give singularities of $\Sigma, \Sigma_1, \Sigma_0$ and lines on them explicitly in Theorems \ref{Int3quadricsOrdinary}, \ref{Int3quadricsP-rank1}, \ref{Int3quadricsSupersingular}.

Finally let $C, C_1$ or $C_0$ be a curve associated with 
$\calX, \calX_1$ or $\calX_0$, respectively (see (\ref{curveOFgenus2}) for the definition).

\begin{theorem}\label{Curve} \   The curves $C, C_1, C_0$ are
non-singular and of genus 2.  Moreover, 

{\rm (a)} \ the curve $C$ is given by
\[
 z^2 + (t + a_1)(t + a_2) (t + a_3)z
 \]
 \[
 = c_1d_1(t + a_2)^2(t + a_3)^2 + 
 c_2d_2(t + a_1)^2(t + a_3)^2 + c_3d_3(t + a_1)^2(t + a_2)^2.
\]

\smallskip
{\rm (b)} \ The curve $C_1$ is given by
$$
\begin{array}{l}
z^2 +(t + a_1)(t + a_2)^2z = (t +a_2)^4c_1d_1\\
 +(t+a_1)^2(t+a_2)^2\left(b_1b_2 + c_2d_2+c_3d_3 + b_1\sqrt{c_3d_2} + b_2\sqrt{c_2d_3} + b_3\sqrt{d_2d_3} + b_4\sqrt{c_2c_3}\right)\\
+(t + a_1)^2(t + a_2)\{b_3(b_1d_2 + b_2d_3) + b_4(b_2c_2 + b_1c_3)\}.
\end{array}
$$

\smallskip
{\rm (c)} \ The curve $C_0$ is given by
$$
\begin{array}{l}
 z^2 + (t + a)^3z  \\
 = (t + a)^4(c_1d_1 + c_2d_2 + c_3d_3 + b_1b_2 + b_5b_6)\\
 + (t + a)^3\{b_6b_8c_1 +(b_2b_4+b_5b_8)c_2 + b_1b_4c_3 +b_5b_7d_1 + (b_1b_3 +b_6b_7)d_2 +b_2b_3d_3\}\\
+ (t+a)^2\{b_8^2c_1c_2 +b_6^2c_1d_2 +b_5^2c_2d_1 +b_7^2d_1d_2 \\
+b_4^2c_2c_3 +b_2^2c_2d_3 +b_1^2c_3d_2 +b_3^2d_2d_3 + b_1b_3b_5b_8 +b_2b_4b_6b_7\}\\
+(t+a)\{(b_1b_3b_8^2 +b_2b_4b_6^2)c_1 +(b_1b_3b_5^2 +b_2b_4b_7^2)d_1 \\
+(b_1^2b_5b_8 +b_4^2b_6b_7)c_3 +(b_3^2b_5b_8 +b_2^2b_6b_7)d_3\}\\
+ (b_1b_8 +b_4b_6)^2c_1c_3 +(b_2b_6+b_3b_8)^2c_1d_3 +(b_1b_5 + b_4 b_7)^2c_3d_1 + (b_2b_7+b_3b_5)^2d_1d_3 .
\end{array}
$$
\end{theorem}
We also give Igusa's normal forms of $C, C_1, C_0$ in Remarks \ref{CurveOrdinary2}, \ref{CurveRank12}, \ref{CurveSupersingularIgusa}, respectively  (see (\ref{g=2}) for Igusa's normal forms).

Combining Theorems \ref{KummerQuartic}, \ref{Int3quadrics}, \ref{Curve} and a general theory of quadratic
line complexes (Propositions \ref{Basic}, \ref{Basic2}), we have the following Corollary.

\begin{corollary}\label{KummerQuarticCor}
The surface $S, S_1$ or $S_0$ is the Kummer quartic surface associated with the curve $C, C_1$ or $C_0$,
respectively, and $\Sigma, \Sigma_1$ or $\Sigma_0$ is a partial resolution of singularities of $S, S_1$ or
$S_0$, respectively.
\end{corollary}

The plan of this paper is as follows. 
In Section \ref{sec2}, we recall the classical theory of quadratic line complexes and Kummer surfaces,
the theory of quadratic forms in characteristic 2 and their pencils, and the theory of curves of genus 2 
and Kummer surfaces in characteristic 2.  We study singularities of 
quadratic line complexes in Section \ref{sec3-0},
quartic surfaces associated with quadratic line complexes in Section \ref{sec4},
the intersections of three quadrics in ${\bf P}^5$ in Section \ref{sec3}
and the curves of genus 2 in Section \ref{sec5}.  Finally in Section \ref{sec6} we
discuss the canonical map $|4\Theta - 2\sum_{i=1}^4 p_i|$ from the Jacobian of an ordinary curve of genus 2 
to the intersection of three quadrics, where $p_1,\ldots, p_4$ are 2-torsion points on the Jacobian.  
This is also an analogue of the map $|4\Theta - \sum_{i=1}^{16} p_i|$ mentioned above.

\smallskip

Very recently, Dolgachev \cite{Do24} has found an analogue of Kummer surfaces in characteristic two.
This is another direction, that is, $K3$ surfaces are supersingular and contain a 
$(16)_6$-configuration, but a curve of genus 2 does not appear there.   
On the other hand, in our case, $K3$ surfaces are not supersingular and contain no $(16)_6$-configurations.

\subsection*{Acknowlegement}
The authors are grateful to the anonymous referee for valuable comments.

\section{Preliminaries}\label{sec2}

In this section, we recall the classical theory of quadratic line complexes and Kummer surfaces in \ref{ClassicalQLC}.  Next we recall 
the theory of quadratic forms in characteristic 2 in \ref{QuadForm} and introduce a canonical forms of
pencils of quadrics in ${\bf P}^5$ in \ref{Pencils}.  Finally 
in \ref{Char2}, we recall the theory of curves of genus 2 and Kummer surfaces in characteristic 2.

\subsection{Classical theory of Quadratic line complexes and Kummer surfaces}\label{ClassicalQLC}

Let $k$ be an algebraically closed field of characteristic char$(k) = p \geq 0$ (when we assume $p\not= 2$,
we will mention it).
In the following we recall the classical theory of quadratic line complexes over the base field $k$.  
The main references are Griffiths and Harris \cite[Chapter 6]{GH}, Cassels and Flynn \cite[Chapter 17]{CF}.

Let $G=G(2,4)$ be the Grassmannian variety of lines in ${\bf P}^3$ which can be embedded into 
${\bf P}^5$ as a non-singular quadric hypersurface (Pl${\rm \ddot{u}}$cker embedding).
For a point 
$\mathfrak{p}$ and a hyperplane $\mathfrak{h}$ in ${\bf P}^3$, let 
$$
\sigma(\mathfrak{p}) = \{ \mathfrak{l} \in G : \ \mathfrak{p} \in \mathfrak{l}\}, \ \sigma(\mathfrak{h}) = \{ \mathfrak{l} \in G :\ \mathfrak{l} \subset \mathfrak{h}\},\ \sigma(\mathfrak{p},\mathfrak{h}) = \{ \mathfrak{l} \in G : \  \mathfrak{p} \in \mathfrak{l} \subset \mathfrak{h} \}
$$
be the Schubert varieties.  Both $\sigma(\mathfrak{p})$ and $\sigma(\mathfrak{h})$ are planes and 
$\sigma(\mathfrak{p},\mathfrak{h})$ is a line in ${\bf P}^5$.  
Conversely any plane in $G$ is of the form of either 
$\sigma(\mathfrak{p})$ or $\sigma(\mathfrak{h})$ for some
$\mathfrak{p}, \mathfrak{h}$ and any line in $G$ is of the form $\sigma(\mathfrak{p},\mathfrak{h})$.   
Any non-singular quadric in ${\bf P}^5$ contains two 3-dimensional irreducible families of planes 
and in case of $G$ they are nothing but $\{\sigma(\mathfrak{p})\}_{\mathfrak{p} \in {\bf P}^3}$ and 
$\{\sigma(\mathfrak{h})\}_{\mathfrak{h}\subset {\bf P}^3}$ 
(see Proposition \ref{maximalSubspaces} for $p= 2$).  

Let $Q$ be a quadric hypersurface in ${\bf P}^5$ such that $Q\cap G$ is non-singular.  
The variety $\calX=Q\cap G$ was classically called {\it a quadratic line complex} which parameterizes 
lines in ${\bf P}^3$.  
The condition $\calX$ being non-singular implies that
$\calX$ does not contain any plane  (see Lemma \ref{two-quadrics}) and hence 
the intersection $\sigma(\mathfrak{p}) \cap Q$ is a conic in the plane $\sigma(\mathfrak{p})$.
Define
\begin{equation}\label{KummerQuartic1}
S = \{ \mathfrak{p} \in {\bf P}^3 : \ \sigma(\mathfrak{p}) \cap Q \ {\rm  is\ a\ singular\ conic}\},
\end{equation}
\begin{equation}\label{KummerQuartic2}
R=\{\mathfrak{p} \in S : \ \sigma(\mathfrak{p}) \cap Q \ {\rm  is\ a\ double\ line}\}.
\end{equation}  
Similarly we define the dual version:
\begin{equation}\label{DualKummer}
S^\vee = \{ \mathfrak{h} \in ({\bf P}^3)^\vee : \ \sigma(\mathfrak{h}) \cap Q \ {\rm  is\ a\ singular\ conic}\},
\end{equation}
\begin{equation}\label{DualKummer2}
R^\vee = \{ \mathfrak{h} \in S^\vee : \ \sigma(\mathfrak{h}) \cap Q \ {\rm  is\ a\ double\ line}\}.
\end{equation}
For $x \in \calX$, we denote by $\mathfrak{l}_x$ the line in ${\bf P}^3$ corresponding to $x$.
A line $\mathfrak{l}_x$ is called {\it singular} if $x$ is a singular point of the conic 
$\sigma(\mathfrak{p})\cap Q$
(resp. $\sigma(\mathfrak{h})\cap Q$) for some $\mathfrak{p}$ (resp. $\mathfrak{h}$).  
The point $\mathfrak{p}$ (resp. the plane $\mathfrak{h}$) is uniquely 
determined by $x$ and is called the {\it focus} (resp. the {\it plane}) of $\mathfrak{l}_x$.
Let 
\begin{equation}\label{Kummer}
\Sigma =\{ x \in \calX :\ \mathfrak{l}_x\ {\rm  is\ a\ singular\ line }\}.
\end{equation}

\begin{prop}\label{singularlines}{\rm (Griffiths and Harris \cite[p.767]{GH}, Cassels and Flynn \cite[Lemma 17.2.1]{CF})}
Let $x \in \calX$.  
Then $x \in \Sigma$ if and only if the tangent space $T_x(Q) \subset {\bf P}^5$ of $Q$ at $x$ is tangent to
$G$ at some point.
\end{prop}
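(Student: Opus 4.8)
The plan is to reduce both conditions in the statement to the \emph{same} condition on linear subspaces, namely that the hyperplane $T_x(Q)$ contains a plane lying on $G$, and then to exploit the geometry of the two families of planes on the smooth quadric $G$.

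First I would record a tangent-space criterion for a singular line. Since $\calX$ is non-singular, no point $x\in\calX$ can be a singular point of $Q$: if the differential of the defining form of $Q$ vanished at $x$, then the projective tangent space of $\calX=Q\cap G$ at $x$ would coincide with $T_x(G)$, a ${\bf P}^4$, and would thus be four-dimensional, contradicting $\dim\calX=3$. Hence $T_x(Q)$ is a genuine hyperplane. Now let $\Pi$ be any Schubert plane through $x$, i.e. $\Pi=\sigma(p)$ or $\Pi=\sigma(h)$; by Lemma \ref{two-quadrics} such a $\Pi$ is not contained in $Q$, so $\Pi\cap Q$ is a genuine conic. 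Because $\Pi$ is linear, the embedded tangent space of this conic at $x$ equals $\Pi\cap T_x(Q)$, and therefore $x$ is a singular point of $\Pi\cap Q$ if and only if $\Pi\subseteq T_x(Q)$. Combining the two families, $x\in\Sigma$ if and only if some Schubert plane through $x$ is contained in $T_x(Q)$.

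For the forward implication, suppose $x\in\Sigma$, so a Schubert plane $\Pi$ satisfies $\Pi\subseteq T_x(Q)$. Since $\Pi\subset G$, the hyperplane section $T_x(Q)\cap G$ contains the plane $\Pi$. A smooth hyperplane section of $G$ is a smooth three-dimensional quadric, whose maximal linear subspaces are lines and which therefore contains no plane; hence $T_x(Q)\cap G$ must be singular, that is, $T_x(Q)$ is tangent to $G$. For the converse, assume $T_x(Q)=T_y(G)$ is tangent to $G$ at a point $y$. The geometric input is that $T_y(G)\cap G$ is a rank-four quadric cone with vertex $y$, swept out by two one-parameter families of planes through $y$, which are precisely the $\alpha$-planes $\{\sigma(p):p\in\ell_y\}$ and the $\beta$-planes $\{\sigma(h):\ell_y\subset h\}$. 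As $x\in\calX\subset G$ and $x\in T_x(Q)=T_y(G)$, the point $x$ lies on this cone, hence on some ruling plane $\Pi$; then $\Pi\subseteq T_y(G)=T_x(Q)$ and $x\in\Pi\cap Q$, so by the criterion $x\in\Sigma$.

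The step requiring the most care is the converse, particularly in characteristic $2$: I must justify that the polarity of $G$ and the two rulings of the tangent cone behave exactly as in the classical case, and identify these rulings with the Schubert families $\{\sigma(p)\}$ and $\{\sigma(h)\}$ from the Pl\"ucker coordinates. This is the only place where the characteristic could a priori intervene, but it is controlled by the fact that the Pl\"ucker quadric $G$ has a non-degenerate alternating polar form even in characteristic $2$ (unlike the form $\sum X_i^2$), so that $y\mapsto T_y(G)$ is still an isomorphism onto the dual quadric and the tangent cone at $y$ is genuinely a cone over a smooth quadric surface with its two rulings.
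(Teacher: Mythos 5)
Your proof is correct. Note that the paper itself gives no proof of this proposition: it is quoted from Griffiths--Harris and Cassels--Flynn, whose argument is essentially the one you reconstruct (reduce ``$x$ is a singular point of the conic $\Pi\cap Q$'' to ``$\Pi\subseteq T_x(Q)$'', then use that a hyperplane section of $G$ contains a plane iff it is a tangent hyperplane section, via the cone structure of $T_y(G)\cap G$ and the fact that every plane in $G$ is a Schubert plane). So there is no divergence of method to report. What your write-up adds, and what the paper leaves implicit by citing characteristic-$0$ sources while using the result in characteristic $2$, is the verification that each step survives in characteristic $2$: the key points are exactly the ones you flag --- $T_x(Q)$ is a genuine hyperplane because $\calX$ is smooth, the Pl\"ucker quadric is non-defective so $y\mapsto T_y(G)$ is still a linear isomorphism and ``singular hyperplane section'' is still equivalent to ``tangent hyperplane'', and the tangent cone is still a cone over a smooth quadric surface with its two rulings of Schubert planes. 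One tiny point you could make explicit: in the converse, if $x$ happens to equal the tangency point $y$, every Schubert plane through $x$ lies in $T_x(G)=T_x(Q)$, so the conclusion still holds; your phrasing covers this since the vertex lies on every ruling plane.
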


There exist canonical morphisms 
$$\pi: \Sigma \to S, \quad \pi^\vee: \Sigma \to S^\vee$$
by sending $x$ to the focus or the plane of $\mathfrak{l}_x$.  
Note that when $\sigma(\mathfrak{p})\cap Q$ is a double line,
it sends to the point $\mathfrak{p}$.

As mentioned above, a non-singular quadric in 
${\bf P}^5$ contains two irreducible families of planes and a singular quadric of rank 5
contains an irreducible family of planes for $p\ne 2$. 
The pencil of quadrics $\{t_0G+t_1Q\}_{(t_0:t_1)\in {\bf P}^1}$
defines a curve $C$ as follows.  Let $G(3,6)$ be the Grassmannian of planes in ${\bf P}^5$ and we consider
\[
Z=\{ ((t_0:t_1), P)\in {\bf P}^1 \times G(3,6) \ : \ P \subset t_0G+t_1Q\}.
\]
A general fiber of the first projection morphism $pr : Z \to {\bf P}^1$ has two connected components. 
Then, we define 
\begin{equation}\label{curveOFgenus2}
C = {\rm Spec}(pr_*\calO_Z)
\end{equation}
which is a double covering of ${\bf P}^1$,  
parameterizing irreducible components of families of planes contained in 
$t_0G+t_1Q$ $((t_0:t_1)\in {\bf P}^1)$.  For $p= 2$, see Proposition \ref{maximalSubspaces}.
\begin{remark}\label{KummerChar0}
In case $p\ne 2$, by the assumption $\calX=G\cap Q$ being non-singular,  
we can diagonalize $G$ and $Q$ simultaneously as in the equation (\ref{diagonal}) (cf. Klingenberg \cite[Satz 1]{Kli}).
Moreover it is known that $S$ is a quartic surface with sixteen nodes and is called a {\it Kummer quartic surface} and $S^\vee$ is isomorphic to $S$.  The surface $\Sigma$ is non-singular and hence is a $K3$ surface.
The both morphisms $\pi$ and $\pi^\vee$ are the minimal resolutions of singularities. In this case
the curve $C$ is given by the equation (\ref{genus2}).
\end{remark}

Let $A$ be the {\it variety of lines} in  $\calX$.  
For each $L\in A$, there exist a point $\mathfrak{p}_L$ and a plane $\mathfrak{h}_L$ with 
$L=\sigma(\mathfrak{p}_L,\mathfrak{h}_L)$.
Thus we have morphisms 
$$\varphi: A \to S, \quad \varphi^\vee: A \to S^\vee$$
defined by
$\varphi(L) = \mathfrak{p}_L, \ \varphi^\vee(L) = \mathfrak{h}_L$
when $L = \sigma(\mathfrak{p}_L, \mathfrak{h}_L).$  
Note that the conic $\sigma(\mathfrak{p}_L) \cap Q$ splits into two lines, that is, one is 
$\sigma(\mathfrak{p}_L,\mathfrak{h}_L)$ and
another is $\sigma(\mathfrak{p}_L, \mathfrak{h}')$ for some plane $\mathfrak{h}'$ 
containing $\mathfrak{p}_L$.  Similarly $\sigma(\mathfrak{h}_L)\cap Q 
= \sigma(\mathfrak{p}_L,\mathfrak{h}_L) + \sigma(\mathfrak{p}', \mathfrak{h}_L)$ 
for some point $\mathfrak{p}' \in \mathfrak{h}_L$.
Thus both morphisms $\varphi, \varphi^\vee$ have degree 2 branched at each point of $R$ and $R^\vee$.   

We will show that $A$ is isomorphic to the Jacobian $J(C)$ of $C$ and the maps $\varphi, \varphi^\vee$ are
the quotient map by inversions of $A$.  The following argument is given by 
Cassels and Flynn \cite[Chap. 17, \S 1]{CF}.
We denote by $\mathfrak{a}=(t_0,t_1, \mu)$ a point of $C$ over a point $(t_0:t_1)\in {\bf P}^1$, and
by $\bar{\mathfrak{a}}$ another point of $C$ over $(t_0:t_1)\in {\bf P}^1$, where $\mu$ denotes an irreducible family of planes in the quadric $t_0G+t_1Q$.
For $\mathfrak{a}\in C$ and $L\in A$, we define a line $\Upsilon(\mathfrak{a})L\in A$ as follows.
There exists a unique plane $\Pi$ in the family of planes in $t_0G+t_1Q$ corresponding to $\mu$ which
contains $L$ (Lemma \ref{two-quadrics}).  
Then the conic $\Pi\cap \calX$ in $\Pi$ splits into two lines $L + M$. We now 
define $\Upsilon(\mathfrak{a})L=M$.
Thus, if we denote by $A_L$ the set of lines $\{\Upsilon(\mathfrak{a})L\}_{\mathfrak{a}\in C}$, then
$A_L$ is the set of lines meeting with $L$ and is birational to $C$.

Now, for an effective divisor $\mathfrak{a}_1+\mathfrak{a}_2$ on $C$ and $L\in A$, define
$$L'= \Upsilon(\bar{\mathfrak{a}}_1)\Upsilon(\mathfrak{a}_2)L.$$
We remark that if $\mathfrak{a}_1+\mathfrak{a}_2$ is general, then $\Upsilon(\mathfrak{a}_1)L \cap \Upsilon(\mathfrak{a}_2)L=\emptyset$, $L'\cap L=\emptyset$ and 
$L'$ is the unique line meeting with $\Upsilon(\mathfrak{a}_1)L$ and
$\Upsilon(\mathfrak{a}_2)L$ (Cassels and Flynn \cite[Lemma 17.1.5]{CF}).  Moreover, if we denote by  
$\Lambda$ the 3-dimensional  linear space spanned by $L$, $\Upsilon(\mathfrak{a}_1)L$, 
$\Upsilon(\mathfrak{a}_2)L$, then
\begin{equation}\label{3-space}
\Lambda \cap \calX = L + \Upsilon(\mathfrak{a}_1)L+\Upsilon(\mathfrak{a}_2)L+L'.
\end{equation}
Finally note that for $\bar{\mathfrak{a}}+\mathfrak{a}$ we have 
$\Upsilon(\mathfrak{a})\Upsilon(\mathfrak{a})L = L$.
Thus $A$ has a natural structure of principal homogeneous space over $J(C)$.

\begin{prop}\label{Basic}
Let $k$ be an algebraically closed field in characteristic $p \geq 0$.  Assume that $C$ is a non-singular curve.
Then $A$ is isomorphic to the Jacobian $J(C)$ of $C$.  
\end{prop}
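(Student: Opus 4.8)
The plan is to make the principal-homogeneous-space structure explicit by producing a birational morphism $J(C)\to A$ and then promoting it to an isomorphism via the absence of rational curves on an abelian surface. Fix a line $L_0\in A$ and define $f\colon \mathrm{Sym}^2 C\to A$ by $\mathfrak{a}_1+\mathfrak{a}_2\mapsto \Upsilon(\bar{\mathfrak{a}}_1)\Upsilon(\mathfrak{a}_2)L_0$. The characterization in (\ref{3-space}) of $L'=\Upsilon(\bar{\mathfrak{a}}_1)\Upsilon(\mathfrak{a}_2)L_0$ as the unique line meeting both $\Upsilon(\mathfrak{a}_1)L_0$ and $\Upsilon(\mathfrak{a}_2)L_0$ is symmetric in $\mathfrak{a}_1,\mathfrak{a}_2$, and since $(\mathfrak{a},L)\mapsto\Upsilon(\mathfrak{a})L$ is a morphism, $f$ is a well-defined morphism on the symmetric square. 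The first key point is that $f$ contracts the canonical pencil: for every $\mathfrak{a}$ one has $\Upsilon(\bar{\mathfrak{a}})\Upsilon(\bar{\mathfrak{a}})L_0=L_0$, so $f$ sends the whole $g^1_2=\{\mathfrak{a}+\bar{\mathfrak{a}}\}\cong{\bf P}^1$ to the single point $L_0$.

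Because $C$ is non-singular of genus $2$, the Abel--Jacobi morphism $\mathrm{Sym}^2 C\to \mathrm{Pic}^2(C)\cong J(C)$ is exactly the contraction of this unique $g^1_2$ to a point, exhibiting $\mathrm{Sym}^2 C$ as the blow-up of $J(C)$ at one point. By the rigidity lemma a morphism that is constant on the contracted $(-1)$-curve factors through the contraction, so $f$ descends to a morphism $\bar f\colon J(C)\to A$.

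Next I would check that $\bar f$ is birational. It is dominant by the transitivity already noted: the curve $A_{L_0}=\{\Upsilon(\mathfrak{a})L_0\}_{\mathfrak{a}\in C}$ is birational to $C$, and composing two such operations fills a dense subset of $A$. For generic injectivity I invert the construction using (\ref{3-space}): for $L_0$ and $L'=\bar f([\mathfrak{a}_1+\mathfrak{a}_2])$ in general position the span $\Lambda=\langle L_0,L'\rangle$ is a ${\bf P}^3$ with $\Lambda\cap\calX=L_0+\Upsilon(\mathfrak{a}_1)L_0+\Upsilon(\mathfrak{a}_2)L_0+L'$, so the two intermediate lines are recovered as the residual components of this quadrilateral; since $\mathfrak{a}\mapsto\Upsilon(\mathfrak{a})L_0$ is birational onto $A_{L_0}$, this recovers $\{\mathfrak{a}_1,\mathfrak{a}_2\}$ and hence the class $[\mathfrak{a}_1+\mathfrak{a}_2]\in J(C)$. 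Thus $\bar f$ is generically one-to-one.

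Finally, $J(C)$ is an abelian surface and $A$ is a smooth projective surface (the variety of lines on the smooth quadric line complex $\calX$). A birational morphism from a smooth projective surface contracts only curves of negative self-intersection, whereas by adjunction with $K_{J(C)}=0$ every irreducible curve $E\subset J(C)$ satisfies $E^2=2g(E)-2\ge 0$, since $J(C)$ contains no rational curves. Hence $\bar f$ contracts nothing, is proper and quasi-finite, therefore finite, and being finite and birational onto the normal surface $A$ it is an isomorphism; this is the asserted $A\cong J(C)$, and it identifies the translation action of $J(C)$ with the operators $\Upsilon(\bar{\mathfrak{a}}_1)\Upsilon(\mathfrak{a}_2)$. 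I expect the main obstacle to be the generic-injectivity step: one must verify via (\ref{3-space}) that, for $L_0+L'$ general, the intersection $\Lambda\cap\calX$ is the reduced quadrilateral with $L_0$ and $L'$ as opposite sides, so that the two intermediate lines---and thence $\mathfrak{a}_1,\mathfrak{a}_2$---are unambiguously reconstructed; the non-singularity of $C$ enters both here and in identifying $\mathrm{Sym}^2 C\to J(C)$ with the blow-down of the $g^1_2$.
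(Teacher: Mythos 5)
Your argument is correct in substance, but it takes a genuinely different route from the paper. The paper offers essentially no separate proof of Proposition \ref{Basic}: it constructs the operators $\Upsilon(\mathfrak{a})$, records the identities $\Upsilon(\mathfrak{a})\Upsilon(\mathfrak{a})L=L$ and (\ref{3-space}) following Cassels and Flynn, asserts that $A$ thereby carries a natural structure of principal homogeneous space over $J(C)$, and then states the proposition, citing Bhosle's Theorem 4 for characteristic two; the isomorphism is then the standard triviality of a torsor under an abelian variety over an algebraically closed field. You instead trivialize the torsor by hand: fixing $L_0$, descending $\mathrm{Sym}^2C\to A$ through the contraction of the $g^1_2$, and promoting the resulting birational morphism $J(C)\to A$ to an isomorphism via the absence of curves of negative self-intersection on an abelian surface plus finite-birational-onto-normal. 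This buys self-containedness --- you only use the two identities actually established in the text, not an unverified scheme-theoretic simply transitive action --- and your inversion of (\ref{3-space}) is precisely Cassels--Flynn's Lemma 17.1.5, which the paper quotes. The one input you should make explicit (the paper also takes it for granted) is that $A$ is an irreducible, normal --- in fact smooth --- projective surface and that $(\mathfrak{a},L)\mapsto\Upsilon(\mathfrak{a})L$ is a morphism; both your dominance claim and the final step depend on this.
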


We remark that, by using the theory of vector bundles, Bhosle
studied pencils of quadrics in ${\bf P}^{2g+1}$ in characteristic 2.
She presented a canonical form of a pencil of quadrics in ${\bf P}^{2g+1}$, associated
a hyperelliptic curve $C$ of genus $g$ to a pencil for generic case and showed that 
the Jacobian of $C$ is isomorphic to the variety of $(g-1)$-dimensional subspaces on the
base locus of the pencil (Bhosle \cite[Theorem 4]{B}).

Let $\iota$ (resp. $\iota^\vee$) be the covering transformation of the morphism $\varphi: A \to S$ (resp. $\varphi^\vee : A \to S^\vee$).
Then $\iota$ (resp. $\iota^\vee$) has fixed points over $R$ (resp. $R^\vee$).  
The following Proposition is well known over the complex numbers (Griffiths and Harris \cite[p.780]{GH}).
 
\begin{prop}\label{Basic2}
Let $k$ be an algebraically closed field in characteristic $p \geq 0$.   Assume that $C$ is a non-singular curve.
The morphisms $\varphi : A \to S$ and $\varphi^\vee: A \to S^\vee$ are the quotient maps 
by the inversion.  In particular, $S$ and $S^\vee$ are isomorphic.
\end{prop}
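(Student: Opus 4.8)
The plan is to show that the covering transformation $\iota$ of $\varphi$ (and likewise $\iota^\vee$ of $\varphi^\vee$) is one of the involutions $\Upsilon(\mathfrak{a})$ constructed above, then to prove that every such $\Upsilon(\mathfrak{a})$ is an inversion of the abelian surface $A\cong J(C)$ (Proposition \ref{Basic}), and finally to produce a translation conjugating $\iota$ to $\iota^\vee$. Throughout I write $T_t$ for the translation of $A$ by $t\in J(C)$ coming from the principal homogeneous space structure. First I would identify the deck transformations. Let $(1:0)\in{\bf P}^1$ be the point of the pencil with $t_0G+t_1Q=G$, and let $\mathfrak{a}_0,\bar{\mathfrak{a}}_0$ be the two points of $C$ above it, corresponding to the two families of planes $\{\sigma(p)\}_{p}$ and $\{\sigma(h)\}_{h}$ in $G$; that these are indeed the maximal families in the member $t_1=0$ of the pencil, also when $p=2$, is Proposition \ref{maximalSubspaces}. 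For $L=\sigma(p_L,h_L)\in A$, the unique plane of the family $\{\sigma(p)\}$ containing $L$ is $\Pi=\sigma(p_L)$, and since $\sigma(p_L)\subset G$ we get $\Pi\cap\calX=\sigma(p_L)\cap Q=L+M$, where $M=\sigma(p_L,h')$ is precisely the second line of the fibre $\varphi^{-1}(p_L)$. Hence $\Upsilon(\mathfrak{a}_0)L=M=\iota(L)$, that is $\iota=\Upsilon(\mathfrak{a}_0)$; symmetrically $\iota^\vee=\Upsilon(\bar{\mathfrak{a}}_0)$.

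Next I would prove that each $\Upsilon(\mathfrak{a})$ is an inversion. Two facts from the torsor construction suffice: every $\Upsilon(\mathfrak{a})$ is an involution, and for any two points $\mathfrak{a},\mathfrak{b}$ of $C$ the composite $\Upsilon(\mathfrak{a})\Upsilon(\mathfrak{b})$ equals the translation $T_{[\mathfrak{b}-\mathfrak{a}]}$ (this is the definition of the $J(C)$-action, rewritten using $\bar{\mathfrak{a}}+\mathfrak{a}\sim K$). Writing $t=[\mathfrak{b}-\mathfrak{a}]$, so that $T_t=\Upsilon(\mathfrak{a})\Upsilon(\mathfrak{b})$, one computes purely formally
$$\Upsilon(\mathfrak{a}_0)\,T_t\,\Upsilon(\mathfrak{a}_0)=\bigl(\Upsilon(\mathfrak{a}_0)\Upsilon(\mathfrak{a})\bigr)\bigl(\Upsilon(\mathfrak{b})\Upsilon(\mathfrak{a}_0)\bigr)=T_{[\mathfrak{a}-\mathfrak{a}_0]}\,T_{[\mathfrak{a}_0-\mathfrak{b}]}=T_{-t}.$$
Thus $\Upsilon(\mathfrak{a}_0)$ normalizes the group of translations, acting on it by $-1$. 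Fixing any point of $A$ as origin to identify $A$ with $J(C)$, a bijection $\sigma$ with $\sigma T_t\sigma^{-1}=T_{-t}$ for all $t$ necessarily has the form $\sigma(x)=-x+c$; hence $\iota$ is an inversion and $\varphi$ is the quotient map by it, and the same argument applies to $\varphi^\vee$ and $\iota^\vee$.

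Finally I would compare $S$ and $S^\vee$. By Step 1 and the composition law, $\iota^\vee\iota=\Upsilon(\bar{\mathfrak{a}}_0)\Upsilon(\mathfrak{a}_0)=T_\tau$ with $\tau=[\mathfrak{a}_0-\bar{\mathfrak{a}}_0]$, so $\iota^\vee=T_\tau\iota$. Because $A$ is an abelian surface, multiplication by $2$ is an isogeny and therefore surjective even in characteristic $2$, so there is $s\in J(C)$ with $2s=\tau$. Using $\iota T_{-s}=T_s\iota$ from Step 2, we obtain $T_s\,\iota\,T_s^{-1}=T_{2s}\iota=T_\tau\iota=\iota^\vee$, that is, the translation $T_s\in\Aut(A)$ conjugates $\iota$ into $\iota^\vee$. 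Consequently it descends to an isomorphism $S=A/\langle\iota\rangle\xrightarrow{\ \sim\ }A/\langle\iota^\vee\rangle=S^\vee$.

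The step I expect to be the main obstacle is the identification $\iota=\Upsilon(\mathfrak{a}_0)$ in Step 1: it rests on knowing that the two maximal families of planes in the smooth quadric $G$, regarded as the member $t_1=0$ of the pencil, are exactly the Schubert families $\{\sigma(p)\}$ and $\{\sigma(h)\}$, and that these account for the two points of $C$ over $(1:0)$ — a statement requiring Proposition \ref{maximalSubspaces} in characteristic $2$, where a quadric is no longer governed by its associated alternating form alone. The remaining steps are formal consequences of the torsor structure and the surjectivity of multiplication by $2$, and are uniform in the characteristic.
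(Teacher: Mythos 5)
Your proof is correct, but it takes a genuinely different route from the paper's. The paper argues via fixed points: citing Katsura's criterion that an involution of an abelian surface with an isolated fixed point and no fixed curve must be an inversion, it rules out a fixed curve by observing that such a curve would force $\Sigma$ to be singular along a curve (contradicting the later explicit computations showing $\Sigma$, $\Sigma_1$, $\Sigma_0$ have only isolated singularities), and it guarantees a fixed point from the fact that $S$ is always singular (again by forward reference to the explicit equations); notably, the paper's proof does not spell out the final claim that $S\cong S^\vee$. Your argument instead exploits the torsor structure set up before Proposition \ref{Basic}: you identify $\iota$ and $\iota^\vee$ with $\Upsilon(\mathfrak{a}_0)$ and $\Upsilon(\bar{\mathfrak{a}}_0)$ for the two points of $C$ over the member $G$ of the pencil, use the relation $\Upsilon(\mathfrak{a})\Upsilon(\mathfrak{b})=T_{[\mathfrak{b}-\mathfrak{a}]}$ (which is exactly the paper's definition of the $J(C)$-action rewritten via $\mathfrak{a}+\bar{\mathfrak{a}}\sim K$, and whose conjugation consequence extends from differences of points to all of $J(C)$ since such differences generate it) to conclude $\iota T_t\iota^{-1}=T_{-t}$, hence $\iota(x)=-x+c$, and then obtain $S\cong S^\vee$ by conjugating with $T_s$, $2s=\tau$. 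What your approach buys: it is uniform in the characteristic, avoids the forward references to Sections \ref{sec4}--\ref{sec3} and the external fixed-point lemma, and actually supplies the omitted proof of the ``in particular'' clause. What it costs: it leans entirely on the well-definedness of the $J(C)$-action (that $\Upsilon(\bar{\mathfrak{a}}_1)\Upsilon(\mathfrak{a}_2)L$ depends only on the divisor class), which the paper asserts following Cassels--Flynn and which in characteristic $2$ requires the same care as Proposition \ref{Basic} itself; you correctly flag the other delicate point, namely that Proposition \ref{maximalSubspaces} is what guarantees the two Schubert families account for the two points of $C$ over $G$ when $p=2$.
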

\begin{proof}  
Let $\iota$ be the covering transformation of $\varphi$.  To prove that $\iota$ is an inversion of $A$, 
it is enough to show that $\iota$ has an isolated fixed point and no fixed curves 
(Katsura \cite[Lemma 3.5]{Ka2}).  Assume that $\iota$ has a fixed curve $D$.  Then, by definition of the map 
$\pi: \Sigma \to S$, $\pi^{-1}(\varphi(p)) \subset \Sigma$ is the line corresponding to $p$ for any 
$p\in D$. Thus $\pi^{-1}(\varphi(D))$ has dimension 2.  This implies that $\Sigma$ is not irreducible 
and  has singularities along a curve.  On the other hand, $\Sigma$ is non-singular in charcteristic 
$p\ne 2$.  In case $p=2$, we will show that $\Sigma$ is singular, but only isolated singularities 
(Theorems \ref{Int3quadricsOrdinary}, \ref{Int3quadricsP-rank1}, \ref{Int3quadricsSupersingular}).  
Thus $\iota$ has at most isolated fixed points.  
Assume now that $\iota$ has no fixed points.  Then $S$ is non-singular.  On the other hand, 
in characteristic $p\ne 2$, $S$ has 16 nodes.  In case $p=2$, we will show that $S$ has always 
singulatiries (Theorems \ref{KummerQuarticOrdinary}, \ref{KumQuarticRank1}, \ref{ssKummerQ}).
\end{proof}

It is known that $S^\vee$ is the projective dual of $S$ (cf. Cassels and Flynn \cite[p.181]{CF}).

\begin{remark}
Let $\Theta$ be the theta divisor of $A$.  Then, over the complex numbers, it is well known that 
the morphism $\varphi : A\to S$ is defined by the complete linear system $|2\Theta|$, and  the linear system
$|4\Theta - \sum_{i=1}^{16} p_i|$ also defines a rational map
$$\psi : A \to \Sigma$$
of degree 2, 
where $\{p_i\}$ is the set of 2-torsion points on $A$ (Griffiths and Harris \cite[p.786]{GH}).
In case $char (k)=2$, Laszlo and Pauly \cite[Proposition 4.1]{LP} 
studied the map defined by $|2\Theta|$ and found the equation of the Kummer quartic surface for
ordinary case, Ducrohet \cite{Duc} for supersingular case and Duquesne \cite{Du} for arbitrary case.
\end{remark}

\subsection{Quadratic forms in characteristic two}\label{QuadForm}

We recall fundamental facts on quadratic forms in characteristic two in 
Dieudonn\'e \cite{Die} and Bhosle \cite{B}.

Let $k$ be a field of any characteristic and 
let $E$ be a vector space over $k$.
A {\it quadratic form} $q$ on $E$ is a function $q: E \to k$ satisfying 
\begin{equation}
q(ax + by) = a^2q(x) + b^2q(y) + abA(x,y), \quad a, b \in k,
\end{equation}
where $A$ is a symmetric bilinear form on $E$.
Note that if $char(k)=2$, $A(x,x)=0$ for all $x\in E$ and hence $A$ is {\it alternating}.

In the following, we assume that $k$ is an algebraically closed field of characteristic two.
A subspace $V$ of $E$ is called {\it totally singular} if $q(x)=0$ for all $x\in V$.
A totally singular subspace is {\it totally isotropic}, that is, $A(x,y)=0$ for all $x, y\in V$.
The converse is not true.  The {\it index} $\nu$ of $q$ is defined as the dimension of a maximal
totally singular subspace.  Two totally singular subspaces of the same dimension are
equivalent under the action of the orthogonal group of $E$.

A quadratic form is called {\it non-defective} if the alternating form $A$ is non-degenerate and
{\it defective} if $A$ is degenerate.  
For a defective quadratic form $q$ we define the {\it radical} $N$ of $A$ by
$N=\{ x \in E : \ A(x,y)=0 \ {\rm for \ all} \ y \in E\}$.
The dimension of $N$ is called the {\it defect} of $q$.

\begin{prop}\label{quadraticForm}{\rm (Bhosle \cite[Lemma 2.5]{B})}
Let $q$ be a quadratic form on $k^{2m}$ with the associated alternating form $A$.

{\rm (1)} \ Assume that $q$ is non-defective.  
Let $W$ be a maximal totally singular subspace of $k^{2m}$ which is of dimension $m$ 
and let $e_1,\ldots, e_{m}$ be a basis of $W$.
Then there exists a basis $e_1,\ldots, e_m, f_1,\ldots, f_m$ of $k^{2m}$ such that
$f_1,\ldots, f_m$ span a totally singular subspace for $q$ and $A(e_i, f_j)=\delta_{ij}$
for $i,j=1,\ldots, m$.

{\rm (2)} Assume that the defect of $q$ is two and the radical $N$ contains a unique totally 
singular subspace $N_0$ of dimension one.  Let $W$ be a maximal totally singular subspace 
of $k^{2m}$.  Then there exists a basis $e_1,\ldots, e_m, f_1,\ldots, f_m$ of $k^{2m}$ 
such that $e_1,\ldots, e_m$ is a basis of $W$ and the span of $f_2,\ldots, f_m$ is totally
singular, $A(e_i, f_j)=\delta_{ij}$ for $i,j=2,\ldots, m$, $e_1$ spans $N_0$ and $e_1, f_1$
is a basis of $N$.
\end{prop}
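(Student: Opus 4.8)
The plan is to prove both parts by the same two-step recipe: first produce vectors $f_j$ dual to the $e_i$ using (non)degeneracy of $A$, and then correct each $f_j$ inside its allowed coset so that the $f_j$ become totally singular. Throughout I would use the characteristic-$2$ identity $q(u+v)=q(u)+q(v)+A(u,v)$, the fact that $q|_W=0$ forces $A|_W=0$, and that $k$ is algebraically closed so Frobenius is surjective (every scalar is a square). For part (1), since $A$ is non-degenerate and $e_1,\ldots,e_m$ are independent, the functionals $A(e_i,-)$ are independent, so there exist $g_1,\ldots,g_m$ with $A(e_i,g_j)=\delta_{ij}$, and a short pairing argument shows $e_1,\ldots,e_m,g_1,\ldots,g_m$ is a basis. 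Because $W^\perp=W$, any vector with $A(e_i,f_j)=\delta_{ij}$ for all $i$ differs from $g_j$ by an element of $W$, so I set $f_j=g_j+\sum_i c_{ji}e_i$ and solve for the scalars. Using $q|_W=0$ one computes $q(f_j)=q(g_j)+c_{jj}$, so $c_{jj}=q(g_j)$ makes each $f_j$ singular, and this depends only on the diagonal coefficient. For $i\neq j$ one finds $A(f_i,f_j)=A(g_i,g_j)+c_{ij}+c_{ji}$, so the off-diagonal coefficients need only satisfy $c_{ij}+c_{ji}=A(g_i,g_j)$, e.g.\ $c_{ij}=A(g_i,g_j)$, $c_{ji}=0$ for $i<j$. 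These choices are independent; since $q\bigl(\sum_j d_jf_j\bigr)=\sum_j d_j^2 q(f_j)+\sum_{i<j}d_id_jA(f_i,f_j)$, the span of the $f_j$ is then totally singular, and the correction by $W$ preserves both $A(e_i,f_j)=\delta_{ij}$ and the basis property.

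For part (2) the form $A$ has rank $2m-2$ and a $2$-dimensional null space $N$, on which $A$ vanishes. Hence $q|_N(ae_1+bf_1)=a^2q(e_1)+b^2q(f_1)$ is a sum of squares, the hypothesis that $N$ contains a unique singular line $N_0$ is exactly $q|_N\not\equiv 0$, and over $k$ this line is $\{q=0\}\cap N$. I would first fix $e_1$ spanning $N_0$ (so $q(e_1)=0$) and $f_1$ completing a basis of $N$ (so $q(f_1)\neq 0$). A key preliminary is that $W+N_0$ is totally singular, since for $w\in W$ one has $q(w+ce_1)=q(w)+c^2q(e_1)+cA(w,e_1)=0$ as $e_1\in N$; maximality of $W$ then forces $N_0\subset W$, so $e_1$ may be taken as the first member of a basis $e_1,\ldots,e_m$ of $W$, and $W\cap N=N_0$.

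Next I would repeat the correction argument, now for indices $i,j\geq 2$ and allowing corrections in the larger space $W+\langle f_1\rangle=\langle e_2,\ldots,e_m\rangle^\perp$. The functionals $A(e_i,-)$ for $i\geq 2$ are independent (because $\langle e_2,\ldots,e_m\rangle\cap N=0$), so there are $g_2,\ldots,g_m$ with $A(e_i,g_j)=\delta_{ij}$; I set $f_j=g_j+w_j+c_jf_1$ with $w_j\in W$ and $c_j\in k$. Since $f_1\in N$, adding $w_j+c_jf_1$ does not disturb $A(e_i,f_j)=\delta_{ij}$, and $A(f_i,f_j)=A(g_i,g_j)+(\text{a symmetric combination of the }W\text{-coefficients})$, which the off-diagonal coefficients kill as before. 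The new feature is the singularity condition $q(f_j)=q(g_j)+c_j^2 q(f_1)+(\text{diagonal }W\text{-coefficient})$: the term $c_j^2q(f_1)$ appears precisely because $q$ does not descend to $E/N$. As $q(f_1)\neq 0$ and $k$ is algebraically closed, I can solve for $c_j$ and make $f_j$ singular. A final pairing check shows $e_1,\ldots,e_m,f_1,\ldots,f_m$ is a basis with all the asserted incidences.

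The main obstacle is exactly this last point in part (2): unlike part (1), one cannot pass to the non-degenerate quotient $E/N$ and invoke the first case, because $q$ is not well defined modulo $N$ (it shifts by $q(n)$). What rescues the argument is the vector $f_1\in N$ with $q(f_1)\neq 0$, guaranteed by the uniqueness-of-$N_0$ hypothesis, together with surjectivity of Frobenius on $k$, which lets the otherwise obstructing square term $c_j^2q(f_1)$ absorb any prescribed value. Checking that the three correction steps — diagonal singularity, off-diagonal isotropy, and preservation of $A(e_i,f_j)=\delta_{ij}$ — genuinely decouple is the only place that demands care, but each reduces to a direct characteristic-$2$ computation.
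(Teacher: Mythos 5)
The paper offers no proof of this proposition --- it is quoted verbatim from Bhosle \cite[Lemma 2.5]{B} --- so there is no in-paper argument to compare against; judged on its own, your proof is correct and is the standard one: choose a dual family $g_j$ using the (partial) non-degeneracy of $A$, then correct within $W$ (resp.\ within $W+\langle f_1\rangle$) to force singularity on the diagonal and isotropy off the diagonal, and the three conditions decouple exactly as you compute. Two small remarks. First, in part (2) the term $c_j^2q(f_1)$ that you present as the essential rescue is not actually needed: since the statement only requires $f_2,\ldots,f_m$ (not $f_1$) to span a totally singular subspace, the diagonal $W$-coefficient $c_{jj}$ already absorbs $q(g_j)$ exactly as in part (1), so one may take $c_j=0$ throughout; the genuine role of $q(f_1)\neq 0$ is to identify $N_0$, to force $N_0\subset W$ and $W\cap N=N_0$, and to guarantee $f_1\notin W$ for the final basis check. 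Second, a fully self-contained proof of (2) should also justify that a maximal totally singular subspace has dimension exactly $m$ (your pairing bound gives $\dim W\le m$ via $\dim(W\cap N)\le 1$; the reverse inequality needs a short extension argument), since you use the basis $e_1,\ldots,e_m$ of $W$ without comment.
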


\begin{prop}\label{maximalSubspaces}{\rm (Bhosle \cite[Lemma 2.6]{B})}
Let $q$ be a quadratic form on $k^{2m}$.

{\rm (1)} \ Assume that $q$ is non-defective.  
Then the space of maximal totally singular subspaces for $q$ has two connected components
each of which is a non-singular variety of dimension $m(m-1)/2$.

{\rm (2)} Assume that the defect of $q$ is two and the radical $N$ contains a unique totally 
singular subspace $N_0$ of dimension one.  Then the space of maximal totally singular subspaces for $q$ 
is a non-singular variety of dimension $m(m-1)/2$.
\end{prop}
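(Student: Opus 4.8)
The plan is to put $q$ into the explicit coordinates supplied by Proposition \ref{quadraticForm}, to read off smoothness and the dimension from a direct computation in affine charts, and finally to count connected components, which is the delicate point. Throughout write $\calM$ for the variety of maximal totally singular subspaces. In case (1) Proposition \ref{quadraticForm}(1) gives a basis $e_1,\dots,e_m,f_1,\dots,f_m$ with $q(\sum_i a_ie_i+\sum_i b_if_i)=\sum_{i=1}^m a_ib_i$, and every member of $\calM$ has dimension $m$. I would cover $\calM$ by the affine charts of those $W$ that are graphs of linear maps $W_0:=\langle e_1,\dots,e_m\rangle\to\langle f_1,\dots,f_m\rangle$, i.e. spanned by $g_i=e_i+\sum_j\phi_{ij}f_j$. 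One computes $q(\sum_i c_ig_i)=\sum_{i,j}\phi_{ij}c_ic_j$, so $W$ is totally singular exactly when $(\phi_{ij})$ is alternating (zero diagonal and $\phi_{ij}=\phi_{ji}$); hence the chart is the affine space of alternating $m\times m$ matrices, of dimension $m(m-1)/2$. Since any two maximal totally singular subspaces are conjugate under the orthogonal group (as recalled above), these charts cover $\calM$ and exhibit it as smooth of dimension $m(m-1)/2$. The identical computation in case (2), using $q(\sum_i a_ie_i+\sum_i b_if_i)=b_1^2+\sum_{i\ge2}a_ib_i$ from Proposition \ref{quadraticForm}(2) (after rescaling so $q(f_1)=1$), forces $\phi_{11}=0$, $\phi_{1q}=0$ for $q\ge2$, $\phi_{ii}=\phi_{i1}^2$ for $i\ge2$, and $\phi_{pq}=\phi_{qp}$ for $2\le p<q$; the free parameters $\phi_{i1}\,(i\ge2)$ and $\phi_{pq}\,(2\le p<q)$ again number $m(m-1)/2$, so $\calM$ is smooth of that dimension here too.

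For the component count in case (1) I would use the parity of the intersection dimension against the fixed $W_0$. In the chart above, $\dim(W_\phi\cap W_0)=m-\rank(\phi_{ij})$, and the rank of an alternating matrix is even, so $W\mapsto\dim(W\cap W_0)\bmod 2$ is locally constant on $\calM$; this is the classical two-family invariant, valid in all characteristics (see Dieudonn\'e \cite{Die}). The special orthogonal group $\mathrm{SO}(q)$, the kernel of the Dickson invariant, is connected and of index $2$ in $\mathrm{O}(q)$ and preserves this parity, whereas the reflection $\tau$ exchanging $e_1$ and $f_1$ lies in $\mathrm{O}(q)\setminus\mathrm{SO}(q)$ and reverses it. Writing $\mathrm{O}(q)=\mathrm{SO}(q)\sqcup\tau\,\mathrm{SO}(q)$ and using transitivity of $\mathrm{O}(q)$ on $\calM$, we obtain $\calM=\mathrm{SO}(q)\cdot W_0\,\sqcup\,\mathrm{SO}(q)\cdot(\tau W_0)$, two orbits of the connected group $\mathrm{SO}(q)$ separated by parity, hence exactly two connected components, each smooth of dimension $m(m-1)/2$.

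In case (2) the vector $e_1$ spans the singular line $N_0$ and lies in the radical of $A$, so $q(e_1)=0$ and $A(e_1,-)=0$; therefore for any totally singular $W$ the subspace $W+\langle e_1\rangle$ is again totally singular, and by maximality every member of $\calM$ contains $N_0$. Passing to the symplectic quotient $E/N$ of dimension $2m-2$, where $N=\langle e_1,f_1\rangle$ is the radical of $A$, each $W\in\calM$ maps to a Lagrangian subspace. Conversely, since $q(f_1)\ne0$ and square roots are unique in characteristic $2$, a short computation shows that every Lagrangian has a unique totally singular lift to a member of $\calM$; this gives a bijective morphism from the Lagrangian Grassmannian $\mathrm{LG}(m-1,2m-2)$ onto $\calM$. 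As the Lagrangian Grassmannian is irreducible, $\calM$ is irreducible, and being smooth of dimension $m(m-1)/2$ it is therefore connected of that dimension.

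The hard part is not the local geometry but the global count of components. The chart computation delivers smoothness and the dimension in both cases almost immediately; the real content is to show that the parity invariant is well defined and locally constant and that $\mathrm{SO}(q)$ realizes exactly the two parity classes in case (1), and to see that the defect collapses these into a single family in case (2). I expect the main obstacle to be precisely this collapse, which I would resolve by the observation that every maximal totally singular subspace must contain the distinguished singular line $N_0$, thereby reducing $\calM$ to an irreducible Lagrangian Grassmannian.
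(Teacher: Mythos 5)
The paper does not actually prove this proposition: it is quoted verbatim from Bhosle \cite[Lemma 2.6]{B}, so there is no internal argument to compare yours against. On its own terms your proof is correct and follows the classical line that Bhosle (and ultimately Dieudonn\'e \cite{Die}) takes: the normal forms of Proposition \ref{quadraticForm} give affine charts on the space $\calM$ of maximal totally singular subspaces, parametrized by alternating matrices in case (1) and by the constrained matrices you describe in case (2), which yields smoothness and the dimension $m(m-1)/2$; the two components in case (1) come from the parity of $\dim(W\cap W_0)$ together with the Dickson invariant, and the single component in case (2) comes from the key observation that every maximal totally singular subspace contains $N_0$, identifying $\calM$ with a Lagrangian Grassmannian of the symplectic quotient $E/N$. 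Two points are worth tightening. First, your chart computation shows only that $\dim(W\cap W_0)\bmod 2$ is constant on the chart centred at $W_0$; local constancy near an arbitrary $W_1$ requires the congruence relating $\dim(W\cap W_0)$, $\dim(W\cap W_1)$ and $\dim(W_0\cap W_1)$, i.e.\ the classical two-family statement itself -- so the citation of Dieudonn\'e is genuinely doing work there, and you should say so (alternatively, one can argue purely via the orbit decomposition $\calM=\mathrm{SO}(q)\cdot W_0\sqcup \mathrm{SO}(q)\cdot\tau W_0$ once one checks the stabilizer of $W_0$ lies in the kernel of the Dickson invariant). Second, in case (2) the regular map goes from $\calM$ to the Lagrangian Grassmannian, $W\mapsto (W+N)/N$; the inverse $L\mapsto\ker(q|_{\tilde L})$ involves extracting square roots of values of $q$ and is only the set-theoretic inverse, not a morphism. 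Since both varieties are projective, the bijective morphism $\calM\to \mathrm{LG}(m-1,2m-2)$ is closed, hence a homeomorphism, and connectedness (indeed irreducibility) of $\calM$ still follows; just state the direction correctly.
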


\subsection{Pencils of quadratics in ${\bf P}^5$}\label{Pencils}

In this subsection, we introduce canonical forms of pencils of quadric hypersurfaces of ${\bf P}^5$ in characteristic two based on
the results due to Klingenberg \cite{Kli} and Bhosle \cite{B} (Proposition \ref{CanFormPencilQuadrics}). 

Let $q_1, q_2$ be quadratic forms on $k^{2m}$ and consider the pencil $\{xq_1 + yq_2\}_{(x:y)\in {\bf P}^1}$ of quadratic forms generated by $q_1, q_2$. 
We have a pencil of alternating forms $\{xA_1 + yA_2\}$ associated with $\{xq_1 + yq_2\}$.
By choosing a basis of $k^{2m}$, we denote the corresponding alternating matrix
by the same symbol $xA_1+yA_2$.
The Pfaffian of the pencil is defined as the square root of ${\rm det}(xA_1 + yA_2)$.

\begin{prop}\label{CanAlt}{\rm (Klingenberg \cite[Satz 2]{Kli}, Bhosle \cite[Proposition 2.8]{B})}
Let $A_1, A_2$ be alternating forms with $A_1$ non-degenerate.  
Let $\prod_i (t-a_i)^{2m_i}$ be the characteristic polynomial of $A_1^{-1}A_2$ where $a_i\ne a_j$ $(i\ne j)$.
Then the pencil 
can be written in the following form$:$
$$A_1= \sum_{i=1}^{m}\sum_{j=1}^{m_i} X_{ij}Y_{ij},\quad 
A_2= \sum_i\left\lbrack a_i\left(\sum_{j=1}^{m_i} X_{ij}Y_{ij}\right) + \sum_{j=2}^{m_i} X_{ij}Y_{i(j-1)}\right\rbrack,$$
where $X_{ij}Y_{ij}$ denote the alternating form
$$A((x_{ij},y_{ij}), (x'_{ij},y'_{ij})) = x_{ij}y'_{ij} - x'_{ij}y_{ij}.$$
\end{prop}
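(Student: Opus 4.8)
The plan is to repackage the pencil as a single operator and then invoke a normal-form theorem on a symplectic space. Since $A_1$ is non-degenerate, it defines a symplectic form $\omega:=A_1$ on $V=k^{2m}$; recall that an alternating form is symmetric, so $A_1(v,w)=A_1(w,v)$ and $A_2(v,w)=A_2(w,v)$. Let $T:=A_1^{-1}A_2$ be the endomorphism of $V$ characterized by $A_2(v,w)=A_1(Tv,w)$ for all $v,w$. First I would check that $T$ is self-adjoint for $\omega$: using the symmetry of $A_1$ and $A_2$,
\[
\omega(Tv,w)=A_2(v,w)=A_2(w,v)=A_1(Tw,v)=A_1(v,Tw)=\omega(v,Tw).
\]
Since $A_1=\omega$ and $A_2(v,w)=\omega(Tv,w)$ recover the pencil from the pair $(\omega,T)$, and a simultaneous linear change of coordinates for $(A_1,A_2)$ is exactly a symplectic change of basis intertwining $T$, the proposition becomes a normal-form statement for the self-adjoint operator $T$, whose characteristic polynomial is the given $\prod_i(t-a_i)^{2m_i}$.

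Next I would reduce to a single eigenvalue by the primary decomposition. Over the algebraically closed field $k$ write $V=\bigoplus_{\lambda}V_\lambda$, the sum over the distinct eigenvalues of $T$, with $V_\lambda=\ker(T-\lambda)^{2m}$. Self-adjointness gives $\omega(p(T)v,w)=\omega(v,p(T)w)$ for every polynomial $p$; taking $p(t)=(t-\lambda)^{2m}$ and $v\in V_\lambda$ yields $\omega(v,p(T)w)=0$ for all $w$, and as $p(T)$ is invertible on $V_{\lambda'}$ for $\lambda'\ne\lambda$ we conclude $\omega(V_\lambda,V_{\lambda'})=0$. Thus the $V_\lambda$ are mutually $\omega$-orthogonal and each is a non-degenerate symplectic subspace, and on $V_\lambda$ we have $T=\lambda\,\mathrm{id}+N$ with $N$ nilpotent and self-adjoint. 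It then remains to put a nilpotent self-adjoint operator $N$ on a symplectic space $(W,\omega)$ into normal form, namely to split $W$ into mutually $\omega$-orthogonal $N$-invariant symplectic blocks, each of dimension $2m_i$ with a basis $x_{i1},\dots,x_{im_i},y_{i1},\dots,y_{im_i}$ on which $\omega$ pairs $x_{ij}$ with $y_{ij}$ only and $N$ acts by the dual chains $x_{ij}\mapsto x_{i(j-1)}$, $y_{ij}\mapsto y_{i(j+1)}$ (with $x_{i0}=y_{i(m_i+1)}=0$). Unwinding the definition of the forms, this is precisely the asserted $A_1=\sum X_{ij}Y_{ij}$ together with $A_2=\sum_i\bigl[a_i\sum_j X_{ij}Y_{ij}+\sum_{j\ge2}X_{ij}Y_{i(j-1)}\bigr]$, where $a_i$ is the eigenvalue of the block.

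To build one block I would let $\ell:=m_i$ be the maximal order of a vector for $N$, pick $x$ with $N^{\ell-1}x\ne0=N^\ell x$, and use non-degeneracy of $\omega$ to choose $y$ with $\omega(N^{\ell-1}x,y)=1$. The crucial observation is that both chains are isotropic for $\omega$, in every characteristic: $\omega(N^ax,N^bx)=\omega(x,N^{a+b}x)$ depends only on $s:=a+b$, and writing $s=2k$ or $s=2k+1$ it equals $\omega(N^kx,N^kx)$ or $\omega(N^kx,N^{k+1}x)$, both of which vanish since $\omega$ and the alternating form $(u,v)\mapsto\omega(Nu,v)=A_2(u,v)-\lambda\,\omega(u,v)$ are alternating (and likewise for the $y$-chain). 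By the same displacement trick $\omega(N^ax,N^by)=\omega(x,N^{a+b}y)$ depends only on $a+b$, vanishes for $a+b\ge\ell$, and equals $1$ for $a+b=\ell-1$, so the cross-pairing of the two chains is the perfect anti-diagonal pairing. Consequently $U:=\langle N^jx : j\ge0\rangle+\langle N^jy : j\ge0\rangle$ is an $N$-invariant, non-degenerate (hyperbolic) subspace of dimension $2\ell$; self-adjointness makes $U^{\perp}$ again $N$-invariant and symplectic, so $W=U\perp U^{\perp}$ and I would finish by induction on $\dim W$.

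I expect the main obstacle to be the bookkeeping of this last step rather than any single hard idea: one must check that $y$ again has order exactly $\ell$ (forced by $\omega(N^{\ell-1}x,y)=1$ together with maximality of $\ell$), that the $2\ell$ chain vectors are linearly independent (forced by non-degeneracy of the cross-pairing), and that a routine triangular change of the $y$-chain — replacing $y$ by $y+\sum_{k\ge1}\mu_kN^ky+\sum_k\nu_kN^kx$ — clears the lower cross-pairings $\omega(N^ax,N^by)$ with $a+b<\ell-1$ so as to reach exactly the coupling $X_{ij}Y_{i(j-1)}$; assembling the blocks then reproduces the characteristic polynomial $\prod_i(t-a_i)^{2m_i}$. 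It is worth emphasizing that, unlike the theory of quadratic forms, nothing here is special to characteristic $2$: the entire argument rests only on $\omega$ and $A_2$ being alternating, which is exactly what forces the isotropy of the chains and hence the clean hyperbolic blocks.
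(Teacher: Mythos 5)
Your argument is correct, but note that the paper does not prove this proposition at all: it is quoted from Klingenberg (Satz 2) and Bhosle (Proposition 2.8), with only the remark that Klingenberg's characteristic-$\ne 2$ proof carries over to characteristic $2$. What you have written is a self-contained reconstruction of that standard argument: encode the pencil as the symplectic form $\omega=A_1$ together with the $\omega$-self-adjoint operator $T=A_1^{-1}A_2$, split off generalized eigenspaces, and decompose each nilpotent self-adjoint piece into mutually orthogonal hyperbolic blocks spanned by a Jordan chain and a dual chain. The one place where characteristic $2$ genuinely matters is the isotropy of a single chain, i.e.\ the vanishing of $\omega(N^kx,N^{k+1}x)$: in characteristic $2$ ``antisymmetric'' does not imply ``alternating,'' so you cannot argue from antisymmetry of $(u,v)\mapsto\omega(Nu,v)$ alone; you correctly sidestep this by observing that this form equals $A_2-\lambda A_1$ restricted to the eigenspace and is therefore alternating because $A_1$ and $A_2$ are assumed alternating. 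That is exactly the point that makes Klingenberg's proof valid in characteristic $2$, as Bhosle observed. (A cosmetic slip: your opening assertion that an alternating form is symmetric is itself a characteristic-$2$ statement; in odd characteristic the self-adjointness computation needs the antisymmetric signs, but the conclusion is unchanged.) The remaining bookkeeping you describe --- $y$ having order exactly $\ell$, linear independence via the anti-triangular cross-pairing Gram matrix, the triangular substitution clearing the lower pairings, and induction on $U^{\perp}$ --- all checks out.
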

In \cite[Satz 2]{Kli}, the author assumed the charcteristic $p\ne 2$, however as Bhosle pointed out 
\cite[Proposition 2.8]{B}, the proof of Satz 2 works well in characteristic 2.

Let $(X_1,X_2,X_3,Y_1,Y_2,Y_3)$ be homogeneous coordinates of ${\bf P}^5$.
We consider a pencil of quadratic forms generated by $q_1, q_2$ with the associated alternating
forms $A_1, A_2$.  We assume that $q_1$ is non-defective and hence ${\rm det}(A_1)\not=0$.
We use the symbol $Q_i$ for the hypersurface in ${\bf P}^5$ defined by $q_i$ $(i=1, 2)$.
Let $\calP$ be the pencil of quadrics in ${\bf P}^5$ generated by $Q_1, Q_2$.
First we recall the following fact (e.g. Griffiths and Harris \cite[p.762, Lemma]{GH}).

\begin{lemma}\label{two-quadrics}
Assume that $Q_1\cap Q_2$ is non-singular.  Then $Q_1\cap Q_2$ does not contain a plane.
\end{lemma}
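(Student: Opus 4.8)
The plan is to argue by contradiction: suppose $X := Q_1 \cap Q_2$ is a non-singular threefold containing a plane $\Pi \cong {\bf P}^2$, and produce a singular point of $X$ lying on $\Pi$. First I would choose homogeneous coordinates $x_0, \ldots, x_5$ on ${\bf P}^5$ so that $\Pi = \{x_3 = x_4 = x_5 = 0\}$. Since $Q_1$ and $Q_2$ vanish identically on $\Pi$, each quadratic form lies in the ideal $(x_3, x_4, x_5)$, so I can write
$$Q_i = x_3 A_i + x_4 B_i + x_5 C_i \qquad (i = 1, 2),$$
where $A_i, B_i, C_i$ are linear forms in $x_0, \ldots, x_5$. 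This normal form is the backbone of the argument, and it is available over any field, in particular in characteristic $2$.

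Next I would apply the Jacobian criterion along $\Pi$. At a point $p = (x_0 : x_1 : x_2 : 0 : 0 : 0) \in \Pi$, every monomial of $Q_i$ contains one of $x_3, x_4, x_5$, so the partial derivatives of $Q_i$ with respect to $x_0, x_1, x_2$ vanish at $p$, while the derivatives with respect to $x_3, x_4, x_5$ evaluate to $A_i(p), B_i(p), C_i(p)$. Hence the $2 \times 6$ Jacobian of $(Q_1, Q_2)$ at $p$ has the same rank as the $2 \times 3$ matrix
$$M(p) = \begin{pmatrix} A_1(p) & B_1(p) & C_1(p) \\ A_2(p) & B_2(p) & C_2(p) \end{pmatrix},$$
whose entries are linear forms in $x_0, x_1, x_2$. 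As $X$ is a threefold, a point $p \in \Pi$ is a smooth point of $X$ precisely when this rank equals $2$; so it suffices to exhibit one $p \in \Pi$ with $\operatorname{rank} M(p) \le 1$, since that point is then singular on $X$, contradicting the hypothesis.

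Finally I would show that the rank $\le 1$ locus of $M$ on $\Pi \cong {\bf P}^2$ is non-empty. The map $p \mapsto M(p)$ is linear, so its image is a linear subspace $L$ of dimension $\le 2$ inside the projectivised space ${\bf P}^5$ of $2 \times 3$ matrices. If $\dim L = 2$, then $L$ meets the Segre variety ${\bf P}^1 \times {\bf P}^2 \subset {\bf P}^5$ of rank-one matrices, which has dimension $3$, by the projective dimension theorem since $2 + 3 \ge 5$; pulling an intersection point back to $\Pi$ yields the desired $p$. If instead $\dim L < 2$, the six linear forms $A_i, B_i, C_i$ restricted to $\Pi$ span a space of dimension $\le 2$ and therefore possess a common zero $p_0 \in {\bf P}^2$, at which $M(p_0) = 0$. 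In either case the rank drops to $\le 1$ somewhere on $\Pi$. Equivalently, this is the standard fact that the determinantal locus $\{\operatorname{rank} \le 1\}$ of a $2 \times 3$ matrix of linear forms has codimension at most $(2-1)(3-1) = 2$ in ${\bf P}^2$ and hence is non-empty. The one point requiring care is exactly this non-emptiness: a naive B\'ezout count on the three $2 \times 2$ minors (three conics in ${\bf P}^2$) does not force a common zero, and it is the determinantal structure together with the characteristic-free projective dimension theorem that makes the conclusion hold, so that the proof goes through uniformly in all characteristics, including $p = 2$.
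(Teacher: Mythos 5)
Your proof is correct and follows essentially the same route as the paper: restrict the Jacobian of $(Q_1,Q_2)$ to the plane $\Pi$, observe that it reduces to a $2\times 3$ matrix of linear forms on $\Pi\cong{\bf P}^2$, and find a point where the rank drops to $\le 1$, contradicting smoothness. The only difference is cosmetic and in your favour: the paper normalizes $Q_1$ via Proposition \ref{quadraticForm} and then simply asserts that the three $2\times 2$ minors (three conics) have a common zero, whereas you justify that non-emptiness properly via the Segre variety of rank-one matrices and the projective dimension theorem (with the degenerate case handled separately), an argument valid in every characteristic.
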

\begin{proof}
Assume that $Q_1\cap Q_2$ contains a plane $\Pi$.  By changing of coordinates (Proposition \ref{quadraticForm} (1)), we may assume that
$\Pi$ is defined by $X_1=X_2=X_3=0$, and 
$Q_1$, $Q_2$ are given by 
$$Q_1: A_1 + \sum_i c_iX_i^2 =0, \quad 
Q_2: \sum_{i,j} a_{ij}X_iY_j + \sum_{i,j}b_{ij}X_iX_j + \sum_i d_i X_i^2=0,$$
respectively. Here $A_1= \sum_{i=1}^3 X_iY_i$.  
By restricting the Jacobian of $Q_1\cap Q_2$ to $\Pi$, we obtain the three conics
$$Y_1\sum_{j=1}^3 a_{2j}Y_j+Y_2\sum_{j=1}^3 a_{1j}Y_j=0,\quad 
Y_2\sum_{j=1}^3 a_{3j}Y_j+Y_3\sum_{j=1}^3 a_{2j}Y_j=0,$$
$$Y_1\sum_{j=1}^3 a_{3j}Y_j+Y_3\sum_{j=1}^3 a_{1j}Y_j=0$$
which have a common zero.
\end{proof}

Let $\prod_{i=1}^3(t-a_i)^2$ be the characteristic polynomial of $A_1^{-1}A_2$.
The following three cases occur.

${\rm (a)} \ a_1, a_2, a_3$ are different;

${\rm (b)}\ a_1\not= a_2=a_3$;

${\rm (c)} \ a= a_1=a_2=a_3$.

\noindent
Now by using Proposition \ref{CanAlt}, 
$\calP$ is given as follows:
\begin{equation}\label{ordinaryPencil}
{\rm (a)} : \
\left\{ \begin{array}{l}
Q_1 : A_1 + \sum_{i=1}^3 (p_i^2X_i^2 + t_i^2Y_i^2)=0, \\
\\
Q_2: \sum_{i=1}^3 (a_iX_iY_i + r_i^2X_i^2 + s_i^2Y_i^2)=0.
\end{array} \right.
\end{equation}

\begin{equation}\label{p-rank1Pencil}
{\rm (b)} : \ 
\left\{ \begin{array}{l}
Q_1: A_1 + \sum_{i=1}^3 (p_i^2X_i^2 + t_i^2Y_i^2)=0,\\
\\ 
Q_2: \sum_{i=1}^3 (a_iX_iY_i + r_i^2X_i^2 + s_i^2Y_i^2) + X_3Y_2=0.
\end{array} \right.
\end{equation}

\begin{equation}\label{SupersingularPencil}
{\rm (c)} :\
\left\{ \begin{array}{l}
Q_1: A_1 + \sum_{i=1}^3 (p_i^2X_i^2 + t_i^2Y_i^2)=0,\\
\\ 
Q_2: \sum_{i=1}^3 (aX_iY_i + r_i^2X_i^2 + s_i^2Y_i^2) + X_2Y_1+X_3Y_2=0.
\end{array} \right.
\end{equation}

\smallskip
\noindent
As in the proof of Bhosle \cite[Corollary 2.10]{B}, by changing the coordinates
\begin{equation}\label{CC}
\left\{ \begin{array}{l}
x_i = \alpha_i X_i +\beta_i Y_i, \\
\\
y_i = \gamma_i X_i + \delta_i Y_i
\end{array} \right.
\end{equation}
with $\alpha_i\delta_i + \beta_i\gamma_i =1, \ \alpha_i\gamma_i =p_i^2, \beta_i\delta_i = t_i^2$, we may assume that $\calP$ is given by the equations in Proposition \ref{CanFormPencilQuadrics}.

\begin{prop}\label{CanFormPencilQuadrics}
\begin{equation}\label{(a)}
{\rm (a)} : \
\left\{ \begin{array}{l}
G : A_1=\sum_{i=1}^3 X_iY_i=0, \\
\\
Q : \sum_{i=1}^3 a_iX_iY_i + c_iX_i^2 + d_iY_i^2=0.
\end{array} \right.
\end{equation}
\begin{equation}\label{(b)}
{\rm (b)} : \ 
\left\{ \begin{array}{l}
G : \ A_1=0, \\
\\
Q : \ \sum_{i=1}^3 \left(a_iX_iY_i + c_iX_i^2 + d_iY_i^2\right) + B =0
\end{array} \right.
\end{equation}
where 
$
B = b_1X_2Y_3 + b_2X_3Y_2 + b_3X_2X_3 +
b_4Y_2Y_3
$
and
$b_1=\beta_3\gamma_2, \ b_2=\alpha_2\delta_3,\ b_3=\gamma_2\delta_3,\ b_4=\alpha_2\beta_3.$
\begin{equation}\label{(c)}
{\rm (c)} : \ 
\left\{ \begin{array}{l}
G : \ A_1=0, \\
\\
Q : \ \sum_{i=1}^3 \left(aX_iY_i + c_iX_i^2 + d_iY_i^2\right) + C = 0
\end{array} \right.
\end{equation}
where 
$C= b_1X_2Y_3 + b_2X_3Y_2 + b_3X_2X_3 +b_4Y_2Y_3 + b_5X_1Y_2+b_6X_2Y_1+b_7X_1X_2+
b_8Y_1Y_2,$ \ 
$b_1=\beta_3\gamma_2, \ b_2=\alpha_2\delta_3,\ b_3=\gamma_2\delta_3,\ b_4=\alpha_2\beta_3,\ 
b_5=\beta_2\gamma_1, \ b_6=\alpha_1\delta_2,\ b_7=\gamma_1\delta_2,\ b_8=\alpha_1\beta_2.$
\end{prop}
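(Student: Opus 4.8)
The plan is to prove all three cases at once by directly performing the substitution (\ref{CC}) and reading off coefficients, exploiting the fact that the canonical forms (\ref{ordinaryPencil}), (\ref{p-rank1Pencil}), (\ref{SupersingularPencil}) differ only by the off-diagonal terms $X_3Y_2$ (case (b)) and $X_2Y_1+X_3Y_2$ (case (c)). First I would record that the constraints $\alpha_i\delta_i+\beta_i\gamma_i=1$, $\alpha_i\gamma_i=p_i^2$, $\beta_i\delta_i=t_i^2$ can indeed be met over $k$: since $k$ is algebraically closed of characteristic $2$, one may for each $i$ normalize $\alpha_i=1$, $\gamma_i=p_i^2$ and then solve the Artin--Schreier-type equation $p_i^2\beta_i^2+\beta_i+t_i^2=0$ for $\beta_i$, putting $\delta_i=1+\beta_i p_i^2$ (the degenerate cases $p_i=0$ or $t_i=0$ being solved by hand). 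The matrix $\begin{pmatrix}\alpha_i&\beta_i\\\gamma_i&\delta_i\end{pmatrix}$ has determinant $\alpha_i\delta_i+\beta_i\gamma_i=1$ (recall $-1=1$), so the transformation is invertible, with inverse $X_i=\delta_i x_i+\beta_i y_i$, $Y_i=\gamma_i x_i+\alpha_i y_i$ by the same determinant-one formula.

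Next I would check that $Q_1$ collapses to the reduced form. Expanding $x_iy_i=\alpha_i\gamma_i X_i^2+(\alpha_i\delta_i+\beta_i\gamma_i)X_iY_i+\beta_i\delta_i Y_i^2$ and substituting the three constraints gives $x_iy_i=p_i^2X_i^2+X_iY_i+t_i^2Y_i^2$, so $\sum_i x_iy_i=Q_1$. Hence in the new coordinates $Q_1=\sum_i X_iY_i$ after relabelling $x_i,y_i$ as $X_i,Y_i$; this is the common first line of (\ref{(a)}), (\ref{(b)}), (\ref{(c)}).

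For $Q_2$ I would substitute the inverse formulas into each monomial. The diagonal block $\sum_i(a_iX_iY_i+r_i^2X_i^2+s_i^2Y_i^2)$ becomes $\sum_i(a_ix_iy_i+c_ix_i^2+d_iy_i^2)$, the cross term persisting with coefficient exactly $a_i$ precisely because $\alpha_i\delta_i+\beta_i\gamma_i=1$, while $c_i=a_i\gamma_i\delta_i+r_i^2\delta_i^2+s_i^2\gamma_i^2$ and $d_i=a_i\alpha_i\beta_i+r_i^2\beta_i^2+s_i^2\alpha_i^2$; their explicit values do not enter the statement. This settles case (a) immediately. For the remaining off-diagonal monomials I would expand $X_3Y_2=(\delta_3x_3+\beta_3y_3)(\gamma_2x_2+\alpha_2y_2)$ and, in case (c), additionally $X_2Y_1=(\delta_2x_2+\beta_2y_2)(\gamma_1x_1+\alpha_1y_1)$, and match monomials in characteristic $2$. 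This produces exactly $B$ with $b_1=\beta_3\gamma_2$, $b_2=\alpha_2\delta_3$, $b_3=\gamma_2\delta_3$, $b_4=\alpha_2\beta_3$, together with the four further terms of $C$ with $b_5=\beta_2\gamma_1$, $b_6=\alpha_1\delta_2$, $b_7=\gamma_1\delta_2$, $b_8=\alpha_1\beta_2$, as claimed.

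There is no genuine conceptual obstacle here; the whole argument is a bookkeeping computation showing that the chosen transformation annihilates the square terms of $Q_1$ while carrying $Q_2$ into the stated normal form. The only steps deserving a word of care are the solvability of the constraints on $\alpha_i,\beta_i,\gamma_i,\delta_i$, which rests on algebraic closedness, and the feature special to characteristic $2$ that each off-diagonal term $X_3Y_2$ and $X_2Y_1$ spreads into \emph{four} monomials, including the mixed terms $X_iX_j$ and $Y_iY_j$, rather than transforming diagonally; this is exactly the phenomenon that generates the nontrivial forms $B$ and $C$ and thereby the three distinct strata.
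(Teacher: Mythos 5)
Your proposal is correct and follows exactly the paper's route: the paper proves the proposition by applying the coordinate change (\ref{CC}) with $\alpha_i\delta_i+\beta_i\gamma_i=1$, $\alpha_i\gamma_i=p_i^2$, $\beta_i\delta_i=t_i^2$ to the pencils (\ref{ordinaryPencil}), (\ref{p-rank1Pencil}), (\ref{SupersingularPencil}) (citing Bhosle for the computation), and your expansion of $X_3Y_2$ and $X_2Y_1$ reproduces precisely the stated coefficients $b_1,\dots,b_8$. Your added verification that the constraints on $\alpha_i,\beta_i,\gamma_i,\delta_i$ are solvable over an algebraically closed field is a detail the paper leaves implicit but is handled correctly.
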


Let $\calP$, $\calP_1$, $\calP_0$ be the pencils of quadrics defined by 
(\ref{(a)}), (\ref{(b)}), (\ref{(c)}) in Proposition \ref{CanFormPencilQuadrics}, respectively, and call them
{\it canonical forms of pencil of quadrics}.  The subscripts of the second and third one correspond to the 2-ranks of the Jacobians.  Note that
$sG+tQ$ is non-defective for $(s,t)\ne (a_i,1)$ and $a_iG+Q$ has the defect 2 and the radical contains
a unique totally singular subspace of dimension one as stated in Proposition \ref{maximalSubspaces} (2).

\begin{remark}\label{coefficients2}
Note that in cases (b), (c), $b_1b_2=b_3b_4$, $b_5b_6=b_7b_8$.  Since $\alpha_i\delta_i + \beta_i\gamma_i =1$, 
$(b_1,b_2,b_3,b_4)\ne (0,0,0,0)$, and $(b_5,b_6,b_7,b_8)\ne (0,0,0,0)$.  Also
by the relations $b_1b_2=b_3b_4$ and $b_5b_6=b_7b_8$, we have
a relation
\begin{equation}\label{relation-9}
(b_1b_5+b_4b_7)(b_2b_6+b_3b_8)=(b_1b_8+b_4b_6)(b_2b_7+b_3b_5).
\end{equation}
Similarly we have
\begin{equation}\label{relation-91}
\begin{array}{l}
b_2(b_1b_8+b_4b_6)=b_4(b_2b_6+b_3b_8),\quad b_3(b_1b_8+b_4b_6)=b_1(b_2b_6+b_3b_8),\\
b_1(b_2b_7+b_3b_5)=b_3(b_1b_5+b_4b_7),\quad b_4(b_2b_7+b_3b_5)=b_2(b_1b_5+b_4b_7),\\
b_5(b_1b_8+b_4b_6)=b_8(b_1b_5+b_4b_7),\quad b_7(b_1b_8+b_4b_6)=b_6(b_1b_5+b_4b_7),\\
b_6(b_2b_7+b_3b_5)=b_7(b_2b_6+b_3b_8),\quad b_8(b_2b_7+b_3b_5)=b_5(b_2b_6+b_3b_8).
\end{array}
\end{equation}
\end{remark}

We use the following Lemma \ref{prank1translation} and Remark \ref{prank1translationRemark} to construct
some involutions of Kummer quartic surfaces (Propositions \ref{prank1translation2}, \ref{prank1translation3} and Lemma \ref{ssAut}).

\begin{lemma}\label{prank1translation}\ Assume that $b_1b_2b_3b_4\ne 0$ and $b_5b_6b_7b_8\ne 0$.

{\rm (1)}\ In the equation $(\ref{(b)})$, 
after changing the coordinates, we may assume that $b_2b_4c_2=b_1b_3d_2$.

\smallskip

{\rm (2)}\ In the equation $(\ref{(c)})$, 
after changing the coordinates, we may assume that $b_6b_8c_1=b_5b_7d_1$.
\end{lemma}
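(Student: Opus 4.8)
The plan is to undo the normalisation (\ref{CC}) and work with the pencil already in the shape (\ref{p-rank1Pencil}), exploiting the coincidence $a_2=a_3$ which is special to case (b). First I would express the target quantity in terms of the data of (\ref{p-rank1Pencil}). Inverting (\ref{CC}) as $X_i=\delta_ix_i+\beta_iy_i$, $Y_i=\gamma_ix_i+\alpha_iy_i$ (legitimate since $\alpha_i\delta_i+\beta_i\gamma_i=1$) and substituting into $Q_2$ gives
\[
c_2=a_2\gamma_2\delta_2+r_2^2\delta_2^2+s_2^2\gamma_2^2,\qquad
d_2=a_2\alpha_2\beta_2+r_2^2\beta_2^2+s_2^2\alpha_2^2 .
\]
Using $b_2b_4=\alpha_2^2\beta_3\delta_3$, $b_1b_3=\gamma_2^2\beta_3\delta_3$ and the characteristic-$2$ identity $(\alpha_2\delta_2+\beta_2\gamma_2)^2=1$, a short calculation should yield
\[
b_2b_4c_2-b_1b_3d_2=\beta_3\delta_3\,(\alpha_2^2c_2+\gamma_2^2d_2)=t_3^2\,(a_2p_2^2+r_2^2).
\]
Because $b_1b_2b_3b_4\ne0$ forces $t_3^2=\beta_3\delta_3\ne0$, the desired relation $b_2b_4c_2=b_1b_3d_2$ is equivalent to $a_2p_2^2+r_2^2=0$. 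The same computation shows that the residual freedom left in (\ref{CC}) is useless: $\alpha_2^2c_2+\gamma_2^2d_2$ is pinned to $a_2p_2^2+r_2^2$, so the pairwise transformations cannot move the quantity. Hence a transformation mixing the pairs $(X_2,Y_2)$ and $(X_3,Y_3)$ is required, and this is available exactly because $a_2=a_3$: the nilpotent part $M=A_1^{-1}A_2-a_2\,\mathrm{id}$ of the pencil satisfies $M^2=0$ on the span of these four coordinates.

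The heart of the argument is to produce an explicit coordinate change in $\mathrm{Sp}(A_1)\cap Z(M)$ that alters $a_2p_2^2+r_2^2$. I would take
\[
c_\nu:\quad Y_2\mapsto Y_2+\nu X_3,\quad Y_3\mapsto Y_3+\nu X_2,\quad X_2,X_3\ \text{fixed},
\]
and check that it lies in the symplectic group of $A_1$, commutes with $M$ (so that it preserves $A_2$ and therefore all the $a_i$ and $b_j$), and keeps both $Q_1$ and $Q_2$ in the form (\ref{p-rank1Pencil}); the only terms that could spoil the shape are cross terms, and these cancel in characteristic $2$. Its effect on the relevant coefficients is $\tilde p_2^{\,2}=p_2^2+t_3^2\nu^2$ and $\tilde r_2^{\,2}=r_2^2+s_3^2\nu^2$, whence
\[
a_2\tilde p_2^{\,2}+\tilde r_2^{\,2}=(a_2p_2^2+r_2^2)+\nu^2\,(a_2t_3^2+s_3^2).
\]
When $a_2t_3^2+s_3^2\ne0$ I solve $\nu^2=(a_2p_2^2+r_2^2)/(a_2t_3^2+s_3^2)$ over the algebraically closed field $k$, apply $c_\nu$ and then re-normalise by (\ref{CC}); this puts the pencil back into the canonical form (\ref{(b)}) with $a_2p_2^2+r_2^2=0$, i.e. with $b_2b_4c_2=b_1b_3d_2$.

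The main obstacle is the degenerate case $a_2t_3^2+s_3^2=0$, in which $c_\nu$ is ineffective. I would handle it by a preliminary application of the transposed map
\[
\bar c_\sigma:\quad X_2\mapsto X_2+\sigma Y_3,\quad X_3\mapsto X_3+\sigma Y_2,\quad Y_2,Y_3\ \text{fixed},
\]
which also lies in $\mathrm{Sp}(A_1)\cap Z(M)$, fixes $a_2p_2^2+r_2^2$, and sends $a_2t_3^2+s_3^2$ to $(a_2t_3^2+s_3^2)+\sigma^2(a_2p_2^2+r_2^2)$. We may assume $a_2p_2^2+r_2^2\ne0$ (otherwise there is nothing to prove), so a choice of $\sigma\ne0$ makes the coefficient of $\nu^2$ nonzero and $c_\nu$ then finishes the job. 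I expect verifying that $c_\nu,\bar c_\sigma$ genuinely preserve the shape (\ref{p-rank1Pencil}) -- the characteristic-$2$ bookkeeping distinguishing the symplectic group of $A_1$ from its orthogonal subgroup, which is precisely what lets these maps change the quadratic refinement while fixing $A_1$ and $A_2$ -- to be the one delicate point.

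Finally, part (2) follows by the identical scheme in case (c). There $a=a_1=a_2=a_3$, so the pairs $(X_1,Y_1)$ and $(X_2,Y_2)$ may be mixed by the analogues of $c_\nu$ and $\bar c_\sigma$ built from the coupling terms $b_5,\dots,b_8$; one finds $b_6b_8c_1-b_5b_7d_1=t_2^2\,(a\,p_1^2+r_1^2)$ with $t_2^2=\beta_2\delta_2\ne0$ guaranteed by $b_5b_6b_7b_8\ne0$, and the same Artin--Schreier-type solvability over $k$ gives $b_6b_8c_1=b_5b_7d_1$.
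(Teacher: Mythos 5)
Your part (1) is correct, but it takes a genuinely different route from the paper, and one of your structural claims along the way is false. The paper never mixes the planes $(X_2,Y_2)$ and $(X_3,Y_3)$: it precomposes (\ref{CC}) with the shear $Y_2\mapsto \epsilon X_2+Y_2$ inside the single plane $(X_2,Y_2)$ (and, when that is ineffective, first with $X_2\mapsto X_2+\epsilon Y_2$), which replaces $a_2p_2^2+r_2^2$ by $a_2p_2^2+r_2^2+\epsilon^2(a_2t_2^2+s_2^2)$ --- note the coefficient $a_2t_2^2+s_2^2$ comes from the \emph{same} plane. So your assertion that ``the pairwise transformations cannot move the quantity'' and that a plane-mixing transformation ``is required'' is wrong: what is pinned down is only the value of $\alpha_2^2c_2+\gamma_2^2d_2$ under changes of the restricted shape (\ref{CC}); general ${\rm SL}_2$-changes of $(X_2,Y_2)$ do move $a_2p_2^2+r_2^2$. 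This misstatement is only motivational, however. Your transvections $c_\nu$ and $\bar c_\sigma$ genuinely preserve the shape (\ref{p-rank1Pencil}) (the $X_2X_3$, resp.\ $Y_2Y_3$, cross terms cancel because $a_2=a_3$, and the terms $\nu X_3^2$, $\sigma Y_2^2$ coming from $X_3Y_2$ are absorbed into $r_3^2$, $s_2^2$), your formulas for their effect on $a_2p_2^2+r_2^2$ and on $a_2t_3^2+s_3^2$ are correct, and the two-step solvability argument over the algebraically closed field closes part (1). The paper's route and yours trade off differently: yours keeps (\ref{p-rank1Pencil}) exactly intact, while the paper's shears temporarily create an $X_2X_3$ term that is later absorbed into the $b_i$ of (\ref{(b)}).

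Part (2), however, does \emph{not} follow ``by the identical scheme,'' and this is a genuine gap. In case (c) the pencil (\ref{SupersingularPencil}) has two coupling terms, $X_2Y_1$ and $X_3Y_2$. The analogue of $c_\nu$ for the planes $(X_1,Y_1)$ and $(X_2,Y_2)$, namely $Y_1\mapsto Y_1+\nu X_2$, $Y_2\mapsto Y_2+\nu X_1$, behaves well on $X_2Y_1$ (producing $\nu X_2^2$), but it sends the second coupling term to $X_3Y_2+\nu X_1X_3$. An $X_1X_3$ term is not of the shape (\ref{SupersingularPencil}), and after renormalizing by (\ref{CC}) it produces $X_1X_3$, $X_1Y_3$, $X_3Y_1$, $Y_1Y_3$ monomials, none of which occur in the canonical form (\ref{(c)}), which couples only the planes $1$--$2$ and $2$--$3$. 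So your coordinate change takes the pencil out of the class in which the identity $b_6b_8c_1=b_5b_7d_1$ is being asserted. The repair is precisely the device you discarded in part (1): use shears $Y_1\mapsto\epsilon X_1+Y_1$ and $X_1\mapsto X_1+\epsilon Y_1$ inside the single plane $(X_1,Y_1)$. These do not touch $X_3Y_2$, the only new cross term $\epsilon X_1X_2$ coming from $X_2Y_1$ is of the allowed type $b_7X_1X_2$, and they change $ap_1^2+r_1^2$ by $\epsilon^2(at_1^2+s_1^2)$, respectively $at_1^2+s_1^2$ by $\epsilon^2(ap_1^2+r_1^2)$, so the same two-step argument then applies. (Your identity $b_6b_8c_1+b_5b_7d_1=t_2^2(ap_1^2+r_1^2)$ and the nonvanishing of $t_2^2=\beta_2\delta_2$ are correct.)
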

\begin{proof} (1)\ 
Recall that we start the pencil of quadrics given in (\ref{p-rank1Pencil}),
and by changing of coordinates (\ref{CC}), we have 
the equation (\ref{(b)}).
Here 
$b_1=\beta_3\gamma_2, b_2=\alpha_2\delta_3, b_3=\gamma_2\delta_3, b_4=\alpha_2\beta_3$
and
$c_i = a_i\gamma_i\delta_i + r_i^2\delta_i^2+s_i^2\gamma_i^2,\ d_i = a_i\alpha_i\beta_i +r_i^2\beta_i^2 + s_i^2\alpha_i^2.$
Since
$$b_2b_4c_2= \beta_3\delta_3(a_2\alpha_2^2\gamma_2\delta_2+r_2^2\alpha_2^2\delta_2^2 +s_2^2\alpha_2^2\gamma_2^2), \ b_1b_3d_2= \beta_3\delta_3(a_2\alpha_2\beta_2\gamma_2^2+r_2^2\beta_2^2\gamma_2^2 +s_2^2\alpha_2^2\gamma_2^2),$$
and $\alpha_2\delta_2+\beta_2\gamma_2=1$, 
the condition $c_2b_2b_4=d_2b_1b_3$ is equivalent to $a_2\alpha_2\gamma_2 + r_2^2=0$, that is,
$a_2p_2^2 + r_2^2=0$.
Now first by applying the changing of coordinates
\begin{equation}\label{CC3}
\left\{ \begin{array}{l}
Y_2 = \epsilon X_2' +Y_2' \quad (\epsilon \in k) \\
Y_1=Y_1',\ Y_3=Y_3',\ X_i=X_i'\quad (i=1,2,3)
\end{array} \right.
\end{equation}
to the equation (\ref{p-rank1Pencil}) and then by applying the changing of coordinates (\ref{CC}), 
the condition $a_2p_2^2 + r_2^2=0$ is replaced by $a_2p_2^2+r_2^2 +\varepsilon (a_2t_2^2 + s_2^2)+
\varepsilon^2(a_2t_2^2 + s_2^2)=0$.
Thus if $a_2t_2^2 + s_2^2\not= 0$, then we may assume $a_2p_2^2 + r_2^2=0$ and hence $c_2b_2b_4=d_2b_1b_3$.  
If $a_2t_2^2 + s_2^2=0$, then first apply the changing of coordinates
\begin{equation}\label{CC2}
\left\{ \begin{array}{l}
X_2 = X_2' +\varepsilon Y_2' \quad (\varepsilon \in k), \\
X_1=X_1',\ X_3=X_3',\ Y_i=Y_i' \quad (i=1,2,3)
\end{array} \right.
\end{equation}
to the equation (\ref{p-rank1Pencil}).
Then the condition $a_2t_2^2 + s_2^2=0$ is replaced by $a_2t_2^2 + s_2^2 +\epsilon^2(a_2p_2^2 + r_2^2)=0$, 
and hence we may assume $a_2t_2^2 + s_2^2\not= 0$.  Thus we have proved the assertion (1).

(2)\  The proof is the same as in the case (1).
\end{proof}

\begin{remark}\label{prank1translationRemark}
In case $b_1b_2b_3b_4=0$, Lemma \ref{prank1translation} holds after a modification.  
For example, if $b_1=b_4=0$ and $b_2b_3\ne 0$, then 
instead of $b_2b_4c_2=b_1b_3d_2$, we may assume $b_2^2c_2=b_3^2d_2$.

Also, by the same way, we may assume $b_1b_4c_3=b_2b_3d_3$.
We remark that the changing of coordinates (\ref{CC2}) does not change $\{ b_i\}$, but (\ref{CC3}) may change
$\{ b_i\}$. Therefore we can not assume both $b_2b_4c_2=b_1b_3d_2$ and $b_1b_4c_3=b_2b_3d_3$ at the same time.
\end{remark}

\subsection{Kummer quartic surfaces and curves of genus 2 in characteristic two}\label{Char2}

In this subsection, 
we recall fundamental results of singularities of Kummer quartic surfaces and curves of genus 2 
in characteristic two. 

Let $A$ be an abelian surface over $k$ and 
let $\iota$ be the inversion.
Then, for the singularities of $A/\langle \iota \rangle$, we have the
following theorem (cf. Katsura \cite{Ka}).

\def\MARU#1{\textcircled{\scriptsize#1}}

\begin{theorem}\label{Kummer-sigularity}
\begin{itemize}
\item[$({\rm i})$] $4$ rational double points of type $D_4$ if $A$ is ordinary.
\item[$({\rm ii})$] $2$ rational double points of type $D_8$ if the $2$-rank of $A$ is one.
\item[$({\rm iii})$] An elliptic double point of type $\mbox{\MARU{19}}_0$ if $A$ is superspecial.
\item[$({\rm iv})$] An elliptic double point of type $\mbox{\MARU{4}}_{0,1}^{1}$ if $A$ is supersingular and not superspecial.
\end{itemize}
\end{theorem}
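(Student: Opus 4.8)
The plan is to reduce the whole computation to a single local model at the origin and then to read off the singularity from the formal group of $A$. First, the fixed points of $\iota$ are exactly the $2$-torsion points, since $\iota(x)=x$ means $2x=0$; scheme-theoretically the fixed locus is $A[2]=\ker[2]$, supported on the $2^f$ rational $2$-torsion points, where $f$ is the $2$-rank. This gives $4$, $2$, $1$ support points in the ordinary, $2$-rank $1$ and supersingular cases. Away from these points $\langle\iota\rangle$ acts freely and $A/\langle\iota\rangle$ is smooth, so all singularities are isolated and lie over $A[2](k)$. For any $p\in A[2](k)$ the translation $t_p$ commutes with $\iota$, because $-p=p$ forces $\iota\circ t_p=t_p\circ\iota$; hence $t_p$ descends to an automorphism of $A/\langle\iota\rangle$ sending the image of $0$ to the image of $p$. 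Thus all singular points are formally isomorphic, and it suffices to analyse the one over $0$ and to count $2^f$ of them.

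\emph{Local model via the formal group.} Completing at $0$, the map $\iota$ acts on $\hat{\calO}_{A,0}\cong k[[x,y]]$ as the inversion $[-1]$ of the formal group $\hat A$, and the non-reducedness of $A[2]$ at $0$ (the differential of $[-1]$ is the identity since $-1=1$) is exactly what makes the quotient singular beyond an ordinary node. Over $\bar k$ the type of $\hat A$ is fixed by the $2$-rank and, in the supersingular case, by the $a$-number: $\hat A\cong\hat{\bbG}_m\times\hat{\bbG}_m$ in the ordinary case, $\hat A\cong\hat{\bbG}_m\times\hat E$ in the $2$-rank $1$ case (with $\hat E$ the formal group of a supersingular elliptic curve), $\hat A\cong\hat E\times\hat E$ in the superspecial case, and $\hat A$ the indecomposable $2$-dimensional formal group of height $4$ with $a$-number $1$ in the supersingular non-superspecial case. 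In each case one writes $[-1]$ in a coordinate -- on $\hat{\bbG}_m$ one has $[-1]\colon x\mapsto x/(1+x)=x+x^2+x^3+\cdots$ -- and computes the invariant ring $R=k[[x,y]]^{\iota}$, whose spectrum is the local model of $A/\langle\iota\rangle$.

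\emph{Identification of the types.} In the ordinary case the diagonal involution on $\hat{\bbG}_m\times\hat{\bbG}_m$ has invariants $u=x+\bar x=x^2/(1+x)$, $v=y+\bar y=y^2/(1+y)$ and $w=x\bar y+\bar x y$ (whose leading term is $x^2y+xy^2$), satisfying $w^2=u^2v+uv^2+(\text{higher order})$; the leading form $w^2+uv(u+v)$, a square plus a cubic with three distinct linear factors, presents $R$ as the complete local ring of a rational double point of type $D_4$. The $2$-rank $1$ case is handled identically and gives a rational double point of type $D_8$. In the two supersingular cases $R$ is not a rational double point: one resolves the singularity and computes the dual graph of the exceptional divisor, its fundamental cycle, and the arithmetic genus of that cycle, matching these against Wagreich's list \cite{W} to obtain the elliptic double point $\mbox{\MARU{19}}_0$ for $A$ superspecial and $\mbox{\MARU{4}}_{0,1}^{1}$ for $A$ supersingular but not superspecial.

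\emph{Main obstacle.} The ordinary and $2$-rank $1$ cases are routine once the invariant ring is computed, since the singularity is a rational double point detected by its leading form. The difficulty is concentrated in the supersingular case. There the formal group laws for $\hat E\times\hat E$ and for the indecomposable height-$4$ group carry many higher-order terms, so $[-1]$ and the invariant ring are far less transparent; moreover the resulting singularity is elliptic rather than rational, which forces an explicit resolution and a delicate computation of Wagreich's invariants rather than a mere reading of a Dynkin diagram. The most subtle point is that the superspecial versus non-superspecial dichotomy is invisible to the $2$-rank and must be detected through the $a$-number, equivalently the decomposability of $\hat A$.
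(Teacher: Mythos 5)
The paper does not prove this theorem: it is quoted from Shioda \cite{Sh} and Katsura \cite{Ka}, and your outline follows exactly the strategy of those references --- reduce to the origin by translation by $2$-torsion (which commutes with $\iota$), read off the formal group from the $2$-rank and the $a$-number, and compute the invariant ring of $[-1]$. Your ordinary-case computation checks out: the three invariants satisfy precisely $w^2+uvw+u^2v+uv^2=0$ (the extra term $uvw$ that you call ``higher order'' is in fact of the same order, but the equation is Artin's $D_4^1$, still a rational double point of type $D_4$), and the only point left implicit is that $u,v,w$ generate the full invariant ring, which follows since the hypersurface $k[[u,v]][w]$ is normal and of the correct degree $2$ over $k[[u,v]]$. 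The $2$-rank $1$ and the two supersingular cases are announced rather than carried out, and that is where essentially all of the work in Katsura's cited proof lies (explicit resolution and comparison of the resulting graph and fundamental cycle with Wagreich's list); so as written this is a correct plan, matching the original argument in method, but not yet a complete proof.
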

Here a supersingular abelian surface is called {\it superspecial} if it is isomorphic to the product of two 
elliptic curves, and elliptic double points of type $\mbox{\MARU{19}}_0$, $\mbox{\MARU{4}}_{0,1}^{1}$
are in the sense of Wagreich \cite{W}.
The dual graphs of the minimal resolutions of these singularities are given in Figure \ref{ellipticSing}.
Each component is a non-singular rational curve, 
$-3$ or $-4$ is the self-intersection number of the component and other components are $(-2)$-curves. 
The central component has multiplicity 2.
In Figure \ref{ellipticSing}, an elliptic singularity of type $\mbox{\MARU{14}}_{0,0,1}^{(0),(1)}$ is also 
in the sense of Wagreich whose minimal resolution is obtained from the one of a singularity of type 
$\mbox{\MARU{4}}_{0,1}^{1}$ by blowing up a point on
the $(-3)$-curve.  This singularity appears in Theorem \ref{Int3quadricsSupersingular}.

\begin{figure}[htbp]
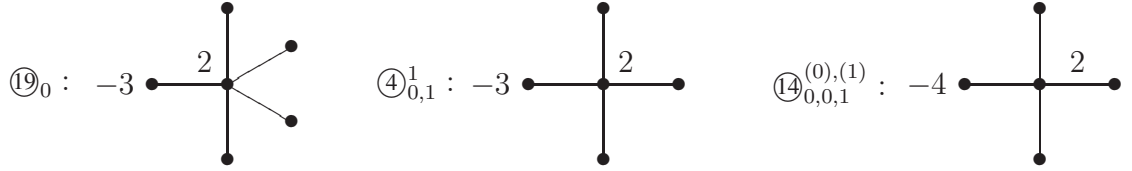

\begin{center}
\resizebox{!}{1.5cm}{
\xy
(0,5)*{};(10,5)*{}**\dir{-};
(10,5)*{};(10,15)*{}**\dir{-};
(10,5)*{};(10,-5)*{}**\dir{-};
(10,5)*{};(18.5,10)*{}**\dir{-};
(10,5)*{};(18.5,0)*{}**\dir{-};
(50,5)*{};(60,5)*{}**\dir{-};
(60,5)*{};(60,15)*{}**\dir{-};
(60,5)*{};(60,-5)*{}**\dir{-};
(60,5)*{};(70,5)*{}**\dir{-};
(108,5)*{};(118,5)*{}**\dir{-};
(118,5)*{};(118,15)*{}**\dir{-};
(118,5)*{};(118,-5)*{}**\dir{-};
(118,5)*{};(128,5)*{}**\dir{-};
@={(0,5),(10,5),(10,15),(18.5,10),
(18.5,0),(10,-5),(50,5),(60,5),(60,15),(60,-5),(70,5),
(108,5),(118,5),(118,15),(118,-5),(128,5)
}@@{*{\bullet}};
(6,2)*{-3};
(7,8)*{2};
(-15,5)*{\mbox{\MARU{19}}_{0}:};
(45,5)*{-3};
(63,8)*{2};
(35,5)*{\mbox{\MARU{4}}_{0,1}^{1}:};
(103,5)*{-4};
(121,8)*{2};
(90,5)*{\mbox{\MARU{14}}_{0,0,1}^{(0),(1)}:};
\endxy 
}
\caption{Dual graphs of minimal resolutions of singularities}
\label{ellipticSing}
\end{center}
\end{figure}

Note that in Cases (i) and (ii) the non-singular model of $A/\langle \iota \rangle$ is a $K3$ surface
and that in Cases (iii) and (iv) $A/\langle \iota \rangle$ is a rational surface 
(cf. Shioda \cite{Sh} and Katsura \cite{Ka}).

For a non-singular curve $C$ of genus $2$, we denote by $J(C)$ the Jacobian variety of $C$.
As for a {\it normal form} of $C$, by Igusa \cite{I} we have the following result.
\begin{equation}\label{g=2}
y^2 + y = 
\left\{ \, 
\begin{array}{ll}
 \alpha x + \beta x^{-1} + \gamma (x-1)^{-1} & \mbox{if}~J(C)~\mbox{is ordinary}, \\
 x^3 +\alpha x + \beta x^{-1} & \mbox{if}~J(C)~\mbox{is of}~2\mbox{-rank}~1,\\
 x^5 + \alpha x^3 & \mbox{if}~J(C)~\mbox{is supersingular
},
\end{array}
\right.
\end{equation}
where $\alpha, \beta, \gamma \in k$ and $\alpha, \beta, \gamma$ in the first case and
$\beta$ in the second case are different from $0$.  
Note that in characteristic 2 there exists no curves $C$ of genus 2 such that $J(C)$ is superspecial
(cf. Ibukiyama-Katsura-Oort \cite{IKO}), and hence only an elliptic singularity of type 
$\mbox{\MARU{4}}_{0,1}^{1}$ appears in the quotient surface $J(C)/\langle \iota\rangle$.

\section{The intersection of two quadrics}\label{sec3-0}

We use the same notation as in Subsection \ref{Pencils}.  
In this section we present a necessary and sufficient condition  for 
the intersection of two quadrics in ${\bf P}^5$ being non-singular.

Recall that  $\calP$, $\calP_1$ or $\calP_0$ is the pencil of quadrics in ${\bf P}^5$ given 
by (\ref{(a)}), (\ref{(b)}) or (\ref{(c)}) in Proposition \ref{CanFormPencilQuadrics}, respectively.    
Let $\calX, \calX_1$ or  $\calX_0$ be the base locus of $\calP$, $\calP_1$ or 
$\calP_0$, respectively.

\begin{lemma}\label{3-fold}\
{\rm (a)} $\calX$ is singular if and only if $\prod_i c_id_i= 0$.

{\rm (b)} $\calX_1$ is singular if and only if 
$c_1d_1=0$ or $b_2b_4c_2 + b_1b_4c_3 + b_1b_3d_2 + b_2b_3d_3= 0$.

{\rm (c)} $\calX_0$ is singular if and only if 
$c_1(b_1b_8+b_4b_6)(b_2b_6+b_3b_8)+c_3(b_1b_5+b_4b_7)(b_1b_8+b_4b_6)$
$+d_1(b_1b_5+b_4b_7)(b_2b_7+b_3b_5)+d_3(b_2b_6+b_3b_8)(b_2b_7+b_3b_5)=0.$

\end{lemma}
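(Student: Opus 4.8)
The plan is to use the Jacobian criterion in characteristic $2$ and reduce the whole statement, in each of the three cases, to the question of whether a \emph{pair of binary quadratic forms on a line} has a common zero.

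The first step is to record the infinitesimal picture. For a quadratic form $Q$ with associated alternating form $A$ one has $Q(p+y)=Q(p)+Q(y)+A(p,y)$, so the differential of $Q$ at $p$ is the linear form $y\mapsto A(p,y)$. Hence a point $p$ of the complete intersection $\calX=Q_1\cap Q_2$ is singular exactly when the two Jacobian rows $A_1(p,-),A_2(p,-)$ are linearly dependent. Since $Q_1$ is non-defective, $A_1$ is non-degenerate and $A_1(p,-)\ne 0$ for $p\ne 0$, so the dependence reads $A_2(p,-)=\lambda A_1(p,-)$ for some $\lambda\in k$, i.e. $p\in N_\lambda:=\ker(A_2+\lambda A_1)$ together with $Q_1(p)=Q_2(p)=0$. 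Because $\det(A_2+\lambda A_1)=\det(A_1)\prod_i(\lambda+a_i)^2$, the space $N_\lambda$ is nonzero only for $\lambda\in\{a_i\}$, and then $\dim N_\lambda=2$ by Proposition \ref{maximalSubspaces}(2). Thus $\calX$ is singular if and only if, for some eigenvalue $a_i$, the two binary forms $Q_1|_{N_{a_i}}$ and $Q_2|_{N_{a_i}}$ on ${\bf P}(N_{a_i})\cong{\bf P}^1$ have a common zero. In each case below the chosen basis of $N_{a_i}$ splits into two vectors supported on $A_1$-dual coordinate pairs, so the restrictions are \emph{diagonal}, $f_0s^2+f_2t^2$ and $g_0s^2+g_2t^2$; such a pair has a common zero iff $f_0g_2+f_2g_0=0$ in characteristic $2$.

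For (a) the form $A_{a_i}$ pairs $X_j,Y_j$ with coefficient $a_i+a_j$, which vanishes only for $j=i$, so $N_{a_i}=\langle e_{X_i},e_{Y_i}\rangle$; the restrictions are $X_iY_i$ and $a_iX_iY_i+c_iX_i^2+d_iY_i^2$, whose common zero exists iff $c_id_i=0$. Hence $\calX$ is singular iff some $c_id_i=0$, i.e. $\prod_i c_id_i=0$. In (b) with $a=a_2=a_3$ the value $\lambda=a_1$ gives, after checking that the $\{2,3\}$-block of $A_{a_1}$ has Pfaffian $(a+a_1)^2\ne 0$, again $N_{a_1}=\langle e_{X_1},e_{Y_1}\rangle$ and the condition $c_1d_1=0$.

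The value $\lambda=a$ in (b) is the first nontrivial point. Here $A_a$ restricted to $\langle e_{X_2},e_{X_3},e_{Y_2},e_{Y_3}\rangle$ has Pfaffian $b_1b_2+b_3b_4=0$ by Remark \ref{coefficients2}, so $N_a$ is two-dimensional; solving $A_a v=0$ I would present it spanned by $b_2e_{X_2}+b_3e_{Y_2}$ and $b_1e_{X_3}+b_3e_{Y_3}$. Restricting and using $b_1b_2=b_3b_4$ (which kills the cross terms) gives $Q_1|_{N_a}=b_3(b_2\alpha^2+b_1\beta^2)$ and $Q_2|_{N_a}=(ab_2b_3+c_2b_2^2+d_2b_3^2)\alpha^2+(ab_1b_3+c_3b_1^2+d_3b_3^2)\beta^2$, so the common-zero (resultant) condition becomes $c_2b_1b_2^2+c_3b_1^2b_2+d_2b_1b_3^2+d_3b_2b_3^2=0$; using $b_3b_4=b_1b_2$ this equals $b_3\cdot(b_2b_4c_2+b_1b_4c_3+b_1b_3d_2+b_2b_3d_3)$, giving exactly the stated condition. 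Combining the two eigenvalues yields (b).

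Case (c) is where I expect the main obstacle. Now $a$ is the only eigenvalue and $A_a$ consists solely of the $b$-terms; solving $A_av=0$ (the four equations in $X_2,Y_2$ force $X_2=Y_2=0$) one finds $N_a$ spanned by $(X_1,X_3,Y_1,Y_3)=(b_6,0,b_7,0)$ and $(0,b_1,0,b_3)$, using $b_5b_6=b_7b_8$ and $b_1b_2=b_3b_4$. All the $C$-terms vanish on $X_2=Y_2=0$, so the restrictions are diagonal and the resultant condition comes out as
\[
c_1b_1b_3b_6^2+c_3b_1^2b_6b_7+d_1b_1b_3b_7^2+d_3b_3^2b_6b_7=0 .
\]
The difficulty is matching this with the product-of-minors form in the statement. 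I would resolve it by substituting back the change of coordinates (\ref{CC}) with $\alpha_i\delta_i+\beta_i\gamma_i=1$: the four minors collapse, namely $b_2b_7+b_3b_5=\gamma_1\delta_3$, $b_1b_5+b_4b_7=\beta_3\gamma_1$, $b_2b_6+b_3b_8=\alpha_1\delta_3$, $b_1b_8+b_4b_6=\alpha_1\beta_3$, whence the stated expression equals $c_1\alpha_1^2\beta_3\delta_3+c_3\alpha_1\beta_3^2\gamma_1+d_1\beta_3\gamma_1^2\delta_3+d_3\alpha_1\gamma_1\delta_3^2$, and the displayed condition equals $\gamma_2^2\delta_2^2$ times it. Since $\gamma_2\delta_2\ne 0$ in the non-degenerate range, the two zero loci coincide. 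Finally, the places where I divided or chose a generic basis ($b_3\ne0$ in (b); the genericity of the $b_i$ in (c)) require separate treatment when some $b_i=0$; these follow by the same method after recomputing $N_a$, exactly as the analogous degeneracies are handled in Remark \ref{prank1translationRemark}.
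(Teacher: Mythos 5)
Your argument is correct and ultimately rests on the same criterion as the paper's proof, namely the Jacobian criterion together with the observation that the two gradient rows at $p$ are $A_1(p,-)$ and $A_2(p,-)$; but you organize the computation differently. The paper works with the pre\-/normalized equations (\ref{ordinaryPencil})--(\ref{SupersingularPencil}), i.e.\ before the substitution (\ref{CC}), where the Jacobian of the pair $Q_1$, $Q_2+a_iQ_1$ visibly drops rank only on coordinate subspaces such as $X_2=Y_2=X_3=Y_3=0$ or $X_1=Y_1=X_3=Y_2=0$, and then pushes those linear conditions through (\ref{CC}) to obtain the stated polynomials in $b_i,c_i,d_i$. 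You instead stay in the canonical coordinates, identify the rank\-/drop locus intrinsically as $\bigcup_i {\bf P}(N_{a_i})$ with $N_{a_i}=\ker(A_2+a_iA_1)$, and finish with a resultant of two binary quadrics; the two descriptions agree, since the paper's coordinate subspaces are exactly the $N_{a_i}$ read in the old coordinates, and your identities $b_2b_7+b_3b_5=\gamma_1\delta_3$ etc.\ and the discrepancy factors $b_3^2$ in (b) and $\gamma_2^2\delta_2^2$ in (c) all check out. The one real cost of your route is the choice of explicit bases of $N_a$ in terms of the $b_i$: these degenerate when, e.g., $b_3=0$ in (b) or $\gamma_2\delta_2=0$ in (c), and you only assert that those configurations follow by the same method. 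They do (one recomputes a kernel basis and the discrepancy factor becomes a different nonzero product of the $\alpha_i,\beta_i,\gamma_i,\delta_i$), but this is precisely the case division that the paper's choice of coordinates avoids, so a complete write-up along your lines must carry out those sub-cases. Two minor slips: the fact $\dim N_{a_i}=2$ is the defect statement recorded after Proposition \ref{CanFormPencilQuadrics}, not Proposition \ref{maximalSubspaces}(2) (though your explicit kernels prove it anyway); and in case (a) the restrictions to $N_{a_i}$ are not diagonal --- they carry the cross term $X_iY_i$ --- although your direct common\-/zero analysis there is correct.
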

\begin{proof}
Let $J$ be the Jacobian matrix of $\calX, \calX_1$ or  $\calX_0$
with respect to the homogeneous coordinates $(X_1, Y_1, X_2, Y_2, X_3, Y_3)$.
We denote by $\Delta_{ij}$ the determinant of the matrix consisting of 
$i$-th and $j$-th columns.
Assume $\rank (J)\leq 1$.

(a) \ In this case, $J$ is given by
$$
\left(
\begin{array}{cccccc}
 Y_1 & X_1 & Y_2 & X_2 & Y_3 & X_3\\
a_1Y_1 & a_1X_1 & a_2Y_2 & a_2X_2 & a_3Y_3 & a_3X_3
\end{array}
\right).
$$
Note that 5 columns of $J$ vanish by the relation $\sum X_iY_i=0$.  Hence, if $X_i\ne 0$,
then $c_i=0$ by the equation $Q=0$.  
Conversely if $c_1=0$, then $(X_1,Y_1,X_2,Y_2,X_3,Y_3)=(1,0,0,0,0,0)$ is a singular point.

\smallskip
(b)\ In this case we use the equations of the pencil given in (\ref{p-rank1Pencil}) instead of 
(\ref{(b)}).  Moreover we use the pair $Q_1$ and $Q_2+a_2Q_1$.  Then $J$ is given by
$$
\left(
\begin{array}{cccccc}
 Y_1 & X_1 & Y_2 & X_2 & Y_3 & X_3\\
(a_1+a_2)Y_1 &  (a_1+a_2)X_1 & 0 & X_3 & Y_2 & 0
\end{array}
\right).
$$
Obviously $X_3=Y_2=0$ and then $J$ is given by
$$
\left(
\begin{array}{cccccc}
0 & 0 & 0 & X_2 & Y_3 & 0\\
(a_1+a_2)Y_1 &  (a_1+a_2)X_1 & 0 & 0 & 0 & 0
\end{array}
\right).
$$
Thus ${\rm rank}(J)\leq 1$ if and only if $X_2=Y_2=X_3=Y_3=0$ or $X_1=Y_1=X_3=Y_2=0$.

First assume that $X_2=Y_2=X_3=Y_3=0$.  Then, by changing the coordinates (\ref{CC}), the new coordinates
should satisfy 
$x_1y_1=0$ and $a_1x_1y_1+c_1x_1^2+d_1y_1^2=0$, that is, $c_1d_1=0$.

Next assume that $X_1=Y_1=X_3=Y_2=0$.  Then, by changing the coordinates (\ref{CC}), we have
$$\alpha_2\gamma_2X_2^2 +\beta_3\delta_3Y_3^2=0, \quad (c_2\alpha_2^2+d_2\gamma_2^2)X_2^2 + (c_3\beta_3^2+d_3\delta_3^2)Y_3^2=0.$$
Thus $\calX_1$ has a singularity if and only if 
$$\alpha_2\gamma_2(c_3\beta_3^2+d_3\delta_3^2) +\beta_3\delta_3(c_2\alpha_2^2+d_2\gamma_2^2)=0,$$
by using the relation between $\{b_i\}$ and $\{\alpha_i, \beta_i, \gamma_i, \delta_i\}$,
which is equivalent to
$$b_2b_4c_2 + b_1b_4c_3 + b_1b_3d_2 + b_2b_3d_3= 0.$$

\smallskip
(c)\ In this case we also use the equations of the pencil given in (\ref{SupersingularPencil}) instead of 
(\ref{(c)}).  Moreover we use the pair $Q_1$ and $Q_2+aQ_1$.  Then $J$ is given by
$$
\left(
\begin{array}{cccccc}
 Y_1 & X_1 & Y_2 & X_2 & Y_3 & X_3\\
0 &  X_2 & Y_1 & X_3 & Y_2 & 0
\end{array}
\right).
$$
Obviously $X_3=Y_1=0$ and hence $J$ is
$$
\left(
\begin{array}{cccccc}
0 & X_1 & Y_2 & X_2 & Y_3 & 0\\
0 &  X_2 & 0 & 0 & Y_2 & 0
\end{array}
\right).
$$
Thus ${\rm rank}(J)\leq 1$ if and only if $X_2=X_3=Y_1=Y_2=0$.
By changing the coordinates (\ref{CC}), we have
$$\alpha_1\gamma_1X_1^2 +\beta_3\delta_3Y_3^2=0, \quad (c_1\alpha_1^2+d_1\gamma_1^2)X_1^2 + (c_3\beta_3^2+d_3\delta_3^2)Y_3^2=0.$$
Thus $\calX_0$ has a singularity if and only if 
$$\alpha_1\gamma_1(c_3\beta_3^2+d_3\delta_3^2) +\beta_3\delta_3(c_1\alpha_1^2+d_1\gamma_1^2)=0.$$
By using the relation between $\{b_i\}$ and $\{\alpha_i, \beta_i, \gamma_i, \delta_i\}$,
we have obtained the assertion.
\end{proof}

\subsection*{Assumption:}
In the following of this paper we assume that $\calX$, $\calX_1$, $\calX_0$ are non-singular.

\begin{remark}\label{coefficients}
Under the assumption, 
the cases $b_1=b_2=0$ and $b_3=b_4=0$ do not occur in case (b) by the condition
$b_3(b_1d_2 + b_2d_3) + b_4(b_2c_2 + b_1c_3)\ne 0$.  
Only the case $b_1=b_3=0$, $b_1=b_4=0$,
$b_2=b_3=0$ or $b_2=b_4=0$ is possible.  In case (c), 
$$(b_1b_5+b_4b_7, b_2b_6+b_3b_8)\ne (0,0),\quad  (b_1b_8+b_4b_6, b_2b_7+b_3b_5) \ne (0,0).$$
\end{remark}

The following Lemma \ref{Invariant} will be used to study singularities of quartic surfaces 
and their blowing-ups in Theorems \ref{ssKummerQ}, \ref{Int3quadricsSupersingular}.
\begin{lemma}\label{Invariant}
Let $\xi_1=\sqrt{b_2b_4+b_5b_8}, \ \xi_2=\sqrt{b_1b_3+b_6b_7}$.  Then $(\xi_1, \xi_2)\ne (0,0)$.
\end{lemma}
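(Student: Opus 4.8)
The plan is to rewrite the two radicands in terms of the matrix entries $\alpha_i,\beta_i,\gamma_i,\delta_i$ of the change of coordinates (\ref{CC}) and to exploit that in characteristic $2$ they are perfect squares. Using $b_2=\alpha_2\delta_3$, $b_4=\alpha_2\beta_3$, $b_5=\beta_2\gamma_1$, $b_8=\alpha_1\beta_2$ together with the relations $\beta_3\delta_3=t_3^2$ and $\alpha_1\gamma_1=p_1^2$ from (\ref{CC}), one finds $b_2b_4+b_5b_8=\alpha_2^2t_3^2+p_1^2\beta_2^2=(\alpha_2t_3+\beta_2p_1)^2$, and likewise $b_1b_3+b_6b_7=\gamma_2^2t_3^2+p_1^2\delta_2^2=(\gamma_2t_3+\delta_2p_1)^2$. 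Since the Frobenius is a bijection on $k$, the square roots are well defined and
\[
\xi_1=\alpha_2t_3+\beta_2p_1,\qquad \xi_2=\gamma_2t_3+\delta_2p_1.
\]

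Assume for contradiction that $\xi_1=\xi_2=0$. The first key step is to eliminate $p_1$: forming $\delta_2\xi_1+\beta_2\xi_2$ and using $\alpha_2\delta_2+\beta_2\gamma_2=1$ (the term $2p_1\beta_2\delta_2$ vanishing in characteristic $2$), this combination collapses to $(\alpha_2\delta_2+\beta_2\gamma_2)t_3=t_3$, whence $t_3=0$. Substituting back gives $\xi_1=\beta_2p_1=0$ and $\xi_2=\delta_2p_1=0$; were $p_1\ne 0$ we would get $\beta_2=\delta_2=0$, contradicting $\alpha_2\delta_2+\beta_2\gamma_2=1$. Therefore $p_1=0$ as well.

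It remains to derive a contradiction from $t_3=0$ and $p_1=0$, i.e. from $\beta_3\delta_3=0$ and $\alpha_1\gamma_1=0$. Here I would record the simplifications, valid by the unimodular relations $\alpha_i\delta_i+\beta_i\gamma_i=1$,
\[
b_1b_5+b_4b_7=\beta_3\gamma_1,\quad b_2b_6+b_3b_8=\alpha_1\delta_3,\quad b_1b_8+b_4b_6=\alpha_1\beta_3,\quad b_2b_7+b_3b_5=\gamma_1\delta_3,
\]
and combine them with the two non-vanishing conditions $(b_1b_5+b_4b_7,\,b_2b_6+b_3b_8)\ne(0,0)$ and $(b_1b_8+b_4b_6,\,b_2b_7+b_3b_5)\ne(0,0)$ from Remark \ref{coefficients}. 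Since $t_3=0$ forces $\beta_3=0$ or $\delta_3=0$, and $p_1=0$ forces $\alpha_1=0$ or $\gamma_1=0$, there are four cases: $\{\beta_3=0,\alpha_1=0\}$ and $\{\gamma_1=0,\delta_3=0\}$ each annihilate the pair $(\beta_3\gamma_1,\alpha_1\delta_3)$, while $\{\beta_3=0,\gamma_1=0\}$ and $\{\alpha_1=0,\delta_3=0\}$ each annihilate $(\alpha_1\beta_3,\gamma_1\delta_3)$. Every case violates one of the two conditions of Remark \ref{coefficients}, the required contradiction.

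The individual computations are routine, so I expect the main obstacle to be the reformulation in the first paragraph: recognizing that both radicands are perfect squares (so that $\xi_1,\xi_2$ acquire the clean linear expressions above) and then spotting the elimination $\delta_2\xi_1+\beta_2\xi_2=t_3$, which is what reduces the whole statement to the bookkeeping against Remark \ref{coefficients}.
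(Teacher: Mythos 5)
Your proof is correct and follows essentially the same route as the paper's: substitute the expressions $b_i$ in terms of $\alpha_j,\beta_j,\gamma_j,\delta_j$, use the unimodular relations $\alpha_i\delta_i+\beta_i\gamma_i=1$ to force one of $\{\alpha_1,\gamma_1\}$ and one of $\{\beta_3,\delta_3\}$ to vanish, and then contradict the non-vanishing conditions of Remark \ref{coefficients} (equivalently Lemma \ref{3-fold}(c)). Your repackaging of $\xi_1,\xi_2$ as the linear forms $\alpha_2t_3+\beta_2p_1$ and $\gamma_2t_3+\delta_2p_1$ (so that the unimodular matrix gives $t_3=p_1=0$ at once) is a slightly cleaner presentation than the paper's cross-multiplication yielding $\alpha_1\gamma_1\beta_3\delta_3(\alpha_2\delta_2+\beta_2\gamma_2)^2=0$, but the substance is the same.
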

\begin{proof}
Assume that $\xi_1^2= \alpha_2^2\beta_3\delta_3+\beta_2^2\alpha_1\gamma_1=0$ and
$\xi_2^2=\gamma_2^2\beta_3\delta_3+\delta_2^2\alpha_1\gamma_1=0$.  Then it follows that
$\alpha_2^2\delta_2^2\alpha_1\gamma_1\beta_3\delta_3=\gamma_2^2\beta_2^2\alpha_1\gamma_1\beta_3\delta_3$,
that is, $\alpha_1\gamma_1\beta_3\delta_3(\alpha_2\delta_2+\beta_2\gamma_2)^2 =0$.
Since $\alpha_2\delta_2+\beta_2\gamma_2=1$, we have $\alpha_1\gamma_1\beta_3\delta_3=0$.
Again by $\alpha_i\delta_i+\beta_i\gamma_i=1$, we can easily see that 
$\alpha_1=\beta_3=0$, $\alpha_1=\delta_3=0$, $\gamma_1=\beta_3=0$ or $\gamma_1=\delta_3=0$.
These imply that $b_1=b_4=b_6=b_8=0$, $b_2=b_3=b_6=b_8=0$, $b_1=b_4=b_5=b_7=0$ or 
$b_2=b_3=b_5=b_7=0$.  In any case, this contradicts to the condition (c) in Lemma \ref{3-fold}.
\end{proof}

\section{The quartic surfaces}\label{sec4}

In this section we discuss the quartic surfaces $S, S_1, S_0$ which are defined by the set of foci of 
the intersection of two quadrics $\calX$, $\calX_1, \calX_0$ (see the definition (\ref{KummerQuartic1})).
First of all, we give a proof of Theorem \ref{KummerQuartic}, and then study the singularities and tropes
of the surfaces.

\subsection*{Proof of Theorem \ref{KummerQuartic}} 

We consider the pencil $\calP_0$ of quadrics, but not assuming $a_1=a_2=a_3$.  
Recall that $G$ is the grassmannian $G(2,4)$.
Consider three points 
$$(X_1,X_2,X_3,Y_1,Y_2,Y_3)=(x, 0, 0, 0, y, z),\ (0, x, 0, y, 0, t),\ (0, 0, x, z, t, 0)$$ 
in ${\bf P}^5$ and the plane 
$$\Pi= \{(\alpha x, \beta x, \gamma x, \beta y+\gamma z, \alpha y + \gamma t , \alpha z + \beta t) \ :  
\ (\alpha,  \beta , \gamma)\in {\bf P}^2\}$$ 
generated by these points, where $(x, y, z, t)\in {\bf P}^3$.
The plane $\Pi$ is a generic member of an irreducible family of planes on $G$.
The conic $\Pi \cap Q_2$ on $\Pi$ is given by

\begin{equation}\label{ordinaryConic}
\begin{array}{l}
(c_1x^2+d_2y^2+d_3z^2+b_4yz+b_5xy)\alpha^2\\
\\
 + (c_2x^2+d_1y^2+d_3t^2+b_1xt+b_6xy)\beta^2\\
\\
+(c_3x^2+d_1z^2+d_2t^2+b_2xt+b_8zt)\gamma^2\\
\\
+((a_1+a_2)xy+b_1xz+b_4yt+b_7x^2+b_8y^2)\alpha\beta \\
\\
+((a_2+a_3)xt+b_6xz+b_8yt+b_3x^2+b_4t^2)\beta\gamma \\
\\
 + ((a_1+a_3)xz+b_2xy+b_4zt+b_5xt+b_8yz)\gamma\alpha =0.
\end{array}
\end{equation}
Then this conic has a singularity if and only if 
$$
\alpha = (a_2+a_3)xt+b_6xz+b_8yt+b_3x^2+b_4t^2,
$$
$$
\beta= (a_1+a_3)xz+b_2xy+b_4zt+b_5xt+b_8yz,
$$
$$
\gamma = (a_1+a_2)xy+b_1xz+b_4yt+b_7x^2+b_8y^2.
$$
By combining this and (\ref{ordinaryConic}), we obtain the sextic equation.  The terms of  degree
$\leq 1$ in $x$ of this sextic are identically 0, and hence by dividing by $x^2$ we have the following equation:
$$(b_3^2c_1+b_7^2c_3)x^4 + (b_2^2d_1 +b_8^2c_3)y^4 + (b_1^2d_1+b_6^2d_3)z^4 + 
(b_4^2c_1+b_5^2d_3)t^4$$
$$+b_5(b_1b_5+b_4b_7)xt^3 + b_7(b_2b_7+b_3b_5)x^3t + b_2(b_2b_6 + b_3b_8)xy^3 
+ b_8(b_2b_6+b_3b_8)y^3z$$
$$+b_3(b_2b_7+b_3b_5)x^3y + b_4(b_1b_5+b_4b_7)zt^3 + b_6(b_1b_8 + b_4b_6)yz^3 
+ b_1(b_1b_8+b_4b_6)z^3t$$ 
$$+(a_1+a_3)(b_3b_7x^3z +b_1b_6xz^3 + b_2b_8y^3t+b_4b_5yt^3)$$
$$+ (b_2^2c_2 + b_3^2d_2 + c_3(a_1+a_2)^2 + (a_1+a_2)b_2b_3)x^2y^2
+(b_5^2c_2+b_7^2d_2+ c_1(a_2+a_3)^2 + (a_2+a_3)b_5b_7)x^2t^2$$
$$+(b_1^2c_3+b_3^2d_3 + b_6^2c_1+b_7^2d_1+c_2(a_1+a_3)^2 + (a_1+a_3)(b_1b_3+b_6b_7))x^2z^2$$
$$+(b_6^2d_2+b_8^2c_2 + d_1(a_2+a_3)^2 + (a_2+a_3)b_6b_8)y^2z^2
+ (b_1^2d_2 + b_4^2c_2 + d_3(a_1+a_2)^2 + (a_1+a_2)b_1b_4)z^2t^2$$
$$+ (b_2^2d_3 + b_4^2c_3 + b_5^2d_1 + b_8^2c_1 + d_2(a_1+a_3)^2 + (a_1+a_3)(b_2b_4+b_5b_8))y^2t^2$$
$$+ (b_7(b_2b_6+b_3b_8)+ b_3(a_1+a_2)(a_1+a_3))x^2yz + (b_3(b_1b_5+b_4b_7)+b_7(a_1+a_3)(a_2+a_3))x^2zt$$
$$+ (b_8(b_2b_7+b_3b_5) + b_2(a_1+a_2)(a_1+a_3))xy^2t + (b_2(b_1b_8+b_4b_6)+b_8(a_1+a_3)(a_2+a_3))y^2zt$$
$$+(b_1(b_2b_6+b_3b_8)+b_6(a_1+a_3)(a_2+a_3))xyz^2 + (b_6(b_1b_5+b_4b_7)+b_1(a_1+a_2)(a_1+a_3))xz^2t$$
$$ + (b_4(b_2b_7+b_3b_5)+ b_5(a_1+a_3)(a_2+a_3))xyt^2 + (b_5(b_1b_8+b_4b_6)+ b_4(a_1+a_2)(a_1+a_3))yzt^2$$
$$+ (b_2b_7(a_2+a_3)+b_3b_5(a_1+a_2))x^2yt + (b_2b_6(a_1+a_2)+b_3b_8(a_2+a_3))xy^2z$$
$$+ (b_1b_5(a_2+a_3)+b_4b_7(a_1+a_2))xzt^2 + (b_1b_8(a_1+a_2)+b_4b_6(a_2+a_3))yz^2t$$
$$+ ((a_1+a_2)(a_2+a_3)(a_3+a_1)+(a_1+a_2)(b_5b_6+b_7b_8)+(a_2+a_3)(b_1b_2+b_3b_4))xyzt=0.$$
Now by putting $a_1=a_2=a_3$ we have the equation (c) of the quartic surface.  
The case (a) (resp. the case (b)) is obtained by putting $b_1=\cdots =b_8=0$ (resp. 
$a_2=a_3$ and $b_5=\cdots = b_8=0$).
\qed

\medskip

Next we study the quartic surfaces individually.  First we consider the case (a).

\begin{theorem}\label{KummerQuarticOrdinary}\
The quartic surface $S$ has exactly four rational double points 
$$P_1=(1, 0, 0, 0),\ P_1=(0, 1, 0, 0),\ P_3=(0, 0, 1, 0),\ P_4=(0, 0, 0, 1)$$ 
of type $D_4$
and contains four tropes:
\smallskip
$$\overline{\Theta}_1: \ x= (a_1+a_2)\sqrt{d_3}zt + (a_1+a_3)\sqrt{d_2}yt + (a_2+a_3)\sqrt{d_1}yz=0,$$
$$\overline{\Theta}_2:\ y= (a_1+a_2)\sqrt{d_3}zt + (a_1+a_3)\sqrt{c_2}xz + (a_2+a_3)\sqrt{c_1}xt =0,$$
$$\overline{\Theta}_3:\ z= (a_1+a_2)\sqrt{c_3}xy + (a_1+a_3)\sqrt{d_2}yt + (a_2+a_3)\sqrt{c_1}xt=0,$$
$$\overline{\Theta}_4:\ t= (a_1+a_2)\sqrt{c_3}xy + (a_1+a_3)\sqrt{c_2}xz + (a_2+a_3)\sqrt{d_1}yz=0.$$
The trope $\overline{\Theta}_i$ passes through three points $P_j\ (j\not=i)$ among the four points.
\end{theorem}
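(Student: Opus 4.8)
The plan is to work directly with the explicit quartic $F$ of Theorem \ref{KummerQuartic}(a), which I abbreviate as
\[
F = (a_1+a_2)^2(c_3x^2y^2+d_3z^2t^2) + (a_1+a_3)^2(c_2x^2z^2+d_2y^2t^2) + (a_2+a_3)^2(c_1x^2t^2 + d_1y^2z^2) + \lambda\, xyzt,
\]
where $\lambda=(a_1+a_2)(a_2+a_3)(a_3+a_1)\ne 0$ because the $a_i$ are distinct. I also recall that by Lemma \ref{3-fold}(a) and the standing assumption that $\calX$ is non-singular we have $c_id_i\ne 0$ for every $i$. Almost every assertion reduces to a short characteristic-$2$ computation; the one genuinely delicate point is identifying the analytic type of the four singularities as $D_4$.

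First I would locate the singular points. In characteristic $2$ every monomial of $F$ that is a square (all but the last) has vanishing partial derivatives, so
\[
\partial_x F = \lambda yzt,\quad \partial_y F = \lambda xzt,\quad \partial_z F = \lambda xyt,\quad \partial_t F = \lambda xyz.
\]
Every monomial of $F$ involves at least two distinct coordinates, so $P_1,\dots,P_4$ lie on $S$, and the displayed partials all vanish there, so these points are singular. The common zero locus of the four partials is exactly the union of the six edges of the coordinate tetrahedron, since $\lambda\ne 0$ forces at most two of the coordinates to be non-zero. Restricting $F$ to such an edge, e.g. to $\{z=t=0\}$, gives $(a_1+a_2)^2c_3\,x^2y^2$, which vanishes only at the two endpoints because $c_3\ne 0$; the same holds on the remaining five edges using $c_id_i\ne 0$. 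Hence the singular locus is precisely $\{P_1,P_2,P_3,P_4\}$, which gives the ``exactly four'' clause.

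The tropes and incidences are then immediate. Setting $x=0$ annihilates every monomial containing $x$ and leaves
\[
F|_{x=0}=(a_1+a_2)^2 d_3 z^2t^2 + (a_1+a_3)^2 d_2 y^2 t^2 + (a_2+a_3)^2 d_1 y^2 z^2,
\]
which in characteristic $2$ equals $\bigl((a_1+a_2)\sqrt{d_3}\,zt+(a_1+a_3)\sqrt{d_2}\,yt+(a_2+a_3)\sqrt{d_1}\,yz\bigr)^2$, since $(A+B+C)^2=A^2+B^2+C^2$. Thus the plane $\{x=0\}$ meets $S$ in the double conic $\overline{\Theta}_1$, so $\overline{\Theta}_1$ is a trope; the planes $\{y=0\},\{z=0\},\{t=0\}$ produce $\overline{\Theta}_2,\overline{\Theta}_3,\overline{\Theta}_4$ in exactly the same way. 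Finally $\overline{\Theta}_i$ lies in the coordinate plane omitting the $i$-th variable, which contains the three points $P_j$ $(j\ne i)$, and substituting each such $P_j$ into the defining conic yields $0$; this is the claimed incidence.

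The substantive step is to show that each $P_i$ is a rational double point of type $D_4$. Working in the chart $x=1$ with local coordinates $(y,z,t)$, the lowest-order part of $F$ is the quadratic $(a_1+a_2)^2c_3y^2+(a_1+a_3)^2c_2z^2+(a_2+a_3)^2c_1t^2$, which in characteristic $2$ is the perfect square $L^2$ of the linear form $L=(a_1+a_2)\sqrt{c_3}\,y+(a_1+a_3)\sqrt{c_2}\,z+(a_2+a_3)\sqrt{c_1}\,t$; so the tangent cone is a double plane, as it must be at a $D$-type point. I would take $u=L$ as a new coordinate (legitimate since the coefficient of $y$ is non-zero) and eliminate $y$. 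Modulo terms divisible by $u$ and modulo the quartic terms, the cubic $\lambda yzt$ then becomes a non-zero scalar multiple of $zt(z+c't)$ with $c'\ne0$, a cubic form with three distinct linear factors, so $F$ takes the shape $u^2+zt(z+c't)+(\text{higher order})$, which is Artin's characteristic-$2$ normal form $D_4^0$. The main obstacle is precisely this last reduction: one must verify that the residual $uzt$ term and the quartic terms can be absorbed by a formal change of variables without altering the analytic type, equivalently that resolving the double point produces the $D_4$ dual graph. I expect this to follow from the three distinct branches of the cubic, but it is the one place requiring a careful normal-form argument or a direct appeal to Artin's classification. As an independent check, once the identification $S\cong J(C)/\la\iota\ra$ is in hand (Propositions \ref{Basic} and \ref{Basic2}), the count of four singular points singles out the ordinary row of Theorem \ref{Kummer-sigularity}, again giving four $D_4$ points — though one must order the logic so as not to feed this conclusion back into the proof of Proposition \ref{Basic2}.
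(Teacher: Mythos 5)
Your proposal is correct and follows essentially the same route as the paper: the Jacobian criterion (where in characteristic $2$ only the $xyzt$ term survives differentiation) locates exactly the four coordinate vertices, the coordinate hyperplane sections become perfect squares giving the tropes, and the $D_4$ type is obtained either by a local blow-up/normal-form analysis or by appealing to the identification $S\cong J(C)/\la\iota\ra$ together with Theorem \ref{Kummer-sigularity}. The one step you flag as delicate (absorbing the residual terms after reducing to $u^2+zt(z+c't)+\cdots$) is exactly the step the paper also leaves to a brief ``by blowing up the singular points'' remark, and your caution about not feeding Proposition \ref{Basic2} back into its own proof matches the paper's intended ordering, so there is no gap beyond what the paper itself elides.
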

\begin{proof}
It follows from the Jacobian criterion that $S$ has exactly four singular points 
$P_1$$,\ldots,$ $P_4$.
By blowing up the singular points, we can see that $P_i$ is a rational double point
of type $D_4$ (or it follows from Remark \ref{CurveOrdinary2}, Corollary \ref{KummerQuarticCor}, 
Proposition \ref{Basic2} and Theorem \ref{Kummer-sigularity} that the singularities are of type $D_4$).
Each trope is defined by a hyperplane section, e.g.
$2\overline{\Theta}_1 = \{x=0\}$.
The last assertion is straightforward.
\end{proof}

The Cremona transformation
\begin{equation}\label{CremonaOrdinary}
c_r: (x, y, z, t)\to \left(\sqrt{d_1d_2d_3}/x, \sqrt{c_1c_2d_3}/y, \sqrt{c_1d_2c_3}/z, \sqrt{d_1c_2c_3}/t\right)
\end{equation}
preserves the equation of the quartic surface and
interchanges $P_i$ and $\overline{\Theta}_i$ $(1\leq i \leq 4)$.  
Also there are the following involutions of $S$ which generate $({\bf Z}/2{\bf Z})^2$:
$$\varphi_1: (x, y, z, t)\to \left(\sqrt{d_1d_2}y, \sqrt{c_1c_2}x, \sqrt{c_1d_2}t, \sqrt{d_1c_2}z\right)$$
$$\varphi_2: (x, y, z, t)\to \left(\sqrt{d_1d_3}z, \sqrt{c_1d_3}t, \sqrt{c_1c_3}x, \sqrt{d_1c_3}y\right).$$

\begin{remark}\label{LaszloRemark}
Laszlo and Pauly \cite[Proposition 4.1]{LP} gave the equation of the Kummer quartic surface
associated with the Jacobian $J(C)$ of an ordinary curve $C$ of genus 2 as follows:
$$\lambda_{10}^2(x_{00}^2x_{10}^2 + x_{01}^2x_{11}^2) + \lambda_{01}^2(x_{00}^2x_{01}^2 +
x_{10}^2x_{11}^2) + \lambda_{11}^2(x_{00}^2x_{11}^2+x_{01}^2x_{10}^2)$$
$$ + \lambda_{00}^{-1}\lambda_{10}\lambda_{01}\lambda_{11}x_{00}x_{01}x_{10}x_{11}=0.$$
Here $\{x_{ij}\}$ is a basis of $H^0(J(C), 2\Theta)$.
In the equation in Theorem \ref{KummerQuartic} (a), by putting 
$$X=x\sqrt[4]{c_1c_2c_3}, \ Y=y\sqrt[4]{d_1d_2c_3}, \ Z=z\sqrt[4]{d_1c_2d_3},\ T=t\sqrt[4]{c_1d_2d_3}$$
and then dividing by ${(a_1+a_2)(a_2+a_3)(a_3+a_1)\over \sqrt{c_1c_2c_3d_1d_2d_3}}$,
we obtain
\begin{equation}\label{LPeq}
\ 
\left\{ \begin{array}{l}
{\sqrt{c_3d_3}(a_1+a_2)\over (a_1+a_3)(a_2+a_3)}(X^2Y^2+Z^2T^2)+ {\sqrt{c_2d_2}(a_1+a_3)\over (a_1+a_2)(a_2+a_3)}(X^2Z^2+Y^2T^2)\\
\\ 
+{\sqrt{c_1d_1}(a_2+a_3)\over (a_1+a_2)(a_1+a_3)}(X^2T^2+Y^2Z^2)
+ XYZT=0
\end{array} \right.
\end{equation}
which is the same equation as Laszlo and Pauly's one above.
\end{remark}

\medskip

Next we consider the quartic surface in the case (b).

\begin{theorem}\label{KumQuarticRank1}
The surface $S_1$ has exactly two singular points 
$$P_1=\left(0,\sqrt{b_1}, \sqrt{b_2}, 0\right),\ \ P_2=\left(\sqrt{b_4}, 0, 0, \sqrt{b_3}\right)$$
of type $D_8$ and contains two tropes
$\overline{\Theta}_1$ and $\overline{\Theta}_2$ both of which are double conics cutting by
the hyperplane section $\sqrt{b_2}y + \sqrt{b_1}z=0$ and $\sqrt{b_3}x+\sqrt{b_4}t=0$, respectively.
Two tropes $\overline{\Theta}_1$ and $\overline{\Theta}_2$ meet at $P_1$ and $P_2$.
\end{theorem}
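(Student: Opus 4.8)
The statement is Theorem~\ref{KumQuarticRank1}, describing the singular locus and tropes of the Kummer quartic surface $S_1$ whose explicit equation is given in Theorem~\ref{KummerQuartic}(b). My plan is to extract everything directly from that quartic equation by the Jacobian criterion, exactly as was done in the ordinary case (Theorem~\ref{KummerQuarticOrdinary}). First I would verify that the two claimed points $P_1=(0,\sqrt{b_1},\sqrt{b_2},0)$ and $P_2=(\sqrt{b_4},0,0,\sqrt{b_3})$ actually lie on $S_1$ by substituting into the quartic; here the relations $b_1b_2=b_3b_4$ from Remark~\ref{coefficients2} will be essential to make the substitutions collapse. Next I would compute all four partial derivatives $\partial F/\partial x,\ \partial F/\partial y,\ \partial F/\partial z,\ \partial F/\partial t$ of the defining polynomial $F$, remembering that in characteristic~2 every square term differentiates to~$0$, so the Jacobian is controlled entirely by the odd-degree monomials. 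Evaluating these at $P_1$ and $P_2$ should give zero, confirming they are singular.

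\textbf{Finding the tropes and that the singularities are the only ones.} For the tropes I would restrict $F$ to the two hyperplanes $H_1:\ \sqrt{b_2}\,y+\sqrt{b_1}\,z=0$ and $H_2:\ \sqrt{b_3}\,x+\sqrt{b_4}\,t=0$ and check that the restriction becomes a perfect square, i.e. $F|_{H_i}=(\text{conic})^2$. On $H_1$ I would eliminate $z=\sqrt{b_2/b_1}\,y$ (assuming the generic situation $b_1b_2b_3b_4\ne0$; the degenerate sub-cases of Remark~\ref{coefficients} need a separate but parallel check, cf. Remark~\ref{prank1translationRemark}) and watch the quartic degenerate; using $b_1b_2=b_3b_4$ repeatedly, the coefficients should reorganize into the square of a quadratic form, which is precisely a double conic $2\overline{\Theta}_1$. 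The same computation on $H_2$ gives $2\overline{\Theta}_2$. To show $P_1,P_2$ are the \emph{only} singular points, I would argue that the four partials vanish simultaneously only at these two points: solving the system cut out by the odd-degree terms, one sees the vanishing forces the point onto both hyperplanes $H_1\cap H_2$, whose intersection with $S_1$ consists of exactly $P_1$ and $P_2$. That both tropes pass through $P_1$ and $P_2$ is then immediate, since each $P_i$ satisfies both $H_1$ and $H_2$.

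\textbf{Identifying the singularity type as $D_8$.} Rather than resolving the singularities by hand, I would invoke the abstract machinery already assembled. By Proposition~\ref{Basic2}, $S_1\cong A/\langle\iota\rangle$ where $A=J(C)$, and by Proposition~\ref{Basic} this $A$ is the Jacobian of the genus-$2$ curve attached to the $2$-rank $1$ pencil $\calP_1$. Since $\calP_1$ sits in case~(b), the associated curve $C$ has $2$-rank $1$ (this is the content of the paper's main correspondence, to be established via Theorem~\ref{CurveP-rank1} analogously to Theorem~\ref{CurveOrdinary}), and Theorem~\ref{Kummer-sigularity}(ii) then tells us that $J(C)/\langle\iota\rangle$ carries exactly two rational double points of type $D_8$. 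This matches the two singular points $P_1,P_2$ just found, so they must be of type $D_8$. Alternatively, for a self-contained check one blows up $P_1$ and traces out the $D_8$ dual graph directly.

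\textbf{Main obstacle.} The hard part will be the explicit verification that $F|_{H_i}$ is a perfect square and that the only common zeros of the four partials are $P_1,P_2$: the defining quartic in Theorem~\ref{KummerQuartic}(b) has many monomials, and one must lean heavily and carefully on the syzygies $b_1b_2=b_3b_4$ (and its consequences) to force the cancellations in characteristic~$2$. Handling the boundary cases where some $b_i$ vanish (permitted by Remark~\ref{coefficients}) without disturbing the final count of singular points and tropes is the delicate bookkeeping that I expect to consume most of the effort.
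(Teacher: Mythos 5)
Your proposal follows essentially the same route as the paper's proof: the Jacobian criterion for locating the two singular points, restriction of the quartic to the hyperplanes $\sqrt{b_2}y+\sqrt{b_1}z=0$ and $\sqrt{b_3}x+\sqrt{b_4}t=0$ using $b_1b_2=b_3b_4$ to exhibit the tropes as double conics, and identification of the $D_8$ type by combining Proposition \ref{Basic2} and Theorem \ref{Kummer-sigularity} with the later computation showing the associated Jacobian has $2$-rank $1$ (Theorem \ref{CurveRank1}). The only cosmetic difference is that the paper isolates the single non-square term $b_3x^2yz+b_2xy^2t+b_1xz^2t+b_4yzt^2$ and rewrites it as $\sqrt{b_1/b_2}\,z^2(\sqrt{b_3}x+\sqrt{b_4}t)^2$ on the hyperplane, which is exactly the cancellation you anticipate.
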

\begin{proof}
First note that $(b_1, b_2)\ne (0,0)$, $(b_3,b_4)\ne (0,0)$ (Remark \ref{coefficients}).
By the Jacobian criterion, we can see that $S_1$ has two singular points.
We will show that the Jacobian surface has 2-rank 1 (Remark \ref{CurveRank12}). 
It follows from Corollary \ref{KummerQuarticCor}, 
Proposition \ref{Basic2} and Theorem \ref{Kummer-sigularity} that
the singularities are of type $D_8$.
For the second assertion, for example, by putting the equation $\sqrt{b_2}y + \sqrt{b_1}z=0$ in 
the last term of the equation in Theorem \ref{KumQuarticRank1}, we have a decomposition
$$b_3x^2yz + b_2xy^2t + b_1xz^2t + b_4yzt^2= \sqrt{b_1/b_2}z^2\left(\sqrt{b_3}x + \sqrt{b_4}t\right)^2$$
by using $b_1b_2=b_3b_4$.  This implies the assertion.
\end{proof}

Recall that we may assume that $b_2b_4c_2=b_1b_3d_2$ for $b_1b_2b_3b_4\ne 0$ and 
$b_2^2c_2=b_3^2d_2$ for $b_1=b_4=0$ (Lemma \ref{prank1translation}, Remark \ref{prank1translationRemark}).

\begin{prop}\label{prank1translation2}
$S_1$ has an involution $\iota$ defined by
$$\iota : (x, y, z, t) \to \left(\sqrt[4]{d_1b_2^2/c_1b_3^2}y, \sqrt[4]{c_1b_3^2/d_1b_2^2}x, \sqrt[4]{c_1b_4^2/d_1b_1^2}t, \sqrt[4]{d_1b_1^2/c_1b_4^2}z\right)$$
for $b_1b_2b_3b_4\ne 0$ and 
$$\iota : (x, y, z, t) \to \left(\sqrt[4]{d_1b_2^2/c_1b_3^2}y, \sqrt[4]{c_1b_3^2/d_1b_2^2}x, \sqrt[4]{c_1b_2^2/d_1b_3^2}t, \sqrt[4]{d_1b_3^2/c_1b_2^2}z\right)$$
for $b_1=b_4=0$ which satisfies $\iota(\bar{\Theta}_1)=\bar{\Theta}_2$ and $\iota(P_1)=P_2$.
\end{prop}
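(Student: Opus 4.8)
The plan is to verify directly that the stated map $\iota$ is an involution preserving the quartic equation of $S_1$ from Theorem \ref{KummerQuartic}(b), and that it exchanges the two singular points and the two tropes. For the generic case $b_1b_2b_3b_4\ne 0$, write $\lambda=\sqrt[4]{d_1b_2^2/c_1b_3^2}$ and $\mu=\sqrt[4]{c_1b_4^2/d_1b_1^2}$, so that $\iota\colon (x,y,z,t)\mapsto(\lambda y,\lambda^{-1}x,\mu t,\mu^{-1}z)$ and $\iota^2$ is visibly the identity. Since $\iota$ interchanges $x\leftrightarrow y$ and $z\leftrightarrow t$ up to the scalars $\lambda^{\pm1},\mu^{\pm1}$, I would organize the verification by grouping the monomials of the quartic $F$ into orbits under this coordinate swap and comparing coefficients orbit by orbit, with the goal of showing $F\circ\iota=F$.

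First I would treat the pure fourth powers. The orbit $\{x^4,y^4\}$ forces the new $x^4$-coefficient $b_2^2d_1\lambda^{-4}$ to equal $b_3^2c_1$; substituting $\lambda^4=d_1b_2^2/(c_1b_3^2)$ makes both sides agree and simultaneously fixes the proportionality constant to be $1$. The orbit $\{z^4,t^4\}$ works identically and pins down $\mu^4$; this is precisely the reason for the choice of fourth roots. The self-paired monomials $x^2y^2$ and $z^2t^2$ are sent to themselves with scalar factor $1$, so they impose no condition.

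The decisive step is the orbit $\{x^2z^2,y^2t^2\}$, which $\iota$ exchanges with factor $\lambda^2\mu^2=b_2b_4/(b_1b_3)$. Writing $C_{xz}$ and $C_{yt}$ for the corresponding coefficients in Theorem \ref{KummerQuartic}(b), invariance is equivalent to $b_1b_3C_{yt}=b_2b_4C_{xz}$. Expanding both sides and cancelling the $c_3$- and $d_3$-terms by means of the relation $b_1b_2=b_3b_4$ from Remark \ref{coefficients2}, this collapses to the single identity $b_1b_3d_2=b_2b_4c_2$, which is exactly the normalization secured in Lemma \ref{prank1translation}(1). I expect this to be the main obstacle: the entire purpose of the preliminary coordinate change in Lemma \ref{prank1translation} is to make this one coefficient comparison hold. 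The remaining cubic monomials $\{x^2yz,xy^2t\}$ and $\{xz^2t,yzt^2\}$ are exchanged with factor $\lambda\mu$, and their invariance reduces to $\lambda\mu=b_2/b_3=b_4/b_1$, which again follows from $b_1b_2=b_3b_4$; since $\lambda\mu=\sqrt{b_2b_4/(b_1b_3)}$ in characteristic $2$, this is immediate.

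Finally I would record the geometric statements. Substituting $P_1=(0,\sqrt{b_1},\sqrt{b_2},0)$ gives $\iota(P_1)=(\lambda\sqrt{b_1},0,0,\mu^{-1}\sqrt{b_2})$, and using $\lambda\mu=\sqrt{b_2b_4/(b_1b_3)}$ one checks $\lambda\sqrt{b_1}:\mu^{-1}\sqrt{b_2}=\sqrt{b_4}:\sqrt{b_3}$, so $\iota(P_1)=P_2$. For the tropes, the image of the hyperplane $\sqrt{b_2}\,y+\sqrt{b_1}\,z=0$ defining $\overline{\Theta}_1$ is computed from the $y$- and $z$-coordinates of $\iota$ to be $\sqrt{b_2}\lambda^{-1}x+\sqrt{b_1}\mu\,t=0$, which the same scalar identity identifies with the hyperplane $\sqrt{b_3}\,x+\sqrt{b_4}\,t=0$ defining $\overline{\Theta}_2$; hence $\iota(\overline{\Theta}_1)=\overline{\Theta}_2$. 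The degenerate case $b_1=b_4=0$ is handled by the same scheme, now with scalars $\lambda=\sqrt[4]{d_1b_2^2/c_1b_3^2}$, $\nu=\sqrt[4]{c_1b_2^2/d_1b_3^2}$ and $\lambda\nu=b_2/b_3$, where the crucial coefficient comparison instead reads $b_2^2c_2=b_3^2d_2$, the normalization provided by Remark \ref{prank1translationRemark}.
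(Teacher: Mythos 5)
Your proposal is correct and is exactly the direct verification the paper has in mind: its own proof consists only of the sentence ``The proof is straightforward.'' All of your key computations check out — the orbit $\{x^2z^2,y^2t^2\}$ does reduce, via $b_1b_2=b_3b_4$, to the normalization $b_2b_4c_2=b_1b_3d_2$ of Lemma \ref{prank1translation}(1) (resp.\ $b_2^2c_2=b_3^2d_2$ in the degenerate case), and the identities $\lambda\mu=b_2/b_3=b_4/b_1$ handle the remaining orbits, the singular points, and the tropes.
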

\begin{proof}
The proof is straightforward.  
\end{proof}
We do not know an exact formula of a Cremona transformation interchanging $\{\bar{\Theta}_1, \bar{\Theta}_2\}$ and $\{P_1, P_2\}$
as the one given in (\ref{CremonaOrdinary}) for ordinary case.
\medskip

Finally we consider the case (c).  
Let $\xi_1= \sqrt{b_2b_4+ b_5b_8},\ \xi_2 = \sqrt{b_1b_3+b_6b_7}$.  Recall that 
$(b_1b_5+b_4b_7, b_2b_6+b_3b_8)\ne (0,0),\  (b_1b_8+b_4b_6, b_2b_7+b_3b_5) \ne (0,0)$
(Remark \ref{coefficients}) and $(\xi_1, \xi_2) \not=(0,0)$ (Lemma \ref{Invariant}). 
Let
$$x_0 =\sqrt{(b_1b_8+b_4b_6)\xi_1},\quad y_0 =\sqrt{(b_1b_5+b_4b_7)\xi_2},$$
$$z_0 =\sqrt{(b_2b_7+b_3b_5)\xi_1},\quad t_0 =\sqrt{(b_2b_6+b_3b_8)\xi_2},$$
$$x_0' =\sqrt{(b_1b_8+b_4b_6)\xi_2}, \quad y_0' =\sqrt{(b_1b_5+b_4b_7)\xi_1},$$
$$z_0' =\sqrt{(b_2b_7+b_3b_5)\xi_2}, \quad t_0' =\sqrt{(b_2b_6+b_3b_8)\xi_1}.$$

\begin{theorem}\label{ssKummerQ}
{\rm (1)}\ $S_0$ has a unique singular point 
$P_0= (x_0, y_0, z_0, t_0)$
of type $\mbox{\MARU{4}}_{0,1}^{1}$.

\smallskip
{\rm (2)}\ $S_0$ contains a trope $\bar{\Theta}$ by cutting by the hyperplane
$$z_0'x + t_0'y + x_0'z +y_0't=0.$$
\end{theorem}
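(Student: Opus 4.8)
The plan is to establish (1) by the Jacobian criterion and then to read off the type of the singularity from the general theory, and to establish (2) by a direct substitution exhibiting the hyperplane section as a perfect square.

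For (1), I would first verify that $P_0=(x_0,y_0,z_0,t_0)$ actually lies on $S_0$ by substituting its coordinates into the quartic of Theorem~\ref{KummerQuartic}~(c). Note that in characteristic $2$ the Euler identity reads $xF_x+yF_y+zF_z+tF_t=0$ identically for a quartic $F$, so, unlike in characteristic $0$, the vanishing of $F$ at $P_0$ is an independent condition and must be checked separately. I would then compute the four partial derivatives $F_x,F_y,F_z,F_t$ and verify that each vanishes at $P_0$. Both verifications are formal manipulations with the monomials $x_0^2=(b_1b_8+b_4b_6)\xi_1$, $y_0^2=(b_1b_5+b_4b_7)\xi_2$, and so on, together with the mixed products $x_0y_0,x_0z_0,\dots$; the key inputs are the quadratic relations $b_1b_2=b_3b_4$ and $b_5b_6=b_7b_8$ and the derived identities (\ref{relation-9}) and (\ref{relation-91}) of Remark~\ref{coefficients2}, which force the many monomials produced by the substitution to collapse. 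Lemma~\ref{Invariant} guarantees $(\xi_1,\xi_2)\ne(0,0)$ and Remark~\ref{coefficients} guarantees that the four bracketed products are not all zero, so $P_0$ is a genuine point of $\mathbf{P}^3$.

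Having exhibited one singular point, I would obtain both its uniqueness and its type from the structural results rather than from an explicit local resolution. By Propositions~\ref{Basic} and~\ref{Basic2} the surface $S_0$ is isomorphic to $J(C)/\langle\iota\rangle$ for the associated genus~$2$ curve $C$. In case (c) the curve $C$ is supersingular (this is established in Section~\ref{sec5}), and by the remark in Subsection~\ref{Char2} no genus~$2$ Jacobian is superspecial; hence Theorem~\ref{Kummer-sigularity}~(iv) applies and shows that $J(C)/\langle\iota\rangle$ has exactly one singular point, an elliptic double point of type $\mbox{\MARU{4}}_{0,1}^{1}$. Consequently $P_0$ is the unique singular point of $S_0$ and is of the asserted type, completing (1) with no blow-up computation.

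For (2), I would substitute the linear relation $z_0'x+t_0'y+x_0'z+y_0't=0$ into the quartic, eliminating one variable, and show that the resulting quartic form on the plane $\bar{\Theta}$ is the square of a conic; this is precisely the assertion that $\bar{\Theta}$ is a trope (a double conic), in parallel with the decomposition used in the proof of Theorem~\ref{KumQuarticRank1}. Since squaring in characteristic $2$ is additive, the target is automatically a sum of squares of monomials, so the verification reduces to checking that, after substitution and reduction by $b_1b_2=b_3b_4$, $b_5b_6=b_7b_8$ and (\ref{relation-91}), all the ``cross'' terms vanish and the surviving terms assemble into a single square. The main obstacle throughout is the size of these computations: the quartic of Theorem~\ref{KummerQuartic}~(c) has on the order of thirty terms, and both the Jacobian check in (1) and the perfect-square check in (2) demand repeated and careful use of the relations among the $b_i$ to force the large intermediate expressions to cancel. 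The conceptual content is by contrast light, since once $P_0$ is shown to be singular and $\bar{\Theta}$ a double conic, the identification of the singularity type is purely a matter of quoting Theorem~\ref{Kummer-sigularity}.
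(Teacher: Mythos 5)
Your proposal is correct and follows essentially the same route as the paper: singularity of $P_0$ via the Jacobian criterion (with the relations $b_1b_2=b_3b_4$, $b_5b_6=b_7b_8$ forcing the cancellations), uniqueness and the type $\mbox{\MARU{4}}_{0,1}^{1}$ deduced from Proposition~\ref{Basic2} together with Theorems~\ref{CurveSupersingular} and~\ref{Kummer-sigularity} rather than from an explicit resolution, and part (2) by exhibiting the hyperplane section as a double conic. The only cosmetic difference is that the paper substitutes a general hyperplane $t=\alpha x+\beta y+\gamma z$ and solves for the coefficients making the cross terms vanish, whereas you verify the stated hyperplane directly; both reduce to the same long computation.
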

\begin{proof}
(1)\ By the Jacobian criterion, we
can check that $P_0$ is a singular point of the surface by elementary but long calculation.
Later we show that $S_0$ is the quotient of the supersingular abelian surface (Remark \ref{CurveSupersingularIgusa}).  
It follows from Corollary \ref{KummerQuarticCor}, 
Proposition \ref{Basic2} and Theorem \ref{Kummer-sigularity} that
$S_0$ has a unique singularity of type $\mbox{\MARU{4}}_{0,1}^{1}$.
We remark that the point $P_0$ is nothing but the
base point of the linear system defined by six quadrics 
$$X_1=b_3x^2+b_4t^2+b_6xz+b_8yt,\ Y_1=b_2y^2+b_1z^2+b_7xz+b_5yt,$$
$$X_2=b_2xy+b_5xt+b_8yz+b_4zt,\ Y_2=b_3xy+b_7xt+b_6yz+b_1zt,$$
$$X_3= b_7x^2+b_8y^2+b_1xz+b_4yt,\ Y_3=b_6z^2+b_5t^2+b_3xz+b_2yt.$$
Here $X_i^2, Y_i^2$ are coefficients of $c_i, d_i$ when we consider the quartic equation of $S_0$
as a linear form of $c_i, d_i$. 

(2)\ Consider a hyperplane $t=\alpha x +\beta y + \gamma z$ $(\alpha, \beta, \gamma\in k)$.
By putting this into the equation of $S_0$, we have the equation 
$$f(x,y,z)^2 + xy L_1(x,y,z)^2 + yz L_2(x,y,z)^2+ zx L_3(x,y,z)^2=0$$
where $f$ is a quadric and $L_1, L_2, L_3$ are linear forms.  We can check that
 $L_1, L_2, L_3$ coincide up to constant by elementary but long calculation.
Then we choose $\alpha, \beta, \gamma$ satisfying $L_1 \equiv 0$ which gives  
the desired hyperplane.  Now a direct calculation shows that
the desired hyperplane is given in (2).
\end{proof}
We do not know an exact formula of a Cremona transformation interchanging $\bar{\Theta}$ and $P_0$
as the one given in (\ref{CremonaOrdinary}) for the ordinary case.

\section{The intersection of three quadrics}\label{sec3}

In this section, we study the sets of singular lines denoted by
$\Sigma$, $\Sigma_1$ and $\Sigma_0$ (see the definition (\ref{Kummer})).  
First we give a proof of Theorem \ref{Int3quadrics}.
Then we study singularities and lines on them, and give birational
maps from $S$ to $\Sigma$, $S_1$ to $\Sigma_1$ and $S_0$ to $\Sigma_0$, respectively.

\subsection*{Proof of Theorem \ref{Int3quadrics}}

We use Proposition \ref{singularlines}.
Consider the case (c) without assuming $a_1=a_2=a_3$.
For 
$$
x=(u_1, u_2, u_3, v_1, v_2, v_3) \in G,\ x'= (u_1', u_2', u_3', v_1', v_2', v_3')\in G\cap Q_2,
$$
the tangent space of $G$ at $x$ is given by 
$$
\sum_{i=1}^3 (v_iX_i + u_iY_i) =0,
$$
and that of $Q_2$ at $x'$ is given by
$$
(a_1v_1'+b_7u_2' +b_5v_2')X_1 + (a_1u_1'+b_6u_2'+b_8v_2')Y_1 + (a_2v_2'+b_7u_1'+b_3u_3'+ 
b_6v_1'+b_1v_3')X_2
$$
$$
 + (a_2u_2'+b_5u_1'+b_2u_3'+ b_8v_1'+b_4v_3')Y_2 + (a_3v_3'+b_3u_2'+b_2v_2')X_3 + (a_3u_3'+ b_1u_2'+b_4v_2')Y_3 =0.
$$
The condition that these two tangent spaces coincide implies that the coefficients of $X_i, Y_i$ coincide, and by putting these into $\sum_i u_iv_i=0$, we obtain the equation
$$(a_1^2 + b_5b_6+b_7b_8)u_1'v_1' + (a_2^2+ b_1b_2+b_3b_4+b_5b_6+b_7b_8)u_2'v_2' +(a_3^2 + b_1b_2+b_3b_4)u_3'v_3'$$
$$+ b_5b_7u_1'^2 + b_6b_8v_1'^2 + (b_1b_3+b_6b_7)u_2'^2 + (b_2b_4+b_5b_8)v_2'^2 + b_2b_3u_3'^2 + b_1b_4v_3'^2$$
$$+ (b_1b_5+b_4b_7)u_1'v_3' + (b_2b_6+b_3b_8)u_3'v_1' + (b_2b_7+b_3b_5)u_1'u_3' + (b_1b_8+b_4b_6)v_1'v_3' $$
$$+ (a_1+a_2)b_7u_1'u_2' + (a_1+a_2)b_5u_1'v_2' + (a_1+a_2)b_6u_2'v_1'+ (a_1+a_2)b_8v_1'v_2'$$
$$+ (a_2+a_3)b_3u_2'u_3' + (a_2+a_3)b_1u_2'v_3' + (a_2+a_3)b_2u_3'v_2'+ (a_2+a_3)b_4v_2'v_3'.$$
Now by using the equations $a_1=a_2=a_3$ and $b_1b_2=b_3b_4, \ b_5b_6=b_7b_8$, we obtain the equation (c).
By putting $b_1=\cdots = b_8=0$, we have the equation (a), and by putting $a_2=a_3$, $b_1b_2=b_3b_4$
and $b_5=\cdots = b_8=0$, we obtain the equation (b).
\qed

\medskip

\begin{remark}\label{originalequation}
Using (\ref{ordinaryPencil}), (\ref{p-rank1Pencil}) and (\ref{SupersingularPencil}), 
we see, in a similar way to the proof of 
Theorem \ref{Int3quadrics}, that the surfaces $\Sigma$, $\Sigma_1$ and $\Sigma_0$
are given as follows:

{\rm (a)} \
The surface $\Sigma$ is isomorphic to the surface defined by
$$
A_1+ \sum_{i=1}^3 (p_i^2X_i^2 + t_i^2Y_i^2)= 
\sum_{i=1}^3 (a_iX_iY_i + r_i^2X_i^2 + s_i^2Y_i^2).
$$
$$
= \sum_{i=1}^3 (a_i^2X_iY_i + a_i^2p_i^2X_i^2 + a_i^2t_i^2Y_i^2)=0.
$$

\smallskip
{\rm (b)} \ 
The surface $\Sigma_1$ is isomorphic to the surface defined by
$$
A_1 + \sum_{i=1}^3 (p_i^2X_i^2 + t_i^2Y_i^2)=
\sum_{i=1}^3 (a_iX_iY_i + r_i^2X_i^2 + s_i^2Y_i^2) + X_3Y_2.
$$
$$
= \sum_{i=1}^3 (a_i^2X_iY_i + a_i^2p_i^2X_i^2 + a_i^2t_i^2Y_i^2)+ p_2^2X_3^2 + t_3^2Y_2^2= 0
$$

\smallskip

{\rm (c)} \
The surface $\Sigma_0$ is isomorphic to the surface defined by
$$
A_1+ \sum_{i=1}^3 (p_i^2X_i^2 + t_i^2Y_i^2)=
\sum_{i=1}^3 (aX_iY_i + r_i^2X_i^2 + s_i^2Y_i^2) + X_2Y_1+X_3Y_2.
$$
$$
= \sum_{i=1}^3 (a^2X_iY_i + a^2p_i^2X_i^2 + a^2t_i^2Y_i^2)+ 
p_1^2X_2^2 +p_2^2X_3^2 + t_2^2Y_1^2 + t_3^2Y_2^2 + X_3Y_1= 0
$$

These equations are sometimes useful to examine the properties of $\Sigma$, $\Sigma_1$ and 
$\Sigma_0$, for example, to calculate the singularities.
\end{remark}

Next we study singularities and lines on $\Sigma$, $\Sigma_1$, $\Sigma_0$, and give birational
maps from $S$ to $\Sigma$, $S_1$ to $\Sigma_1$ and $S_0$ to $\Sigma_0$ individually.
First we consider the case (a).

\begin{theorem}\label{Int3quadricsOrdinary}\ 
\smallskip
{\rm (1)} \ The lines on $\Sigma$ are exactly the following eight ones$:$

\smallskip
$\widetilde{\Theta}_1 : X_1=X_2=X_3=\sum_i\sqrt{d_i}Y_i =0,$

$\widetilde{\Theta}_2 : Y_1=Y_2=X_3=\sqrt{c_1}X_1 + \sqrt{c_2}X_2 + \sqrt{d_3}Y_3 =0,$

$\widetilde{\Theta}_3 : Y_1=X_2=Y_3=\sqrt{c_1}X_1 + \sqrt{d_2}Y_2 + \sqrt{c_3}X_3 =0,$

$\widetilde{\Theta}_4 : X_1=Y_2=Y_3=\sqrt{d_1}Y_1 + \sqrt{c_2}X_2 + \sqrt{c_3}X_3 =0,$

$E_1 : Y_1=Y_2=Y_3=\sum_i\sqrt{c_i}X_i =0,$

$E_2 : X_1=X_2=Y_3=\sqrt{d_1}Y_1 + \sqrt{d_2}Y_2 + \sqrt{c_3}X_3 =0,$

$E_3 : X_1=Y_2=X_3=\sqrt{d_1}Y_1 + \sqrt{c_2}X_2 + \sqrt{d_3}Y_3 =0,$

$E_4 : Y_1=X_2=X_3=\sqrt{c_1}X_1 + \sqrt{d_2}Y_2 + \sqrt{d_3}Y_3 =0.$

\smallskip
{\rm (2)}\ The surface $\Sigma$ has exactly twelve nodes at the twelve intersection points of
$\widetilde{\Theta}_i$ and $E_j$.  In particular $\Sigma$ is a $K3$ surface with rational double points.
\end{theorem}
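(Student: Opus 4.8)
The plan is to prove (1) by first producing the eight lines explicitly and then ruling out any others, and to prove (2) by a Jacobian computation followed by a local tangent-cone analysis.

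For part (1), I would first check that each listed locus is a line on $\Sigma$, using uniformly the characteristic-two identity $\sum_i \mu_i Z_i^2 = (\sum_i \sqrt{\mu_i}\,Z_i)^2$. Concretely, choose for each index $i$ one of the two conjugate coordinates $X_i,Y_i$ and set it to zero; this kills three of the six coordinates, one from each pair $\{X_i,Y_i\}$. On such a coordinate plane every monomial $X_iY_i$ vanishes, so both $F_1=\sum X_iY_i$ and $F_3=\sum a_i^2X_iY_i$ (in the notation of Theorem \ref{Int3quadrics}(a)) vanish identically, while $F_2$ restricts to a sum of three squares $c_iX_i^2$ or $d_iY_i^2$, hence to the square of a single linear form. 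Its zero locus is one line inside the plane, and the eight choices of which coordinate to kill in each pair yield exactly $\widetilde{\Theta}_1,\dots,\widetilde{\Theta}_4,E_1,\dots,E_4$.

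To show these are all, I would parametrize an arbitrary line $\ell\subset\Sigma$ by two points $\mathbf{p},\mathbf{q}$ and impose $F_j|_\ell\equiv0$. Expanding $F_j(s\mathbf{p}+t\mathbf{q})$ as a binary quadratic in $(s:t)$ forces $F_j(\mathbf{p})=F_j(\mathbf{q})=0$ together with the vanishing of the polar forms $B_j(\mathbf{p},\mathbf{q})$ for $j=1,2,3$. In characteristic two the square terms $c_iX_i^2+d_iY_i^2$ contribute nothing to $B_2$, so $B_1,B_2,B_3$ are the three diagonal forms with coefficients $1,a_i,a_i^2$; since $a_1,a_2,a_3$ are distinct, a Vandermonde argument forces $X_i(\mathbf{p})Y_i(\mathbf{q})+X_i(\mathbf{q})Y_i(\mathbf{p})=0$ for each $i$. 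This says the projections of $\mathbf{p}$ and $\mathbf{q}$ to each coordinate plane $\langle X_i,Y_i\rangle$ are proportional, so $\ell$ lies in a diagonal $2$-plane $\Pi$. Restricting $F_1,F_2,F_3$ to $\Pi$ makes each the square of a linear form in the plane parameter, and $\Pi\cap\Sigma$ contains a line only if those three linear forms are proportional; using again that the $a_i$ are distinct and that $c_id_i\ne0$ (the standing nonsingularity hypothesis, via Lemma \ref{3-fold}(a)), this forces $\Pi$ to be one of the eight coordinate $2$-planes. \emph{The main obstacle is the bookkeeping of the degenerate configurations}, in which one conjugate pair vanishes identically along $\ell$; these are precisely the cases where $c_id_i\ne0$ is indispensable, since it excludes a line lying in a single plane $\langle X_i,Y_i\rangle$.

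For part (2), I would read off the singular locus from the Jacobian of $(F_1,F_2,F_3)$. In characteristic two the derivatives of the square terms vanish, so every column of the $3\times6$ Jacobian is a scalar multiple of the Vandermonde vector $(1,a_i,a_i^2)^{t}$; its rank therefore equals the number of indices $i$ with $(X_i,Y_i)\ne(0,0)$, and $\Sigma$ is singular exactly where at least one conjugate pair vanishes. Intersecting with the three equations (and using $a_i$ distinct, $c_id_i\ne0$) shows that for each of the three choices of vanishing pair and each of the four ways of choosing one surviving coordinate in the remaining two pairs there is exactly one point, giving $4\times3=12$ singular points. Encoding each of the eight lines by the set of indices at which $X$ survives, two lines meet iff these sets differ in exactly one index, so each $\widetilde{\Theta}_i$ meets exactly three $E_j$, recovering the same $12$ points as the intersections $\widetilde{\Theta}_i\cap E_j$. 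To identify each as a node I would treat one representative $P$ (all being equivalent by the symmetry of the configuration): there the Jacobian has rank two, and the combination $G=a_2a_3F_1+(a_2+a_3)F_2+F_3$ has vanishing differential at $P$; solving $F_1=F_2=0$ for two local coordinates presents $\Sigma$ near $P$ as $\{G=0\}$ in a smooth threefold, whose quadratic leading term is $(a_1+a_2)(a_1+a_3)\,uv+(a_2+a_3)(c_1u^2+d_1v^2+d_3q^2)$, a nondegenerate quadric equivalent to $z^2+xy$, i.e. an $A_1$ cone. Since $\Sigma$ is a complete intersection of three quadrics in ${\bf P}^5$, adjunction gives $\omega_\Sigma\cong\mathcal{O}_\Sigma$ and $H^1(\mathcal{O}_\Sigma)=0$, so with only these twelve nodes $\Sigma$ is normal with rational double points and its minimal resolution is a $K3$ surface.
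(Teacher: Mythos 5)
Your argument is correct, but it takes a genuinely different route from the paper. For part (1) the paper argues in one line via the line geometry of the Grassmannian: every line on $\Sigma\subset G$ is a Schubert line $\sigma(p,h)$, hence corresponds either to a singular point of the Kummer quartic $S$ (four $D_4$ points) or to a trope (four of them), giving eight lines; you instead work entirely with the equations, using the characteristic-two polarization $F_j(s\mathbf{p}+t\mathbf{q})=s^2F_j(\mathbf{p})+t^2F_j(\mathbf{q})+stB_j(\mathbf{p},\mathbf{q})$ and the Vandermonde system $\sum_i u_i=\sum_i a_iu_i=\sum_i a_i^2u_i=0$ to confine any line to a diagonal $2$-plane. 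This is longer but self-contained: it does not presuppose Theorem \ref{KummerQuarticOrdinary} or the identification of $\Sigma$ with the singular lines of the complex, and it makes visible exactly where $c_id_i\ne 0$ enters. For part (2) the paper again only invokes ``the Jacobian criterion and the resolution of singularities,'' whereas you supply the actual local analysis: the observation that every column of the Jacobian is a multiple of $(1,a_i,a_i^2)^{t}$, the count $3\times 4=12$, and the presentation of $\Sigma$ near a singular point as $\{G=0\}$ in the smooth threefold $\{F_1=F_2=0\}$ with $G=a_2a_3F_1+(a_2+a_3)F_2+F_3$, whose quadratic part reduces to $w^2+\alpha uv$ after completing the square in characteristic two -- a clean and correct identification of the $A_1$ point. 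Two small imprecisions worth fixing: the case of a line lying in a single plane $\langle X_i,Y_i\rangle$ (two conjugate pairs vanishing identically) is already excluded by $F_1=X_iY_i$ alone, so $c_id_i\ne 0$ is really needed for the cases of exactly one vanishing pair and of a single non-coordinate direction $(\xi_i:\eta_i)$ with $\xi_i\eta_i\ne0$; and the twelve singular points are not literally ``equivalent by the symmetry of the configuration'' (the involutions $\iota_1,\iota_2,\iota_3$ only act transitively on the four points sharing a given vanishing pair), though the local computation is identical after relabelling the indices. Neither affects the validity of the proof.
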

\begin{proof}
Since any line on $\Sigma$ is of the form $\sigma(p,h)$ cutting by planes $\sigma(p)$ and 
$\sigma(h)$, it corresponds to a singularity of the quartic surface $S$ or a trope.  Hence the number of lines is eight.   The assertion (2) follows from the Jacobian criterion and
the resolution of singularities.
\end{proof}

\noindent
The configuration of $\{\widetilde{\Theta}_i, E_j\}$ is given as in Figure \ref{EightLines}:

\begin{figure}[htbp]
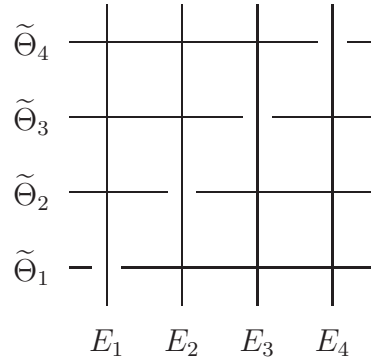

\begin{center}
\resizebox{!}{4.0cm}{
\xy
(10,0)*{};(10,40)*{}**\dir{-};
(20,0)*{};(20,40)*{}**\dir{-};
(30,0)*{};(30,40)*{}**\dir{-};
(40,0)*{};(40,40)*{}**\dir{-};
(5,5)*{};(8,5)*{}**\dir{-};
(12,5)*{};(45,5)*{}**\dir{-};
(5,15)*{};(18,15)*{}**\dir{-};
(22,15)*{};(45,15)*{}**\dir{-};
(5,25)*{};(28,25)*{}**\dir{-};
(32,25)*{};(45,25)*{}**\dir{-};
(5,35)*{};(38,35)*{}**\dir{-};
(42,35)*{};(45,35)*{}**\dir{-};
(10,-5)*{E_1};(20,-5)*{E_{2}};(30,-5)*{E_3};(40,-5)*{E_4};
(0,5)*{\widetilde{\Theta}_1};(0,15)*{\widetilde{\Theta}_2};(0,25)*{\widetilde{\Theta}_3};(0,35)*{\widetilde{\Theta}_4};
\endxy 
}
\caption{Eight lines on $\Sigma$}
\label{EightLines}
\end{center}
\end{figure}

Let $\widetilde{\Sigma}$ be the minimal resolution of $\Sigma$.
Then $\widetilde{\Sigma}$ is a $K3$ surface and contains twenty $(-2)$-curves (i.e. non-singular rational curves) which are the proper transforms of
$\widetilde{\Theta}_i$, $E_j$ and the twelve exceptional curves over the twelve nodes.
We denote the proper transforms by the same symbols $\widetilde{\Theta}_i$, $E_j$.
Then the dual graph of twenty $(-2)$-curves is given as in Figure \ref{20curves}:

\begin{figure}[htbp]
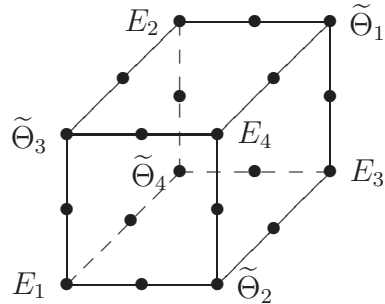

\begin{center}
\resizebox{!}{4.0cm}{
\xy
(10,5)*{};(10,25)*{}**\dir{-};
(25,20)*{};(25,40)*{}**\dir{--};
(30,5)*{};(30,25)*{}**\dir{-};
(45,20)*{};(45,40)*{}**\dir{-};
(10,5)*{};(30,5)*{}**\dir{-};
(10,25)*{};(25,40)*{}**\dir{-};
(25,20)*{};(45,20)*{}**\dir{--};
(30,25)*{};(45,40)*{}**\dir{-};
(10,25)*{};(30,25)*{}**\dir{-};
(10,5)*{};(25,20)*{}**\dir{--};
(30,5)*{};(45,20)*{}**\dir{-};
(25,40)*{};(45,40)*{}**\dir{-};
@={(10,5),(10,15),(10,25),(20,5),(20,25),(30,5),(30,15),(30,25),(25,20),(18.5,13.5),
(25,40),(45,20),(45,40),(37.5,12.5),(37.5, 32.5),(45,30),(17.5,32.5),(35,40),(35,20),(25,30)}@@{*{\bullet}};
(5,5)*{E_1};(5,25)*{\widetilde{\Theta}_{3}};(50,20)*{E_3};(35,5)*{\widetilde{\Theta}_2};
(50,40)*{\widetilde{\Theta}_1};(35,25)*{E_4};(21,20)*{\widetilde{\Theta}_4};(20,40)*{E_2};
\endxy 
}
\caption{The dual graph of 20 $(-2)$-curves on $\widetilde{\Sigma}$}
\label{20curves}
\end{center}
\end{figure}

\begin{remark}\label{Mukai-Peters}
Over the complex numbers, Peters and Stienstra \cite{PS} studied a 1-dimensional family of $K3$ surfaces
containing 20 $(-2)$-curves forming the dual graph in Figure \ref{20curves} and Mukai and Ohashi \cite{MO} also
studied quartic surfaces given in Remark \ref{LaszloRemark}.
\end{remark}

The projective transformations
$$\iota_1: (X_1, X_2, X_3, Y_1, Y_2, Y_3) \to \left(\sqrt{d_1/c_1}Y_1, X_2, X_3, \sqrt{c_1/d_1}X_1, Y_2, Y_3\right),$$
$$\iota_2: (X_1, X_2, X_3, Y_1, Y_2, Y_3) \to \left(X_1, \sqrt{d_2/c_2}Y_2, X_3, Y_1, \sqrt{c_2/d_2}X_2, Y_3\right),$$
$$\iota_3: (X_1, X_2, X_3, Y_1, Y_2, Y_3) \to \left(X_1, X_2, \sqrt{d_3/c_3}Y_3, Y_1, Y_2: \sqrt{c_3/d_3}X_3\right)$$
act on $\Sigma$ and hence induce an automorphism group of $\widetilde{\Sigma}$ isomorphic to $({\bf Z}/2{\bf Z})^3$.  The subgroup $({\bf Z}/2{\bf Z})^2$ generated by $\iota_1\circ \iota_2, \iota_1\circ \iota_3$ preserves 
$\{\widetilde{\Theta}_i\}$ and
the remaining involutions interchange $\{\widetilde{\Theta}_i\}$ and $\{E_j\}$.

\begin{prop}\label{OrdinaryStoQ}  The linear system defined by six quadrics 
$$X_1=(a_2+a_3)xt,\ X_2=(a_1+a_3)xz,\ X_3=(a_1+a_2)xy,$$
$$Y_1=(a_2+a_3)yz,\ Y_2=(a_1+a_3)yt, \ Y_3=(a_1+a_2)zt$$
gives a birational map $\mu$ from the quartic surface $S$ to $\Sigma$.  
\end{prop}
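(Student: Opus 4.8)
The plan is to verify by direct substitution that $\mu$ carries $S$ into $\Sigma$, and then to establish birationality by exhibiting an explicit rational inverse; since $\Sigma$ is irreducible of dimension $2$, the image of $\mu$ must then be all of $\Sigma$. Observe first that, up to the nonzero scalars $a_i+a_j$ (nonzero because $a_i\ne a_j$ in characteristic $2$), the six defining quadrics are just the six mixed monomials $xy,xz,xt,yz,yt,zt$. Their common zero locus in ${\bf P}^3$ is the set of four coordinate points, which are precisely the nodes $P_1,\dots,P_4$ of $S$ (Theorem \ref{KummerQuarticOrdinary}). Hence $\mu$ is a morphism on the dense open set $S\setminus\{P_1,\dots,P_4\}$, and it suffices to work there.

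Next I would pull back the three quadratic forms of Theorem \ref{Int3quadrics}(a). A one-line computation gives $X_1Y_1=(a_2+a_3)^2\,xyzt$, $X_2Y_2=(a_1+a_3)^2\,xyzt$, and $X_3Y_3=(a_1+a_2)^2\,xyzt$. Consequently $\sum_i X_iY_i$ pulls back to $[(a_2+a_3)^2+(a_1+a_3)^2+(a_1+a_2)^2]\,xyzt$ and $\sum_i a_i^2X_iY_i$ to $[a_1^2(a_2+a_3)^2+a_2^2(a_1+a_3)^2+a_3^2(a_1+a_2)^2]\,xyzt$, and both bracketed coefficients vanish identically in characteristic $2$. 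For the middle form one invokes the characteristic-$2$ identity $a_1(a_2+a_3)^2+a_2(a_1+a_3)^2+a_3(a_1+a_2)^2=(a_1+a_2)(a_2+a_3)(a_3+a_1)$; together with the above square terms $c_iX_i^2,\,d_iY_i^2$ this shows that the pullback of $\sum_i(a_iX_iY_i+c_iX_i^2+d_iY_i^2)$ is exactly the quartic of Theorem \ref{KummerQuartic}(a), i.e. the defining equation of $S$. Thus $\mu$ maps $S\setminus\{P_i\}$ into $\Sigma$.

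For birationality I would write down the inverse explicitly. From the monomial ratios $x/y=\tfrac{(a_2+a_3)}{(a_1+a_3)}\,X_2/Y_1$, $y/z=\tfrac{(a_1+a_3)}{(a_1+a_2)}\,X_3/X_2$, and $z/t=\tfrac{(a_2+a_3)}{(a_1+a_3)}\,X_2/X_1$ one recovers $(x:y:z:t)$, which organizes into the rational map $\nu:\Sigma\dashrightarrow S$ given by
$$\nu=\left(\frac{X_2X_3}{(a_1+a_2)(a_1+a_3)}:\frac{X_3Y_1}{(a_1+a_2)(a_2+a_3)}:\frac{X_2Y_1}{(a_1+a_3)(a_2+a_3)}:\frac{X_1Y_1}{(a_2+a_3)^2}\right).$$
A direct substitution verifies $\nu\circ\mu=\mathrm{id}$ on $S$. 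Since $\mu$ is birational onto its image, that image is a $2$-dimensional irreducible closed subvariety of $\Sigma$; as $\Sigma$ is itself irreducible of dimension $2$ (a $K3$ surface with rational double points, Theorem \ref{Int3quadricsOrdinary}), the image is all of $\Sigma$, and $\mu$ is birational.

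All the computations here are elementary, so the substance of the argument is conceptual rather than computational. The main point requiring care is the characteristic-$2$ bookkeeping that matches the pulled-back middle quadric term-by-term with the Kummer equation of Theorem \ref{KummerQuartic}(a). The second, and genuinely characteristic-$2$, subtlety is that one must produce an honest rational inverse $\nu$ rather than merely checking that $\mu$ is injective on points: a bijective morphism in characteristic $2$ need not be birational (it could be purely inseparable), so it is the identity $\nu\circ\mu=\mathrm{id}$ that secures degree one and hence the desired birationality.
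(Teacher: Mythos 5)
Your proof is correct and follows essentially the same route as the paper: both verify by direct substitution that the first and third quadrics defining $\Sigma$ pull back to zero (the coefficients of $xyzt$ cancel in characteristic $2$) and that the middle one pulls back to the quartic equation of $S$ (the paper phrases this via the observation that $X_i^2, Y_i^2$ are the coefficients of $c_i, d_i$ in the quartic), and both conclude birationality by inverting the monomial map. The only difference is in the final step: the paper blows up the four nodes to get a proper morphism $\tilde{S}\to\Sigma$ and checks that the fiber over a general point of the line $\widetilde{\Theta}_1$ is a single point by solving for $(x:y:z:t)$ from the monomial values, whereas you exhibit a global rational inverse $\nu$ with $\nu\circ\mu=\mathrm{id}$ and then use irreducibility of $\Sigma$ to get surjectivity --- a slightly cleaner way to rule out inseparability, and in substance the same linear inversion that underlies the paper's ``solving the equation'' step.
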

\begin{proof}
Note that $X_i^2, Y_i^2$ are coefficients of $c_i, d_i$ when we consider the quartic equation of $S$ as
a linear form of $c_i, d_i$. 
By definition of $\mu$, its image satisfies the equations $\sum_{i=1}^3 X_iY_i= \sum_{i=1}^3 a_i^2X_iY_i=0$, and
the relation $\sum_{i=1}^3 (a_iX_iY_i + c_iX_i^2 + d_iY_i^2)=0$ follows from the quartic equation of $S$.
The base locus of the linear system consists of four singular points of $S$.
Let $\tilde{S}$ be the blowing-up of the four singular points of $S$. Then $\mu$ induces a proper morphism from
$\tilde{S}$ to $\Sigma$.  Thus if we show that the inverse image $\mu^{-1}(P)$ of a point $P$ of $\Sigma$ consists of one point, then
the assertion follows.  Let $P$ be a general point of the line $\tilde{\Theta}_1$.  Then we can write
$P=(0, 0, 0, \alpha, \beta, \gamma)$ satisfying $\alpha, \beta, \gamma \in k^*$ and $\sqrt{d_1}\alpha + \sqrt{d_2}\beta +\sqrt{d_3}\gamma =0$.
Then the inverse image $\mu^{-1}(P)$ consists of only one point by solving the equation
$(xt, xz, xy, yz, yt, zt) = \left(0, 0, 0, \alpha/(a_2+a_3), \beta/(a_1+a_3), \gamma/(a_1+a_2)\right).$
\end{proof}

\begin{remark}
In Figure \ref{20curves}, by contracting sixteen $(-2)$-curves except $\widetilde{\Theta}_1,
\ldots, \widetilde{\Theta}_4$, we obtain the quartic surface $S$.  If we contract
sixteen $(-2)$-curves except $E_1,\ldots, E_4$, then we obtain the dual surface $S^\vee$.
\end{remark}

\medskip
Next we consider the case (b).

\begin{theorem}\label{Int3quadricsP-rank1}
{\rm (1)} \ The lines on $\Sigma_1$ are exactly the following four ones$:$
$$\widetilde{\Theta}_1 : Y_1=\sqrt{b_1}X_2 + \sqrt{b_2}X_3= \sqrt{b_2}Y_2 + \sqrt{b_1}Y_3 =0,$$
$$\widetilde{\Theta}_2 : X_1=\sqrt{b_3}X_2 + \sqrt{b_4}Y_3= \sqrt{b_3}X_3 + \sqrt{b_4}Y_2=0,$$
$$E_1 : X_1=\sqrt{b_1}X_2+\sqrt{b_2}X_3=\sqrt{b_2}Y_2+\sqrt{b_1}Y_3 =0,$$
$$E_2 : Y_1=\sqrt{b_3}X_2 + \sqrt{b_4}Y_3= \sqrt{b_3}X_3 +\sqrt{b_4}Y_2=0.$$
Here we give only the equation of a plane $\Pi$ such that $\Pi \cap \Sigma_1$ is a double line.
The dual graph of the four lines is a square, that is, $\widetilde{\Theta}_1 \cdot \widetilde{\Theta}_2 =
E_1\cdot E_2 =0$ and $\widetilde{\Theta}_i\cdot E_j=1, \ 1\leq i,j \leq 2$. 

\smallskip
{\rm (2)} \ The surface $\Sigma_1$ has exactly four singular points which are the intersection points of 
the four lines.
\end{theorem}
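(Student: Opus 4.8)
The plan is to follow the proof of Theorem~\ref{Int3quadricsOrdinary}. Every line on $\Sigma_1$ lies on $G=Q_1$ and is therefore a Schubert line $\sigma(p,h)$, cut out by the two Schubert planes $\sigma(p)$ and $\sigma(h)$; as in the ordinary case such a line lies on $\Sigma_1$ exactly when one of these planes meets $Q_2$ in a double line, which happens precisely over a singular point of the Kummer quartic $S_1$ (through $\sigma(p)$) or over a trope of $S_1$ (through $\sigma(h)$). Since Theorem~\ref{KumQuarticRank1} gives exactly two singular points $P_1,P_2$ and two tropes $\overline{\Theta}_1,\overline{\Theta}_2$ on $S_1$, this forces $\Sigma_1$ to carry exactly $2+2=4$ lines: the lines $E_1,E_2$ over $P_1,P_2$ and the lines $\widetilde{\Theta}_1,\widetilde{\Theta}_2$ over the tropes.

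To identify these lines explicitly, I would take the four planes $\Pi$ displayed in the statement and verify directly, from the three defining equations of $\Sigma_1$ in Theorem~\ref{Int3quadrics}(b), that each has the required shape. The key point is a characteristic-$2$ mechanism: on each such plane the first equation $\sum X_iY_i$ and the third equation both restrict to $0$ identically --- the cross terms cancelling because $a_2=a_3$ and $b_1b_2=b_3b_4$ --- so that $\Pi$ lies on the first and third of the three defining quadrics, while the middle quadric $Q_2$ restricts to a sum of pure squares $\alpha X_1^2+\beta X_2^2+\gamma Y_2^2=(\sqrt{\alpha}X_1+\sqrt{\beta}X_2+\sqrt{\gamma}Y_2)^2$, which is automatically a perfect square, hence a double line. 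Its reduced part is the asserted line, the factorization being the same one as in the proof of Theorem~\ref{KumQuarticRank1} (where $b_3x^2yz+b_2xy^2t+b_1xz^2t+b_4yzt^2$ became $\sqrt{b_1/b_2}\,z^2(\sqrt{b_3}x+\sqrt{b_4}t)^2$). Throughout I would keep Remark~\ref{coefficients} in mind: some of the $b_i$ may vanish (only the pairs $b_1=b_3=0$, $b_1=b_4=0$, $b_2=b_3=0$, $b_2=b_4=0$ are allowed), and in those cases the displayed equations degenerate but the same computation still produces four distinct double lines.

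For the dual graph I would compare the four families of linear equations pairwise. The planes of $\widetilde{\Theta}_1$ and $E_1$ share their two ``$b_1,b_2$''-equations (differing only in $Y_1=0$ versus $X_1=0$), and similarly $\widetilde{\Theta}_2,E_2$ share their ``$b_3,b_4$''-equations, while $\widetilde{\Theta}_1,E_2$ and $\widetilde{\Theta}_2,E_1$ each share the single equation $Y_1=0$ or $X_1=0$; solving the resulting linear systems shows $\widetilde{\Theta}_1\cdot\widetilde{\Theta}_2=E_1\cdot E_2=0$ and $\widetilde{\Theta}_i\cdot E_j=1$, giving the square and its four corner points. For part~(2) I would then apply the Jacobian criterion to the three quadrics of Theorem~\ref{Int3quadrics}(b): at each of the four corners the $3\times6$ Jacobian drops rank, so these points are singular, and a rank computation over the complement shows there are no others.

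I expect the main obstacle to be precisely this last step --- showing by the Jacobian criterion that the four corner points are the \emph{only} singular points of $\Sigma_1$, rather than merely that they are singular. Unlike the line count, which is immediate once Theorem~\ref{KumQuarticRank1} is in hand, this demands a global rank-$3$ analysis of the $3\times6$ Jacobian matrix on $\Sigma_1$, and the partial vanishings of the $b_i$ permitted by Remark~\ref{coefficients} must each be checked so that no stray singular point is overlooked.
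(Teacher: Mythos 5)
Your proposal is correct and follows essentially the same route as the paper: the paper's own proof simply says that part (1) is proved exactly as in the ordinary case (every line on $\Sigma_1$ is a Schubert line $\sigma(p,h)$ corresponding to a singular point or a trope of $S_1$, so Theorem \ref{KumQuarticRank1} gives exactly four), with Lemma \ref{3-fold}(b) controlling the intersection multiplicities, and part (2) follows from the Jacobian criterion and resolution of singularities. Your expansion — the explicit perfect-square factorization on each plane in characteristic $2$ and the case analysis over vanishing $b_i$ from Remark \ref{coefficients} — is a faithful filling-in of details the paper leaves implicit.
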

\begin{proof}
The proof of (1) is the same as that of Theorem \ref{Int3quadricsOrdinary}.  We use the condition of 
Lemma \ref{3-fold} (b) for the intersection multiplicities. 
The assertion (2) follows from the Jacobian criterion and resolutions of singularities.
\end{proof}

Consider the involutions $\iota_1$, $\iota_2$ of ${\bf P}^5$:
$$\iota_1: (X_1, X_2, X_3, Y_1, Y_2, Y_3) \to \left(\sqrt{d_1/c_1}Y_1, X_2, X_3, \sqrt{c_1/d_1}X_1, Y_2, Y_3\right);$$
In case $b_1b_2b_3b_4\ne 0$, 
$$\iota_2: (X_1, X_2, X_3, Y_1, Y_2, Y_3) \to \left(\sqrt{d_1/c_1}Y_1, {\sqrt{b_2b_4/b_1b_3}}Y_2, X_3, \sqrt{c_1/d_1}X_1, \sqrt{b_1b_3/b_2b_4}X_2, Y_3\right),$$
and in case $b_1b_2b_3b_4=0$, for example, $b_1=b_4=0$,
$$\iota_2: (X_1, X_2, X_3, Y_1, Y_2, Y_3) \to \left(\sqrt{d_1/c_1}Y_1, (b_2/b_3)Y_2, X_3, \sqrt{c_1/d_1}X_1, (b_3/b_2)X_2, Y_3\right).$$
Obviously $\iota_1$ preserves the equations of $\Sigma_1$ and hence induces an involution of $\Sigma_1$ and
$\iota_1(\widetilde{\Theta}_1) =E_1$ and $\iota_1(\widetilde{\Theta}_2)=E_2$.
Note that $\iota_2$ preserves $\Sigma_1$ if and only if $b_2b_4c_2=b_1b_3d_2$ for $b_1b_2b_3b_4\ne 0$ and 
$b_2^2c_2=b_3^2d_2$ for $b_1=b_4=0$.

\begin{prop}\label{prank1translation3}
After changing the coordinates, we may assume that $b_2b_4c_2=b_1b_3d_2$ for $b_1b_2b_3b_4\ne 0$ and 
$b_2^2c_2=b_3^2d_2$ for $b_1=b_4=0$, and hence  
the projective transformation $\iota_2$ preserves $\Sigma_1$ and $\iota_2(\widetilde{\Theta}_1)=\widetilde{\Theta}_2$ and $\iota_2(E_1)=E_2$.
\end{prop}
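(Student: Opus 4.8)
The plan is to split the argument into two independent pieces: the coordinate reduction, which is already furnished by earlier results, and the verification of the three asserted properties of $\iota_2$, each of which reduces to a short substitution.

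First I would invoke Lemma \ref{prank1translation}(1): starting from the pencil (\ref{p-rank1Pencil}) and applying the coordinate changes (\ref{CC2}), (\ref{CC3}) before passing to the canonical form (\ref{(b)}), one may assume $b_2b_4c_2 = b_1b_3d_2$ when $b_1b_2b_3b_4 \ne 0$, and by Remark \ref{prank1translationRemark} one may assume $b_2^2c_2 = b_3^2d_2$ when $b_1 = b_4 = 0$. I would stress that such a change carries $\Sigma_1$ to an isomorphic surface still written in the form of Theorem \ref{Int3quadrics}(b) with the new coefficients, and that $\iota_2$ and the lines $\widetilde{\Theta}_i, E_j$ are by definition expressed through these same coefficients, so the whole configuration is transported consistently.

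Next I would confirm that $\iota_2$ preserves $\Sigma_1$, which is the ``if and only if'' remark recorded just before the statement. Substituting the coordinate expression of $\iota_2$ into the three equations of Theorem \ref{Int3quadrics}(b), and using $b_1b_2 = b_3b_4$ to simplify the scalars $\sqrt{b_2b_4/b_1b_3}$ and $\sqrt{b_1b_3/b_2b_4}$ to plain ratios of the $b_i$, one finds that the first and third equations are automatically invariant (the $X_3, Y_3$ terms are fixed and the $X_2, Y_2$ terms swap into one another), and that in the second equation every term is invariant except the pair $c_2X_2^2 + d_2Y_2^2$, whose invariance is equivalent to exactly $b_2b_4c_2 = b_1b_3d_2$ (resp. $b_2^2c_2 = b_3^2d_2$). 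This is precisely the normalization arranged in the first step.

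Finally I would verify $\iota_2(\widetilde{\Theta}_1) = \widetilde{\Theta}_2$ and $\iota_2(E_1) = E_2$ by inserting $\iota_2$ into the linear forms of Theorem \ref{Int3quadricsP-rank1}. For a point on $\widetilde{\Theta}_1$, the equation $Y_1 = 0$ maps to $X_1 = 0$, while $\sqrt{b_1}X_2 + \sqrt{b_2}X_3 = 0$ and $\sqrt{b_2}Y_2 + \sqrt{b_1}Y_3 = 0$ are carried, after clearing scalars via $b_1b_2 = b_3b_4$, to $\sqrt{b_3}X_3 + \sqrt{b_4}Y_2 = 0$ and $\sqrt{b_3}X_2 + \sqrt{b_4}Y_3 = 0$; together these three are the defining equations of $\widetilde{\Theta}_2$. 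The check for $E_1 \mapsto E_2$ is word-for-word the same. The only place that calls for any care is the scalar bookkeeping in the second and third steps, but there is no conceptual obstacle here, since the one genuinely nontrivial reduction has been isolated in Lemma \ref{prank1translation}.
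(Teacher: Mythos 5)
Your proposal is correct and follows essentially the same route as the paper: the paper's own proof is a one-line citation of Lemma \ref{prank1translation} and Remark \ref{prank1translationRemark}, with the invariance of $\Sigma_1$ under $\iota_2$ (equivalent to $b_2b_4c_2=b_1b_3d_2$, resp.\ $b_2^2c_2=b_3^2d_2$) recorded without proof in the sentence preceding the proposition and the line-swapping left as a direct check. You simply write out those substitutions explicitly, using $b_1b_2=b_3b_4$ to clear the scalars, which is exactly the intended verification.
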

\begin{proof} The assertion follows from Lemma \ref{prank1translation} and Remark \ref{prank1translationRemark}.
\end{proof}

\begin{prop}\label{P-rank1StoQ}  The linear system defined by six quadrics 
$$X_1=b_3x^2+b_4t^2,\ X_2=b_2xy+b_4zt+(a_1+a_2)xz,\ X_3=b_1xz+b_4yt+(a_1+a_2)xy,$$
$$Y_1=b_2y^2+b_1z^2,\ Y_2=b_3xy+b_1zt+(a_1+a_2)yt, \ Y_3=b_3xz+b_2yt+(a_1+a_2)zt$$
gives a birational map $\mu_1$ from the quartic surface $S_1$ to $\Sigma_1$.  
\end{prop}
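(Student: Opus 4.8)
The plan is to mimic the proof of Proposition~\ref{OrdinaryStoQ} step by step, adapting it to the $2$-rank $1$ situation. As there, the starting point is the observation that the six quadrics defining $\mu_1$ are chosen so that $X_i^2$ and $Y_i^2$ are exactly the coefficients of $c_i$ and $d_i$ when the quartic equation of $S_1$ in Theorem~\ref{KummerQuartic}~(b) is regarded as a linear form in $c_1,\ldots,d_3$; for instance $X_1^2=b_3^2x^4+b_4^2t^4$ is the coefficient of $c_1$ and $Y_1^2=b_2^2y^4+b_1^2z^4$ that of $d_1$, the mixed squares dropping out in characteristic~$2$.

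First I would check that $\mu_1$ sends $S_1$ into $\Sigma_1$, i.e. that the image satisfies the three equations of Theorem~\ref{Int3quadrics}~(b). Substituting the quadrics into $\sum_i X_iY_i$ makes it vanish identically in $(x,y,z,t)$ by the characteristic-$2$ cancellation alone, and substituting them into $\sum_i a_i^2X_iY_i+b_1b_3X_2^2+b_2b_3X_3^2+b_2b_4Y_2^2+b_1b_4Y_3^2$ makes it vanish identically as well, now using in addition $a_2=a_3$ and $b_1b_2=b_3b_4$. For the middle equation, the piece $\sum_i(c_iX_i^2+d_iY_i^2)$ reproduces the part of the quartic depending on $c_i,d_i$ by the observation above, while $\sum_i a_iX_iY_i+b_1X_2Y_3+b_2X_3Y_2+b_3X_2X_3+b_4Y_2Y_3$ reproduces the remaining terms, so that the whole middle expression equals the quartic equation of $S_1$ up to a nonzero scalar and hence vanishes on $S_1$.

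Next I would identify the base locus. A direct substitution shows that all six quadrics vanish at the two $D_8$-points $P_1=(0,\sqrt{b_1},\sqrt{b_2},0)$ and $P_2=(\sqrt{b_4},0,0,\sqrt{b_3})$ of Theorem~\ref{KumQuarticRank1} (for $Y_1$ one uses $b_2y^2+b_1z^2=2b_1b_2=0$ at $P_1$), and I expect $\{P_1,P_2\}$ to be exactly the base locus on $S_1$. Blowing up $S_1$ at these two points then turns $\mu_1$ into a proper morphism $\widetilde S_1\to\Sigma_1$, exactly as in Proposition~\ref{OrdinaryStoQ}. To conclude that $\mu_1$ is birational it suffices to produce one point of $\Sigma_1$ with a single preimage: I would take a general point $P$ of one of the four lines listed in Theorem~\ref{Int3quadricsP-rank1}~(1), say $\widetilde\Theta_1$, along which several Pl\"ucker coordinates are constrained, parametrise it, and solve $\mu_1(x,y,z,t)=P$ for a unique $(x:y:z:t)\in{\bf P}^3$.

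The main obstacle is precisely this inversion. In the ordinary case the six quadrics are monomials, so solving $\mu(x,y,z,t)=P$ on $\widetilde\Theta_1$ reduces to the linear system $(xt,xz,xy,yz,yt,zt)=(\ldots)$ and uniqueness is immediate; here $X_2,X_3,Y_2,Y_3$ are genuine trinomials such as $X_2=b_2xy+b_4zt+(a_1+a_2)xz$, so the preimage computation is considerably heavier and the choice of the line along which to carry it out is what keeps it tractable, the relations $b_1b_2=b_3b_4$ and the nonvanishing of the relevant $b_i$ from Remark~\ref{coefficients} being used to force uniqueness. One may treat the degenerate cases $b_1b_2b_3b_4=0$ allowed by Remark~\ref{coefficients} by the same computation with the obvious modifications. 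A conceptual alternative would be to recognise $\mu_1$ as a rational inverse of the focus morphism $\pi:\Sigma_1\to S_1$ of Subsection~\ref{ClassicalQLC}, which is birational by the classical theory; but writing $\pi$ out in coordinates looks no cheaper than the direct fibre computation, so I would carry out the latter.
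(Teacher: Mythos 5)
Your proposal follows the paper's proof essentially step for step: the same observation that $X_i^2,Y_i^2$ are the coefficients of $c_i,d_i$ in the quartic of $S_1$, the same reduction to the scheme of Proposition \ref{OrdinaryStoQ} (image lands in $\Sigma_1$, base locus is the two singular points, blow up to get a proper morphism), and the same final step of computing the fibre over a general point of $\widetilde{\Theta}_1$ — which the paper parametrises explicitly as $(\alpha,\sqrt{b_2},\sqrt{b_1},0,\sqrt{b_1},\sqrt{b_2})$ and, like you, leaves as a routine check. The approach is correct and matches the paper's.
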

\begin{proof} We remark that $X_i^2, Y_i^2$ are the coefficients of $c_i, d_i$ when we consider the quartic equation of $S_1$ as a linear form of $c_i, d_i$.
The proof of the assertion is similar to the one of Proposition \ref{OrdinaryStoQ}.
In this case consider a general point $(\alpha, \sqrt{b_2}, \sqrt{b_1}, 0, \sqrt{b_1}, \sqrt{b_2})$
of the line $\tilde{\Theta}_1$\ $(\alpha \in k)$.  Then we can easily check that $(x,y,z,t)$ is uniquely determined by $\alpha$.
\end{proof}

The involution $\iota_2$ of $\Sigma_1$ corresponds to the involution $\iota$ of $S_1$ defined in
Proposition \ref{prank1translation2} under the birational map $\mu_1$.

\medskip
Finally we consider the case (c).
Let $\iota$ be the projective transformation of ${\bf P}^5$ defined by
$$\iota(X_1)= \sqrt{b_6b_8/b_5b_7}Y_1, \ \iota(Y_1)= \sqrt{b_5b_7/b_6b_8}X_1,\ \iota(X_i)=X_i,\ \iota(Y_i)=Y_i \ (i=2,3)$$
if $b_5b_6b_7b_8\ne 0$, 
$$\iota(X_1)= (b_6/b_7)Y_1,\ \iota(Y_1)= (b_7/b_6)X_1,\ \iota(X_i)=X_i,\ \iota(Y_i)=Y_i\ (i=2,3)$$
if, for example,  $b_5=b_8=0$.
By Lemma \ref{prank1translation}, Remark \ref{prank1translationRemark}, we may assume that 
$b_6b_8c_1=b_5b_7d_1$ in the first case and $b_6^2c_1=b_7^2d_1$ in the second case. 

\begin{lemma}\label{ssAut}
The involution $\iota$
acts on $\Sigma_0$ as an automorphism.
\end{lemma}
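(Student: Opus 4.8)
The plan is to verify directly that the linear involution $\iota$ carries each of the three quadrics cutting out $\Sigma_0$ in Theorem \ref{Int3quadrics}(c) to a scalar multiple of itself; since $\iota$ is a projective transformation of ${\bf P}^5$ and $\Sigma_0$ is their common zero locus, this at once yields $\iota(\Sigma_0)=\Sigma_0$. The map $\iota$ fixes $X_2,X_3,Y_2,Y_3$ and interchanges $X_1$ and $Y_1$ up to a scalar, so only the monomials involving $X_1$ or $Y_1$ need to be tracked. The first step is to record, using the relation $b_5b_6=b_7b_8$ of Remark \ref{coefficients2}, that the scaling factor $\sqrt{b_6b_8/b_5b_7}$ equals $b_6/b_7=b_8/b_5$; this rewrites both displayed formulas for $\iota$ (the generic one and the degenerate one with $b_5=b_8=0$) in the uniform shape $\iota(X_1)=(b_6/b_7)Y_1$, $\iota(Y_1)=(b_7/b_6)X_1$, and confirms $\iota^2=\mathrm{id}$.

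Next I would dispose of the first and third quadrics, which need no extra hypothesis. In $\sum_i X_iY_i$ the only relevant term is $X_1Y_1$, which is visibly $\iota$-invariant. In the third quadric the $X_1,Y_1$-monomials fall into the three pairs $b_5b_7X_1^2\leftrightarrow b_6b_8Y_1^2$, $(b_1b_5+b_4b_7)X_1Y_3\leftrightarrow(b_1b_8+b_4b_6)Y_1Y_3$, and $(b_2b_7+b_3b_5)X_1X_3\leftrightarrow(b_2b_6+b_3b_8)X_3Y_1$; in each case a one-line computation with $b_5b_6=b_7b_8$ shows that $\iota$ sends one member to the other (for instance $b_5b_7X_1^2\mapsto b_5b_7(b_6/b_7)^2Y_1^2=b_5b_6^2/b_7\,Y_1^2=b_6b_8Y_1^2$). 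All other monomials lie in $X_2,X_3,Y_2,Y_3$ and are fixed, so the third quadric is preserved.

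Finally I would treat the second quadric $Q_2$, which is where the normalization enters and which I expect to be the only real obstacle. Its mixed terms pair off in the same way, $b_5X_1Y_2\leftrightarrow b_8Y_1Y_2$ and $b_6X_2Y_1\leftrightarrow b_7X_1X_2$, again by $b_5b_6=b_7b_8$, while $aX_1Y_1$ is invariant. The delicate point is the diagonal pair $c_1X_1^2$ and $d_1Y_1^2$: one computes $\iota(c_1X_1^2)=c_1(b_6/b_7)^2Y_1^2$, so these two monomials are interchanged precisely when $b_6^2c_1=b_7^2d_1$, which (via $b_5b_6=b_7b_8$) is equivalent to $b_6b_8c_1=b_5b_7d_1$. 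This is exactly the normalization we are entitled to assume by Lemma \ref{prank1translation} and Remark \ref{prank1translationRemark}. With it in force every monomial of $Q_2$ is either fixed or swapped with another monomial already present, so $Q_2$ is preserved as well, and $\iota(\Sigma_0)=\Sigma_0$ follows.

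The heart of the argument is thus a bounded monomial bookkeeping rather than any structural difficulty: the single nontrivial input is matching $c_1X_1^2$ with $d_1Y_1^2$, for which the normalization of Lemma \ref{prank1translation} was tailor-made. The remaining care concerns the degenerate configurations in which some of $b_5,b_6,b_7,b_8$ vanish; there one uses the symmetric choice of scalar (such as $b_6/b_7$ when $b_5=b_8=0$) and the correspondingly modified identity $b_6^2c_1=b_7^2d_1$ furnished by Remark \ref{prank1translationRemark}, and the same pairing of monomials goes through verbatim.
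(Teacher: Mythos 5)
Your verification is correct and is exactly the direct monomial check that the paper's proof (which reads only ``This is straightforward'') leaves to the reader: the identity $b_5b_6=b_7b_8$ handles every cross term and the quadric $Q_3$, while the normalization $b_6b_8c_1=b_5b_7d_1$ from Lemma \ref{prank1translation} is needed precisely to swap $c_1X_1^2$ with $d_1Y_1^2$ in $Q_2$. Your observation that $\sqrt{b_6b_8/b_5b_7}=b_6/b_7=b_8/b_5$ in characteristic $2$ is a nice unification of the generic and degenerate formulas, and your treatment of the degenerate cases matches the level of detail in the paper.
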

\begin{proof}
This is straightforward.
\end{proof}

\begin{theorem}\label{Int3quadricsSupersingular}
{\rm (1)}\ 
The surface $\Sigma_0$ has only one singular point $P$, which is the intersection point of the two conics
defined by $b_7X_1 + b_6Y_1 = 0$ and $b_2X_3 + b_4Y_3 = 0$ or by $b_5X_1 + b_8Y_1 = 0$ and $b_3X_3 + b_1Y_3 = 0$.
The singular point $P$ is an elliptic double point of 
type $\mbox{\MARU{14}}_{0,0,1}^{(0),(1)}$ 
{\rm (Figure \ref{ellipticSing})}.
Let $\xi_1 = \sqrt{b_2b_4 + b_5b_8}$ and 
$\xi_2 = \sqrt{b_1b_3 + b_6b_7}$.
Then, the singular point $P= (X_1, X_2, X_3, Y_1, Y_2, Y_3)$ is concretely given by
$$
\begin{array}{l}
X_1 = b_8(\xi_1\sqrt{b_1b_3c_2}+ \xi_2\sqrt{b_1b_3d_2} + 
b_1\sqrt{\xi_1\xi_2 c_3} + b_3\sqrt{\xi_1\xi_2 d_3}), \\
X_2 = \xi_1(b_8\sqrt{b_1b_3c_1} + b_5\sqrt{b_1b_3d_1} + 
b_1\sqrt{b_5b_8c_3} + b_3\sqrt{b_5b_8d_3}), \\
X_3 = b_1(b_8\sqrt{\xi_1\xi_2 c_1} + b_5\sqrt{\xi_1\xi_2 d_1}
+\xi_1 \sqrt{b_5b_8c_2} + \xi_2 \sqrt{b_5b_8d_2}), \\
Y_1 = b_5(\xi_1 \sqrt{b_1b_3c_2} + \xi_2 \sqrt{b_1b_3d_2} + 
b_1\sqrt{\xi_1\xi_2 c_3} + b_3\sqrt{\xi_1\xi_2 d_3}), \\
Y_2 = \xi_2(b_8 \sqrt{b_1b_3c_1} + b_5 \sqrt{b_1b_3d_1} + 
b_1\sqrt{b_5b_8c_3} + b_3\sqrt{b_5b_8d_3}), \\ 
Y_3 = b_3(b_8\sqrt{\xi_1\xi_2 c_1} + b_5\sqrt{\xi_1\xi_2 d_1}
+\xi_1 \sqrt{b_5b_8c_2} +\xi_2 \sqrt{b_5b_8d_2}).
\end{array}
$$
{\rm (2)}\ Let 
$x_0, y_0, z_0, t_0, x_0', y_0', z_0', t_0'$ be 
as in Theorem {\rm \ref{ssKummerQ}}.
The lines on $\Sigma_0$ are exactly the following two$:$
$$\widetilde{E}: y_0X_1 +t_0X_3+ x_0Y_2= y_0X_2+z_0X_3+x_0Y_1=z_0X_1+t_0X_2+x_0Y_3$$
$$=\left(\sqrt{c_1}x_0 +\sqrt{d_2}y_0 + \sqrt{d_3}z_0+\sqrt{b_4y_0z_0+b_5x_0y_0}\right)X_1$$
$$+\left(\sqrt{c_2}x_0 +\sqrt{d_1}y_0 + \sqrt{d_3}t_0+\sqrt{b_1x_0t_0+b_6x_0y_0}\right)X_2$$
$$+\left(\sqrt{c_3}x_0 +\sqrt{d_1}z_0 + \sqrt{d_2}t_0+\sqrt{b_2x_0t_0+b_8z_0t_0}\right)X_3=0$$
and
$$\widetilde{\Theta}: x_0'Y_1 +z_0'X_3+ y_0'Y_2= x_0'X_2+t_0'X_3+y_0'X_1=t_0'Y_1+z_0'X_2+y_0'Y_3$$
$$=\left(\sqrt{d_1}y_0' +\sqrt{d_2}x_0' + \sqrt{d_3}t_0'+\sqrt{b_4x_0't_0'+b_8x_0'y_0'}\right)Y_1$$
$$+\left(\sqrt{c_2}y_0' +\sqrt{c_1}x_0' + \sqrt{d_3}z_0'+\sqrt{b_1y_0'z_0'+b_7x_0'y_0'}\right)X_2$$
$$+\left(\sqrt{c_3}y_0' +\sqrt{c_1}t_0' + \sqrt{d_2}z_0'+\sqrt{b_2y_0'z_0'+b_5z_0't_0'}\right)X_3=0,$$
respectively.  
Moreover
$\widetilde{\Theta} = \iota(\widetilde{E})$,
and $\widetilde{E}$ and $\widetilde{\Theta}$ meet at $P$.
\end{theorem}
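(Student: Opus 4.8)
I would mimic the proofs of Theorems \ref{Int3quadricsOrdinary} and \ref{Int3quadricsP-rank1}, now for the supersingular pencil $\calP_0$, combining two tools: the correspondence between the lines lying on $\Sigma_0$ and the singular points and tropes of the associated Kummer quartic $S_0$, and the Jacobian criterion applied to the three quadrics of Theorem \ref{Int3quadrics}(c).

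For part (2) the structural input is the principle already used in the ordinary case: every line on $\Sigma_0$ is a Schubert line $\sigma(p,h)$ and hence is a component of a degenerate conic $\sigma(p)\cap Q_2$ or $\sigma(h)\cap Q_2$, so it corresponds either to a singular point or to a trope of $S_0$. Since Theorem \ref{ssKummerQ} shows that $S_0$ carries exactly one singular point $P_0$ and exactly one trope $\bar\Theta$, there are exactly two lines, which I label $\widetilde E$ (from $P_0$) and $\widetilde\Theta$ (from $\bar\Theta$). To make them explicit I would reuse the generic plane $\Pi$ of the family and the conic $\Pi\cap Q_2$ written out in (\ref{ordinaryConic}) in the proof of Theorem \ref{KummerQuartic}: eliminating $(\alpha,\beta,\gamma)$ from the parametrization of $\Pi$ at $(x_0,y_0,z_0,t_0)$ gives the three linear forms $y_0X_1+t_0X_3+x_0Y_2$, $y_0X_2+z_0X_3+x_0Y_1$, $z_0X_1+t_0X_2+x_0Y_3$ defining the plane $\Pi_{P_0}$, while at $P_0$ the conic (\ref{ordinaryConic}) degenerates to the square of a linear form, and taking square roots (which are additive in characteristic two) and substituting $X_1=\alpha x_0$, $X_2=\beta x_0$, $X_3=\gamma x_0$ turns that linear form into the fourth equation, the one involving square roots, cutting $\Pi_{P_0}$ down to $\widetilde E$. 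The line $\widetilde\Theta$ is produced in the same way from the trope $\bar\Theta$. Finally $\widetilde\Theta=\iota(\widetilde E)$ is verified by feeding the defining equations of $\widetilde E$ through the involution of Lemma \ref{ssAut}, and the incidence $\widetilde E\cap\widetilde\Theta=\{P\}$ by substituting the coordinates of $P$ into both.

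For part (1) I would apply the Jacobian criterion directly to the system of Theorem \ref{Int3quadrics}(c). Demanding that the $3\times 6$ Jacobian matrix have rank at most two, and exploiting the relations $b_1b_2=b_3b_4$ and $b_5b_6=b_7b_8$ together with the nonvanishing hypotheses of Remark \ref{coefficients} and Lemma \ref{Invariant}, should single out the singular locus as the one point $P$; the pair of conic conditions in the statement offers a more symmetric way to locate it, and one then checks by substitution that the displayed coordinates both lie on $\Sigma_0$ and annihilate the Jacobian, giving uniqueness as well as existence.

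The genuinely delicate step is to identify the singularity type as $\mbox{\MARU{14}}_{0,0,1}^{(0),(1)}$, a type that does not occur on any quotient $A/\langle\iota\rangle$ and so cannot be imported from Theorem \ref{ssKummerQ}; it differs from the type $\mbox{\MARU{4}}_{0,1}^{1}$ of the singular point of $S_0$. Here I would resolve $P$ by hand --- conveniently starting from the simpler equivalent model in Remark \ref{originalequation}(c) --- through a sequence of blow-ups, then read off the self-intersection numbers and the dual graph of the exceptional configuration and match them against Figure \ref{ellipticSing}. As the remark following that figure records, the minimal resolution ought to be that of a $\mbox{\MARU{4}}_{0,1}^{1}$ point with one further blow-up of a point on the $(-3)$-curve, so the essential task is to verify the position and self-intersections of the infinitely near points introduced by that extra blow-up. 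This resolution, along with the explicit confirmation of the coordinates of $P$ and of the two line equations, is where essentially all the computational labor resides.
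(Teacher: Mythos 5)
Your proposal is essentially correct, and for part (2) and for locating the singular point in part (1) it follows the same route as the paper: the line/trope--singular point correspondence bounds the number of lines by two, the plane $\sigma(P_0)$ cut out by the three linear forms produces $\widetilde{E}$ as a double line, $\widetilde{\Theta}=\iota(\widetilde{E})$ comes from Lemma \ref{ssAut}, and the Jacobian criterion (which the paper applies to the simpler model of Remark \ref{originalequation}(c), where after row reduction the $3\times 6$ matrix forces $Y_1=X_3=0$ in the auxiliary coordinates, translating into the two conic conditions) pins down $P$. The one place where you genuinely diverge is the identification of the singularity type. You propose to resolve $P$ by an explicit sequence of blow-ups and match the weighted dual graph against Figure \ref{ellipticSing}; this is feasible but is by far the heaviest computation in your plan. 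The paper instead observes that the focus map $\pi:\Sigma_0\to S_0$, which contracts $\widetilde{E}$ to $P_0$, is precisely the blow-up of $P_0\in S_0$, and then invokes the canonical resolution of Katsura \cite[\S6]{Ka}: the minimal resolution of a point of type $\mbox{\MARU{14}}_{0,0,1}^{(0),(1)}$ is obtained from that of a point of type $\mbox{\MARU{4}}_{0,1}^{1}$ by blowing up a point on the $(-3)$-curve. Since $P_0$ is of type $\mbox{\MARU{4}}_{0,1}^{1}$ by Theorem \ref{ssKummerQ}, the type of $P$ follows with no further resolution work. Your approach buys independence from the earlier classification of $P_0$ at the cost of a long computation; the paper's buys brevity by exploiting the geometry of the map $\pi$, which your write-up does not use at this step even though you implicitly have it available from the correspondence between $\widetilde{E}$ and $P_0$.
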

\begin{proof}
Note that by Lemma \ref{Invariant} we have $(\xi_1, \xi_2) \neq (0, 0)$.
To calculate the singularities of $\Sigma_0$, we use the equations 
in Remark \ref{originalequation} {\rm (c)}.
Subtracting suitable constant multiples of the first equation
from the second and the third equations, 
the surface $\Sigma_0$ is  isomorphic to the surface defined by the following three equations:
$$
\begin{array}{l}
(1)~ \sum_{i=1}^3 (X_iY_i + p_i^2X_i^2 + t_i^2Y_i^2)= 0,\\
(2) ~\sum_{i=1}^3 \{(r_i^2 + ap_i^2)X_i^2 + (s_i^2 + at_i^2)Y_i^2\} + X_2Y_1+X_3Y_2= 0,\\
(3)~ p_1^2X_2^2 +p_2^2X_3^2 + t_2^2Y_1^2 + t_3^2Y_2^2 + X_3Y_1= 0
\end{array}
$$
The Jacobian matrix with respect to the coordinates $(X_1, X_2, X_3, Y_1, Y_2, Y_3)$
is given by
$$
\left(
\begin{array}{cccccc}
 Y_1 & Y_2 & Y_3 & X_1 & X_2 & X_3\\
 0  & Y_1  & Y_2 & X_2 & X_3  & 0\\
 0  & 0 &   Y_1  & X_3  &  0  & 0
\end{array}
\right).
$$
In this matrix, if $Y_1 \neq 0$, then the first, the second and the third rows are linearly
independent. If $X_3 \neq 0$, then the 4th, the 5th and the 6th rows are linearly
independent. Therefore, singular points must satisfy the equation $Y_1 = X_3 = 0$.
In the change of coordinates (\ref{CC}), we have 
$$
     Y_1 = \gamma_1x_1 + \alpha_1y_1, X_3 = \delta_3 x_3 + \beta_3 y_3.
$$
Therefore, for the equations which define $\Sigma_0$ in Theorem \ref{Int3quadrics},
the singularities are defined by the following two equations:
\begin{equation}\label{1}
       \gamma_1X_1 + \alpha_1Y_1 = 0,
\end{equation}
\begin{equation}\label{2}
       \delta_3 X_3 + \beta_3 Y_3 = 0.
\end{equation}
Note that either $\alpha_2\delta_2 \neq 0$ or $\beta_2\gamma_2\neq 0$ holds
by $\alpha_2\delta_2 + \beta_2\gamma_2 = 1$.
If $\alpha_2\delta_2 \neq 0$, then multiplying (\ref{1}) by $\delta_2$ and 
(\ref{2}) by $\alpha_2$, we have $b_7X_1 + b_6Y_1 = 0$ and $b_2X_3 + b_4Y_3 = 0$.
If $\beta_2\gamma_2 \neq 0$, multiplying (\ref{1}) by $\beta_2$ and 
(\ref{2}) by $\gamma_2$, we have  $b_5X_1 + b_8Y_1 = 0$ and $b_3X_3 + b_1Y_3 = 0$.
Since we are in charactersitic 2, using these equations, we have 3 linear equations
from the defining equations of $\Sigma_0$.
Solving the equations, we get the concrete coordinates of the only one singular point $P$.

Let $\Pi$ be the plane defined by
$$y_0X_1 +t_0X_3+ x_0Y_2= y_0X_2+z_0X_3+x_0Y_1=z_0X_1+t_0X_2+x_0Y_3=0.$$
In Theorem \ref{ssKummerQ}, we showed that the quartic surface $S_0$
has a singular point $P_0=(x_0, y_0, z_0, t_0)$.  The plane $\Pi$ is nothing but the one
consisting of lines in ${\bf P}^3$ passing through $P_0$.  We can also check that 
$\Pi$ cuts $\Sigma_0$ along the double line $\widetilde{E}$, 
$\widetilde{E}$ contains the singular point $P$ and $\widetilde{\Theta} = \iota(\widetilde{E})$ 
by direct but long calculation.  
Since $P$ is the unique singular point, it is fixed by $\iota$ and hence $\iota(\widetilde{E})$ also
contains $P$.  Therefore $\widetilde{E}$ and $\widetilde{\Theta}$ meet at $P$.

Recall that there exists a canonical morphism 
$\pi: \Sigma_0 \to S_0$ by sending $x$ to the focus of $\ell_x$ (see \S \ref{ClassicalQLC})
which is nothing but the one sending $\tilde{E}$ to $P_0$.
It is easy to see that $\pi$ is the blowing-up of $P_0 \in S_0$.
On the other hand, it follows from the canonical resolution given in Katsura \cite[\S6]{Ka} that
the minimal resolution of a singularity of type $\mbox{\MARU{14}}_{0,0,1}^{(0),(1)}$ 
is obtained from the one of a singularity of type $\mbox{\MARU{4}}_{0,1}^{1}$ by blowing-up a point
on the $(-3)$-curve (Figure \ref{ellipticSing}).   Since $P_0$ is of type $\mbox{\MARU{4}}_{0,1}^{1}$ (Theorem \ref{ssKummerQ}),
$P$ is of type $\mbox{\MARU{14}}_{0,0,1}^{(0),(1)}$.
\end{proof}

\begin{remark}\label{}
In Theorem \ref{Int3quadricsSupersingular} (2), if $x_0=0$, then the equations 
$$y_0X_1 +t_0X_3+ x_0Y_2= y_0X_2+z_0X_3+x_0Y_1=z_0X_1+t_0X_2+x_0Y_3=0$$ 
define a 3-dimensional subspace instead of a plane.
However, together with the relation $\sum X_iY_i=0$, it defines a unique line. The same thing holds if
$y_0'=0$. 
\end{remark}

\begin{prop}\label{SupersingularStoQ}  The linear system defined by six quadrics 
$$X_1=b_3x^2+b_4t^2+b_6xz+b_8yt,\ Y_1=b_2y^2+b_1z^2+b_7xz+b_5yt,$$
$$X_2=b_2xy+b_5xt+b_8yz+b_4zt,\ Y_2=b_3xy+b_7xt+b_6yz+b_1zt,$$
$$X_3= b_7x^2+b_8y^2+b_1xz+b_4yt,\ Y_3=b_6z^2+b_5t^2+b_3xz+b_2yt$$
gives a birational map $\mu_0$ from the quartic surface $S_0$ to $\Sigma_0$.  
\end{prop}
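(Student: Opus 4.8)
The plan is to follow the pattern of Propositions \ref{OrdinaryStoQ} and \ref{P-rank1StoQ}, proceeding in three stages: check that the image of $\mu_0$ lies on $\Sigma_0$, identify the base locus, and establish birationality. For the first stage I would verify the three defining equations of $\Sigma_0$ listed in Theorem \ref{Int3quadrics}~(c) (with $a_1=a_2=a_3=a$). As recorded just after the statement of Theorem \ref{ssKummerQ}, the six quadrics $X_i,Y_i$ are precisely those whose squares $X_i^2,Y_i^2$ are the coefficients of $c_i,d_i$ when the quartic equation of $S_0$ is read as a linear form in the $c_i,d_i$; hence that equation reads $\sum_i(c_iX_i^2+d_iY_i^2)+B=0$ with $B$ collecting the terms free of $c_i,d_i$. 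I expect the first equation $\sum_{i=1}^3 X_iY_i=0$ and the third equation of $\Sigma_0$ to be polynomial identities in $(x,y,z,t)$, exactly as in the ordinary case where $\sum X_iY_i$ and $\sum a_i^2X_iY_i$ vanish identically in characteristic $2$; the verification uses $b_1b_2=b_3b_4$, $b_5b_6=b_7b_8$ and the relations of Remark \ref{coefficients2}. Granting this, the middle equation equals the quartic equation of $S_0$ modulo $a\sum_iX_iY_i=0$, so it holds on $S_0$ and $\mu_0(S_0)\subset\Sigma_0$.

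For the second stage, the base locus of the linear system is the common zero locus of the six quadrics, which by the remark in the proof of Theorem \ref{ssKummerQ} is exactly the unique singular point $P_0$ of $S_0$. Blowing up $P_0$ produces a surface $\tilde S_0$ and a proper morphism $\tilde S_0\to\Sigma_0$ extending $\mu_0$, so it remains only to show this morphism has degree one.

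For birationality I would argue conceptually rather than invert the quadrics by hand. Recall from the proof of Theorem \ref{KummerQuartic} that the plane $\Pi$ is the Schubert plane $\sigma(p)$ of lines through $p=(x:y:z:t)$, that the singular point of the conic $\Pi\cap Q_2$ has parameters $(\alpha,\beta,\gamma)=(X_1,X_2,X_3)$ when $a_1=a_2=a_3$, and that its Pl\"ucker coordinates are $(\alpha x,\beta x,\gamma x,\beta y+\gamma z,\alpha y+\gamma t,\alpha z+\beta t)$. A short computation in characteristic $2$ (for instance $\beta y+\gamma z=x(b_2y^2+b_1z^2+b_7xz+b_5yt)=xY_1$) shows that these Pl\"ucker coordinates equal $x\cdot(X_1,X_2,X_3,Y_1,Y_2,Y_3)$, so after cancelling the common factor $x$ the map $\mu_0$ is nothing but the map sending a focus $p\in S_0$ to the singular line through it, i.e.\ the rational inverse of $\pi\colon\Sigma_0\to S_0$. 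Since Theorem \ref{Int3quadricsSupersingular} identifies $\pi$ with the blow-up of $P_0\in S_0$, it is birational, and hence so is $\mu_0$.

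The hard part is avoiding a brute-force inversion. In contrast to Proposition \ref{OrdinaryStoQ}, where the six quadrics are monomials and a general fibre is computed at once, here they are genuine quadratic forms with several cross-terms, so solving $(X_i,Y_i)(x,y,z,t)=P$ directly for a general $P$ is a heavy calculation. The identification $\mu_0=\pi^{-1}$ above sidesteps this; its only cost is the clean polynomial identities expressing $x\cdot(X_i,Y_i)$ as the Pl\"ucker coordinates of the singular line. Should a self-contained computation be preferred, one may instead take a general point $P$ of the line $\widetilde\Theta$ of Theorem \ref{Int3quadricsSupersingular}~(2), which maps under $\pi$ onto the trope $\bar\Theta$ of $S_0$, and check that $P$ has a unique preimage in $S_0$, exactly as in Proposition \ref{OrdinaryStoQ}.
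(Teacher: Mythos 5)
Your proposal is correct, and its skeleton (image lands in $\Sigma_0$, base locus equals $P_0$, blow up, then show degree one) is the same as the paper's; where you genuinely diverge is the birationality step. The paper inverts the six quadrics directly: using $b_1b_2=b_3b_4$ and $b_5b_6=b_7b_8$ it exhibits three explicit combinations of the $X_i,Y_i$ that recover the linear forms $\sqrt{b_2b_6+b_3b_8}\,y+\sqrt{b_1b_5+b_4b_7}\,t$ and $\sqrt{b_2b_7+b_3b_5}\,x+\sqrt{b_1b_8+b_4b_6}\,z$ together with the product $(b_2b_6+b_3b_8)xy+(b_1b_8+b_4b_6)zt$, and concludes that $(x,y,z,t)$ is uniquely determined by $(X_1,\dots,Y_3)$. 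You instead identify $\mu_0$ with the rational inverse of the focus map $\pi\colon\Sigma_0\to S_0$ via the Pl\"ucker-coordinate identity $(\alpha x,\beta x,\gamma x,\beta y+\gamma z,\alpha y+\gamma t,\alpha z+\beta t)=x\cdot(X_1,X_2,X_3,Y_1,Y_2,Y_3)$; I checked the constituent identities (e.g.\ $\beta y+\gamma z=xY_1$ after the characteristic-$2$ cancellations) and they do hold. Your route is more conceptual: it explains for free why the image satisfies $\sum X_iY_i=0$ (a Pl\"ucker relation) and reduces birationality to the statement, already recorded in the proof of Theorem \ref{Int3quadricsSupersingular}, that $\pi$ is the blow-up of $P_0$. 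Its cost is exactly that dependence (one must at least know $\pi$ is generically injective), whereas the paper's explicit inversion is self-contained and yields usable formulas for $\mu_0^{-1}$. Your fallback of tracing a general point of $\widetilde\Theta$ back to the trope $\bar\Theta$ is also viable and is precisely the style of argument used in Propositions \ref{OrdinaryStoQ} and \ref{P-rank1StoQ}. Either way the proof goes through.
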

\begin{proof}
Note that $X_i^2, Y_i^2$ are coefficients of $c_i, d_i$ when we consider the quartic equation of $S_0$
as a linear form of $c_i, d_i$.
The proof is similar to the one of Proposition \ref{OrdinaryStoQ}.
By using the relations $b_1b_2=b_3b_4$, $b_5b_6=b_7b_8$, we have
$$\sqrt{b_2b_6+b_3b_8}y + \sqrt{b_1b_5+b_4b_7}t = \sqrt{b_7X_1+b_6Y_1+b_3X_3+b_1Y_3},$$
$$\sqrt{b_2b_7+b_3b_5}x +\sqrt{b_1b_8+b_4b_6}z =\sqrt{b_5X_1+b_8Y_1+b_2X_3+b_4Y_3},$$
$$(b_2b_6+b_3b_8)xy+(b_1b_8+b_4b_6)zt=b_6X_2+b_8Y_2.$$
It follows that $(x, y, z, t)$ is uniquely determined by $(X_1, X_2, X_3, Y_1, Y_2, Y_3)$.
\end{proof}

\section{Curves of genus 2}\label{sec5}

In this section, we prove Theorem \ref{Curve} and give Igusa's normal forms of curves of genus two 
(see (\ref{g=2})).  
First we prepare a general theory and then apply it to
ordinary, 2-rank 1 and supersingular case individually.

\subsection{General theory}
Let $f_i(t)$ and $g_i(t)$ ($i = 1, 2, 3$) be non-zero rational functions,
and let $a_i$ ($i = 1, 2, 3$) be elements of $k$. Let
\begin{equation}\label{normalpencil}
\begin{array}{rl}
    Q(t) &= f_1(t)X_1^2 + (t + a_1)X_1Y_1 + g_1(t)Y_1^2 \\
    &\quad + f_2(t)X_2^2 + (t + a_2)X_2Y_2 + g_2(t)Y_2^2 \\
    & \quad + f_3(t)X_3^2 + (t + a_3)X_3Y_3 + g_3(t)Y_3^2 = 0
\end{array}
\end{equation}
be a pencil of quadrics. Let $G(3, 6)$ be the Grassmannian of 3-dimensional subspaces
of $k^6$ and let $Z$ be the correspondence variety in ${\bf P}^1 \times G(3, 6)$ defined by
$$
      Z = \{(t, W)\mid W \mbox{is totally singular for}~ Q(t)\}.
$$

For a general point $t \in {\bf A}^1 \subset {\bf P}^1$, 
the fiber of the morphism $f : Z \longrightarrow {\bf P}^1$ has 
two connected components. We set $C = {\rm Spec}(f_{*}{\mathcal O}_Z)$.
Then, $C$ is a double cover of ${\bf P}^1$ (cf. Bhosle \cite[Proposition 2.16]{B}). 
We calculate the concrete equation of $C$.

For this purpose, we factorize polynomials $f_i(t)X_i^2 + (t + a_i)X_iY_i + g_i(t)Y_i^2$
($i = 1, 2, 3$) over the algebraic closure of $k(t)$
as
$$
  f_i(t)X_i^2 + (t + a_i)X_iY_i + g_i(t)Y_i^2 
  = \{\sqrt{f_i(t)}(X_i + \alpha_i Y_i)\}\{\sqrt{f_i(t)}(X_i + \alpha'_i Y_i)\}.
$$
Here, we have
$$
 \alpha_i + \alpha'_i = \frac{t + a_i}{f_i(t)}~\mbox{and}~\alpha_i \alpha'_i = \frac{g_i(t)}{f_i(t)} \quad (i = 1, 2, 3).
$$
We have 3 vectors defined by the equations
$$
\left\{
\begin{array}{l}
X_1 + \alpha_1Y_1 = \frac{1}{\sqrt{f_1(t)}},~X_1 + \alpha'_1Y_1 = 0, \\
X_2 + \alpha_2Y_2 = 0,~X_2 + \alpha'_2Y_2 = \frac{a}{\sqrt{f_2(t)}}, \\
X_3 + \alpha_3Y_3 = 0,~X_3 + \alpha'_3Y_3 = \frac{b}{\sqrt{f_3(t)}},
\end{array}
\right.
$$
$$
\left\{
\begin{array}{l}
X_1 + \alpha_1Y_1 = 0,~X_1 + \alpha'_1Y_1 = \frac{a}{\sqrt{f_1(t)}}, \\
X_2 + \alpha_2Y_2 = \frac{1}{\sqrt{f_2(t)}},~X_2 + \alpha'_2Y_2 = 0, \\
X_3 + \alpha_3Y_3 = 0,~X_3 + \alpha'_3Y_3 = \frac{c}{\sqrt{f_3(t)}},
\end{array}
\right.
$$
$$
\left\{
\begin{array}{l}
X_1 + \alpha_1Y_1 = 0,~X_1 + \alpha'_1Y_1 = \frac{b}{\sqrt{f_1(t)}}, \\
X_2 + \alpha_2Y_2 = 0,~X_2 + \alpha'_2Y_2 = \frac{c}{\sqrt{f_2(t)}}, \\
X_3 + \alpha_3Y_3 = \frac{1}{\sqrt{f_3(t)}},~X_3 + \alpha'_3Y_3 = 0,
\end{array}
\right.
$$
which are a basis of a 3-dimensional vector space $V$ corresponding with a point of $Z$ over 
a general point $t \in {\bf A}^1 \subset {\bf P}^1$. Here, $a$, $b$ and $c$ are arbitrary elements of $k$.
Solving each system of equations, we have 3 vectors ${\bf u}_1$, 
${\bf u}_2$ and ${\bf u}_3$ whose coordinates are arranged as $(X_1,X_2, X_3, Y_1, Y_2, Y_3)$:
$$
\left\{
\begin{array}{l}
{\bf u}_1 = \left(\frac{\sqrt{f_1(t)}\alpha'_1}{t + a_1}, \frac{\sqrt{f_2(t)}a\alpha_2}{t + a_2}, \frac{\sqrt{f_3(t)}b\alpha_3}{t + a_3}, \frac{\sqrt{f_1(t)}}{t + a_1}, \frac{\sqrt{f_2(t)}a}{t + a_2}, \frac{\sqrt{f_3(t)}b}{t + a_3}\right),
\\
{\bf u}_2 = \left(\frac{\sqrt{f_1(t)}a\alpha_1}{t + a_1}, \frac{\sqrt{f_2(t)}\alpha'_2}{t + a_2}, \frac{\sqrt{f_3(t)}c\alpha_3}{t + a_3}, \frac{\sqrt{f_1(t)}a}{t + a_1}, \frac{\sqrt{f_2(t)}}{t + a_2}, \frac{\sqrt{f_3(t)}c}{t + a_3}\right),
\\
{\bf u}_3 = \left(\frac{\sqrt{f_1(t)}b\alpha_1}{t + a_1}, \frac{\sqrt{f_2(t)}c\alpha_2}{t + a_2}, 
\frac{\sqrt{f_3(t)}\alpha'_3}{t + a_3}, \frac{\sqrt{f_1(t)}b}{t + a_1}, \frac{\sqrt{f_2(t)}c}{t + a_2}, 
\frac{\sqrt{f_3(t)}}{t + a_3}\right).
\end{array}
\right.
$$
We set 
$$
\left\{
\begin{array}{l}
{\bf v}_ 1=  \frac{t + a_1}{\sqrt{f_1(t)}}{\bf u}_1,\\ 
{\bf v}_ 2=  \sqrt{f_2(t)}({\bf u}_2 + {\bf u}_1),\\
{\bf v}_ 3=  \sqrt{f_3(t)}({\bf u}_3 + {\bf u}_1),
\end{array}
\right.
$$
Again we set
$$
\left\{
\begin{array}{l}
{\bf w}_ 1=  {\bf v}_1 + \sqrt{\frac{f_2(t)}{f_1(t)}}\frac{t + a_1}{t + a_2}\alpha_2{\bf v}_2 + \sqrt{\frac{f_3(t)}{f_1(t)}}\frac{t + a_1}{t + a_3}\alpha_3{\bf v}_3,\\ 
{\bf w}_ 2=  {\bf v}_2, \\
{\bf w}_ 3=  {\bf v}_3.
\end{array}
\right.
$$
Then, $\{{\bf w}_1, {\bf w}_2, {\bf w}_3\}$ is also a basis of $V$ and the matrix
$$
\left(
\begin{array}{c}
{\bf w}_1 \\
{\bf w}_2 \\
 {\bf w}_3
\end{array}
\right)
$$
is the homogeneous coordinates for the point in $G(3, 6)$ which corresponds with $V$.
Putting $a= b = c= 1$ in the matrix, we have a multi-section of $f : Z \longrightarrow {\bf P}^1$.
The homogeneous coordinates are given by 
$$
\left(
\begin{array}{cccccc}
\alpha_{1}' + \frac{t + a_1}{t + a_2}\frac{f_2(t)\alpha_2}{f_1(t)} 
+ \frac{t + a_1}{t + a_3}\frac{f_3(t)\alpha_3}{f_1(t)}
& 0&
0 & 1 & \sqrt{\frac{f_2(t)}{f_1(t)}}\frac{t + a_1}{t + a_2}& \sqrt{\frac{f_3(t)}{f_1(t)}}\frac{t + a_1}{t + a_3} \\
\sqrt{\frac{f_2(t)}{f_1(t)}} & 1 & 0 & 0 & 0 & 0 \\
\sqrt{\frac{f_3(t)}{f_1(t)}} & 0 & 1 & 0 & 0 & 0
\end{array}
\right).
$$
and certain affine coordinates for the point in $G(3, 6)$ which corresponds with $V$ is given by
$$
\left(
\begin{array}{ccc}
\alpha_{1}' + \frac{t + a_1}{t + a_2}\frac{f_2(t)\alpha_2}{f_1(t)} 
+ \frac{t + a_1}{t + a_3}\frac{f_3(t)\alpha_3}{f_1(t)}
& \sqrt{\frac{f_2(t)}{f_1(t)}}\frac{t + a_1}{t + a_2}& \sqrt{\frac{f_3(t)}{f_1(t)}}\frac{t + a_1}{t + a_3} \\
\sqrt{\frac{f_2(t)}{f_1(t)}}  & 0 & 0 \\
\sqrt{\frac{f_3(t)}{f_1(t)}}  & 0 & 0
\end{array}
\right).
$$
Using $(1, 1)$ component of this matrix, we set
$$
z = (t + a_2)(t + a_3)f_1(t)\left(\alpha'_1 + \frac{t + a_1}{t + a_2}\frac{f_2(t)\alpha_2}{f_1(t)} 
+ \frac{t + a_1}{t + a_3}\frac{f_3(t)\alpha_3}{f_1(t)}\right).
$$
Then, we know
$$
 z = (t + a_2)(t + a_3)f_1(t)\alpha'_1 
 + (t + a_1)(t + a_3)f_2(t)\alpha_2
  + (t + a_1)(t + a_2)f_3(t)\alpha_3
$$
and we have
\begin{equation}\label{standardcurve}
\begin{array}{l}
 z^2 + (t + a_1)(t + a_2) (t + a_3)z \\
 =(t + a_2)^2(t + a_3)^2f_1(t)\{f_1(t){\alpha'_1}^2 +(t + a_1)\alpha'_1\}\\
\quad  + (t + a_1)^2(t + a_3)^2f_2(t)\{f_2(t)\alpha_2^2 +(t + a_2)\alpha_2\}\\
\quad  +(t + a_1)^2(t + a_2)^2f_3(t)\{f_3(t)\alpha_3^2 +(t + a_3)\alpha_3\}\\
 =(t + a_2)^2(t + a_3)^2f_1(t)g_1(t) + 
 (t + a_1)^2(t + a_3)^2f_2(t)g_2(t)  \\
\quad + (t + a_1)^2(t + a_2)^2f_3(t)g_3(t). 
 \end{array}
\end{equation}
We denote by $C$ the curve defined by this equation (\ref{standardcurve}).

Now, we consider 3 cases (a), (b) and (c) in Proposition \ref{CanFormPencilQuadrics}.
By the explicit calculation below, we see that the multi-section which we constructed
becomes a section in Case $({\rm a})$, and a 2-section (purely inseparable of degree 2
over the base curve) in Cases $({\rm b})$ and $({\rm c})$,
after the base change using the curve $C$.

\subsection*{Proof of Theorem \ref{Curve} (a)}

In this case the pencil $\calP$ of quadrics is given by
$$
      \sum_{i = 1}^{3}\left\{c_iX_i^2 + (t + a_i)X_iY_i + d_iY_i^2\right\} = 0.
$$
Therefore, we set $f_{i}(t) = c_i$ and $g_i(t) = d_i$. Thus we have proved the theorem.
\qed

\begin{remark}\label{CurveOrdinary2}
The equation of Igusa's normal form of $C$ is given by
$$
y^2+ y = {\sqrt{c_1d_1}(a_2+a_3)\over (a_1+a_2)(a_1+a_3)}x + {\sqrt{c_2d_2}(a_1+a_3)\over (a_1+a_2)(a_2+a_3)x} + {\sqrt{c_3d_3}(a_1+a_2)\over (a_1+a_3)(a_2+a_3)(x+1)}.
$$
The Jacobian variety $J(C)$ is ordinay, that is, of 2-rank 2.
Recall that the coefficients of Igusa's canonical model are different from 0, that is, $\prod c_id_i\ne 0$ (see (\ref{g=2})) which
coincides with the condition of the smoothness of $\calX$ (Lemma \ref{3-fold}(a)).
Combining this equation and Laszlo and Pauly \cite[Proposition 3.1]{LP2}, we obtain the equation
(\ref{LPeq}) of the Kummer quartic surface.  It follows from Proposition \ref{Basic2} that 
$S$ is a Kummer quartic surface $J(C)/\la \iota\ra$.
\end{remark}

\medskip

\subsection*{Proof of Theorem \ref{Curve} (b)}

In this case the pencil of quadrics is given by
$$
      \sum_{i = 1}^{3}\left\{c_iX_i^2 + (t + a_i)X_iY_i + d_iY_i^2\right\} + B = 0,
$$
$$
 B = b_1X_2Y_3 + b_2X_3Y_2 + b_3X_2X_3 + b_4Y_2Y_3, \ \mbox{with}~b_1b_2=b_3b_4
$$
as in (\ref{(b)}).  By the condition $b_2b_4c_2 + b_1b_4c_3 + b_1b_3d_2 + b_2b_3d_3\ne 0$ 
for the smoothness of $\calX_1$ (Lemma \ref{3-fold}), we may assume $b_4\ne 0$.
Since $B = b_4\left(Y_2 + \frac{b_1}{b_4} X_2\right)\left(Y_3 + \frac{b_2}{b_4} X_3\right)$, 
we set
$$
   Z_2 = Y_2 + \frac{b_1}{b_4} X_2,~ Z_3 = Y_3 + \frac{b_2}{b_4} X_3.
$$
Moreover, we set
$$
   W_3 = X_3 + \frac{b_4}{t + a_3}Z_2.
$$
Then, our pencil becomes
\begin{equation}
\begin{array}{l} 
c_1X_1^2 + (t + a_1)X_1Y_1 + d_1Y_1^2 \\
+\left(c_2 + \frac{b_1(t + a_2)}{b_4} + \frac{b_1^2d_2}{b_4^2}\right)X_2^2 + (t + a_2)X_2Z_2
+ \left(\frac{b_2^2 d_3 + b_4^2c_3}{(t + a_3)^2} + \frac{b_2b_4}{t + a_3} 
 + d_2\right)Z_2^2\\
+\left(c_3 + \frac{b_2(t + a_3)}{b_4} + \frac{b_2^2d_3}{b_4^2}\right)W_3^2 + (t + a_3)W_3Z_3 + d_3Z_3^2= 0.
\end{array}
\end{equation}
Comparing this equation with (\ref{normalpencil}), we see that the curve $C_1$ is
given by the equation
$$
\begin{array}{l}
 z^2 + (t + a_1)(t + a_2) (t + a_3)z  \\
 = (t + a_2)^2(t + a_3)^2c_1d_1 \\
 +(t + a_1)^2(t + a_3)^2\left(c_2 + \frac{b_1(t + a_2)}{b_4} 
 + \frac{b_1^2d_2}{b_4^2}\right)\left(\frac{b_2^2 d_3 + b_4^2c_3}{(t + a_3)^2}
  + \frac{b_2b_4}{t + a_3} + d_2\right)\\
 + (t + a_1)^2(t + a_2)^2\left(c_3 + \frac{b_2(t + a_3)}{b_4} + \frac{b_2^2d_3}{b_4^2}\right)d_3.
\end{array}
$$
Using $a_2= a_3$ and $b_1b_2 = b_3b_4$, we have
$$
\begin{array}{l} 
z^2 + (t + a_1)(t + a_2)^2z \\
= (t + a_2)^4c_1d_1 
+ (t + a_1)^2(t + a_2)^2\left\{b_1b_2 + c_2d_2 + c_3d_3 + (\frac{b_1d_2 + b_2d_3}{b_4})^2\right\}\\
+ (t + a_1)^2(t + a_2)^3(\frac{b_1d_2 + b_2d_3}{b_4}) + (t + a_1)^2(t + a_2)\{b_3(b_1d_2 + b_2d_3)+ b_4(b_2c_2 + b_1c_3)\}\\
+ (t + a_1)^2\{b_1^2c_3d_2 + b_2^2c_2d_3 + b_3^2d_2d_3 + b_4^2c_2c_3\}.
\end{array}
$$
Setting
$$
z = v + (t + a_1)(b_1\sqrt{c_3d_2} + b_2\sqrt{c_2d_3} + b_3\sqrt{d_2d_3} + b_4\sqrt{c_2c_3}) +
(t+a_1)(t + a_2)\frac{b_1d_2 + b_2d_3}{b_4},
$$
we obtain
$$
\begin{array}{l}
v^2 +(t + a_1)(t + a_2)^2v = (t +a_2)^4c_1d_1\\
 +(t+a_1)^2(t+a_2)^2\left(b_1b_2 + c_2d_2+c_3d_3 + b_1\sqrt{c_3d_2} + b_2\sqrt{c_2d_3} + b_3\sqrt{d_2d_3} + b_4\sqrt{c_2c_3}\right)\\
+(t + a_1)^2(t + a_2)\{b_3(b_1d_2 + b_2d_3) + b_4(b_2c_2 + b_1c_3)\},
\end{array}
$$
where $c_1d_1\ne 0, b_3(b_1d_2 + b_2d_3) + b_4(b_2c_2 + b_1c_3)\ne 0$.
If $c_1d_1= 0$ or $b_3(b_1d_2 + b_2d_3) + b_4(b_2c_2 + b_1c_3)= 0$, then $C_1$ is singular.
Thus we have proved the theorem.
\qed

\medskip

\begin{remark}\label{CurveRank12}
The equation of Igusa's normal form of $C_1$ is given by
$$
y^2+ y = x^3 + \alpha x + \beta x^{-1}
$$
with
$$
\begin{array}{l}
\alpha = \frac{\sqrt[6]{b_4(b_2c_2+b_1c_3) +b_3(b_1d_2 + b_2d_3)}}{\sqrt{a_1} + \sqrt{a_2}}
+\frac{\sqrt{c_2d_2 + c_3d_3 + b_1b_2 + b_4\sqrt{c_2c_3} + b_2\sqrt{c_2d_3} +
b_1\sqrt{c_3d_2} + b_3\sqrt{d_2d_3}}}{\sqrt[3]{b_4(b_2c_2+b_1c_3) +b_3(b_1d_2 + b_2d_3)}}\\
\quad \quad + \frac{\sqrt[3]{\{b_4(b_2c_2+b_1c_3) +b_3(b_1d_2 + b_2d_3)\}^2}}{a_1^2 + a_2^2}\\
\beta = \frac{\sqrt{c_1d_1}\sqrt[3]{b_4(b_2c_2+b_1c_3) +b_3(b_1d_2 + b_2d_3)}}{a_1^2 + a_2^2}.
\end{array}
$$
The Jacobian variety $J(C_1)$ is of 2-rank 1.
Recall that $\beta\ne 0$ (see (\ref{g=2})), that is, 
$c_1d_1\ne 0$ and $b_4(b_2c_2+b_1c_3) +b_3(b_1d_2 + b_2d_3)\ne 0$ which
coincides with the condition of the smoothness of $\calX_1$ (Lemma \ref{3-fold}(b)).
It follows from Proposition \ref{Basic2} that 
$S_1$ is a Kummer quartic surface $J(C_1)/\la \iota\ra$.
\end{remark}

\medskip

\subsection*{Proof of Theorem \ref{Curve} (c)}

In this case the pencil of quadrics is given by
$$
      \sum_{i = 1}^{3}\left\{c_iX_i^2 + (t + a_i)X_iY_i + d_iY_i^2\right\} + C = 0,
$$
$$
\begin{array}{l}
 C= b_1X_2Y_3 + b_2X_3Y_2 + b_3X_2X_3 +b_4Y_2Y_3 + b_5X_1Y_2+b_6X_2Y_1+b_7X_1X_2+
b_8Y_1Y_2, \\\

\end{array}
$$ 
with $b_1b_2=b_3b_4$ and $b_5b_6=b_7b_8$ as in (\ref{(c)}).
By the condition for the smoothness of $\calX_0$ (Lemma \ref{3-fold}), 
the cases $b_1=b_2=b_3=b_4=0$ and $b_5=b_6=b_7=b_8=0$ do not occur.
So we may assume that $b_4\ne 0$ and $b_8\ne0$.  In this case $b_1b_8+b_4b_6\ne 0$.
In fact if $b_1b_8+b_4b_6= 0$, then we have $b_2b_6+b_3b_8=b_1b_5+b_4b_7=0$ by the relations 
$b_2(b_1b_8+b_4b_6)= b_4(b_2b_6+b_3b_8)$ and $b_5(b_1b_8+b_4b_6)= b_8(b_1b_5+b_4b_7)$,
which contradicts the smoothness of $\calX_0$ (Lemma \ref{3-fold}).
Thus in the following we may assume that $b_4\ne 0, b_8\ne 0$ and $b_1b_8+b_4b_6\ne 0$.

Since $b_1b_2=b_3b_4$ and $b_5b_6=b_7b_8$, we have
$$
C = b_4\left(\frac{b_1}{b_4}X_2 + Y_2\right)\left(\frac{b_2}{b_4}X_3 + Y_3\right) + b_8\left(\frac{b_5}{b_8}X_1 + Y_1\right)\left(\frac{b_6}{b_8}X_2 + Y_2\right).
$$
We set 
$$
\begin{array}{l}
Z_2 = \frac{b_1}{b_4}X_2 + Y_2,~Z_3 = \frac{b_2}{b_4}X_3 + Y_3,\\
Z_1 = \frac{b_5}{b_8}X_1 + Y_1, ~ W_2 = \frac{b_6}{b_8}X_2 + Y_2.
\end{array}
$$
Then, we have
$$
\begin{array}{l}
X_2 = \frac{b_4b_8}{b_1b_8 + b_4b_6}(Z_2 + W_2),\\
Y_2 = \frac{b_4b_6}{b_1b_8 + b_4b_6}Z_2 + \frac{b_1b_8}{b_1b_8 + b_4b_6}W_2,
\end{array}
$$
and our pencil becomes
$$
\begin{array}{l} 
\left(c_1 + \frac{b_5^2d_1}{b_8^2} + (t +a_1)\frac{b_5}{b_8}\right)X_1^2 + (t + a_1)X_1Z_1 
+ d_1Z_1^2\\
+ \frac{b_4^2b_8^2c_2 + b_1^2b_8^2d_2 + (t + a_2)b_1b_4b_8^2}{(b_1b_8 + b_4b_6)^2}W_2^2
+ (t+a_2)\frac{b_4b_8}{b_1b_8+b_4b_6}W_2Z_2 
+ \frac{b_4^2b_8^2c_2 + b_4^2b_6^2d_2 + (t + a_2)b_4^2b_6b_8}{(b_1b_8 + b_4b_6)^2}Z_2^2 \\
+ \left(c_3 + \frac{b_2^2d_3}{b_4^2} + (t+a_3)\frac{b_2}{b_4}\right)X_3^2
  + (t+a_3)X_3Z_3 
+ d_3Z_3^2
+b_4Z_2Z_3 + b_8Z_1W_2= 0.
\end{array}
$$
We set
$$
W_1 = X_1 + \frac{b_8}{t + a_1}W_2,~ W_3 = X_3 + \frac{b_4}{t + a_3}Z_2.
$$
Then, we have
$$
\begin{array}{l} 
\left(c_1 + \frac{b_5^2d_1}{b_8^2} + (t +a_1)\frac{b_5}{b_8}\right)W_1^2 + (t + a_1)W_1Z_1 
 + d_1Z_1^2 \\
 + \left\{\frac{b_8^2c_1 + b_5^2d_1 + (t+a_1)b_5b_8}{(t+a_1)^2} +\frac{b_4^2b_8^2c_2 + b_1^2b_8^2d_2 + (t + a_2)b_1b_4b_8^2}{(b_1b_8 + b_4b_6)^2}\right\}W_2^2 \\ 
 + (t+a_2)\frac{b_4b_8}{b_1b_8+b_4b_6}W_2Z_2 
 + \left\{\frac{b_4^2c_3 + b_2^2d_3 + (t + a_3)b_2b_4}{(t + a_3)^2} 
 + \frac{b_4^2b_8^2c_2 + b_4^2b_6^2d_2 +(t + a_2)b_4^2b_6b_8}{(b_1b_8 + b_4b_6)^2}\right\}Z_2^2\\
+ \left(c_3 + \frac{b_2^2d_3}{b_4^2} + (t+a_3)\frac{b_2}{b_4}\right)W_3^2
+ (t+a_3)W_3Z_3 + 
d_3Z_3^2 = 0.
\end{array}
$$
Setting
$$
\tilde{W}_2 = \frac{\sqrt{b_4b_8}}{\sqrt{b_1b_8+b_4b_6}}W_2 , ~\tilde{Z}_2 =  \frac{\sqrt{b_4b_8}}{\sqrt{b_1b_8+b_4b_6}}Z_2,
$$
we have
$$
\begin{array}{l} 
\left(c_1 + \frac{b_5^2d_1}{b_8^2} + (t +a_1)\frac{b_5}{b_8}\right)W_1^2 
+ (t + a_1)W_1Z_1 + d_1Z_1^2 \\
+
\left\{\frac{(b_8^2c_1 + b_5^2d_1 + (t+a_1)b_5b_8)(b_1b_8+b_4b_6)}{(t+a_1)^2b_4b_8} +\frac{b_4^2b_8c_2 + b_1^2b_8d_2 + (t + a_2)b_1b_4b_8}{(b_1b_8 + b_4b_6)b_4}\right\}\tilde{W}_2^2\\
 + (t+a_2)\tilde{W}_2\tilde{Z}_2
 + 
\left\{\frac{(b_4^2c_3 + b_2^2d_3 + (t + a_3)b_2b_4)(b_1b_8+b_4b_6)}{(t + a_3)^2b_4b_8} + \frac{b_4b_8^2c_2 + b_4b_6^2d_2 + (t + a_2)b_4b_6b_8}{(b_1b_8 + b_4b_6)b_8}\right\}\tilde{Z}_2^2\\
+ \left(c_3 + \frac{b_2^2d_3}{b_4^2} + (t+a_3)\frac{b_2}{b_4}\right)W_3^2
 + (t+a_3)W_3Z_3 
+ d_3Z_3^2 = 0.
\end{array}
$$
Comparing this equation with (\ref{normalpencil}), we see that the curve $C_0$ is
given by the equation
$$
\begin{array}{l}
 z^2 + (t + a_1)(t + a_2) (t + a_3)z  \\
 = (t + a_2)^2(t + a_3)^2\left(c_1 + \frac{b_5^2d_1}{b_8^2} + (t +a_1)\frac{b_5}{b_8}\right)d_1 \\
 +(t + a_1)^2(t + a_3)^2\left\{\frac{(b_8^2c_1 + b_5^2d_1 + (t+a_1)b_5b_8)(b_1b_8+b_4b_6)}{(t+a_1)^2b_4b_8} +\frac{b_4^2b_8c_2 + b_1^2b_8d_2 + (t + a_2)b_1b_4b_8}{(b_1b_8 + b_4b_6)b_4}\right\}\\
 \times \left\{\frac{(b_4^2c_3 + b_2^2d_3 + (t + a_3)b_2b_4)(b_1b_8+b_4b_6)}{(t + a_3)^2b_4b_8} + \frac{b_4b_8^2c_2 + b_4b_6^2d_2 + (t + a_2)b_4b_6b_8}{(b_1b_8 + b_4b_6)b_8}\right\}\\
 + (t + a_1)^2(t + a_2)^2\left(c_3 + \frac{b_2^2d_3}{b_4^2} + (t+a_3)\frac{b_2}{b_4}\right)d_3.
\end{array}
$$
Using $a_1 = a_2 = a_3 = a$, we have
$$
\begin{array}{l}
 z^2 + (t + a)^3z  \\
 = (t + a)^6\frac{b_1b_4b_6b_8}{(b_1b_8+b_4b_6)^2}
 + (t + a)^5\left(\frac{b_5}{b_8}d_1 + \frac{b_2}{b_4}d_3 
 + \frac{b_4b_8c_2 +b_1b_6d_2}{b_1b_8 + b_4b_6}\right)\\
 + (t+a)^4\left\{\left(c_1 +\frac{b_5^2d_1}{b_8^2}\right)d_1 + \left(c_3 + \frac{b_2^2d_3}{b_4^2}\right)d_3 + 
 \frac{(b_4^2c_2 + b_1^2d_2)(b_8^2c_2 + b_6^2d_2)}{(b_1b_8 + b_4b_6)^2} +b_1b_2 + b_5b_6\right\}\\
+ (t +a)^3\left\{\frac{(b_8^2c_1+b_5^2d_1)b_6+ (b_8^2c_2 + b_6^2d_2)b_5}{b_8} + \frac{(b_4^2c_3 + b_2^2d_3)b_1+ (b_4^2c_2 + b_1^2d_2)b_2}{b_4}\right\} \\
+ (t+a)^2\left\{\frac{(b_8^2c_1 + b_5^2d_1)(b_8^2c_2+b_6^2d_2)}{b_8^2} + \frac{(b_4^2c_3 + b_2^2d_3)(b_4^2c_2 + b_1^2d_2)}{b_4^2} +
\frac{(b_1b_8 +b_4b_6)^2b_2b_5}{b_4b_8}\right\}\\
+(t+a)(b_1b_8 + b_4b_6)^2\left\{\frac{(b_8^2c_1 + b_5^2d_1)b_2}{b_4b_8^2} 
+ \frac{(b_4^2c_3 + b_2^2d_3)b_5}{b_4^2b_8}\right\}
+ \frac{(b_1b_8 + b_4b_6)^2(b_8^2c_1 + b_5^2d_1)(b_4^2c_3 + b_2^2d_3)}{b_4^2b_8^2}.
\end{array}
$$
Let $\alpha$ be a root of the equation $x^2 + x + \frac{b_1b_4b_5b_6}{(b_1b_8 + b_4b_6)^2} = 0$.
Replacing $z$ by
$$
z + (t+ a)^3 \alpha + (t+a)^2\left(\frac{b_5}{b_8}d_1 + \frac{b_2}{b_4}d_3 
+ \frac{b_4b_8c_2 + b_1b_6d_2}{b_1b_8 + b_4b_6}\right),
$$
we have proved the theorem.
\qed

\medskip

\begin{remark}\label{CurveSupersingularIgusa}
The equation of Igusa's normal form of $C_0$ is given by
$$
y^2+ y = x^5 + \alpha x^3
$$
with
$$
\begin{array}{l}
\alpha = \frac{\alpha_1}{\sqrt[5]{\alpha_2^3}},\\
\alpha_1 = b_6b_8c_1 +(b_5b_8 +b_2b_4)c_2 + b_1b_4c_3 +b_5b_7d_1 + (b_1b_3 +b_6b_7)d_2 +b_2b_3d_3\\
 +  (b_1b_8 +b_4b_6)\sqrt{c_1c_3} +(b_3b_8 +b_2b_6)\sqrt{c_1d_3} +(b_1b_5 + b_4 b_7)\sqrt{c_3d_1} + (b_3b_5 +b_2b_7)\sqrt{d_1d_3},\\ 
\alpha_2 = (b_1b_3b_8^2 +b_2b_4b_6^2)c_1 +(b_1b_3b_5^2 +b_2b_4b_7^2)d_1 +(b_1^2b_5b_8 +b_4^2b_6b_7)c_3 +(b_3^2b_5b_8 +b_2^2b_6b_7)d_3.
\end{array}
$$
The Jacobian variety $J(C_0)$ is supersingular, that is, of 2-rank 0.
The necessary and sufficient condition for the curve $C_0$ to be non-singular 
is $\alpha_2\neq 0$, that is, 
$(b_1b_3b_8^2 +b_2b_4b_6^2)c_1 +(b_1b_3b_5^2 +b_2b_4b_7^2)d_1 +(b_1^2b_5b_8 +b_4^2b_6b_7)c_3 +(b_3^2b_5b_8 +b_2^2b_6b_7)d_3 \neq 0$,
which
coincides with the condition of the smoothness of $\calX_0$ (Lemma \ref{3-fold}(c))
by $b_1b_2 =b_3b_4$ and $b_5b_6 = b_7b_8$.
It follows from Proposition \ref{Basic2} that 
$S_0$ is a Kummer quartic surface $J(C_0)/\la \iota\ra$.
\end{remark}

\section{Multi-linear systems}\label{sec6}
In the following, let $k$ be an algebraically closed field of characteristic 2, $C$
a curve of genus two over $k$, and $J(C)$ the Jacobian variety of $C$.
Throughout this section except the last subsection \ref{Duquesne1},
we assume that $J(C)$ is ordinary.
In this section, we study linear systems on $J(C)$ which defines a (rational) map from $J(C)$ to a quartic surface $J(C)/\la \iota\ra$ in ${\bf P}^3$ or an intersection of three quadrics in ${\bf P}^5$.  

The curve $C$ gives a principal
polarization of $J(C)$ which we call the {\it theta divisor}. 
We may assume $C$ contains the zero point $O$ of $J(C)$ and is symmetric
with respect to the inversion $\iota$, that is, $\iota^*C = C$.  Since $J(C)$ is ordinary,
the $2$-rank of $J(C)$ is equal to two and 
denoting by $J(C)[2]$ the group scheme of 2-torsion points of $J(C)$, 
we have $J(C)[2]_{red}\cong {\bf Z}/2{\bf Z}\oplus {\bf Z}/2{\bf Z}$.
We denote by $a_i$ ($i = 1, 2, 3$) the 2-torsion points of $J(C)$. 
We may assume that $a_1$ and $a_2$ are contained in $C$ and 
that the points $O$, $a_1$ and $a_2$ give the ramification points of 
the double covering $\pi : C \longrightarrow {\bf P}^1$.
In this case, we have $a_3\not\in C$. We sometimes set $a_0 = O$.

\begin{table}[h]
\begin{tabular}{|c|c|}\hline
curve & elements of $J(C)[2]_{red}$\\
\hline
$C$ &    $O$, $a_1$, $a_2$ \\
\hline
$T_{a_1}^*C$ &    $a_1$, $O$, $a_3$ \\
\hline
$T_{a_2}^*C$ &    $a_2$, $a_3$, $O$ \\
\hline
$T_{a_3}^*C$ &    $a_3$, $a_2$, $a_1$ \\
\hline
\end{tabular}
\caption{The elements of $J(C)[2]_{red}$ which are contained in the curve}
\end{table}

For a point $x \in J(C)$, we denote by $T_x$ the translation by the point $x$
and denote by $\hat{J}(C)$ the dual abelian surface of $J(C)$.
For a divisor $D$, we have a homomorphism
$$
\begin{array}{rccc}
   \Phi_D: & J(C) & \longrightarrow & \hat{J}(C)\\
         &  x  &  \mapsto   &  T_x^*(D) - D
\end{array}
$$
If $D$ is ample, then $\Phi_D$ is an isogeny, and if $D$ is a principal polarization,
then $\Phi_D$ is an isomorphism (cf. Mumford \cite{M1})

\subsection{Points on the theta divisor $C$}
Torsion points of $J(C)$ play an important role to examine linear systems
$\vert nC\vert$ with positive integer $n$. In this subsection we investigate
the relation between the theta divisor $C$ 
and some torsion points of $J(C)$.

\begin{lemma}\label{4-torsion} 
The theta divisor $C$ contains no $4$-torsion point of $J(C)$.
\end{lemma}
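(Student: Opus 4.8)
The plan is to translate the statement into a question about linear equivalence of divisors on $C$ via the Abel--Jacobi map, and then settle it using the hyperelliptic $g^1_2$. Here ``$4$-torsion point'' should be read as a point of exact order $4$: the three points $O,a_1,a_2$ lie on $C$ (Table 6.1) and are killed by $4$, so the assertion can only be that $C$ meets $J(C)[4]_{\mathrm{red}}$ in those same three points. First I would fix the ramification point of $\pi:C\to\mathbf P^1$ corresponding to $O$, call it $r_0$, and embed $p\mapsto [p-r_0]$, so that the image is the symmetric theta divisor through $O$ and $\iota$ is the hyperelliptic involution $h$. Writing $K$ for the canonical class, one has $2r_0\sim K$, hence for $x=[p-r_0]$ the order conditions read $2x=O\iff 2p\sim K$ and $4x=O\iff 4p\sim 2K$. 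Thus the lemma becomes: there is no $p\in C$ with $4p\sim 2K$ but $2p\not\sim K$.

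The key tool is the description of the complete linear systems $|K|$ and $|2K|$ as pullbacks along $\pi$. Since $C$ has genus $2$, $K\cong\pi^{*}\mathcal O_{\mathbf P^1}(1)$, and Riemann--Roch gives $h^{0}(K)=2$ and $h^{0}(2K)=3$ (as $\deg 2K=4>2g-2$). The pullback maps $\pi^{*}H^{0}(\mathbf P^1,\mathcal O(1))\hookrightarrow H^{0}(C,K)$ and $\pi^{*}H^{0}(\mathbf P^1,\mathcal O(2))\hookrightarrow H^{0}(C,2K)$ are injective and match dimensions, so $|K|=\pi^{*}|\mathcal O(1)|$ and $|2K|=\pi^{*}|\mathcal O(2)|$. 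Consequently every divisor in $|K|$ (resp.\ $|2K|$) is of the form $\pi^{*}E$ for an effective divisor $E$ of degree $1$ (resp.\ $2$) on $\mathbf P^1$. I would record here the basic identity that for a branch point $q$ one has $\pi^{*}q=2r$ with $r$ a ramification point, since this is exactly where the characteristic-$2$ subtlety enters.

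To conclude, suppose $x$ had exact order $4$; then $4p\sim 2K$, so $4p$ is an effective divisor of $|2K|$, hence $4p=\pi^{*}E$ for some effective $E$ of degree $2$ on $\mathbf P^1$. Because the support of $4p$ is the single point $p$ with multiplicity $4$, and $\pi^{*}$ of a sum of distinct or non-branch points always has support of size $\geq 2$, the only possibility is $E=2q$ with $q$ a branch point, giving $\pi^{*}E=4r$ for the corresponding ramification point $r$; thus $p=r$ and $2p\sim K$, contradicting $2x\neq O$. The same argument applied to $|K|$ shows $2p\sim K$ forces $p$ to be ramification, recovering that the only torsion on $C$ among these classes are $O,a_1,a_2$. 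The one genuine point of care---the main obstacle---is verifying in characteristic $2$ that, despite wild ramification, the hyperelliptic cover is still totally ramified (ramification index $2$) over each of its three branch points, so that $\pi^{*}q=2r$ holds scheme-theoretically; this is what rules out a degree-$4$ divisor concentrated at one non-Weierstrass point and makes the pullback descriptions of $|K|$ and $|2K|$ valid. Everything else is Riemann--Roch and is insensitive to the characteristic.
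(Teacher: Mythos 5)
Your proof is correct, but it takes a genuinely different route from the paper's. The paper argues entirely on the abelian surface: assuming $x\in C$ has order $4$, it exhibits three distinct points in the intersection of two suitable translates of the theta divisor (splitting into the cases $2x\in C$ and $2x=a_3\notin C$), which contradicts $(C\cdot T_y^*C)=C^2=2$ unless the translates coincide, and then uses $\Ker\Phi_C=\{O\}$ (the principal polarization) to force $x\in J(C)[2]$. You instead work on the curve itself: via the Abel--Jacobi embedding $p\mapsto[p-r_0]$ you reduce the statement to the nonexistence of $p\in C$ with $4p\sim 2K$ but $2p\not\sim K$, and settle it by identifying $|2K|$ with $\pi^*|\mathcal O_{{\bf P}^1}(2)|$ (legitimate, since both sides of the injection $\pi^*H^0(\mathcal O(2))\hookrightarrow H^0(2K)$ have dimension $3$ by Riemann--Roch) and then using that a pullback divisor supported at a single point forces $p$ to be a ramification point, whence $2p\sim K$. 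The one delicate point you flag --- that the Artin--Schreier double cover, though wildly ramified, is still totally ramified over each branch point so that $\pi^*q=2r$ --- is indeed the crux and does hold: for a separable degree-$2$ cover, $\sum_r e_r=2$ forces a unique point with $e=2$ over any branch point, in any characteristic. Your argument is more elementary and self-contained (it avoids the intersection theory of translates and the theta group formalism), and it makes transparent why only the Weierstrass points of $C$ can be torsion; the paper's method, on the other hand, is the one that scales to the subsequent statements about translates $T_x^*C$ and intersections of divisors (Corollary \ref{4-translation}, Remark \ref{systemofparameter}, Table 4) that the rest of Section \ref{sec6} relies on.
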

\begin{proof}
Suppose there exists an element $x \in C$ of order 4.
Then, $2x = a$ is a 2-torsion point of $J(C)$. 
Assume $a \in C$. Then,
since $C = \iota^*C \ni -x$, 
$T_x^*C$ contains $O$, $-2x=2x = a$ and $O-x = -x$.
Therefore, $C \cap T_x^*C$ contains three different points $O$, $a$ and $-x$. 
If $T_x^*C\neq C$, then we have $3 \leq (T_x^*C\cdot C) = C^2 = 2$, a contradiction.
Therefore, we have $T_x^*C = C$ in this case.
Therefore, we have $x \in \Ker~ \Phi_C$.   However, since $C$ is a principal polarization,
we have $\Ker ~\Phi_C = \{O\}$, a contradiction. 
Hence we have $2x = a = a_3 \not\in C$. 

In this case, $T_x^*C$ contains $x-x=O$, $a_1-x$ and $-x-x=-2x =-a_3= a_3$.
$T_{a_1}^*C$ contains $a_1-a_1=O$, $-x-a_1 = -x+a_1$ and $a_2 - a_1 = a_3$.
If $T_x^*C\neq T_{a_1}^*C$, then we have $3 \leq (T_x^*C\cdot T_{a_1}^*C) = C^2 = 2$, 
a contradiction.
Therefore, we have $T_x^*C = T_{a_1}^*C$. Therefore, we have $T_{x-a_1}^*C = C$.
This means the non-zero element $x-a_1$ is contained in $\Ker~ \Phi_C$.
However, since $C$ is a principal polarization,
we have $\Ker~ \Phi_C = \{O\}$, and we have a contradiction again. 
Hence, $C$ contains no 4-torsion point of $J(C)$.
\end{proof}

\begin{corollary}\label{4-translation} 
Let $x$ be a $4$-torsion point of $J(C)$. Then, $T_x^*C$
contains no point of $J(C)[2]$.
\end{corollary}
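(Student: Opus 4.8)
The plan is to deduce the corollary directly from Lemma \ref{4-torsion} by a short translation argument, since the essential content—that the theta divisor avoids every point of order exactly four—has already been established there.

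First I would fix the convention for the translated divisor. A point $y \in J(C)$ lies on the support of $T_x^*C$ precisely when $T_x(y) = y + x$ lies on $C$. Hence the assertion that $T_x^*C$ contains a $2$-torsion point $a \in J(C)[2]$ is equivalent to the assertion that $x + a \in C$. This reformulation is the only structural input needed, and it reduces the statement about $T_x^*C$ to a statement about membership in the original divisor $C$.

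Next I would argue by contradiction. Suppose some $2$-torsion point $a \in J(C)[2]$ lies on $T_x^*C$, so that $x + a \in C$. Since $x$ is a $4$-torsion point—meaning $x$ has order exactly $4$, so that $2x \neq O$ while $4x = O$—I would compute the order of $x + a$. Because $a$ is $2$-torsion, $2(x + a) = 2x + 2a = 2x \neq O$, whereas $4(x + a) = 4x + 4a = O$. Thus $x + a$ is again a point of order exactly $4$, that is, a $4$-torsion point lying on $C$. This contradicts Lemma \ref{4-torsion}, which asserts that $C$ contains no $4$-torsion point. Therefore no such $a$ can exist, and $T_x^*C$ contains no point of $J(C)[2]$.

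I do not expect any genuine obstacle here; the argument is purely formal once the convention for $T_x^*$ is set. The one subtlety worth flagging is the meaning of the phrase ``$4$-torsion point'': the statement is correct only under the reading that $x$ has order exactly $4$. Indeed, were $x$ itself a $2$-torsion point, then $x + x = O \in C$ would exhibit the $2$-torsion point $x$ on $T_x^*C$, so the conclusion would fail. Consequently the crux of the proof is simply the verification that adding a $2$-torsion point to a point of order exactly four yields again a point of order exactly four, after which Lemma \ref{4-torsion} closes the argument.
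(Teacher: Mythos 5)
Your proposal is correct and is exactly the argument the paper intends: the paper's proof simply says the corollary ``follows from Lemma \ref{4-torsion}'', and the content of that deduction is precisely your observation that $a\in T_x^*C$ means $x+a\in C$, where $x+a$ again has order exactly $4$ since $2(x+a)=2x\neq O$ and $4(x+a)=O$. Your side remark that the statement requires $x$ to have order exactly $4$ is also consistent with the paper's usage (cf.\ $J(C)[4]\setminus J(C)[2]$ later in the same section).
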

\begin{proof}
This corollary follows from Lemma \ref{4-torsion}.
\end{proof}

\begin{lemma}\label{general-transformation}
Let $x$ be a general point of $J(C)$. Then, $T_x^*C$
contains no point of $J(C)[2]$.
\end{lemma}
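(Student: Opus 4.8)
The plan is to reduce the statement to an elementary dimension count by rewriting the incidence condition ``$a\in T_x^*C$'' as a condition on $x$. Recall that $T_x$ denotes translation by $x$, so that $T_x^*C=T_x^{-1}(C)=\{y\in J(C): y+x\in C\}=C-x$; this is the same convention used in the proof of Lemma \ref{4-torsion}, where the three listed points of $T_x^*C$ arise as $x-x$, $-x-x$ and $O-x$ with $x,-x,O\in C$. For a $2$-torsion point $a\in J(C)[2]$ one then has
\[
a\in T_x^*C \iff a+x\in C \iff x\in C-a,
\]
and $C-a=\{c-a:c\in C\}=T_{-a}(C)$ is the image of the curve $C$ under a translation automorphism of $J(C)$. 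Hence each $C-a$ is again a curve, i.e.\ a one-dimensional closed subvariety of the abelian surface $J(C)$.

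Next I would record that the set of $2$-torsion $k$-points is finite: it is the group of $k$-rational points of the finite group scheme $J(C)[2]$, and in the ordinary case under consideration it consists of exactly the four reduced points $O,a_1,a_2,a_3$ with $J(C)[2]_{red}\cong {\bf Z}/2{\bf Z}\oplus {\bf Z}/2{\bf Z}$. Consequently the ``bad locus''
\[
B=\{x\in J(C): T_x^*C\cap J(C)[2]\neq\emptyset\}=\bigcup_{a\in J(C)[2]}(C-a)
\]
is a finite union of curves, hence a proper closed subset of the surface $J(C)$; its complement $J(C)\setminus B$ is a nonempty Zariski-open set. Any $x$ lying in this open set satisfies $T_x^*C\cap J(C)[2]=\emptyset$, which is precisely the conclusion for a general point $x$.

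There is no genuine obstacle in this argument; it is essentially a dimension count, and the final statement is a companion to Corollary \ref{4-translation} (which handled the special case of $4$-torsion $x$ via Lemma \ref{4-torsion}). The only points requiring care are the translation convention, so that the condition on $x$ comes out as membership in a translate of $C$ rather than in some higher-dimensional locus, and the observation that one need only avoid the finitely many reduced $2$-torsion points: over the algebraically closed field $k$ the infinitesimal part of the group scheme $J(C)[2]$ contributes no further closed points, so $B$ really is a finite union of curves.
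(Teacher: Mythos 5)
Your argument is correct and is essentially the paper's own proof: since each $a_i$ is $2$-torsion, your bad locus $\bigcup_{a\in J(C)[2]}(C-a)$ is exactly the union $\bigcup_{i=0}^{3}T_{a_i}^*C$ that the paper asks a general $x$ to avoid, and both arguments conclude by noting this is a proper closed subset of the surface. You merely spell out the translation bookkeeping and the finiteness of the reduced $2$-torsion more explicitly than the paper does.
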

\begin{proof}
There exists a point $x\in J(C)$ which is not contained
in $\cup_{i=0}^{3}T_{a_i}^*C$. Then, $T_x^*C$
contains no point of $J(C)[2]$.
\end{proof}

The following lemma is well-known.
\begin{lemma}\label{point-divisor}
Let $x_i$ {\rm (}$i = 1,2, \ldots, n${\rm )} be points of $J(C)$. Then,
$$
\sum_{i = 1}^{n}T_{x_i}^*C -nC \sim 0\quad \mbox{if and only if}\quad \sum_{i= 1}^nx_i = O.
$$
\end{lemma}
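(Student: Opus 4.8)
The plan is to deduce the statement directly from the fact, recalled just before the lemma, that the map $\Phi_C : J(C) \to \hat{J}(C)$, $x \mapsto T_x^*C - C$, is a group homomorphism which is moreover an isomorphism because $C$ is a principal polarization. The essential observation is that the divisor $\sum_{i=1}^n T_{x_i}^*C - nC$ is, by construction, a sum of classes already lying in $\Pic^0(J(C)) = \hat{J}(C)$, so its triviality can be tested inside $\hat{J}(C)$ rather than in all of $\Pic(J(C))$.

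First I would rewrite $\sum_{i=1}^n T_{x_i}^*C - nC = \sum_{i=1}^n (T_{x_i}^*C - C)$ and note that each summand $T_{x_i}^*C - C$ is exactly a representative of the element $\Phi_C(x_i)\in\hat{J}(C)$. Since $\Phi_C$ is a homomorphism (equivalently, by the theorem of the square, $T_{x+y}^*C + C \sim T_x^*C + T_y^*C$), the class of the whole divisor equals $\sum_{i=1}^n \Phi_C(x_i) = \Phi_C\!\left(\sum_{i=1}^n x_i\right)$ in $\hat{J}(C)$.

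It then remains to observe that a divisor whose class lies in $\Pic^0(J(C))$ is linearly equivalent to $0$ if and only if that class is the identity element of $\hat{J}(C)$. Hence $\sum_{i=1}^n T_{x_i}^*C - nC \sim 0$ if and only if $\Phi_C\!\left(\sum_{i=1}^n x_i\right) = 0$. Because $C$ is a principal polarization, $\Phi_C$ is an isomorphism, so it is injective with $\Ker \Phi_C = \{O\}$, and therefore $\Phi_C\!\left(\sum_{i=1}^n x_i\right) = 0$ holds exactly when $\sum_{i=1}^n x_i = O$. This yields both implications simultaneously. There is no genuine obstacle here: the only points needing a little care are the identification of $T_{x_i}^*C - C$ with $\Phi_C(x_i)$ and the additivity of $\Phi_C$, both immediate from the definitions recalled before the lemma. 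In particular no characteristic-$2$ phenomena intervene, since the argument depends only on the abelian-variety structure of $J(C)$ and on $C$ being a principal polarization.
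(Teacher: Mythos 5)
Your proposal is correct and follows essentially the same route as the paper: both arguments identify the divisor class with $\Phi_C\left(\sum_{i=1}^n x_i\right)$ using the additivity of $\Phi_C$, and then conclude from the injectivity of $\Phi_C$ guaranteed by $C$ being a principal polarization. The only difference is that you spell out a few intermediate justifications (the theorem of the square, the triviality test inside $\Pic^0$) that the paper leaves implicit.
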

\begin{proof}
Let $O_{\hat{J}(C)}$ be the zero point of $\hat{J}(C)$.
Since
$\Phi_C(\sum_{i= 1}^nx_i)= \sum_{i= 1}^n\Phi_C(x_i)= \sum_{i = 1}^{n}T_{x_i}^*C -nC$,
we see $\sum_{i = 1}^{n}T_{x_i}^*C -nC\sim 0$ if and only if 
$\Phi_C(\sum_{i= 1}^nx_i) = O_{\hat{J}(C)}$.
Since $C$ is a principal polarization, we see $\Phi_C(\sum_{i= 1}^nx_i) = O_{\hat{J}(C)}$
if and only if $\sum_{i= 1}^nx_i=O$.
\end{proof}

\begin{remark}\label{systemofparameter} 
If $i \neq j$, $T_{a_i}^*C \cap T_{a_j}^*C$ consists of 
different two 2-torsion points. Since $(T_{a_i}^*C \cdot T_{a_j}^*C) = C^2 = 2$,
$T_{a_i}^*C$ intersects $T_{a_j}^*C$ at each intersection point transversely.
Therefore, the defining equations for $T_{a_i}^*C$ and $T_{a_j}^*C$
make the system of parameters at each intersection point.
\end{remark}

\subsection{The linear system $\vert 2C\vert$}\label{7.2}
In this subsection, we examine the linear system $\vert 2C\vert$, and
for our future use we
explain the concrete equation of the image of $\varphi_{\vert 2C\vert}$
in ${\bf P}^3$ which was given by Laszlo-Pauly \cite{LP}.

Let $L(2C)$ be the vector space given by the divisor $2C$.
By the Riemann-Roch theorem, we have $\dim L(2C) = 4$.
By Lemma \ref{point-divisor}, we have
$$
     2(T_{a_i}^*(C) - C) \sim 0 \quad (i = 0, 1, 2, 3).
$$
Therefore, there exists rational functions $f_i$ ($i = 0, 1, 2, 3$) such that
$$
       2(T_{a_i}^*(C) - C) = (f_{3-i}).
$$
We take a natural isomorphism
\begin{equation}\label{equation}
        \rho : L(2C) \cong {\rm H}^0(J(C), \mathcal{O}_{J(C)}(2C))
\end{equation}
and we set $\tilde{f}_i = \rho (f_i)$.

The following Lemmas \ref{basis} and \ref{iota-action} are known (see Laszlo-Pauly
\cite{LP}). But we give their proofs for the sake of completeness.
\begin{lemma}\label{basis}
$f_i$ ($i = 0, 1, 2, 3$) gives a basis of $L(2C)$.
\end{lemma}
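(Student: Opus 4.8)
The plan is to reduce the statement to the linear independence of the four sections and then detect that independence by evaluating at the four $2$-torsion points of $J(C)$. Since $\dim L(2C)=4$ by Riemann--Roch and the map $\rho$ of (\ref{equation}) is an isomorphism, it suffices to show that the sections $\tilde f_0,\tilde f_1,\tilde f_2,\tilde f_3\in \mathrm{H}^0(J(C),\mathcal O_{J(C)}(2C))$ are linearly independent. First I would record the zero divisor of each section: from $2(T_{a_i}^*C-C)=(f_{3-i})$ it follows that $\tilde f_{3-i}$ has zero divisor $2T_{a_i}^*C$, so, re-indexing, $\tilde f_j$ vanishes exactly along the translate $T_{a_{3-j}}^*C$ (with the convention $a_0=O$, so that $\tilde f_3$ vanishes along $C$ itself, $f_3$ being a constant).

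Next I would pin down the vanishing of each $\tilde f_j$ at the $2$-torsion points $a_0=O,a_1,a_2,a_3$ using the table listing the elements of $J(C)[2]_{red}$ lying on each of the curves $C,T_{a_1}^*C,T_{a_2}^*C,T_{a_3}^*C$. A direct reading of that table shows that the translate $T_{a_k}^*C$ contains every element of $J(C)[2]_{red}$ except $a_{3-k}$ (equivalently $a_j\in T_{a_k}^*C$ iff $a_j+a_k\ne a_3$, which one can also verify group-theoretically in $J(C)[2]_{red}\cong({\bf Z}/2{\bf Z})^2$). Combining this with the previous step, the section $\tilde f_j$, whose zero locus is $T_{a_{3-j}}^*C$, vanishes at every $a_i$ with $i\ne j$ and does not vanish at $a_j$. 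Thus the $4\times 4$ array of values $(\tilde f_j(a_i))$, computed in any local trivialization of $\mathcal O_{J(C)}(2C)$ near each $a_i$, is diagonal with nonzero diagonal entries.

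Finally, from a relation $\sum_{j}\lambda_j\tilde f_j=0$, evaluating at $a_i$ kills every term except $j=i$ and leaves $\lambda_i\tilde f_i(a_i)=0$; since $\tilde f_i(a_i)\ne 0$ this gives $\lambda_i=0$ for each $i$, so the sections are independent and $\{f_0,f_1,f_2,f_3\}$ is a basis of $L(2C)$. The only genuinely delicate points are the bookkeeping in the second step --- correctly matching each $2$-torsion point to the unique translate that omits it, through the index shift $j\mapsto 3-j$ --- and the justification that \emph{evaluation at $a_i$} is meaningful for a section of a line bundle: here one uses that the fiber of $\mathcal O_{J(C)}(2C)$ over $a_i$ is one-dimensional, so that the vanishing or non-vanishing of each section there is well defined. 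The transversality recorded in Remark \ref{systemofparameter} and the distinctness of the four translates (which follows from $\Ker\,\Phi_C=\{O\}$, as $C$ is a principal polarization) are consistent with this picture but are not actually required for the independence argument.
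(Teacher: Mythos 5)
Your proof is correct and follows essentially the same route as the paper: both reduce to linear independence of the sections $\tilde f_i$ and detect it by evaluating at the four points of $J(C)[2]_{red}$, using that $\tilde f_i$ vanishes at $a_j$ for $j\ne i$ but not at $a_i$. Your write-up merely makes explicit the bookkeeping (the index shift $j\mapsto 3-j$ and the reading of Table~3) that the paper's proof leaves implicit.
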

\begin{proof} Since $\dim L(2C) = 4$ and (\ref{equation}), it suffices
to show that $\tilde{f}_{i}$ ($i = 0, 1, 2, 3$) are linearly independent over $k$.
We consider a linear relation
$$
  \alpha_0\tilde{f}_0+\alpha_1\tilde{f}_1+\alpha_2\tilde{f}_2+\alpha_3\tilde{f}_3 = 0 
  \quad(\alpha_0, \alpha_1, \alpha_2, \alpha_3 \in k).
$$
Note that $\tilde{f}_i$ has 3 zero points in $J(C)[2]$ and $\tilde{f}_i(a_i) \neq 0$.
Therefore, we have $\alpha_i\tilde{f}_i(a_i) =0$ and $\tilde{f}_i(a_i) \neq 0$.
Therefore, we have $\alpha_i = 0$.
\end{proof}

\begin{remark}
Let $F: J(C) \longrightarrow J(C)^{(2)}$ be the relative Frobenius morphism
and we denote by $\Theta$ the descent divisor of $2C$, i.e., $\rho^{*}(\Theta) \sim 2C$.
We may assume $\Theta$ is an effective divisor. 
Since $\dim {\rm H}^0(J(C)^{(2)}, \mathcal{O}_{J(C)^{(2)}}(\Theta))$ is
one-dimensional, we take a basis $\theta$. Then, we may assume $F^*(\theta) = \tilde{f}_3$,
and we may assume $\tilde{f}_{3-i} = T_{a_i}^*\tilde{f}_3$. Then,
$f_i$ ($i = 0, 1, 2, 3$) is our basis and 
$\tilde{f}_{i}$ ($i = 0, 1, 2, 3$) gives the canonical basis in Laszlo and Pauly \cite{LP}.
From here on, we take our basis $f_i$ ($i = 0, 1, 2, 3$) of $L(2C)$
like this.
\end{remark}

\begin{lemma}\label{iota-action}
The inversion $\iota$ acts on $L(2C)$ identically.
\end{lemma}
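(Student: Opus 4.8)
The plan is to exploit the explicit basis $f_0,\dots,f_3$ of $L(2C)$ from Lemma \ref{basis} together with the characteristic-$2$ fact that $1$ is the only square root of unity. First I would record that $\iota$ genuinely acts on $L(2C)$: since $C$ is symmetric, $\iota^*C=C$, so $\iota^*(2C)=2C$, and for $f\in L(2C)$, i.e. $(f)+2C\ge 0$, one has $(\iota^*f)+2C=\iota^*\bigl((f)+2C\bigr)\ge 0$, whence $\iota^*f\in L(2C)$. Because $\iota$ acts linearly, it suffices to evaluate it on each basis vector $f_i$.

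The key step is to compute the divisor of $\iota^*f_{3-i}$. Recall $(f_{3-i})=2\bigl(T_{a_i}^*C-C\bigr)$. Each $a_i$ is a $2$-torsion point, so $-a_i=a_i$ and therefore $\iota\circ T_{a_i}=T_{a_i}\circ\iota$ (both maps send $y$ to $-y+a_i$). Combined with $\iota^*C=C$ this gives the divisor identity $\iota^*T_{a_i}^*C=(\iota\circ T_{a_i})^*C=T_{a_i}^*\iota^*C=T_{a_i}^*C$. Consequently $(\iota^*f_{3-i})=2\bigl(\iota^*T_{a_i}^*C-\iota^*C\bigr)=2\bigl(T_{a_i}^*C-C\bigr)=(f_{3-i})$, so $\iota^*f_{3-i}$ and $f_{3-i}$ have the same divisor and hence differ by a nonzero scalar: $\iota^*f_{3-i}=\lambda_i f_{3-i}$ for some $\lambda_i\in k^{\times}$.

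Finally I would invoke $\iota^2=\mathrm{id}$. Applying $\iota^*$ twice yields $\lambda_i^2 f_{3-i}=f_{3-i}$, so $\lambda_i^2=1$. Since we are in characteristic $2$, where $x^2-1=(x-1)^2$, this forces $\lambda_i=1$. Hence $\iota^*f_i=f_i$ for every $i$, and as the $f_i$ form a basis of $L(2C)$, the inversion acts as the identity on $L(2C)$.

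The step I expect to require the most care is the divisor identity $\iota^*T_{a_i}^*C=T_{a_i}^*C$, which uses both the $2$-torsion of $a_i$ (to commute $\iota$ past the translation) and the symmetry of $C$; the one thing to watch is keeping the translation and sign conventions consistent. Everything else is formal, and the collapse of $\{\pm 1\}$ to $\{1\}$ in characteristic $2$ is exactly what makes the action identical rather than merely $\pm 1$ on even and odd eigenspaces as in the classical theory.
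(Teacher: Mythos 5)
Your proof is correct and follows essentially the same route as the paper: compute that $(\iota^*f_{3-i})=(f_{3-i})$ using the symmetry of $C$ and the $2$-torsion of $a_i$, deduce $\iota^*f_{3-i}=\lambda_i f_{3-i}$, and conclude $\lambda_i=1$ from $\iota^{*2}=\mathrm{id}$ in characteristic $2$, then use the basis from Lemma \ref{basis}. Your version merely spells out the commutation $\iota\circ T_{a_i}=T_{a_i}\circ\iota$ that the paper leaves implicit.
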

\begin{proof}
Since $C$ is symmetric, we have
$$
 (\iota^*f_{3-i}) = 2\iota^*(T_{a_i}^*C - C) = 2(T_{a_i}^*C - C) = (f_{3-i}).
$$
Therefore, there exists $\alpha \in k$, $\alpha \neq 0$ such that 
$\iota^*f_{3-i} = \alpha f_{3-i}$. Since $\iota^*$ is of order 2 and
the characteristic $p = 2$, we have $\alpha = 1$. Therefore, by Lemma \ref{basis}
we complete our proof.
\end{proof}

\begin{remark}\label{quartic}
We consider the map
$$
\begin{array}{rccc}
\varphi_{\vert 2C \vert}: & J(C) & \longrightarrow & {\bf P}^3 \\
        &   P   & \mapsto & (f_3(P), f_2(P), f_1(P), f_0(P))
\end{array}
$$
The divisor $2C$ is base-point-free (cf. Mumford \cite{M1}, for instance). 
Therefore, $\varphi_{\vert 2C \vert}$ is a morphism.
We set $x_{00}=f_3$, $x_{10}= f_2$, $x_{01}=f_1$ and $x_{11} = f_0$.
Then,
by Laszlo and Pauly \cite{LP} the image $\varphi_{\vert 2C \vert}(J(C))$
is given by the equation
$$
\begin{array}{c}
\lambda_{10}^2(x_{00}^2x_{10}^2 + x_{01}^2x_{11}^2) 
+ \lambda_{01}^2(x_{00}^2x_{01}^2 + x_{10}^2x_{11}^2) 
+ \lambda_{11}^2(x_{00}^2x_{11}^2 + x_{01}^2x_{10}^2) \\
 +\frac{\lambda_{10}\lambda_{01}\lambda_{11}}{\lambda_{00}}x_{00}x_{10}x_{01}x_{11} =0
\end{array}
$$
with certain constants $\lambda_{ij}$ $(\lambda_{10}\lambda_{01}\lambda_{11}\lambda_{00} \neq 0)$,
and the image $\varphi_{\vert 2C \vert}(J(C))$ is isomorphic to 
the Kummer quartic surface $J(C)/\langle \iota \rangle$.
\end{remark}

\subsection{The linear system $\vert 4C\vert$}
In Griffths-Harris \cite{GH}, they considered the linear system $\vert 4C\vert$ in characteristic 0
and showed that $\vert 4C - \sum_{i=1}^{16}p_i\vert$ ($p_i$'s are 2-torsion points of $J(C)$)
gives a rational map from $J(C)$ to ${\bf P}^5$. Moreover, they showed that
the image is a complete intersection of three quadrics. In this subsection
we show a similar result in characteristic two. 

We denote by $V$ the subspace of $L(4C)$ generated by $f_if_j$ ($i, j = 0, 1, 2, 3$):
$$
      V = \langle f_if_j ~(i,j = 0, 1, 2, 3) \rangle,
$$
and we set $\tilde{V} = \rho (V)$. 
\begin{lemma}\label{10} 
$\dim V = \dim \tilde{V} = 10$.
\end{lemma}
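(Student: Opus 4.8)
The plan is to view $V$ as the image of the symmetric multiplication map and to identify the obstruction to its having maximal dimension with quadrics through the Kummer quartic surface. Since $f_0,f_1,f_2,f_3$ is a basis of $L(2C)$ (Lemma~\ref{basis}), the products $f_if_j$ with $0\le i\le j\le 3$ span $V$, and there are exactly $\binom{5}{2}=10$ of them; hence $\dim V\le 10$ is immediate, and as $\rho$ is an isomorphism we have $\dim\tilde V=\dim V$. The whole content is therefore the reverse inequality $\dim V\ge 10$, i.e. the linear independence of these ten quadratic monomials inside $L(4C)$.

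First I would reformulate this as the injectivity of the natural map $\mathrm{Sym}^2 L(2C)\to L(4C)$, whose source has dimension $10$ and whose image is $V$. A nontrivial linear relation $\sum_{i\le j}c_{ij}f_if_j=0$ is precisely a nonzero homogeneous quadratic form $G$ in the coordinates $(x_{00},x_{10},x_{01},x_{11})=(f_3,f_2,f_1,f_0)$ that vanishes identically on $J(C)$. Because $\varphi_{|2C|}$ is a morphism onto its image (Remark~\ref{quartic}), such a $G$ vanishes on all of $\varphi_{|2C|}(J(C))=S$, the Kummer quartic surface $J(C)/\langle\iota\rangle=\{F=0\}$ with $F$ the quartic displayed in Remark~\ref{quartic}.

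Next I would exclude any such $G$ by a degree argument. The surface $S$ is the image of the irreducible variety $J(C)$, hence irreducible, and (see the last paragraph) its defining quartic $F$ is irreducible of degree $4$, so the coordinate ring $k[x_{00},x_{10},x_{01},x_{11}]/(F)$ is an integral domain. A quadric $G$ vanishing on $S$ maps to $0$ there, forcing $F\mid G$; since $\deg G=2<4=\deg F$, this gives $G=0$. Therefore $\mathrm{Sym}^2 L(2C)\to L(4C)$ is injective and $\dim V=\dim\tilde V=10$.

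The delicate point is not the algebra but making sure that $S$ is genuinely a reduced, irreducible quartic and not, say, a doubled quadric (which would carry a quadric relation). This is guaranteed by Remark~\ref{quartic} and Theorem~\ref{KummerQuarticOrdinary}; concretely, in characteristic $2$ the square of any quadratic form equals $\sum a_{ij}^2 x_i^2 x_j^2$ by the Frobenius, so it can never contain the monomial $x_{00}x_{10}x_{01}x_{11}$, whereas the equation in Remark~\ref{quartic} does. Hence $F$ is not a square, and combined with the irreducibility of $S$ this pins down $F$ as an irreducible quartic, completing the argument.
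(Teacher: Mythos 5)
Your argument is essentially the paper's own proof: the paper also deduces the linear independence of the ten products $f_if_j$ from the fact that the image of $\varphi_{\vert 2C\vert}$ is the Kummer quartic of Remark \ref{quartic}, so that the $f_i$ satisfy a quartic relation but no quadric relation. The only difference is that you carefully justify the absence of quadric relations (irreducibility of $S$, the degree comparison, and the characteristic-$2$ observation that the quartic is not a square because of the monomial $x_{00}x_{10}x_{01}x_{11}$), whereas the paper simply asserts this; your elaboration is correct.
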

\begin{proof}
By Remark \ref{quartic}, $f_i$ ($i = 0, 1, 2, 3$) has a quartic relation
and there exist no quadric relations. Therefore,
$f_if_j \in L(4C)$ $(i,j = 0, 1, 2, 3)$ are linearly independent over $k$.
Therefore, we have $\dim V = \dim \tilde{V} = 10$.
\end{proof}

\begin{corollary}\label{6}
$$
\dim \tilde{V} \cap\ {\rm H}^0(J(C), {\mathcal O}_{J(C)}(4C - 2\sum_{i=0}^3a_i)) = 6.
$$
In particular, 
$$
\dim {\rm H}^0(J(C), {\mathcal O}_{J(C)}(4C - 2\sum_{i=0}^3a_i)) \geq 6.
$$
\end{corollary}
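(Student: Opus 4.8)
The plan is to compute the intersection directly in the basis $\{\tilde{f}_i\tilde{f}_j\}_{0\le i\le j\le 3}$ of $\tilde V$, which is $10$-dimensional by Lemma \ref{10}. I read $H^0(J(C),\mathcal{O}_{J(C)}(4C-2\sum_{i=0}^3 a_i))$ as the subspace of $H^0(J(C),\mathcal{O}_{J(C)}(4C))$ of sections lying in $\mathfrak{m}_{a_k}^2$ at each of the four $2$-torsion points $a_0,\dots,a_3$, so the task is to count how many independent conditions these four double points impose on $\tilde V$.

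First I would record the local vanishing of the basis vectors. By construction the zero divisor of the section $\tilde f_i$ of $\mathcal{O}_{J(C)}(2C)$ is twice a translate of $C$, so by Lemma \ref{basis} and the table above one has $\tilde f_i(a_j)\ne 0$ precisely when $i=j$, and whenever $\tilde f_i$ vanishes at a torsion point $a_j$ it vanishes to order two, i.e. $\tilde f_i\in\mathfrak{m}_{a_j}^2$ for $j\ne i$ (the ideal of the reduced curve through $a_j$ lies in $\mathfrak{m}_{a_j}$, hence its square lies in $\mathfrak{m}_{a_j}^2$). Consequently, for $i\ne j$ the product $\tilde f_i\tilde f_j$ lies in $\mathfrak{m}_{a_k}^2$ for every $k$: at $a_i$ the factor $\tilde f_j$ already does, at $a_j$ the factor $\tilde f_i$ does, and away from $\{a_i,a_j\}$ both factors do. Thus the six off-diagonal products $\tilde f_i\tilde f_j$ $(i<j)$ all lie in $\tilde V\cap H^0(4C-2\sum a_i)$, which already yields $\dim\ge 6$ and hence also the lower bound $\dim H^0(J(C),\mathcal{O}_{J(C)}(4C-2\sum a_i))\ge 6$ asserted at the end.

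The heart of the argument is the reverse inequality, and here characteristic $2$ is decisive. For a general $s=\sum_{i\le j}c_{ij}\tilde f_i\tilde f_j$ I would reduce $s$ modulo $\mathfrak{m}_{a_k}^2$. Among the ten basis products, every one lies in $\mathfrak{m}_{a_k}^2$ except $\tilde f_k^2$ (the six off-diagonal ones by the previous paragraph, and the diagonal $\tilde f_i^2$ with $i\ne k$ because $\tilde f_i\in\mathfrak{m}_{a_k}^2$). Since $\tilde f_k^2$ is a square, in characteristic $2$ its Taylor expansion at $a_k$ has no linear term, so $\tilde f_k^2\equiv \tilde f_k(a_k)^2\pmod{\mathfrak{m}_{a_k}^2}$ with $\tilde f_k(a_k)\ne 0$; therefore $s\equiv c_{kk}\tilde f_k(a_k)^2\pmod{\mathfrak{m}_{a_k}^2}$. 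Thus the double-point condition at $a_k$ collapses to the single linear condition $c_{kk}=0$, rather than the three conditions one expects from a multiplicity-two point on a surface. Imposing this at all four points gives exactly the independent conditions $c_{00}=c_{11}=c_{22}=c_{33}=0$, so $\tilde V\cap H^0(4C-2\sum a_i)$ is precisely the $6$-dimensional span of the off-diagonal products, and the count $10-4=6$ follows.

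The step I expect to require the most care is this collapse of a multiplicity-two condition to a single linear one: it hinges on the squares $\tilde f_k^2$ carrying no linear part, a phenomenon special to characteristic $2$, together with the genuine membership $\tilde f_i\in\mathfrak{m}_{a_j}^2$ forced by the zero divisor being twice a curve through $a_j$. Once these local facts are in place, both the exact value $6$ and the stated lower bound are immediate.
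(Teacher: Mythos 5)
Your proof is correct and follows essentially the same route as the paper: the lower bound comes from the six off-diagonal products $\tilde f_i\tilde f_j$ lying in ${\rm H}^0(J(C),\mathcal{O}_{J(C)}(4C-2\sum a_i))$, and the upper bound comes from the fact that $\tilde f_k^2(a_k)\neq 0$ while all other basis products of $\tilde V$ already lie in $\mathfrak{m}_{a_k}^2$, so the only surviving condition at $a_k$ is $c_{kk}=0$. The paper phrases this second step as $\langle\tilde f_i^2\rangle\cap {\rm H}^0(4C-2\sum a_i)=\{0\}$ rather than via the explicit cotangent computation, but the underlying facts used are identical (and note the condition $c_{kk}=0$ already follows from value-vanishing at $a_k$, so the characteristic-$2$ remark about squares having no linear term, while true, is not actually needed).
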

\begin{proof}
$\tilde{f}_i\tilde{f}_j$ ($i, j = 0, 1, 2, 3; i\neq j$) are contained in 
${\rm H}^0(J(C), {\mathcal O}_{J(C)}(4C - 2\sum_{i=0}^3a_i))$.
Therefore, we have 
$\dim \tilde{V} \cap {\rm H}^0(J(C), {\mathcal O}_{J(C)}(4C - 2\sum_{i=0}^3a_i)) \geq 6$.
On the other hand, $\tilde{f}_i^2$ ($i = 0, 1, 2, 3$) has 3 zeros in $J(C)[2]$
and $\tilde{f}_i^2(a_i) \neq 0$. Therefore, we have 
$\langle \tilde{f}_i^2 ~(i = 0, 1, 2, 3) \rangle \cap 
{\rm H}^0(J(C), {\mathcal O}_{J(C)}(4C - 2\sum_{i=0}^3a_i)) =\{0\}$, 
from which the result follows.
\end{proof}

We denote by $L(4C)^{\langle \iota^*\rangle}$ the $\iota^*$-invariant subspace of $L(4C)$.

\begin{corollary}\label{morethan10}
$V \subset L(4C)^{\langle \iota^*\rangle}$. 
In particular, $\dim L(4C)^{\langle \iota^*\rangle}\geq 10$.
\end{corollary}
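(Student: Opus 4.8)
The plan is to derive the inclusion directly from the multiplicativity of pullback together with Lemma \ref{iota-action}, after which the dimension bound is just a restatement of Lemma \ref{10}. First I would note that because $C$ is symmetric, $\iota^*C = C$, and hence $\iota^*(4C) = 4C$; this guarantees that $\iota^*$ acts on $L(4C)$, since for any $g \in L(4C)$ one has $(\iota^*g) + 4C = \iota^*\bigl((g) + 4C\bigr) \geq 0$. So the $\iota^*$-invariant subspace $L(4C)^{\langle \iota^*\rangle}$ is well defined and it makes sense to ask whether $V$ lies inside it.

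Next I would check invariance on the generators of $V$. For each pair of indices $i,j \in \{0,1,2,3\}$ we have $\iota^*(f_if_j) = (\iota^*f_i)(\iota^*f_j)$, since pullback of functions is multiplicative. By Lemma \ref{iota-action} the inversion $\iota$ acts identically on $L(2C)$, that is $\iota^*f_i = f_i$ for every $i$; consequently $\iota^*(f_if_j) = f_if_j$. Thus every generator $f_if_j$ of $V$ is fixed by $\iota^*$, and since these generators span $V$, we conclude $V \subseteq L(4C)^{\langle \iota^*\rangle}$.

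Finally, invoking Lemma \ref{10}, which gives $\dim V = 10$, the containment just established yields $\dim L(4C)^{\langle \iota^*\rangle} \geq \dim V = 10$, completing the proof.

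I do not expect any genuine obstacle here: the entire content is supplied by Lemma \ref{iota-action}, and the argument is a two-line consequence of it. The only point deserving a moment's attention is the preliminary observation that $\iota^*$ preserves $L(4C)$, which is immediate from the symmetry $\iota^*C = C$ assumed at the start of Section \ref{sec6}.
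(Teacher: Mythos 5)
Your argument is correct and is exactly the paper's proof (which simply cites Lemmas \ref{iota-action} and \ref{10}), only spelled out: multiplicativity of $\iota^*$ plus $\iota^*f_i = f_i$ gives invariance of the generators $f_if_j$, and $\dim V = 10$ gives the bound. No issues.
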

\begin{proof}
This follows from Lemmas \ref{iota-action} and \ref{10}.
\end{proof}

We take a 4-torsion point $x \in J(C)[4]\setminus J(C)[2]$.
Then, by Lemma \ref{point-divisor}, there exists a rational function $\varphi_x$
such that $4(T_x^*C - C) = (\varphi_x)$. Considering the action of $\iota$,
we have
$$
  (\iota^*\varphi_{x}) = 4(T_{-x}^*C - C) = (\varphi_{-x}).
$$
Considering the constant multiple, we can choose 
$\varphi_{-x}$ as $\varphi_{-x} = \iota^*\varphi_{x}$.
For an element $x\in J(C)[2]$, we have already $2(T_x^*C - C) = (f_x)$.
Therefore, we have $4(T_x^*C - C) = (f_x^2)$. In this case we choose
$\varphi_x$ as $\varphi_x = f_x^2$. We denote by $U$ the subspace of $L(4C)$
generated by 16 functions $\varphi_x$. 
Since we work in characteristic 2, the theta group $\mathcal{G}(4C)$ acts on
$U$ (cf. Mumford \cite{M1}). The action of the closed points is given by
$$
(x,\varphi_x)\circ\varphi_y = \varphi_x\circ T_x^*\varphi_y = \mbox{constant}\cdot \varphi_{x+y}
$$
(cf. Mumford \cite{M1}, \cite{M3}).
By Mumford \cite{M2} (also see Sekiguchi \cite{S}) the representation of $\mathcal{G}(4C)$
on $L(4C)$ is irreducible. Therefore, we have $U = L(4C)$.
Hence, we have the following lemma.
\begin{lemma}
$\varphi_x$ $(x\in J(C)[4])$ are a basis of $L(4C)$.
\end{lemma}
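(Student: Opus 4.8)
The plan is to combine a dimension count with the irreducibility of the theta-group representation. First I would compute $\dim L(4C)$: since $C$ is a principal polarization of the abelian surface $J(C)$ we have $C^2 = 2$, so Riemann--Roch on $J(C)$ gives $\dim L(4C) = (4C)^2/2 = 16$. Next I would count the functions $\varphi_x$. Because $J(C)$ is ordinary, the group scheme $J(C)[4]$ of $4$-torsion splits as an \'etale part $J(C)[4]_{red}\cong(\bbZ/4\bbZ)^2$, of order $16$, times a multiplicative part $(\mu_4)^2$; the sixteen functions $\varphi_x$ are indexed by the sixteen reduced points $x\in J(C)[4]_{red}$. Thus the number of functions equals $\dim L(4C)$, and it suffices to prove that the $\varphi_x$ span $L(4C)$ (equivalently, that they are linearly independent).

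Set $U=\langle \varphi_x : x\in J(C)[4]_{red}\rangle$, which is nonzero since it contains $\varphi_O=f_O^2$. The heart of the argument is to show that $U$ is stable under the action of the theta group $\mathcal{G}(4C)$ on $L(4C)$, where $K(4C)=\Ker\Phi_{4C}=J(C)[4]$. I would verify invariance separately for the two factors of $K(4C)$. For a lift in $\mathcal{G}(4C)$ of an \'etale point $x\in J(C)[4]_{red}$, the action sends $\varphi_y$ to a scalar multiple of $\varphi_x\cdot T_x^*\varphi_y$, and a divisor computation using Lemma \ref{point-divisor} and $T_x^*T_y^*C=T_{x+y}^*C$ gives
\[
(\varphi_x)+T_x^*(\varphi_y)=4(T_x^*C-C)+4(T_{x+y}^*C-T_x^*C)=4(T_{x+y}^*C-C)=(\varphi_{x+y}),
\]
so $\varphi_x\cdot T_x^*\varphi_y$ and $\varphi_{x+y}$ differ by a nonzero constant, and the \'etale part acts on $\{\varphi_x\}$ by shifting the index $y\mapsto x+y$, which stays inside $J(C)[4]_{red}$. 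For the multiplicative factor $(\mu_4)^2$, the corresponding theta-group elements act diagonally, scaling each $\varphi_x$ by the value of the Weil (commutator) pairing of $x$ against the given element; this too preserves $U$.

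Granting this invariance, $U$ is a nonzero $\mathcal{G}(4C)$-submodule of $L(4C)$. By Mumford \cite{M2} (see also Sekiguchi \cite{S}) the representation of $\mathcal{G}(4C)$ on $L(4C)$ is irreducible, so $U=L(4C)$. Since $U$ is spanned by the sixteen functions $\varphi_x$ while $\dim L(4C)=16$, these functions are linearly independent and hence form a basis, as claimed.

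I expect the main obstacle to be the verification that $U$ is stable under the \emph{full} theta group $\mathcal{G}(4C)$ in characteristic $2$, and in particular under the non-reduced (multiplicative) part of $K(4C)=J(C)[4]$: the \'etale translations visibly permute the $\varphi_x$ up to constants via the divisor identity above, but the claim that the multiplicative part acts diagonally on exactly this family is what makes the irreducibility argument yield all of $L(4C)$ rather than only a $\mathcal{G}(4C)_{red}$-stable subspace of dimension a multiple of $4$. This is precisely where Mumford's and Sekiguchi's analysis of theta groups for inseparable $K(L)$ is needed; once it is in place, the normalizations $\varphi_{-x}=\iota^*\varphi_x$ and $\varphi_x=f_x^2$ for $x\in J(C)[2]$ only affect the functions up to nonzero scalars and do not disturb the conclusion.
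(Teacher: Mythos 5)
Your proposal is correct and follows essentially the same route as the paper: a dimension count $\dim L(4C)=16$ against the sixteen functions $\varphi_x$ indexed by $J(C)[4]_{red}$, combined with stability of $U=\langle\varphi_x\rangle$ under the theta group $\mathcal{G}(4C)$ (via the same divisor identity $(\varphi_x)+T_x^*(\varphi_y)=(\varphi_{x+y})$) and the irreducibility of the representation of $\mathcal{G}(4C)$ on $L(4C)$ from Mumford and Sekiguchi. You merely make explicit the point the paper asserts in one line, namely that the non-reduced (multiplicative) part of $K(4C)$ also preserves $U$, which is a reasonable elaboration rather than a different argument.
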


\newfont{\bg}{cmr10 scaled\magstep4}
\newcommand{\bigzerol}{\smash{\hbox{\bg 0}}}
\newcommand{\bigzerou}{\smash{\lower1.7ex\hbox{\bg 0}}}

\begin{proposition}
$\dim L(4C)^{\langle \iota^*\rangle} = 10$. In particular, $V= L(4C)^{\langle \iota^*\rangle}$.
\end{proposition}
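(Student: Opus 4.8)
The plan is to exploit the explicit basis $\{\varphi_x\}_{x \in J(C)[4]}$ of $L(4C)$ from the preceding Lemma and to read off the $\iota^*$-action directly on it. Since $J(C)$ is ordinary, $J(C)[4]_{red} \cong (\bbZ/4\bbZ)^{\oplus 2}$, so this basis consists of $16$ functions and $\dim L(4C) = 16$ (consistently with Riemann--Roch, $(4C)^2/2 = 16$). The key structural input is the relation $\iota^*\varphi_x = \varphi_{-x}$, which by construction holds for every $x$: for $x \in J(C)[4]\setminus J(C)[2]$ this is exactly the normalization $\varphi_{-x} = \iota^*\varphi_x$ that was made, while for $x \in J(C)[2]$ we have $\varphi_x = f_x^2$ and $\iota^* f_x = f_x$ by Lemma \ref{iota-action}, so $\iota^*\varphi_x = \varphi_x = \varphi_{-x}$.

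Next I would split the index set $J(C)[4]$ under negation. The four $2$-torsion points are fixed by $x \mapsto -x$, so the corresponding $\varphi_x$ are $\iota^*$-fixed and span a $4$-dimensional subspace of invariants. The remaining $12$ points of $J(C)[4]\setminus J(C)[2]$ satisfy $x \neq -x$ (otherwise $2x = O$), hence they fall into exactly $6$ disjoint pairs $\{x,-x\}$, and on each pair $\iota^*$ interchanges the two basis vectors $\varphi_x \leftrightarrow \varphi_{-x}$ without any scalar. Thus in this basis $\iota^*$ is a genuine permutation: the identity on $4$ vectors and a transposition on each of $6$ pairs.

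From this normal form the invariant dimension is immediate. On each transposition block $\langle \varphi_x, \varphi_{-x}\rangle$ the fixed subspace is the line spanned by $\varphi_x + \varphi_{-x}$ (an element $a\varphi_x + b\varphi_{-x}$ is fixed iff $a = b$, which is valid in any characteristic), contributing $1$ to the invariants; together with the $4$ fixed basis vectors this gives $\dim L(4C)^{\langle \iota^*\rangle} = 4 + 6 = 10$. Finally, Corollary \ref{morethan10} gives $V \subseteq L(4C)^{\langle \iota^*\rangle}$ with $\dim V = 10$, so the inclusion is an equality and $V = L(4C)^{\langle \iota^*\rangle}$, proving the proposition.

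The one point that needs care --- and the only place where things could go wrong --- is verifying that $\iota^*$ acts as a clean permutation of the $\varphi_x$ rather than with scalar twists on each pair; this is precisely why the deliberate choice $\varphi_{-x} = \iota^*\varphi_x$, together with Lemma \ref{iota-action} in the $2$-torsion case, is essential. Once the permutation structure is in hand, the dimension count $4 + 6 = 10$ is purely combinatorial.
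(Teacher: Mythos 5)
Your proposal is correct and follows essentially the same route as the paper: both diagonalize/block-decompose the $\iota^*$-action on the basis $\{\varphi_x\}_{x\in J(C)[4]}$, observing that it fixes the four $\varphi_x$ with $x\in J(C)[2]$ and permutes the remaining twelve in six transposition pairs, each contributing a one-dimensional fixed line, whence $4+6=10$ and equality with $V$ via Corollary \ref{morethan10}. Your explicit check that $\iota^*\varphi_x=\varphi_{-x}$ holds with no scalar twist (via the chosen normalization and Lemma \ref{iota-action}) is exactly the point the paper's matrix presentation relies on.
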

\begin{proof}
We consider the representation of $\iota^*$ with respect to the basis $\{\varphi_x\}$.
We arrange $\varphi_x$ $(x \in J(C)[2])$ first, and then for a 4-torsion point $x$
we arrange $\varphi_{-x}$ next to $\varphi_x$. Then, the representation matrix
is given by
$$
\left(
\begin{array}{cccccccccc}
1  & & & & & & & & &\bigzerou \\
& 1 & & & & & & & &  \\
& & 1 & & & & & & & \\
& & & 1 & & & & & & \\
& & & & 0 & 1 & & & & \\
& & & & 1 & 0 & & & & \\
& & & &   &  &  \ddots & & &  \\
& & & & & & &  & 0 & 1 \\
\bigzerol & & & & & & & & 1 & 0
\end{array}
\right)
$$
Therefore, we have $\dim L(4C)^{\langle \iota^*\rangle} = 10$.
The latter part follows from Corollary \ref{morethan10}.
\end{proof}

\begin{lemma}\label{16}
$\dim {\rm H}^0(J(C), \mathcal{O}_{J(C)}(4C - \sum_{i=0}^3a_i)) =12$.
\end{lemma}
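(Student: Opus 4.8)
The plan is to read off $\dim {\rm H}^0(J(C),\mathcal{O}_{J(C)}(4C-\sum_{i=0}^3 a_i))$ as $\dim L(4C)$ minus the number of \emph{independent} conditions imposed by requiring a section of $4C$ to vanish at the four $2$-torsion points $a_0,\dots,a_3$. First I would record that $\dim L(4C)=16$. This is already available from the preceding Lemma, whose basis $\{\varphi_x\}_{x\in J(C)[4]}$ is indexed by the $16$ points of $J(C)[4]_{\mathrm{red}}\cong ({\bf Z}/4{\bf Z})^2$ (recall $J(C)$ is ordinary); equivalently it follows from Riemann--Roch on the abelian surface, $\dim L(4C)=\tfrac12(4C)^2=8\,C^2=16$, using that an ample line bundle on an abelian variety has no higher cohomology in any characteristic.

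Next I would introduce the fibre restriction map
$$\mathrm{ev}\colon {\rm H}^0(J(C),\mathcal{O}_{J(C)}(4C))\longrightarrow \bigoplus_{i=0}^3 \mathcal{O}_{J(C)}(4C)\otimes k(a_i),$$
whose target is $4$-dimensional. Because the $a_i$ are four distinct reduced points, the kernel of $\mathrm{ev}$ is exactly ${\rm H}^0(J(C),\mathcal{O}_{J(C)}(4C-\sum_{i=0}^3 a_i))$, so it suffices to prove that $\mathrm{ev}$ is surjective; this yields $\dim {\rm H}^0(J(C),\mathcal{O}_{J(C)}(4C-\sum_{i=0}^3 a_i))=16-4=12$.

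For surjectivity I would use the four squares $\tilde f_0^2,\dots,\tilde f_3^2\in\tilde V\subset L(4C)$. By the computation already made in the proof of Corollary \ref{6}, each $\tilde f_i^2$ has its three zeros on $J(C)[2]$ at the points $a_j$ with $j\ne i$, while $\tilde f_i^2(a_i)\ne 0$. Hence, after choosing local trivialisations of $\mathcal{O}_{J(C)}(4C)$ at the four points, the matrix $\big(\tilde f_i^2(a_j)\big)_{0\le i,j\le 3}$ is diagonal with nonzero diagonal entries, so the images $\mathrm{ev}(\tilde f_0^2),\dots,\mathrm{ev}(\tilde f_3^2)$ form a basis of the target. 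Thus $\mathrm{ev}$ is onto, which is exactly what is needed.

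The argument has essentially no hard step: the only thing that must be verified is the diagonal vanishing pattern of the $\tilde f_i^2$, and that is precisely the content recorded in Corollary \ref{6}. The conceptual point worth stating is that $\{\tilde f_i^2\}$ is a system dual to the four evaluation functionals on $L(4C)$ at $a_0,\dots,a_3$, so the four conditions ``vanish at $a_i$'' are automatically independent and cut the dimension down by exactly $4$.
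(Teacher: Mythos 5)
Your proposal is correct and follows essentially the same route as the paper: both compute $\dim L(4C)=16$ by Riemann--Roch and then use the diagonal evaluation pattern $\tilde f_i^2(a_i)\neq 0$, $\tilde f_i^2(a_j)=0$ for $j\neq i$ to see that the four vanishing conditions at $a_0,\dots,a_3$ are independent, giving $16-4=12$. Your phrasing via surjectivity of the evaluation map is just a cleaner packaging of the paper's statement that one can subtract a suitable linear combination of the $\tilde f_i^2$ to kill the values at all points of $J(C)[2]$.
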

\begin{proof}
By the Riemann-Roch theorem, we have $\dim {\rm H}^0(J(C), \mathcal{O}_{J(C)}(4C)) = 16$.
We have $\tilde{f}_i^2(a_i) \neq 0$ $(i = 0, 1, 2, 3)$ 
and $\tilde{f}_i^2(a_j) = 0$ for $j\neq i$.
Subtracting a suitable linear combination of these four functions, 
any element of ${\rm H}^0(J(C), \mathcal{O}_{J(C)}(4C))$
becomes zero at all points of $J(C)[2]$. Therefore, we have
$\dim {\rm H}^0(J(C), \mathcal{O}_{J(C)}(4C - \sum_{i=0}^3a_i)) =12$.
\end{proof}
 
Let $O_{a_i}$ be the local ring at the point $a_i$ and $m_{a_i}$ the maximal ideal
of $O_{a_i}$. The cotangent space at the point $a_i$ of $J(C)$ is isomorphic to
$m_{a_i}/m_{a_i}^2$ and it is 2-dimensional. We have a natural exact sequence
$$
0 \longrightarrow {\mathcal O}_{J(C)}(-2\sum_{i=0}^3a_i) \longrightarrow 
{\mathcal O}_{J(C)}(-\sum_{i=0}^3a_i)\longrightarrow 
\oplus_{i = 0}^3m_{a_i}/m_{a_i}^2
\longrightarrow 0.
$$
Tensoring ${\mathcal O}_{J(C)}(4C)$, we have an exact sequence
$$
0 \longrightarrow {\mathcal O}_{J(C)}(4C -2\sum_{i=0}^3a_i) \longrightarrow 
{\mathcal O}_{J(C)}(4C -\sum_{i=0}^3a_i)\longrightarrow 
\oplus_{i = 0}^3m_{a_i}/m_{a_i}^2
\longrightarrow 0.
$$
Therefore, we have a long exact sequence
\begin{equation}\label{exact}
\begin{array}{c}
0 \longrightarrow {\rm H}^0(J(C), {\mathcal O}_{J(C)}(4C -2\sum_{i=0}^3a_i)) \longrightarrow 
{\rm H}^0(J(C), {\mathcal O}_{J(C)}(4C -\sum_{i=0}^3a_i)) \\
\stackrel{\psi}{\longrightarrow}
\oplus_{i = 0}^3m_{a_i}/m_{a_i}^2.
\end{array}
\end{equation}

To calculate the dimension of $\Im ~\psi$,
we construct some elements of ${\rm H}^0(J(C), {\mathcal O}_{J(C)}(4C -\sum_{i=0}^3a_i))$.
For this purpose, we choose different two elements $a_i$, $a_j$ in $J(C)[2]$.
Then, there exist an element $x_k$ of order 4 in $J(C)$ such that $2x_k = a_i + a_j$.
By Lemma \ref{point-divisor}, we see
$$
2T_{x_k}^*C + T_{a_i}^*C + T_{a_j}^*C - 4C \sim 0.
$$
Therefore, there exists a rational function $\varphi_{ij}$ such that
$$
2T_{x_k}^*C + T_{a_i}^*C + T_{a_j}^*C - 4C =(\varphi_{ij}).
$$
We set $\tilde{\varphi}_{ij} = \rho (\varphi_{ij})$.
Note that $T_{x_k}^*C$ contains no element of $J(C)[2]$ by Lemma \ref{4-translation}.
By Table 3 the situation of $\tilde{\varphi}_{ij}$ at the points of $J(C)[2]$ 
is listed as in Table 4.

\begin{table}[h]
\begin{tabular}{|c|c|c|}\hline
 &  simple zero point in $J(C)[2]$ &  double zero point in $J(C)[2]$\\
\hline
$\tilde{\varphi}_{01}$ &    $a_2$, $a_3$ & $a_0$, $a_1$ \\
\hline
$\tilde{\varphi}_{02}$ &    $a_1$, $a_3$ & $a_0$, $a_2$ \\
\hline
$\tilde{\varphi}_{03}$ &    $a_0$, $a_3$ & $a_1$, $a_2$ \\
\hline
$\tilde{\varphi}_{12}$ &    $a_1$, $a_2$ & $a_0$, $a_3$ \\
\hline
$\tilde{\varphi}_{13}$ &    $a_0$, $a_2$ & $a_1$, $a_3$ \\
\hline
$\tilde{\varphi}_{23}$ &    $a_0$, $a_1$ & $a_2$, $a_3$ \\
\hline
\end{tabular}
\caption{Zero points of $\tilde{\varphi}_{ij}$}
\end{table}

Now, let $z$ be a general point of $C$. Since $a_1 + a_2 + a_3 = a_0 = O$,
by Lemma \ref{point-divisor} there exist rational functions $g_0$ and $h_0$
such that
$$
\begin{array}{l}
T_{z+a_1}^*C + T_{-z}^*C + T_{a_2}^*C + T_{a_3}^*C -4C =(g_0),\\
T_{z+a_2}^*C + T_{-z}^*C + T_{a_1}^*C + T_{a_3}^*C -4C =(h_0).
\end{array}
$$
Note that $T_{-z}^*C$ contains no point in $J(C)[2]$
and that $T_{z+ a_1}^*C$ contains only $a_1$ among the points of $J(C)[2]$,
and that $T_{z+ a_2}^*C$ contains only $a_2$ among the points of $J(C)[2]$.
Therefore, the situation of $\tilde{g}_0= \rho (g_0)$ and $\tilde{h}_0 = \rho (h_0)$ at the points of $J(C)[2]$ is as follows.

\begin{table}[h]
\begin{tabular}{|c|c|c|}\hline
 &  simple zero point in $J(C)[2]$ &  double zero point in $J(C)[2]$\\
\hline
$\tilde{g}_0$ &    $a_0$  & $a_1$, $a_2$, $a_3$ \\
\hline
$\tilde{h}_0$ &    $a_0$  & $a_1$, $a_2$, $a_3$ \\
\hline
\end{tabular}
\caption{Zero points of $\tilde{g}_0$, $\tilde{h}_0$}
\end{table}

\begin{lemma}\label{m_{a_0}}
$\tilde{g}_0$ and $\tilde{h}_0$ make a basis of the cotangent space $m_{a_0}/m_{a_0}^2$ at the point $O =a_0$.
\end{lemma}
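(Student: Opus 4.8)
The plan is to reduce the statement to a transversality assertion that is already available through Remark \ref{systemofparameter}. Since $J(C)$ is a surface, $\dim_k m_{a_0}/m_{a_0}^2 = 2$, so it suffices to show that the images of $\tilde g_0$ and $\tilde h_0$ in $m_{a_0}/m_{a_0}^2$ are linearly independent. By Table 5 each of $\tilde g_0,\tilde h_0$ vanishes to order exactly one at $a_0=O$, so each does determine a nonzero element of $m_{a_0}/m_{a_0}^2$; the content of the lemma is that these two elements point in different directions, and I will identify them with the local linear forms of two transverse translates of $C$.

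First I would locate, among the four components of the zero divisor of each section, the unique one passing through $O$. For $\tilde g_0$, whose zero divisor is $T_{z+a_1}^*C + T_{-z}^*C + T_{a_2}^*C + T_{a_3}^*C$, the translate $T_{z+a_1}^*C$ meets $J(C)[2]$ only in $a_1$ and $T_{-z}^*C$ meets it not at all (both by the choice of $z$ general on $C$), while by Table 3 the set of $2$-torsion points on $T_{a_3}^*C$ is $\{a_1,a_2,a_3\}$, which excludes $O$, and the one on $T_{a_2}^*C$ is $\{O,a_2,a_3\}$, which contains $O$. Hence near $O$ the three components $T_{z+a_1}^*C,T_{-z}^*C,T_{a_3}^*C$ contribute units, and in a local trivialization of $\mathcal{O}_{J(C)}(4C)$ at $O$ one has $\tilde g_0 = (\text{unit})\cdot \ell_{a_2}$, where $\ell_{a_2}\in m_{a_0}$ is a local equation for $T_{a_2}^*C$ (which is smooth at $O$, being a translate of the smooth genus-$2$ curve $C$). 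Therefore the class of $\tilde g_0$ in $m_{a_0}/m_{a_0}^2$ is a nonzero scalar multiple of the class of $\ell_{a_2}$. The identical analysis applied to $\tilde h_0$, with zero divisor $T_{z+a_2}^*C + T_{-z}^*C + T_{a_1}^*C + T_{a_3}^*C$, shows that the only component through $O$ is $T_{a_1}^*C$ (its $2$-torsion locus being $\{O,a_1,a_3\}$), so the class of $\tilde h_0$ is a nonzero multiple of the class of a local equation $\ell_{a_1}$ of $T_{a_1}^*C$.

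It then remains to prove that $\ell_{a_1}$ and $\ell_{a_2}$ are linearly independent in $m_{a_0}/m_{a_0}^2$, i.e. that $T_{a_1}^*C$ and $T_{a_2}^*C$ meet transversally at $O$. This is exactly where Remark \ref{systemofparameter} enters: from Table 3 the $2$-torsion loci of $T_{a_1}^*C$ and $T_{a_2}^*C$ are $\{O,a_1,a_3\}$ and $\{O,a_2,a_3\}$, whose intersection is the two distinct points $O$ and $a_3$; since $(T_{a_1}^*C\cdot T_{a_2}^*C)=C^2=2$, the two translates meet transversally at each of $O$ and $a_3$. Transversality at $O$ says precisely that $\ell_{a_1},\ell_{a_2}$ form a system of parameters there, hence are a basis of $m_{a_0}/m_{a_0}^2$. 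Combining with the previous paragraph, $\tilde g_0$ and $\tilde h_0$ have linearly independent images in $m_{a_0}/m_{a_0}^2$, which proves the lemma.

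I expect the only real obstacle to be the bookkeeping of the second paragraph: correctly pinning down, for each section, which single translate of $C$ passes through $O$, since it is this identification that converts the divisor-theoretic description of $g_0,h_0$ into the geometric transversality statement supplied by Remark \ref{systemofparameter}. Once that component is isolated in each case, the argument is immediate, and the use of the principal polarization and of $(T_{a_i}^*C\cdot T_{a_j}^*C)=2$ is confined to Remark \ref{systemofparameter}.
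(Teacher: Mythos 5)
Your proof is correct and follows the same route as the paper, which simply observes that $O\in T_{a_1}^*C\cap T_{a_2}^*C$ and invokes Remark \ref{systemofparameter}; you have merely made explicit the intermediate step (correctly identifying $T_{a_2}^*C$ and $T_{a_1}^*C$ as the unique components of the zero divisors of $\tilde g_0$ and $\tilde h_0$ through $O$) that the paper leaves implicit.
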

\begin{proof}
We have $a_0 = O \in T_{a_1}^*C \cap T_{a_2}^*C$. Therefore, this lemma
follows from Remark \ref{systemofparameter}.
\end{proof}

By Table 4, $\tilde{\varphi}_{ij}$ gives non-zero vectors at cotangent spaces of two
points of $J(C)[2]$ and becomes 0-vector at cotangent spaces of the other two
points of $J(C)[2]$. We denote by $v^{(k)}_{ij}$ the non-zero vector given by
$\tilde{\varphi}_{ij}$ at the cotangent space of the point $a_k$.
By Table 3 and Lemma \ref{m_{a_0}}, $\tilde{g}_0$ and $\tilde{h}_0$ make a basis, say  
$\langle v_1, v_2\rangle$, of the cotangent
space $m_{a_0}/m_{a_0}^2$ at $a_0$, and are 0-vectors of the cotangent spaces
at the 2-torsion points.
In Table 5, we summarize the situation of these functions at the cotangent spaces of the points 
of $J(C)[2]$.

\begin{table}[h]
\begin{tabular}{|c|c|c|c|c|}\hline
 & $m_{a_0}/m_{a_0}^2$  &  $m_{a_1}/m_{a_1}^2$ & $m_{a_2}/m_{a_2}^2$ & $m_{a_3}/m_{a_3}^2$\\
\hline
$\tilde{\varphi}_{01}$ &   $0$ & $0$ & $v_{01}^{(2)}$ &  $v_{01}^{(3)}$ \\
\hline
$\tilde{\varphi}_{02}$ &  $0$ & $v_{02}^{(1)}$ & $0$ &  $v_{02}^{(3)}$ \\
\hline
$\tilde{\varphi}_{03}$ &  $v_{03}^{(0)}$ & $0$ & $0$ &  $v_{03}^{(3)}$ \\
\hline
$\tilde{\varphi}_{12}$ &  $0$ & $v_{12}^{(1)}$ & $v_{12}^{(2)}$ &  0  \\
\hline
$\tilde{g}_0$ &  $v_1$ & $0$ & $0$ &  $0$ \\
\hline
$\tilde{h}_0$ &  $v_2$ & $0$ & $0$ &  $0$ \\
\hline
\end{tabular}
\caption{Cotangent vectors}
\end{table}

\begin{lemma}\label{morethan6}
$\dim {\rm Im}~\psi \geq 6$.
\end{lemma}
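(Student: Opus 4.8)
The plan is to exhibit six explicit elements of $H^0(J(C),\mathcal{O}_{J(C)}(4C-\sum_{i=0}^3 a_i))$ whose $\psi$-images are linearly independent in $\bigoplus_{i=0}^3 m_{a_i}/m_{a_i}^2$. The natural candidates are the functions already constructed: $\tilde g_0,\tilde h_0$ together with $\tilde\varphi_{01},\tilde\varphi_{02},\tilde\varphi_{03},\tilde\varphi_{12}$. Each vanishes at every point of $J(C)[2]$ (so each lies in the source of $\psi$), and its image is read off from Table 5. First I would observe that, by Lemma \ref{m_{a_0}}, the vectors $\psi(\tilde g_0)=(v_1,0,0,0)$ and $\psi(\tilde h_0)=(v_2,0,0,0)$ already span the summand $m_{a_0}/m_{a_0}^2$ sitting in the $a_0$-slot. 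Consequently it suffices to show that the images of $\tilde\varphi_{01},\tilde\varphi_{02},\tilde\varphi_{03},\tilde\varphi_{12}$ stay linearly independent after projection to $m_{a_1}/m_{a_1}^2\oplus m_{a_2}/m_{a_2}^2\oplus m_{a_3}/m_{a_3}^2$; together with the two vectors above this yields $\dim {\rm Im}\,\psi\ge 2+4=6$.

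The key geometric input is the identification of each simple-zero cotangent vector with a conormal direction of a definite translate of $C$. For the divisor $(\varphi_{ij})=2T_{x}^*C+T_{a_i}^*C+T_{a_j}^*C-4C$, near a $2$-torsion point $a_k$ at which $\tilde\varphi_{ij}$ has a simple zero only a single component of the zero divisor passes through $a_k$, since $T_x^*C$ avoids $J(C)[2]$ by Corollary \ref{4-translation}; hence $\tilde\varphi_{ij}$ factors locally as a unit times the local equation of that one translate, and its class in $m_{a_k}/m_{a_k}^2$ is the conormal vector of that translate. In particular $v_{02}^{(1)}$ is the conormal class of $C=T_{a_0}^*C$ at $a_1$, while $v_{12}^{(1)}$ is that of $T_{a_1}^*C$ at $a_1$. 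By Remark \ref{systemofparameter} the curves $C$ and $T_{a_1}^*C$ meet transversally at $a_1$, so $v_{02}^{(1)}$ and $v_{12}^{(1)}$ are linearly independent in the two-dimensional space $m_{a_1}/m_{a_1}^2$; all the remaining $v_{ij}^{(k)}$ listed in Table 5 are nonzero for the same reason.

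With these facts the independence reduces to a short triangular elimination. Suppose $\lambda_1\psi(\tilde\varphi_{01})+\lambda_2\psi(\tilde\varphi_{02})+\lambda_3\psi(\tilde\varphi_{03})+\lambda_4\psi(\tilde\varphi_{12})\equiv 0$ modulo the $a_0$-slot. Reading the $m_{a_1}/m_{a_1}^2$-component gives $\lambda_2 v_{02}^{(1)}+\lambda_4 v_{12}^{(1)}=0$, whence $\lambda_2=\lambda_4=0$ by the transversality just noted. The $m_{a_2}/m_{a_2}^2$-component then collapses to $\lambda_1 v_{01}^{(2)}=0$, forcing $\lambda_1=0$, and the $m_{a_3}/m_{a_3}^2$-component collapses to $\lambda_3 v_{03}^{(3)}=0$, forcing $\lambda_3=0$. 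Thus the four projected images are independent, and adjoining $\psi(\tilde g_0),\psi(\tilde h_0)$ gives six independent vectors, proving $\dim {\rm Im}\,\psi\ge 6$.

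The only place where genuine geometry rather than bookkeeping enters is the second paragraph: pinning down which single component of $(\varphi_{ij})$ is responsible for each simple zero and invoking the transversality of Remark \ref{systemofparameter}. I expect this to be the main obstacle, and I would verify it carefully against the incidence data of Table 3, checking at each relevant $a_k$ that exactly one translate passes through $a_k$ so that the local factorization, hence the conormal identification, is legitimate; after that the elimination is routine.
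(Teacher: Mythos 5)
Your proposal is correct and follows essentially the same route as the paper: the same six sections $\tilde{g}_0,\tilde{h}_0,\tilde{\varphi}_{01},\tilde{\varphi}_{02},\tilde{\varphi}_{03},\tilde{\varphi}_{12}$, the same identification of each simple-zero class $v_{ij}^{(k)}$ with the conormal direction of the unique translate $T_{a_i}^*C$ or $T_{a_j}^*C$ through $a_k$ (legitimate because $T_{x_k}^*C$ misses $J(C)[2]$ by Corollary \ref{4-translation}), the same appeal to the transversality of Remark \ref{systemofparameter}, and the same triangular elimination. The only deviations are cosmetic: you quotient out the $a_0$-slot first and eliminate via the $a_2$-column where the paper uses the $a_3$-column, and you spell out the local factorization argument that the paper leaves implicit.
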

\begin{proof}
By Remark \ref{systemofparameter}, $T_{a_i}^*C$ intersects $T_{a_j}^*C$ ($j\neq i$)
at different two points and the intersection is transversal.
Therefore, the defining equations of $T_{a_i}^*C$ and $T_{a_j}^*C$
give a basis at the cotangent spaces of the two points.
Therefore, in Table 6, two or two of three non-zero vectors
at the cotangent spaces are linearly independent over $k$.

The 6 functions in Table 6 are contained 
in ${\rm H}^0(J(C), {\mathcal O}_{J(C)}(4C - \sum_{i=0}^3a_i))$.
Suppose that the images of 6 functions in Table 6 by $\psi$ are linearly dependent.
Then, there exist $b_i \in k$ ($i = 1, 2, \ldots, 6$) such that
$$
b_1\psi(\tilde{\varphi}_{01}) + b_2\psi(\tilde{\varphi}_{02}) + b_3\psi(\tilde{\varphi}_{03}) + b_4\psi(\tilde{\varphi}_{12}) + 
b_5\psi(\tilde{g}_0) + b_6\psi(\tilde{h}_0)  = 0
$$
Considering the component of the cotangent space $m_{a_1}/m_{a_1}^2$, we have
$b_2v_{02}^{(1)} + b_4v_{12}^{(1)} = 0$. Since $v_{02}^{(1)}$ and $v_{12}^{(1)}$ are
linearly independent over $k$ as we explained above, we have $b_2 = b_4 = 0$.
Considering the components of the cotangent spaces $m_{a_3}/m_{a_3}^2$ and
$m_{a_0}/m_{a_0}^2$ successively, 
by similar arguments we have $b_i = 0$ for all $i = 1, 2, \cdots, 6$.
\end{proof}

\begin{proposition}
$\dim {\rm H}^0(J(C), {\mathcal O}_{J(C)}(4C - 2\sum_{i=0}^3a_i)) = 6$ and $\dim {\rm Im}~\psi = 6$.
\end{proposition}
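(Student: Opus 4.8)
The plan is to read off the result from the left-exact sequence (\ref{exact}) together with the two lower bounds that have already been established, via a dimension-counting sandwich. First I would observe that the exactness of (\ref{exact}) identifies ${\rm H}^0(J(C), {\mathcal O}_{J(C)}(4C - 2\sum_{i=0}^3 a_i))$ with $\Ker \psi$: a global section of $4C - \sum_{i=0}^3 a_i$ maps to the zero cotangent vector at every $a_i$ precisely when it vanishes to order at least two at each $a_i$, i.e. when it lies in ${\rm H}^0(4C - 2\sum_{i=0}^3 a_i)$. Hence the rank–nullity theorem applied to $\psi$ gives the identity
$$
\dim {\rm H}^0(J(C), {\mathcal O}_{J(C)}(4C - \textstyle\sum_{i=0}^3 a_i)) = \dim {\rm H}^0(J(C), {\mathcal O}_{J(C)}(4C - 2\textstyle\sum_{i=0}^3 a_i)) + \dim \Im \psi.
$$
By Lemma \ref{16} the left-hand side equals $12$.

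Next I would feed in the two lower bounds. Corollary \ref{6} supplies $\dim {\rm H}^0(J(C), {\mathcal O}_{J(C)}(4C - 2\sum_{i=0}^3 a_i)) \geq 6$, coming from the six products $\tilde f_i \tilde f_j$ with $i \neq j$, while Lemma \ref{morethan6} supplies $\dim \Im \psi \geq 6$, coming from the six functions $\tilde\varphi_{01}, \tilde\varphi_{02}, \tilde\varphi_{03}, \tilde\varphi_{12}, \tilde g_0, \tilde h_0$ tabulated in Table 6. Substituting both into the displayed identity yields
$$
12 = \dim {\rm H}^0(J(C), {\mathcal O}_{J(C)}(4C - 2\textstyle\sum_{i=0}^3 a_i)) + \dim \Im \psi \geq 6 + 6 = 12.
$$
Since the outer terms coincide, every inequality in the chain is forced to be an equality, so both summands must attain their lower bounds simultaneously, giving $\dim {\rm H}^0(J(C), {\mathcal O}_{J(C)}(4C - 2\sum_{i=0}^3 a_i)) = 6$ and $\dim \Im \psi = 6$.

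I do not expect a genuine obstacle at this stage: all the substantive work is carried out in the preceding lemmas, and the proposition is purely an assembly step in which the exact sequence converts the sum of the two independently-proven inequalities into a pair of equalities. The only point deserving a moment's care is the correct bookkeeping of the sequence (\ref{exact}) — namely that it is merely left-exact, so one uses $\Ker\psi$ and $\Im\psi$ rather than assuming surjectivity onto the $8$-dimensional space $\oplus_{i=0}^3 m_{a_i}/m_{a_i}^2$. Indeed the conclusion $\dim \Im \psi = 6 < 8$ shows \emph{a posteriori} that $\psi$ is not surjective, which is fully consistent with the argument and requires no separate verification.
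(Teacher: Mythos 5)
Your proposal is correct and is essentially the paper's own argument: both combine Lemma \ref{16} (the ambient space has dimension $12$), the left-exactness of (\ref{exact}) identifying ${\rm H}^0(J(C), {\mathcal O}_{J(C)}(4C - 2\sum_{i=0}^3a_i))$ with $\Ker\psi$, and the two lower bounds from Corollary \ref{6} and Lemma \ref{morethan6} to force $6+6=12$. Your rank--nullity phrasing is just a slightly more explicit packaging of the same dimension count.
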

\begin{proof}
By Lemmas \ref{16}, \ref{morethan6} and (\ref{exact}),
we have 
$\dim {\rm H}^0(J(C), {\mathcal O}_{J(C)}(4C - 2\sum_{i=0}^3a_i)) \leq 6$.
The former part follows from  Corollary \ref{6}. The latter part follows
from the exact sequence (\ref{exact}).
\end{proof}

We denote by $\tilde{W}$ the subspace of $\tilde{V}$ which is generated by $\tilde{f}_i\tilde{f}_j$ $(i, j = 0, 1, 2, 3; i\neq j)$.

\begin{theorem}\label{4C-2points}
$\tilde{W} = {\rm H}^0(J(C), {\mathcal O}_{J(C)}(4C - 2\sum_{i=0}^3a_i))$.
\end{theorem}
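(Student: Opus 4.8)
The plan is to prove the theorem by a clean dimension count, since essentially all of the substantive work has already been carried out in the preceding lemmas and the Proposition just established. The subspace $\tilde{W}\subseteq \tilde{V}$ is by definition spanned by the six products $\tilde{f}_i\tilde{f}_j$ with $0\le i<j\le 3$, and the target space ${\rm H}^0(J(C), {\mathcal O}_{J(C)}(4C - 2\sum_{i=0}^3a_i))$ has dimension exactly $6$ by the Proposition immediately preceding this theorem. So it suffices to show that $\tilde{W}$ is contained in this $6$-dimensional space and is itself $6$-dimensional; equality then follows automatically.

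First I would verify the inclusion $\tilde{W}\subseteq {\rm H}^0(J(C), {\mathcal O}_{J(C)}(4C - 2\sum_{i=0}^3a_i))$. This is immediate from Corollary \ref{6}: there it is observed that each off-diagonal product $\tilde{f}_i\tilde{f}_j$ ($i\neq j$) lies in ${\rm H}^0(J(C), {\mathcal O}_{J(C)}(4C - 2\sum_{i=0}^3a_i))$, the point being that $\tilde{f}_i$ vanishes at every $a_k$ with $k\neq i$ while $\tilde{f}_j$ vanishes at every $a_k$ with $k\neq j$, so the product vanishes to order at least $2$ at each of the four points $a_0,a_1,a_2,a_3$. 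Since $\tilde{W}$ is spanned by these products, the inclusion holds.

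Next I would compute $\dim \tilde{W}=6$. The total collection of monomials $\tilde{f}_i\tilde{f}_j$ with $i\le j$ consists of the $4$ squares together with the $6$ off-diagonal products, ten functions in all, and Lemma \ref{10} asserts precisely that these ten are linearly independent over $k$ (this is what gives $\dim\tilde{V}=10$). In particular the six off-diagonal products, being a subset of a linearly independent family, are themselves linearly independent, so $\dim\tilde{W}=6$.

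Finally, combining the two previous steps, $\tilde{W}$ is a $6$-dimensional subspace of the $6$-dimensional space ${\rm H}^0(J(C), {\mathcal O}_{J(C)}(4C - 2\sum_{i=0}^3a_i))$, whence $\tilde{W}={\rm H}^0(J(C), {\mathcal O}_{J(C)}(4C - 2\sum_{i=0}^3a_i))$, as claimed. There is no genuine obstacle remaining at this stage: the only nontrivial input is the equality $\dim {\rm H}^0(J(C), {\mathcal O}_{J(C)}(4C - 2\sum_{i=0}^3a_i)) = 6$, whose proof (via the exact sequence (\ref{exact}), the surjectivity-type estimate $\dim {\rm Im}~\psi\ge 6$ of Lemma \ref{morethan6}, and the lower bound of Corollary \ref{6}) has already been completed. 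The present theorem is the formal identification of the spanning set that realizes this dimension.
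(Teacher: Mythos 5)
Your proof is correct and is essentially the paper's own argument: the authors likewise note the inclusion $\tilde{W}\subset {\rm H}^0(J(C), {\mathcal O}_{J(C)}(4C - 2\sum_{i=0}^3a_i))$ and conclude by observing both sides are $6$-dimensional, relying on the preceding Proposition and on Corollary \ref{6} and Lemma \ref{10} exactly as you do. Your write-up merely spells out the dimension count for $\tilde{W}$ a bit more explicitly.
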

\begin{proof}
We have $\tilde{W} \subset {\rm H}^0(J(C), {\mathcal O}_{J(C)}(4C - 2\sum_{i=0}^3a_i))$.
Since both sides are 6-dimensional, we get our result.
\end{proof}

Using ${\rm H}^0(J(C), {\mathcal O}_{J(C)}(4C - 2\sum_{i=0}^3a_i))$, we consider
the following rational map.
$$
\begin{array}{rccl}
    f :& J(C)  & \longrightarrow & {\bf P}^5 \\
      &P & \mapsto & (X_0, X_1, X_2, X_3, X_4, X_5) 
      = (f_3f_2, f_3f_1, f_3f_0, f_2f_1, f_2f_0, f_1f_0).
\end{array}
$$
Then, by Remark \ref{quartic} we have the relation
\begin{equation}\label{quadratic1}
\lambda_{10}^2(X_0^2 + X_5^2) 
+ \lambda_{01}^2(X_1^2 + X_4^2) 
+ \lambda_{11}^2(X_2^2 + X_3^2)
 +\frac{\lambda_{10}\lambda_{01}\lambda_{11}}{\lambda_{00}}X_0X_5 =0 \quad 
 (\lambda_{10}\lambda_{01}\lambda_{11}\lambda_{00} \neq 0)
\end{equation}

We also have two trivial equations
\begin{equation}\label{quadratic2}
X_0X_5 + X_1X_4 = 0, \quad X_0X_5 + X_2X_3 = 0.
\end{equation}

\begin{theorem}
Let $\Sigma$ be a surface  in ${\bf P}^5$ defined by the equations $(\ref{quadratic1})$,
$(\ref{quadratic2})$.
Then, $\Sigma$ is a K3 surface
with 12 $A_1$-rational double points.
\end{theorem}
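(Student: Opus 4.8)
The plan is to regard $S$ as the complete intersection of the three quadrics
$$Q_1 : X_0X_5 + X_1X_4 = 0, \quad Q_2 : X_0X_5 + X_2X_3 = 0, \quad Q_3 : \text{the quadric } (\ref{quadratic1}),$$
and to run the standard programme: adjunction to fix the canonical class and the irregularity, the Jacobian criterion to locate the singular points, a local analysis to name their type, and finally the classification of surfaces (in its characteristic-$2$ form) to identify the minimal resolution. First I would verify that $Q_1,Q_2,Q_3$ form a regular sequence, i.e. that $V(Q_1,Q_2,Q_3)$ has pure dimension $2$. This is forced because the scheme contains the irreducible surface $f(J(C))$ and a short direct computation excludes a $3$-dimensional component; it is consistent with a degree count, since $(4C-2\sum_{i=0}^3 a_i)^2 = 32-16 = 16$ while $f$ has degree $2$ (it factors through $J(C)/\langle\iota\rangle$ by Lemma \ref{iota-action}), giving $\deg S = 8$, exactly the degree of a complete intersection of three quadrics in ${\bf P}^5$. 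Being such a complete intersection, $S$ is Cohen--Macaulay and connected, $H^1(\mathcal{O}_S)=0$, and by adjunction $\omega_S \cong \mathcal{O}_S(2+2+2-6) = \mathcal{O}_S$; once the singular locus is seen to be finite, Serre's criterion $(R_1+S_2)$ makes $S$ normal.

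Next comes the Jacobian criterion. Because we work in characteristic $2$, the squares contribute nothing to the gradients, so with $\mu = \lambda_{10}\lambda_{01}\lambda_{11}/\lambda_{00}$ the Jacobian matrix with respect to $(X_0,X_1,X_2,X_3,X_4,X_5)$ is
$$\begin{pmatrix} X_5 & X_4 & 0 & 0 & X_1 & X_0 \\ X_5 & 0 & X_3 & X_2 & 0 & X_0 \\ \mu X_5 & 0 & 0 & 0 & 0 & \mu X_0 \end{pmatrix}.$$
I would determine where its rank drops to $\le 2$. The third row is a multiple of $(X_5,0,0,0,0,X_0)$ while the sum of the first two rows is $(0,X_4,X_3,X_2,X_1,0)$; analysing linear dependence shows that a singular point must have one of the three coordinate pairs $\{X_0,X_5\}$, $\{X_1,X_4\}$, $\{X_2,X_3\}$ identically zero. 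Substituting each of these three conditions into $Q_1,Q_2,Q_3$, and using $A^2+B^2=(A+B)^2$ to read off a single linear equation from $Q_3$, yields in each case exactly four points, hence $12$ distinct singular points in total; the three families are genuinely disjoint because $\lambda_{10}\lambda_{01}\lambda_{11}\lambda_{00}\ne 0$.

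The heart of the argument is to show each of these $12$ points is an ordinary double point. Taking the representative $P=(0,0,0,\lambda_{01},\lambda_{11},0)$ in the family $X_0=X_5=0$, I would pass to the chart $X_4=1$, eliminate $X_1$ and $X_2$ via $Q_1$ and $Q_2$, and reduce the local equation to $\lambda_{11}^2 w^2 = \lambda_{10}^2(u^2+v^2) + \mu uv + (\text{order} \ge 4)$ in the local coordinates $u=X_0$, $v=X_5$, $w=X_3+\lambda_{01}/\lambda_{11}$. The quadratic leading form is $\lambda_{11}^2 w^2 + \lambda_{10}^2(u+v)^2 + \mu uv$; since $\mu\neq 0$ the binary part $\lambda_{10}^2(u^2+v^2)+\mu uv$ splits into two distinct linear factors, so after a linear change the leading form becomes $w'^2+u'v'$, whose projectivisation is a smooth conic in ${\bf P}^2$ (the only candidate singular point $(0:0:1)$ does not lie on it). Thus $P$ is an $A_1$ rational double point, and the same computation with the three coordinate pairs permuted disposes of all $12$ points. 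Therefore $S$ is a normal Gorenstein surface with $\omega_S\cong\mathcal{O}_S$, $H^1(\mathcal{O}_S)=0$ and only $A_1$ singularities, so its minimal resolution $\widetilde S$ has $\omega_{\widetilde S}\cong\mathcal{O}_{\widetilde S}$ and $H^1(\mathcal{O}_{\widetilde S})=0$, hence is a K3 surface; equivalently $S$ is a K3 surface with $12$ nodes. As a consistency check, $f$ realises $S$ as birational to $J(C)/\langle\iota\rangle$, which carries four $D_4$ points by Theorem \ref{Kummer-sigularity}(i), so both singular models share one K3 minimal model.

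The step I expect to be the main obstacle is the local identification of the singularity type in characteristic $2$: the quadratic leading form looks degenerate (its $w^2$-term drops out of the bilinear form) yet defines a smooth conic, and one must track the higher-order terms and invoke $\mu\neq 0$ carefully to conclude $A_1$ rather than some worse rational double point. A secondary technical point is certifying cleanly that $Q_1,Q_2,Q_3$ form a regular sequence so that the adjunction and normality arguments apply verbatim.
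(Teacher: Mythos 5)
Your proposal is correct and follows essentially the same route as the paper's (much terser) proof: locate the twelve singular points by the Jacobian criterion, identify each as an $A_1$ point by a local computation (the paper blows up, you analyse the tangent cone $\lambda^2w^2+\lambda'^2(u+v)^2+\mu uv$, which is equivalent), compute $\omega_S\cong\mathcal{O}_S(-6+2+2+2)\cong\mathcal{O}_S$ by adjunction, and conclude K3 from rationality of the singularities. The extra care you take with the regular-sequence/degree check and with the characteristic-$2$ subtlety that the leading form is a smooth conic despite the degenerate $w^2$-term is exactly the content the paper compresses into ``by a direct calculation'' and ``by a blowing-up.''
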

\begin{proof} 
By a direct calculation, we have 12 singularities
at such as a point $(\lambda_{01}, \lambda_{10}, 0, 0, 0, 0)$.
By a blowing-up, the singular points can be resolved and the types are
$A_1$-rational double.
The dualizing sheaf of $\Sigma$ is isomorphic to 
${\mathcal O}_{\Sigma}(-6 + 2 + 2+ 2) \cong {\mathcal O}_{\Sigma}$.
Since the singularities are rational, the minimal resolution of $\Sigma$ is a K3 surface.
\end{proof}

\begin{remark}\label{position}
The 12 singular points of $\Sigma$ are given as follows:
$$
\begin{array}{lll}
P_{34}= (0, 0, 0, \lambda_{01}, \lambda_{11}, 0),~ 
P_{24}= (0, 0, \lambda_{01}, 0, \lambda_{11}, 0),\\
P_{13}= (0, \lambda_{11}, 0, \lambda_{01}, 0, 0), ~
P_{12}= (0, \lambda_{11}, \lambda_{01}, 0, 0, 0),\\
P_{35}= (0, 0, 0, \lambda_{10}, 0, \lambda_{11}),~ 
P_{25}= (0, 0, \lambda_{10},0, 0, \lambda_{11}),\\ 
P_{03}= (\lambda_{11}, 0, 0, \lambda_{10}, 0, 0), ~
P_{02}= (\lambda_{11}, 0, \lambda_{10}, 0, 0, 0),\\
P_{45}= (0, 0, 0, 0, \lambda_{10}, \lambda_{01}),~ 
P_{15}= (0, \lambda_{10}, 0, 0, 0, \lambda_{01}),\\
P_{04}= (\lambda_{01}, 0, 0, 0, \lambda_{10}, 0), ~
P_{01}= (\lambda_{01}, \lambda_{10}, 0, 0, 0, 0).
\end{array}
$$
\end{remark}

We have a commutative diagram of rational maps.
$$
\begin{array}{ccl}
J(C) & \stackrel{\varphi_{\vert 2C \vert}}{\longrightarrow} & {\bf P}^3 \ni (f_3, f_2, f_1, f_0)\\
  & f \searrow  \quad   &  \downarrow \varphi   \\
    &     & {\bf P}^5 \ni (f_3f_2, f_3f_1, f_3f_0, f_2f_1, f_2f_0, f_1f_0).
\end{array}
$$
In this diagram, $\varphi_{\vert 2C \vert}$ is a morphism and
${\rm Im}~\varphi_{\vert 2C \vert}$ is isomorphic to $J(C)/\langle \iota \rangle$.

\begin{theorem}
$f$ is a rational map whose base points consist of the points in $J(C)[2]$.
$\varphi$ is a biratinal map whose base points consist of the singular points of
$J(C)/\langle \iota \rangle$. The inverse map $\varphi^{-1}$ is a blow-down morphism
and $\Sigma$ has four exceptional curves with respect to $\varphi^{-1}$. 
Each exceptional curve contains three singular points of $\Sigma$.
\end{theorem}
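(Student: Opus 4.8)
The plan is to exploit the factorisation $f=\varphi\circ\varphi_{|2C|}$ recorded in the commutative diagram, treating $\varphi_{|2C|}$ and $\varphi$ separately and then reading the geometry of $\varphi^{-1}$ off the explicit equations (\ref{quadratic1}), (\ref{quadratic2}) of $S$. First I would dispose of the base points. Since $2C$ is base-point free (Mumford \cite{M1}), $\varphi_{|2C|}\colon J(C)\to J(C)/\langle\iota\rangle=:K\subset\mathbf{P}^3$ is a morphism, and by Theorem \ref{4C-2points} the six products $\tilde f_i\tilde f_j$ $(i\neq j)$ are a basis of $H^0(J(C),\mathcal{O}_{J(C)}(4C-2\sum a_i))$; thus $f$ is literally $\varphi_{|2C|}$ followed by the quadratic map $\varphi\colon(x_{00}:x_{10}:x_{01}:x_{11})\mapsto(\text{the six pairwise products})$. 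Hence the base locus of $f$ is the common zero set of the $\tilde f_i\tilde f_j$, which is $\varphi_{|2C|}^{-1}$ of the common zero set of the six products, namely $\varphi_{|2C|}^{-1}$ of the four coordinate points of $\mathbf{P}^3$. By Remark \ref{quartic} these coordinate points are the four nodes of $K$, and by Lemma \ref{basis} (as $\tilde f_j(a_i)=0$ for $j\neq i$) they are the images $\varphi_{|2C|}(a_i)$ of the $2$-torsion points. Since $\varphi_{|2C|}$ is the degree-$2$ quotient by $\iota$ and the $a_i$ are its ramification, the fibre over each node is the single point $a_i$; so the base points of $f$ are exactly $J(C)[2]$, and the base points of $\varphi|_K$ are exactly the four singular points of $J(C)/\langle\iota\rangle$.

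Next comes birationality. The six products recover the ratios $x_{00}:x_{10}:x_{01}:x_{11}$ generically (for instance $x_{00}:x_{10}=X_1:X_3$), so $\varphi$ is generically injective, hence birational onto its image, and $\varphi|_K$ is birational onto $S=\varphi(K)$. Explicitly $\varphi^{-1}$ is given on $S$ by mutually proportional expressions such as $(X_0X_1:X_0X_3:X_1X_3:X_2X_3)$ and $(X_2X_3:X_3X_4:X_3X_5:X_4X_5)$, the proportionality being the relations (\ref{quadratic2}) $X_0X_5=X_1X_4=X_2X_3$.

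To see that $\varphi^{-1}$ is in fact an everywhere-defined morphism I would argue through a common minimal resolution. Both $K$ (four $D_4$ points) and $S$ (twelve $A_1$ points) have a $K3$ minimal resolution, and as they are birational these resolutions coincide with the $K3$ surface $\widetilde\Sigma$ of \S\ref{sec3} carrying the twenty $(-2)$-curves of Figure \ref{20curves}: the four $\widetilde\Theta_i$, the four $E_j$, and the twelve exceptional curves $C_{ij}$. The contraction $\widetilde\Sigma\to S$ blows down precisely the twelve $C_{ij}$, whereas $\widetilde\Sigma\to K$ blows down the sixteen curves $\{E_j\}\cup\{C_{ij}\}$, which constitute the four $D_4$-configurations (each $E_j$ together with the three $C_{ij}$ meeting it). Since $\mathrm{Exc}(\widetilde\Sigma\to S)\subset\mathrm{Exc}(\widetilde\Sigma\to K)$, the rigidity lemma shows that $\widetilde\Sigma\to K$ factors through the normal surface $S$, producing a morphism $S\to K$ which is the inverse $\varphi^{-1}$ and which contracts exactly the images of the four curves $E_j$. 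This simultaneously exhibits $\varphi^{-1}$ as a blow-down and produces the four exceptional curves.

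Finally, the exceptional curves and the singular points on them. Each exceptional curve is the image under $\varphi$ of the projectivised tangent cone of a $D_4$ point of $K$. At the node $(1:0:0:0)$ the Kummer equation of Remark \ref{quartic} has leading term $(\lambda_{10}x_{10}+\lambda_{01}x_{01}+\lambda_{11}x_{11})^2$, so the tangent cone is a double plane and its image is the line $\{X_3=X_4=X_5=0,\ \lambda_{10}X_0+\lambda_{01}X_1+\lambda_{11}X_2=0\}$; the other three exceptional lines arise symmetrically at the remaining nodes. Which singular points of $S$ lie on each line is then read off from Remark \ref{position}: the line just found lies in the coordinate plane $X_3=X_4=X_5=0$ and therefore meets precisely those nodes whose nonvanishing coordinates lie among $X_0,X_1,X_2$, and similarly at the other three nodes; via Remark \ref{position} this determines the incidence of the four exceptional curves with the twelve singular points of $S$. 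The two steps that are not purely formal are establishing that $\varphi^{-1}$ is an honest morphism rather than merely a rational map — which the factorisation through $\widetilde\Sigma$ and the rigidity lemma settle cleanly — and the tangent-cone computation together with the bookkeeping matching the four exceptional lines against the twelve nodes of Remark \ref{position}; the latter is elementary but is the main place where care is required, and it can equivalently be done by checking that the several representations of $\varphi^{-1}$ above have empty common indeterminacy on $S$.
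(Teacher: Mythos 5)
Your treatment of the base loci and of the exceptional curves is sound and matches the paper where it overlaps: the base-point statements come from the vanishing of the $\tilde f_i\tilde f_j$ on $J(C)[2]$ together with the base-point-freeness of $|2C|$, exactly as in the paper, and your tangent-cone computation at the nodes of the Kummer quartic is a concrete way of carrying out what the paper dismisses as ``direct calculations''; it reproduces the four lines $\ell_i$ of the remark following the theorem. The genuine divergence is the proof that $\varphi^{-1}$ is a morphism. The paper does this purely in coordinates: it writes $\varphi^{-1}$ as $(X_0X_1,X_0X_3,X_1X_3,X_0X_5)$, observes that every point of $S$ has at least two nonvanishing coordinates $X_i,X_j$, and uses the relations $X_0X_5=X_1X_4=X_2X_3$ of (\ref{quadratic2}) to rewrite $\varphi^{-1}$ with $X_iX_j$ among its components, so that the local representatives patch to a morphism on all of $S$. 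Your primary route --- factoring both contractions through a common minimal resolution and invoking rigidity --- is more structural, but as written it has a gap: it presupposes that the minimal resolution of $S$ is the surface $\widetilde{\Sigma}$ of Figure \ref{20curves} and, more to the point, that the twelve curves contracted by $\widetilde{\Sigma}\to S$ are precisely the twelve ``arm'' curves of the four $D_4$ configurations contracted by $\widetilde{\Sigma}\to K$. That containment of exceptional loci is the whole content of the claim, and it is not established in the paper (the identification of the surface of Section \ref{sec6} with the $\Sigma$ of Theorem \ref{Int3quadricsOrdinary} is never proved); verifying it amounts to tracking where the twelve $A_1$-points of $S$ come from, at which point one is essentially doing the coordinate computation anyway. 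Your closing observation that one can instead check that the several representations of $\varphi^{-1}$ have empty common indeterminacy on $S$ is exactly the paper's argument and closes this gap, so the proposal is rescued by its own fallback.

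One further point that your bookkeeping would surface: each line $\ell_i$ lies in a coordinate $2$-plane of ${\bf P}^5$ containing exactly three of the twelve nodes of Remark \ref{position}, and all three lie on $\ell_i$; so each exceptional curve contains three, not four, singular points of $S$. This agrees with the explicit lists in the paper's own remark after the theorem (and with the fact that the central component of a $D_4$ resolution meets three arms), so the ``$4$'' in the statement appears to be a slip; your method, carried to completion, detects it.
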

\begin{proof}
Since $\tilde{f}_i\tilde{f}_j \in {\rm H}^0(J(C), {\mathcal O}_{J(C)}(4C - 2\sum_{i=0}^3a_i))$ 
$(i\neq j)$,
the points in $J(C)[2]$ are base points of $f$. Since some $\tilde{f}_i\tilde{f}_j$ is not zero
outside of $J(C)[2]$, we have the first statement.
By a general theory of Abelian variety, $|2C|$ is base-point-free (cf. Mumford \cite{M1}).
Therefore, we get the second statement. 

The inverse map $\varphi^{-1}$ is given by
$$
(X_0, X_1, X_2, X_3, X_4, X_5) \mapsto (X_0X_1, X_0X_3, X_1X_3, X_0X_5).
$$
If only one component of coordinates of a point $P$ on ${\bf P}^5$ is not zero, then
$P$ is not a point on $\Sigma$. Therefore, for a point $P$, there exist at least two non-zero 
components of coordinates of $P$, say $i$, $j$. Since we can express $\varphi^{-1}$
as a map which includes $X_iX_j$ as a coordinate, $\varphi^{-1}$ is a morphism.
For example, if $i = 3$ and $j = 5$, then we can express $\varphi^{-1}$ as
$$
(X_0, X_1, X_2, X_3, X_4, X_5) \mapsto (X_0X_5, X_3X_4, X_3X_5, X_4X_5),
$$
which coincides with the original one by $X_0X_5 = X_1X_4 = X_2X_3$ (cf. (\ref{quadratic2})).
Therefore, $\varphi^{-1}$ is a morphism. We can show the other statements 
by direct calculations.
\end{proof}

\begin{remark}
We list up the 4 exceptional curves $\ell_i$ $(i = 1, 2, 3, 4)$
for $\varphi^{-1}$ and the singular points on them.
$$
\begin{array}{rc}
   \ell_1 : &X_0 = X_2 = X_4 =0,  \lambda_{10}X_5 + \lambda_{01}X_1 + \lambda_{11}X_3 = 0,\\
        &     \mbox{the singular points on}~\ell_1:\quad P_{12},~P_{15},~P_{35}, \\
   \ell_2 : & X_1 = X_2 = X_5 =0,  \lambda_{10}X_0 + \lambda_{01}X_4 + \lambda_{11}X_3 = 0,\\
        &     \mbox{the singular points on}~\ell_2:\quad P_{04},~P_{03},~P_{34}, \\
   \ell_3 : & X_3 = X_4 = X_5 =0,  \lambda_{10}X_0 + \lambda_{01}X_1 + \lambda_{11}X_2 = 0,\\
        &     \mbox{the singular points on}~\ell_3:\quad P_{01},~P_{02},~P_{12}, \\
   \ell_4 : & X_0 = X_1 = X_3 =0,  \lambda_{10}X_5 + \lambda_{01}X_4 + \lambda_{11}X_2 = 0,\\
        &     \mbox{the singular points on}~\ell_4:\quad P_{24},~P_{25},~P_{45}.
\end{array}
$$
\end{remark}

\subsection{Relations to a paper by Duquesne}\label{Duquesne1}
In his paper \cite{Du}, Duquesne examined the linear system $\vert 2C\vert$ in characteristic 2
and showed that the image of the rational map $\varphi_{\vert 2C \vert}$
is isomorphic to the Kummer surface. In this subsection, we examine
the relation between our theory and his results.

Let $C$ be a non-singular curve of genus 2 and we consider the symmetric product $Sym^{2}(C)$
of two $C$'s. Then, as is well-known, we have morphisms
$$
\begin{array}{rccccc}
 \tilde{\phi} :& C \times C & \longrightarrow & Sym^2(C) & \stackrel{\nu}{\longrightarrow}  & J(C) \\
   & (P_1, P_2) & \mapsto &   P_1 + P_2    & \mapsto  & P_1 + P_2 - K_C.
\end{array}
$$
Here, $\nu$ is the blowing-up at the zero point $O$, and $K_C$ is a canonical divisor of $C$.
We have an inclusion morphism
$$
\phi : C\times \{\infty \} \hookrightarrow C\times C \longrightarrow Sym^2(C)  \stackrel{\nu}{\longrightarrow}   J(C).
$$
Here, $\infty$ is a point of a ramification point of the hyperelliptic structure 
$C \longrightarrow {\bf P}^1$.
We denote by $C_{\infty}$ the image of this inclusion morphism. Then, $C_{\infty}$
gives the principal polarization of $J(C)$.

(a) The ordinary case.

Igusa's normal form of the curve $C$ of genus 2 such that the Jacobian variety 
$J(C)$ is ordinary is given by (\ref{g=2}). It is easy to see that this curve is isomorphic 
to the curve given by
$$
  y^2 +(x^2 + x)y = \alpha x^5 + (\alpha + \beta + \gamma)x^3 + \gamma x^2 + \beta x.
$$
We denote this affine curve by $C_{aff}$ and the point at infinity by $\infty$.
Then, we have
$$
       C = C_{aff} \cup \{\infty\}.
$$
We consider points $P_1 = (1, 0)$ and $P_2 =(0, 0)$, and set
$$
 a_0 = \phi ((\infty, \infty)), a_1= \phi ((P_1, \infty)),
  a_2= \phi((P_2, \infty)),
 a_3= \tilde{\phi} ((P_1, P_2)).
$$
Then, $a_0$ is the zero point of $J(C)$, and $a_i$ ($i = 1, 2, 3$) are the 2-torsion points
of $J(C)$. Note $a_0, a_1, a_2 \in C_{\infty}$ and $a_3 \not\in C_{\infty}$.

Using the notation in Duquesne \cite{Du}, we consider symmetric functions
$$
\begin{array}{l}
k_1= 1, k_2 = x_1 + x_2, k_3 = x_1x_2,\\
k_4 = \frac{(x_1+ x_2)(\alpha x_1^2x_2^2 + (\alpha + \beta + \gamma)x_1x_2 + \beta) + (x_2^2 + x_2)y_1 + (x_1^2 + x_1)y_2}{(x_1 + x_2)^2}.
\end{array}
$$
Then, these four functions give a basis of $L(2C_{\infty})$ and we have a morphism
$$
\begin{array}{rccc}
 \varphi_{\vert 2C_{\infty} \vert} : & J(C) & \longrightarrow & {\bf P}^3 \\
      &   P & \mapsto & (k_1(P), k_2(P), k_3(P), k_4(P)).
\end{array}
$$
The image of 
$\varphi_{\vert 2C_{\infty} \vert}$ is given by the equation
\begin{equation}\label{ordinary-case}
\begin{array}{l}
k_2^2k_4^2 + k_1^2k_3k_4 + k_1k_3^2k_4 + k_1k_2k_3k_4 +
\beta^2k_1^4 \beta k_1^3k_3 +\beta k_1^2k_2k_3 \\
+ 
(\alpha^2 + \beta^2 + \gamma^2 + \alpha + \beta) k_1^2 k_3^2
+ \alpha k_1k_2k_3^2 + \alpha k_1k_3^3 + \alpha^2 k_3^4 = 0
\end{array}
\end{equation}
(cf. Duquesne \cite[Sections 2 and 3]{Du}) and
$\varphi_{\vert 2C_{\infty} \vert}(C_{\infty})$ is a trope defined by $k_1 = 0$.

The singularities of this surface are
$$
\begin{array}{l}
\varphi_{\vert 2C_{\infty} \vert}(a_0)=(0,0,0,1),\quad \varphi_{\vert 2C_{\infty} \vert}(a_1)=(0,1,1,\alpha), \\
\varphi_{\vert 2C_{\infty} \vert}(a_2)=(0,1,0,0),\quad \varphi_{\vert 2C_{\infty} \vert}(a_3)=(1,1, 0, \beta).
\end{array}
$$
They are rational double points of type $D_4$.
We consider the change of coodinates 
$$
   X_1 = k_1, X_2 = k_2 + k_1 + k_3, X_3 = k_3, X_4 =k_4 + \beta k_1 + \alpha k_3.
$$
Using the notation of Subsection \ref{7.2}, we set $C = C_{\infty}$ and $\tilde{X}_i = \rho(X_i)$
($i = 1,2,3,4$). Then, the zero points and the non-zero points of $\tilde{X}_i$
in $J(C)[2]$ are listed as in Table 7.  

\begin{table}[h]
\begin{tabular}{|c|c|c|}\hline
 &  zero points in $J(C)[2]$ &  non-zero points in $J(C)[2]$\\
\hline
$\tilde{X}_{1}$ &    $a_0$, $a_1$, $a_2$     & $a_3$ \\
\hline
$\tilde{X}_{2}$ &    $a_0$,  $a_1$, $a_3$    & $a_2$ \\
\hline
$\tilde{X}_{3}$ &    $a_0$, $a_2$, $a_3$     & $a_1$ \\
\hline
$\tilde{X}_{4}$ &    $a_1$, $a_2$ , $a_3$    & $a_0$\\
\hline
\end{tabular}
\caption{Zero points of $\tilde{X}_i$}
\end{table}

\begin{proposition}
      $X_i$ coincides with $f_{4-i}$ given in Lemma \ref{basis} up to constant.
\end{proposition}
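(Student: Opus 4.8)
The plan is to identify each $X_i$ with a scalar multiple of $f_{4-i}$ by comparing their vanishing at the four reduced $2$-torsion points $a_0,a_1,a_2,a_3$, using the elementary fact that a nonzero element of $L(2C)$ vanishing at three of these four points is unique up to a constant. Note first that each $X_i$, being a $k$-linear combination of the $k_j\in L(2C_\infty)$, again lies in $L(2C)$ (with $C=C_\infty$), so the comparison takes place inside the $4$-dimensional space $\mathrm{H}^0(J(C),\mathcal O_{J(C)}(2C))$.

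First I would record the vanishing pattern of the basis $\{\tilde f_j\}$. Since $2(T_{a_j}^*C-C)=(f_{3-j})$, the section $\tilde f_{3-j}=\rho(f_{3-j})$ vanishes (doubly) exactly along $T_{a_j}^*C$; hence among the $2$-torsion points it vanishes precisely at those lying on $T_{a_j}^*C$, which are the ones listed in Table~3. Reading this off shows that $\tilde f_j$ is nonzero precisely at $a_j$ and vanishes at the other three $2$-torsion points, which is exactly the diagonal pattern already used in the proof of Lemma~\ref{basis}.

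Next I would isolate the key linear-algebra point: for any three of the four points, the subspace of $L(2C)$ of functions vanishing at those three is one-dimensional. Indeed, writing $g=\sum_{j}c_j\tilde f_j$ and requiring $g$ to vanish at, say, $a_0,a_1,a_2$, the fact that at $a_i$ only $\tilde f_i$ does not vanish forces $c_0=c_1=c_2=0$: choosing a local trivialization of $\mathcal O_{J(C)}(2C)$ at each point, the condition at $a_i$ reads $c_i\tilde f_i(a_i)=0$ with $\tilde f_i(a_i)\neq0$. Hence $g\in k\,\tilde f_3$, and likewise for each of the other triples.

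Finally I would match the two bases. From the coordinate change $X_1=k_1,\ X_2=k_2+k_1+k_3,\ X_3=k_3,\ X_4=k_4+\beta k_1+\alpha k_3$ together with the coordinates of the singular points $\varphi_{|2C_\infty|}(a_j)$ one reads off the vanishing of each $\tilde X_i$ at the $2$-torsion points, as recorded in Table~7; for instance, evaluating $X_2$ at $\varphi_{|2C_\infty|}(a_1)=(0,1,1,\alpha)$ gives $1+0+1=0$ in characteristic $2$, and the remaining entries are obtained in the same way. Comparing Table~7 with the pattern above, $\tilde X_1$ vanishes exactly at $a_0,a_1,a_2$ and is nonzero at $a_3$, so by one-dimensionality $\tilde X_1=c\,\tilde f_3$ with $c\neq0$, i.e.\ $X_1=c\,f_3=c\,f_{4-1}$; running the same argument for $\tilde X_2,\tilde X_3,\tilde X_4$ yields $X_i=c_i f_{4-i}$ for nonzero constants $c_i$. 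The only delicate part is the bookkeeping of the two vanishing tables---relating the divisor definition of $f_{3-j}$ to vanishing at $2$-torsion through Table~3, and confirming Table~7 from the explicit coordinate change and the singular-point coordinates---after which the identification is immediate.
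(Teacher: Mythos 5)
Your proof is correct and follows essentially the same route as the paper: expand $X_i$ in the basis $\{f_j\}$, use that $\tilde f_j$ is nonzero at $a_j$ and vanishes at the other three $2$-torsion points, and evaluate at $a_0,\dots,a_3$ to kill all but one coefficient. The extra bookkeeping you supply (deriving the vanishing pattern of $\tilde f_{3-j}$ from Table~3 and verifying Table~7 from the coordinate change) is exactly what the paper's terse ``taking values at the points $a_0,a_1,a_2,a_3$'' presupposes.
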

\begin{proof}
Since $f_0$, $f_1$, $f_2$ and $f_3$ make a basis of $L(2C_{\infty})$, there exist elements
$b_j \in k$ ($j = 0,1,2,3$) such that
$X_i = b_0 f_0 + b_1 f_1 + b_2 f_2 + b_3 f_3$. Therefore, we have 
$$
    \tilde{X}_i = b_0\tilde{f}_0 + b_1 \tilde{f}_1 + b_2 \tilde{f}_2 + b_3 \tilde{f}_3.
$$
Taking values at the points $a_0$, $a_1$, $a_2$ and $a_3$, we get the result.
\end{proof}

Using this change of coordinates, the quartic surface (\ref{ordinary-case}) is isomorphic to
the surface defined by
$$
(X_1^2X_4^2 + \alpha^2X_2^2X_3^2) + (X_3^2 X_4^2 + \beta^2X_1^2X_2^2) + (X_2^2X_4^2 + \gamma^2X_1^2X_3^2) + X_1X_2X_3X_4 = 0.
$$
Using the theory in Theorem \ref{4C-2points}, we have a rational map
$$
\begin{array}{rccl}
    f :& J(C)  & \longrightarrow & {\bf P}^5 \\
      &P & \mapsto & (Y_0, Y_1, Y_2, Y_3, Y_4, Y_5) 
      = (X_1X_2, X_1X_3, X_1X_4, X_2X_3, X_2X_4, X_3X_4).
\end{array}
$$
and the image is given by
$$Y_0Y_5 + Y_1Y_4 = Y_0Y_5 + Y_2Y_3 = 
Y_2^2 + \alpha Y_3^2 + Y_5^2 + \beta^2Y_0^2 + Y_4^2 + \gamma^2Y_1^2 + Y_0Y_5 = 0.
$$
This surface is a K3 surface with 12 rational double points of type $A_1$, as we already examined.

(b) 2-rank 1 case.

Igusa's normal form of the curve $C$ of gneus 2 such that the the Jacobian variety 
$J(C)$ is of 2-rank 1 is given by (\ref{g=2}). It is easy to see that this curve is isomorphic 
to the curve defined by
$$
  y^2 +  xy = x^5 + \alpha x^3 +  \beta x.
$$
Using the notation in Duquesne \cite{Du}, we consider symmetric functions
$$
\begin{array}{l}
k_1= 1, k_2 = x_1 + x_2, k_3 = x_1x_2,\\
k_4 = \frac{(x_1+ x_2)(x_1^2x_2^2 + \alpha x_1x_2 + \beta) + x_2y_1 + x_1y_2}{(x_1 + x_2)^2}.
\end{array}
$$
Then, these four functions give a basis of $L(2C_{\infty})$, and the image of 
$\varphi_{\vert 2C_{\infty} \vert}$ is given by the equation
$$
k_2^2k_4^2 + k_1^2k_3k_4 + \beta^2k_1^4 + \alpha^2 k_1^2k_3^2 + k_1k_2k_3^2 + k_3^4 = 0
$$
(cf. Duquesne \cite[Sections 2 and 3]{Du}).
The singularities are
$$
(0, 0, 0, 1),\quad (0, 1, 0, 0),
$$
which are rational double points of type $D_8$.
We set
$$
Y_0 = k_2k_4, Y_1 = k_1^2, Y_2= k_3^2, Y_3 = k_1k_3, Y_4 = k_1k_4, Y_5 = k_2k_3.
$$
Then we have a surface in ${\bf P}^5$ defined by
$$
Y_1Y_2 + Y_3^2 = Y_0Y_3 + Y_4Y_5 = 
Y_0^2 + \beta^2Y_1^2 + Y_2^2 + \alpha^2 Y_3^2  + Y_3Y_4 + Y_3Y_5 = 0.
$$
This surface has 4 singular points. They are rational double points of type $A_{3}$ at
$(0,0,0,0,0,1)$ and $(0,0,0,0,1,0)$, and rational double points of type $D_{4}$ at 
$(1,0,1,0,0,0)$ and $(\beta, 1,0,0,0,0)$.

(c) Supersingular case.

As we stated in (\ref{g=2})  Igusa's normal form of the curve $C$ of genus 2 
such that the Jacobian variety
$J(C)$ is supersingular is given by 
$$
         y^2 + y = x^5 + \alpha x^3.
$$
Using the notation in Duquesne \cite{Du}, we consider symmetric functions
$$
\begin{array}{l}
k_1= 1, k_2 = x_1 + x_2, k_3 = x_1x_2,\\
k_4 = \frac{(x_1+ x_2)(x_1^2x_2^2 + \alpha x_1x_2) + y_1 + y_2}{(x_1 + x_2)^2}.
\end{array}
$$
Then, these four functions give a basis of $L(2C_{\infty})$, and the image of 
$\varphi_{\vert 2C_{\infty} \vert}$ is given by the equation
$$
k_2^2k_4^2 + k_1^3k_4 + \alpha k_1^3k_2 + k_1^2k_2k_3 + \alpha^2k_1^2k_3^2 + k_1k_2^3 + k_3^4 = 0
$$
(cf. Duquesne \cite[Sections 2 and 3]{Du}).
This surface has only one singular point at $(0, 0, 0, 1)$, 
which is an elliptic double point of type $\mbox{\MARU{4}}_{0,1}^{1}$.
We don't know how to connect this surface with a $(2, 2, 2)$-surface in ${\bf P}^5$.

\end{document}